\newtheorem{theorem}{Theorem}[section]
\newtheorem{definition}[theorem]{Definition}
\newtheorem{remark}[theorem]{Remark}
\newtheorem{proposition}[theorem]{Proposition}
\newtheorem{corollary}[theorem]{Corollary}
\newtheorem{lemma}[theorem]{Lemma}
\newcommand{\A}{\mathbb A}
\newcommand{\B}{\mathbb B}
\newcommand{\C}{\mathbb C}
\newcommand{\F}{\mathbb F}
\newcommand{\G}{\mathbb G}
\newcommand{\I}{\mathbb I}
\newcommand{\N}{\mathbb N}
\newcommand{\Q}{\mathbb Q}
\newcommand{\T}{\mathbb T}
\newcommand{\Z}{\mathbb Z}
\newcommand{\bP}{{\bf P}}
\newcommand{\bt}{{\bf t}}
\newcommand{\cB}{\mathcal{B}}
\newcommand{\calH}{\mathcal{H}}
\newcommand{\calC}{\mathcal{C}}
\newcommand{\cG}{\mathcal{G}}
\newcommand{\calK}{\mathcal{K}}
\newcommand{\cO}{\mathcal{O}}
\newcommand{\cR}{\mathcal{R}}
\newcommand{\cW}{\mathcal{W}}
\newcommand{\ad}{\mathrm{ad}}
\newcommand{\Ad}{\mathrm{Ad}}
\newcommand{\an}{{\mathrm an}}
\newcommand{\Aut}{\mathrm{Aut}}
\newcommand{\diag}{\mathrm{diag}}
\newcommand{\End}{\mathrm{End}}
\newcommand{\Frob}{\mathrm{Frob}}
\newcommand{\Gal}{\mathrm{Gal}}
\newcommand{\GL}{\mathrm{GL}}
\newcommand{\GSp}{\mathrm{GSp}}
\newcommand{\Id}{\mathrm{Id}}
\newcommand{\im}{\mathrm{Im}}
\newcommand{\Ind}{\mathrm{Ind}}
\newcommand{\loc}{\mathrm{loc}}
\newcommand{\Log}{\mathrm{Log}}
\newcommand{\Mat}{\mathrm{M}}
\newcommand{\SL}{\mathrm{SL}}
\newcommand{\Spec}{\mathrm{Spec}}
\newcommand{\Spf}{\mathrm{Spf}}
\newcommand{\Spm}{\mathrm{Spm}}
\newcommand{\Tr}{{\mathrm{Tr}}}
\newcommand{\into}{\hookrightarrow}
\newcommand{\ccirc}{\kern0.5ex\vcenter{\hbox{$\scriptstyle\circ$}}\kern0.5ex}
\newcommand{\fA}{{\mathfrak{A}}}
\newcommand{\fa}{{\mathfrak{a}}}
\newcommand{\fc}{{\mathfrak{c}}}
\newcommand{\ff}{{\mathfrak{f}}}
\newcommand{\fg}{{\mathfrak{g}}}
\newcommand{\fH}{{\mathfrak{H}}}
\newcommand{\frakI}{{\mathfrak{I}}}
\newcommand{\fl}{{\mathfrak{l}}}
\newcommand{\m}{\mathfrak{m}}
\newcommand{\fm}{{\mathfrak{m}}}
\newcommand{\fp}{{\mathfrak{p}}}
\newcommand{\fP}{{\mathfrak{P}}}
\newcommand{\fq}{{\mathfrak{q}}}
\newcommand{\fQ}{{\mathfrak{Q}}}
\newcommand{\fs}{{\mathfrak{s}}}
\newcommand{\ft}{{\mathfrak{t}}}
\newcommand{\fu}{{\mathfrak{u}}}
\newcommand{\fU}{{\mathfrak{U}}}
\newcommand{\fsl}{{\mathfrak{sl}}}
\title{Big image of Galois representations associated with finite slope $p$-adic families of modular forms}
\author{Andrea Conti$^\ast$, Adrian Iovita, Jacques Tilouine$^\ast$}
\thanks{$^\ast$Supported by the Programs ArShiFo ANR-10-BLAN-0114 and PerColaTor ANR-14-CE25-0002-01}
\begin{document}
\maketitle

\section{Introduction}

Let $f$ be a non-CM cuspidal eigenform and let $\ell$ be a prime integer. By the work of Ribet (\cite{ribet1}, \cite{ribet3}) and Momose (\cite{momose}), it is known that the $\ell$-adic Galois representation $\rho_{f,\ell}$ associated with $f$ has large image for every $\ell$ and that for almost every $\ell$ it satisfies 
\medskip
\begin{center}
(cong${}_\ell$) $\im\,\rho_{f,\ell}$ contains the conjugate of a principal congruence subgroup $\Gamma(\ell^m)$ of $\SL_2(\Z_\ell)$.
\end{center}
\medskip
\noindent For instance if $\im\,\rho_{f,\ell}$ contains an element with eigenvalues in $\Z_\ell^\times$ distinct modulo $\ell$ then (cong${}_\ell$) holds.

\noindent In \cite{hida}, Hida proved an analogous statement for $p$-adic families of non-CM ordinary cuspidal eigenforms, where $p$ is any odd prime integer. We fix once and for all an embedding $\overline{\Q}\into\overline{\Q}_p$, identifying $\Gal(\overline{\Q}_p/\Q_p)$ with a decomposition subgroup $G_p$ of $\Gal(\overline{\Q}/\Q)$. We also choose a topological generator $u$ of $\Z_p^\times$. 
Let $\Lambda=\Z_p[[T]]$ be the Iwasawa algebra and let $\m=(p,T)$ be its maximal ideal. A special case of Hida's first main theorem  (\cite[Th.I]{hida}) is the following.
 
 \begin{theorem} Let ${\bf f}$ be a non-CM Hida family of ordinary cuspidal eigenforms defined over a finite extension $\I$ of $\Lambda$ and let $\rho_{\bf f}\colon \Gal(\overline{\Q}/\Q)\to\GL_2(\I)$ be the associated Galois representation. Assume that $\rho_{\bf f}$ is residually irreducible and that there exists an element $d$ in its image with eigenvalues $\alpha,\beta\in\Z_p^\times$ such that $\alpha^2\not\equiv\beta^2\pmod{p}$. Then there exists a nonzero ideal ${\mathfrak l}\subset \Lambda$ and an element $g\in\GL_2(\I)$ such that
$$ g\Gamma({\mathfrak l})g^{-1}\subset \im\,\rho_{\bf f}, $$
where $\Gamma({\mathfrak l})$ denotes the principal congruence subgroup of $\SL_2(\Lambda)$ of level ${\mathfrak l}$.
 \end{theorem}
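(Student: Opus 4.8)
The plan is to study $\im\rho_{\bf f}$ through a relative Lie algebra and to feed the variable $T$ into it via the ordinarity of $\bf f$ at $p$. Write $\fm$ for the maximal ideal of $\I$ and $\Gamma_\I(\fm)=\Ker\bigl(\GL_2(\I)\to\GL_2(\I/\fm)\bigr)$, a normal pro-$p$ subgroup of finite index in $\GL_2(\I)$ ($p$ is odd). Since $\alpha^2\not\equiv\beta^2\pmod p$ we have in particular $\alpha\not\equiv\beta$ in $\I/\fm$, so $d$ is diagonalisable over $\I$; conjugating $\rho_{\bf f}$ by a suitable $g_0\in\GL_2(\I)$ we may assume $d=\diag(\alpha,\beta)$, and it then suffices to exhibit a nonzero ideal $\fl\subset\Lambda$ with $\Gamma(\fl)\subset\im\rho_{\bf f}$ for this conjugate, since replacing $\fl$-congruence subgroups by their $g_0$-conjugates recovers the original image. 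Recall also that $\rho_{\bf f}$ is irreducible, being residually irreducible.

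First I would reduce to a Lie-algebra containment. Let $H:=\im\rho_{\bf f}\cap\SL_2(\I)$ and let $L\subseteq\fsl_2(\I)$ be the Pink Lie algebra attached to $H$ (a closed $\Z_p$-Lie subalgebra, obtained by applying $\log$ on a uniform congruence subgroup; this is Pink's setting, $\I$ being profinite and $p$ odd). By Pink's structure theory it is enough to produce a nonzero ideal $\fa\subseteq\I$ with $\fsl_2(\fa)\subseteq L$: this forces $\Gamma_{\SL_2(\I)}(\fa')\subseteq\im\rho_{\bf f}$ for some nonzero ideal $\fa'\subseteq\I$ (a suitable power of $\fa$), whence $\Gamma_{\SL_2(\Lambda)}(\fl)\subseteq\Gamma_{\SL_2(\I)}(\fa')\subseteq\im\rho_{\bf f}$ with $\fl:=\fa'\cap\Lambda$, which is nonzero because $\I$ is a domain finite over $\Lambda$.

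Next, use the regular element to grade $L$. Since $d\in\im\rho_{\bf f}$ normalises $H$, $\Ad(d)$ stabilises $L$; on $\fsl_2$ it acts with eigenvalues $1,\lambda,\lambda^{-1}$, where $\lambda=\alpha\beta^{-1}$, and these are units, pairwise distinct modulo $\fm$ \emph{precisely because} $\alpha^2\not\equiv\beta^2\pmod p$. Hence $L$ splits as $L=\fb^-E^-\oplus\fc\,h\oplus\fb^+E^+$ for closed $\Z_p$-submodules $\fb^\pm,\fc\subseteq\I$, and the Lie bracket forces $\fb^+\fb^-\subseteq\fc$ and, since $p$ is odd, $\fc\,\fb^\pm\subseteq\fb^\pm$. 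If $\fb^+$ or $\fb^-$ vanished, $L$ would lie in a Borel subalgebra, hence be solvable; then a finite-index subgroup of $\im\rho_{\bf f}\cap\SL_2(\I)$ would fix a line of $\I^2$, forcing (via Clifford theory) $\rho_{\bf f}$ to be reducible or dihedral --- impossible, as $\rho_{\bf f}$ is irreducible and $\bf f$ is non-CM. Therefore $\fb^+\neq 0\neq\fb^-$, and $\fc\supseteq\fb^+\fb^-\neq 0$ because $\I$ is a domain.

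The remaining and central point is to promote $\fb^\pm$ (and $\fc$) to ideals of $\I$; this is where ordinarity at $p$ enters. By ordinarity $\rho_{\bf f}|_{G_p}$ is reducible with unramified quotient character $\epsilon_2$, and the ratio $\epsilon_1\epsilon_2^{-1}$ restricted to $I_p$ equals, up to a finite-order character, the universal cyclotomic character $I_p\to\Z_p^\times\to\Lambda^\times$ sending $u$ to $1+T$; choosing $\sigma\in I_p$ lying over $u$, the element $\delta:=\rho_{\bf f}(\sigma)\in\im\rho_{\bf f}$ has eigenvalue ratio $v\in\I^\times$ with $\Z_p[v]$ dense in $\Lambda$ (up to a root of unity, $v$ is a $\Z_p^\times$-multiple of $1+T$). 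Now $\Ad(\delta)$ also stabilises $L$; combining this with the $\Ad(d)$-grading and with $\Ad(g_1)$ for a non-monomial $g_1\in\im\rho_{\bf f}$ (one exists since $\rho_{\bf f}$ is non-CM, so its image is not contained in the normaliser of the diagonal torus), I would deduce that $\fb^+$ is stable under multiplication by $v$ --- hence by $\overline{\Z_p[v]}\supseteq\Lambda$ --- and in fact by all of $\I$; symmetrically for $\fb^-$, and then $\fc\supseteq\fb^+\fb^-$ becomes a nonzero ideal of $\I$ too. Taking $\fa$ to be a suitable power of $\fb^+\fb^-$ gives $\fsl_2(\fa)\subseteq L$, and the first step concludes. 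I expect this last step to be the real obstacle: $\delta$ is not diagonal in the basis diagonalising $d$, and in general no element of $\im\rho_{\bf f}$ conjugates the torus of $\delta$ onto that of $d$, so $\Ad(\delta)$-invariance of $L$ does not by itself rescale $\fb^+$ by $v$; one must genuinely interlace the three symmetries and control congruences modulo powers of $\fm$, ruling out along the way the degenerate configurations --- e.g.\ $\delta$ commuting with $d$, which fortunately collapses to the trivial case $\delta=\diag(v_1,v_2)$, or the subgroup of $\Ad(\GL_2)$ generated by $\Ad(d)$ and $\Ad(\delta)$ being too small --- and it is exactly here that non-CM and the precise ordinary shape of $\rho_{\bf f}|_{G_p}$ are indispensable. (One should also check that $\bf f$ has no nontrivial conjugate self-twists obstructing the descent of the ideal from $\I$ to $\Lambda$; the hypothesis $\alpha^2\not\equiv\beta^2\pmod p$ takes care of this.)
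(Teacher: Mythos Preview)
This theorem is not proved in the paper: it is quoted in the introduction as a special case of Hida's result \cite[Th.~I]{hida}, to motivate the paper's own finite-slope analogue (Theorem~\ref{betalevel}). So there is no ``paper's own proof'' to compare your attempt against. That said, the paper does describe Hida's strategy and proves a closely parallel statement, so a comparison is still informative.

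Your outline matches what the paper attributes to Hida: Pink's Lie algebra theory for pro-$p$ subgroups of $\SL_2(\I)$, a grading by $\Ad(d)$ using the $\Z_p$-regular element, and the ordinary element at $p$ (conjugate to $C_T=\left(\begin{smallmatrix}u^{-1}(1+T)&\ast\\0&1\end{smallmatrix}\right)$) whose conjugation action furnishes a $\Lambda$-module structure on the unipotent piece. Your argument that $\fb^\pm\neq 0$ is essentially the paper's Proposition~\ref{nontrivunip}: specialize at a classical arithmetic prime and invoke Ribet--Momose. So the skeleton is right.

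The gap is exactly the one you flag. In the basis diagonalizing $d$, the ordinary element $\delta$ is in general neither diagonal nor upper-triangular, so $\Ad(\delta)$ does \emph{not} act on your $\fb^+E^+$ by a scalar, and ``interlacing'' $\Ad(d)$, $\Ad(\delta)$, $\Ad(g_1)$ in a single basis does not directly yield $v\cdot\fb^+\subset\fb^+$. The resolution, visible in this paper's proof of Theorem~\ref{betalevel} and implicit in Hida's, is a two-basis transfer rather than an interlacing. One first works in the $d$-diagonal basis to show (via the approximation lemma / Ribet--Momose lifting) that the $\Lambda$-span of each unipotent piece contains a nonzero ideal $\fa^\pm$ of $\I$, hence that $\Lambda\cdot\fH\supset\fa\cdot\fsl_2(\I)$ for $\fa=\fa^+\fa^-$. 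One then conjugates by an element $\beta$ bringing $C_T$ to diagonal form; since $\fa\cdot\fsl_2$ is $\GL_2$-conjugation-invariant, the same containment holds in the new basis. But in the $\beta$-basis the unipotent parts of $\beta\fH\beta^{-1}$ are already $\Lambda$-modules (this is where ordinarity enters, via $\Ad(C_T)$), so they absorb the $\Lambda$-span and each contains $\fa$. Taking brackets and untwisting by $\beta$ gives $\fH\supset\fa^2\cdot\fsl_2(\I)$, and intersecting with $\Lambda$ yields $\fl$. Your single-basis plan does not reach this, and your parenthetical suggestion of ``controlling congruences modulo powers of $\fm$'' is not how the difficulty is overcome.

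Two smaller points. First, your final remark that the hypothesis $\alpha^2\not\equiv\beta^2\pmod p$ ``takes care of'' conjugate self-twists is incorrect: $\Z_p$-regularity and the self-twist group $\Gamma$ are independent issues. In the stated theorem the target is $\SL_2(\Lambda)$, not $\SL_2(\I_0)$, so self-twists do not obstruct anything --- any nonzero ideal of $\I$ intersects $\Lambda$ nontrivially. Second, in ruling out $\fb^\pm=0$ you appeal only to non-CM to exclude the dihedral case; strictly speaking one must also exclude induction from a real quadratic field (cf.\ hypothesis~(2) in Theorem~\ref{comparison}), though for classical specializations of weight $\ge 2$ this cannot occur and the conclusion stands.
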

 
Under mild technical assumptions it is also shown in \cite[Th. II]{hida} that if the image of the residual representation of $\rho_{\bf f}$ contains a conjugate of $\SL_2(\F_p)$ then ${\mathfrak l}$ is trivial or $\m$-primary, and if the residual representation is dihedral ``of CM type'' the height one prime factors $P$ of ${\mathfrak l}$ are exactly those of the g.c.d. of the adjoint $p$-adic $L$ function of ${\bf f}$ and the anticyclotomic specializations of Katz' $p$-adic $L$ functions associated with certain Hecke characters of an imaginary quadratic field. This set of primes is precisely the set of congruence primes between the given non-CM family and the CM families.
 
In her PhD dissertation (see \cite{lang}), J. Lang improved on Hida's Theorem I. Let $\T$ be Hida's big ordinary cuspidal Hecke algebra; it is finite and flat over $\Lambda$. Let $\Spec\,\I$ be an irreducible component of $\T$. It corresponds to a surjective $\Lambda$-algebra homomorphism $\theta\colon \T\to \I$ (a $\Lambda$-adic Hecke eigensystem). We also call $\theta$ a Hida family. Assume that it is not residually Eisenstein. It gives rise to a residually irreducible continuous Galois representation $\rho_\theta\colon G_\Q\to\GL_2(\I)$ that is $p$-ordinary.
We suppose for simplicity that $\I$ is normal. Consider the $\Lambda$-algebra automorphisms $\sigma$ of $\I$ for which there exists a finite order character $\eta_\sigma\colon G_\Q\to\I^\times$ 
such that for every prime $\ell$ not dividing the level, $\sigma\ccirc\theta(T_\ell)=\eta_\sigma(\ell) \theta(T_\ell)$
(see \cite{ribet3} and \cite{lang}). These automorphisms form a finite abelian $2$-group $\Gamma$. 
Let $\I_0$ be the subring of $\I$ fixed by $\Gamma$. Let $H_0=\bigcap_{\sigma\in\Gamma}\ker\,\eta_\sigma$; 
it is a normal open subgroup of  $G_\Q$. One may assume, up to conjugation by an element of $\GL_2(\I)$, 
that $\rho_\theta\vert_{H_0}$ takes values in $\GL_2(\I_0)$.

 \begin{theorem} \cite[Th. 2.4]{lang}
Let $\theta\colon\T\to\I$ be a non-CM Hida family such that $\overline{\rho}_\theta$ is absolutely irreducible. Assume that $\overline{\rho}_\theta\vert_{H_0}$ is an extension of two distinct characters. Then there exists a nonzero ideal ${\mathfrak l}\subset \I_0$ and an element $g\in\GL_2(\I)$ such that
$$g\Gamma({\mathfrak l})g^{-1}\subset \im\,\rho_{\theta},$$
where $\Gamma({\mathfrak l})$ denotes the principal congruence subgroup of $\SL_2(\I_0)$ of level ${\mathfrak l}$.
 \end{theorem}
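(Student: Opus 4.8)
The plan is to adapt to the ring $\I_0$ the method of Hida, which rests on Pink's theory of Lie algebras of pro-$p$ subgroups of $\SL_2$. After conjugating $\rho_\theta$ by a suitable $g\in\GL_2(\I)$ we may assume that $\rho:=\rho_\theta\vert_{H_0}$ takes values in $\GL_2(\I_0)$; since $H_0$ has finite index in $G_\Q$ and any principal congruence subgroup $\Gamma(\fl)\subset\SL_2(\I_0)$ contained in $\im\rho$ is a fortiori contained in $\im\rho_\theta$, it suffices to produce a nonzero ideal $\fl\subset\I_0$ with $\Gamma(\fl)\subset\im\rho$. Put $G=\im\rho$ and, after an auxiliary twist normalising the determinant, let $G^{(1)}$ be the intersection of $G$ with the principal congruence subgroup of $\SL_2(\I_0)$ of level $\fm_{\I_0}$; this is a closed pro-$p$ subgroup of $\SL_2(\I_0)$.

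To $G^{(1)}$ one attaches, following Pink, a closed Lie subalgebra $\frakL\subset\fsl_2(\I_0)$ that is stable under $\Ad\rho$ and a module over the closed $\Z_p$-subalgebra $B\subset\I_0$ generated by the traces of $\Ad\rho$; the basic dictionary of the theory states that $\im\rho$ contains a nonzero principal congruence subgroup if and only if $\frakL\supset\fsl_2(\fl)$ for some nonzero ideal $\fl\subset B$, the two ideals then agreeing up to bounded powers of $p$. So it suffices to find a nonzero $\fl$ with $\fsl_2(\fl)\subset\frakL$. This is where the hypothesis that $\overline\rho_\theta\vert_{H_0}$ is an extension of \emph{distinct} characters is used: it produces a regular semisimple element in the residual image, so that $\Ad\overline\rho\vert_{H_0}$ acts with three distinct characters on the weight lines of $\fsl_2$, keeping the graded pieces of $\frakL$ separated, and it produces a nontrivial unipotent element (residually when the extension is non-split, and after descending to $\I_0$ in the remaining dihedral case) from which to start the bootstrap; absolute irreducibility of $\overline\rho_\theta$ over $G_\Q$ is needed throughout to have $\rho_\theta$ well defined in $\GL_2(\I)$ together with its conjugate self-twist group $\Gamma$.

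The key point is then to show that $\frakL$ spans $\fsl_2$ over $\mathrm{Frac}(\I_0)$. Since $\theta$ is non-CM, $\rho_\theta$ is not induced from a character of a quadratic extension, so by the dichotomy for the Zariski closure of the image of an irreducible non-abelian two-dimensional representation that closure contains $\SL_2$ over $\mathrm{Frac}(\I)$; in particular, together with absolute irreducibility this shows that $\rho$ is irreducible over $\mathrm{Frac}(\I)$ and that $G$ is not contained in a Borel subgroup. Restricting to the finite-index subgroup $H_0$ does not shrink the connected component of the closure, and a $\mathrm{Frac}(\I_0)$-subgroup of $\GL_2$ that becomes all of $\GL_2$ after base change to $\mathrm{Frac}(\I)$ is already all of $\GL_2$, so the Zariski closure of $G$ over $\mathrm{Frac}(\I_0)$ contains $\SL_2$ as well. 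Combined with the $\Ad$-invariance and non-degeneracy of $\frakL$, this forces $\frakL\otimes_B\mathrm{Frac}(\I_0)=\fsl_2(\mathrm{Frac}(\I_0))$; clearing denominators over the Noetherian ring $B$ yields a nonzero ideal $\fl_B\subset B$ with $\fsl_2(\fl_B)\subset\frakL$.

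The final step — upgrading the ideal $\fl_B$ of $B$ to an honest nonzero ideal of $\I_0$ — is the one I expect to be the main obstacle, and it is exactly the improvement over Hida's Theorem~I, whose level ideal only lived in $\Lambda$. Here $p$-ordinarity is the essential input: after conjugation $\rho_\theta\vert_{G_p}$ is upper triangular with unramified quotient character $\delta$ and sub-character $\epsilon$, and local class field theory identifies $\epsilon$ restricted to the inertia subgroup $I_p$ with the tautological character $\Z_p^\times\to\Lambda^\times\subset\I_0^\times$; evaluating it on a lift of the chosen generator $u$ produces an element of $\frakL$ whose diagonal entry is, up to a unit, the image of $T$, and bracketing this with an element of $G$ lying outside every Borel spreads a nonzero ideal of $\I_0$ across all three weight lines of $\fsl_2$. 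It then remains to verify that the trace ring $B$ is large inside $\I_0$ — that $\mathrm{Frac}(B)=\mathrm{Frac}(\I_0)$ and that $B$ and $\I_0$ have the same normalisation away from finitely many height-one primes, the Hecke traces $\theta(T_\ell)$ and their $\sigma$-conjugates lying in $B$ — so that $\fl:=\fl_B\,\I_0$, intersected if necessary with the ideal just produced, is a nonzero ideal of $\I_0$ with $\fsl_2(\fl)\subset\frakL$. Feeding this back through Pink's dictionary and undoing the auxiliary twist and the initial conjugation gives $g\,\Gamma(\fl)\,g^{-1}\subset\im\rho_\theta$.
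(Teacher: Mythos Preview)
This theorem is not proved in the paper: it is quoted from \cite{lang} (appearing as both Theorem~1.2 and Theorem~4.7) and the only comment on its proof is the sentence following Theorem~4.7, that it ``relies on the analogous result proved by Ribet and Momose for the $p$-adic representation associated with a classical modular form.'' The paper does, however, reproduce large parts of Lang's argument in its finite-slope adaptation (Sections~4.3--4.4), so one can compare your sketch to that.

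Your architecture is right in broad strokes --- Pink's Lie algebra, the $H_0$-regularity giving a diagonal element separating the weight lines, and ordinarity supplying the matrix $C_T=\diag(u^{-1}(1+T),1)$ whose conjugation action is what upgrades a $\Z_p$-structure to a $\Lambda$-structure. But the central step differs from Lang's, and your version has a real gap. Lang does \emph{not} argue via the Zariski closure over $\mathrm{Frac}(\I_0)$; instead she specialises at a well-chosen arithmetic prime $P$ and invokes Ribet--Momose to get that the image modulo each $\fP\mid P$ contains a congruence subgroup of $\SL_2(\I_0^\circ/\fP)$ (these are Propositions~4.13 and~4.14 here). One then lifts the resulting unipotents back to $\I_0^\circ$ by an approximation argument (the paper's Proposition~4.8, replacing Lang's direct use of Pink's theory) and concludes by Nakayama that the $\Lambda$-span of $U^{\pm}(\rho_0)$ contains a $\Lambda$-lattice in $\I_0^\circ$ (Proposition~4.16). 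Ordinarity enters precisely here, because conjugation by $C_T$ endows $U^{\pm}$ with a $\Lambda$-module structure; it is this $\Lambda$-action, not a bracket with a single Lie-algebra element, that converts a $\Z_p$-spanning set into a $\Lambda$-lattice and hence into a nonzero ideal of $\I_0^\circ$.

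Your generic-fibre route tries to replace the Ribet--Momose input by the dichotomy for algebraic subgroups of $\GL_2$. The danger is in the passage from ``Zariski closure over $\mathrm{Frac}(\I)$ contains $\SL_2$'' to ``$\frakL\otimes\mathrm{Frac}(\I_0)=\fsl_2$'': you need $\Ad(\rho\vert_{H_0})$ to be absolutely irreducible on $\fsl_2$ over $\mathrm{Frac}(\I_0)$, and restricting to $H_0$ and descending to $\I_0$ simultaneously is exactly where the self-twist group $\Gamma$ can interfere --- this is the content of Lang's \cite[Sec.~5--6]{lang} and is not automatic. More seriously, your final paragraph conflates two different mechanisms: the ``trace ring'' $B$ you introduce plays no role in Lang's argument, and the claim that $\mathrm{Frac}(B)=\mathrm{Frac}(\I_0)$ is neither proved nor used there. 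What actually happens is that ordinarity gives $C_T$ directly, conjugation by $C_T$ makes the unipotent pieces $\Lambda$-modules, and then one shows (via the classical specialisation) that these $\Lambda$-modules span $\I_0$ over $\Lambda$; this is the substance of the improvement over Hida's Theorem~I. Your sketch recognises that this last step is ``the main obstacle'' but the mechanism you propose for it is not the one that works.
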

 
For all of these results it is important to assume the ordinarity of the family, as it implies 
the ordinarity of the Galois representation and in particular that some element of the image of inertia at $p$ is conjugate to the matrix 
$$C_T=\left(\begin{array}{cc} u^{-1}(1+T)&\ast\\0&1\end{array}\right).$$
Conjugation by the element above defines a $\Lambda$-module structure on the Lie algebra 
of a pro-$p$ subgroup of $\im\,\rho_{\bf f}$ and this is used to produce the desired ideal ${\mathfrak l}$.
Hida and Lang use Pink's theory of Lie algebras of pro-$p$ subgroups of $\SL_2(\I)$.

In this paper we propose a generalization of Hida's work to the finite slope case. 
We establish analogues of Hida's Theorems I and II. These are Theorems \ref{betalevel}, \ref{comparison} 
and \ref{exponents} in the text.
Moreover, we put ourselves in the more general setting considered in Lang's work.
In the positive slope case the existence of a normalizing matrix analogous to $C_T$ above is obtained by 
applying relative Sen theory (\cite{sen1} and \cite{sen2}) to the expense of extending scalars to 
the completion $\C_p$ of an algebraic closure of $\Q_p$.

More precisely, for every $h\in (0, \infty)$, we define an Iwasawa algebra $\Lambda_h=\cO_h[[t]]$ 
(where $t=p^{-s_h}T$ for some $s_h\in\Q\cap ]0,\frac{1}{p-1}[$ and $\cO_h$ 
is a finite extension of $\Z_p$ containing $p^{s_h}$ such that its fraction field is Galois over $\Q_p$) and a finite torsion free 
$\Lambda_h$-algebra $\T_h$ (see Section \ref{subsectnest}), called an adapted
slope $\leq h$ Hecke algebra.
Let $\theta\colon\T_h\to\I^\circ$ be an irreducible component; it is finite torsion-free over $\Lambda_h$.
The notation $\I^\circ$ is borrowed from the theory of Tate algebras, but $\I^\circ$ is not a Tate or an affinoid algebra. We write $\I=\I^\circ[p^{-1}]$.
We assume again for simplicity that $\I^\circ$ is normal.
The finite slope family $\theta$ gives rise to a continuous Galois representation
$\rho_\theta\colon G_\Q\to\GL_2(\I^\circ)$. We assume that the residual representation $\overline{\rho_\theta}$ 
is absolutely irreducible.
We introduce the finite abelian $2$-group $\Gamma$ as above, together with its fixed ring $\I_0$ 
and the open normal subgroup $H_0\subset G_\Q$.
%We assume that after a suitable conjugation $\rho_\theta(H_0)\subset\GL_2(\I_0^\circ)$, and (for simplicity) that $\overline{\rho_\theta}\vert_{H_0}$ is still absolutely irreducible.
In Section \ref{liealg} we define a ring $\B_{r}$ (with an inclusion $\I_0\into\B_r$) and a Lie algebra $\fH_{r}\subset\fsl_2(\B_{r})$ attached to the image of $\rho_\theta$.
In the positive slope case CM families do not exist (see Section \ref{cmforms}) hence no ``non-CM'' assumption is needed in the following. As before we can assume, after conjugation by an element of $\GL_2(\I^\circ)$, that $\rho_\theta(H_0)\subset\GL_2(\I_0^\circ)$. Let $P_1\subset\Lambda_h$ be the prime $(u^{-1}(1+T)-1)$.

 \begin{theorem} \textnormal{(Theorem \ref{betalevel})}
Let $\theta\colon\T_h\to\I^\circ$ be a positive slope family such that $\overline{\rho}_\theta\vert_{H_0}$ is absolutely irreducible. Assume that there exists $d\in \rho_\theta(H_0)$ with eigenvalues $\alpha,\beta\in\Z_p^\times$ such that $\alpha^2\not\equiv\beta^2\pmod{p}$. Then there exists a nonzero ideal ${\mathfrak l}\subset \I_0[P_1^{-1}]$ such that
$$ \fl\cdot\fsl_2(\B_{r})\subset\fH_r. $$
 \end{theorem}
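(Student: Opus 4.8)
The plan is to run the Pink--Hida--Lang machine: realize a pro-$p$ part of $\im\,\rho_\theta$ inside $\SL_2$, attach to it the Lie algebra $\fH_r$, and use a torus element normalizing $\fH_r$ to endow it with an Iwasawa-theoretic module structure from which the level ideal $\fl$ is read off. The only genuinely new point is that in positive slope the $p$-ordinary matrix $C_T$ is no longer available; it must be replaced, over the ring $\B_r$ obtained after extending scalars to $\C_p$, by an element $v$ built from the Sen operator of $\rho_\theta\vert_{G_p}$ via relative Sen theory. Since the two eigenvalues of $v$, namely $1$ and $u^{-1}(1+T)$, differ by a generator of $P_1$, the element $v$ becomes diagonalizable — and its eigenspace decomposition available — precisely after inverting $P_1$; this is the source of the localization $\I_0[P_1^{-1}]$ in the statement.

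\emph{Step 1: decomposition of $\fH_r$ and non-vanishing of its graded pieces.} Because $\alpha^2\not\equiv\beta^2\pmod p$ and $p$ is odd, $\Ad(d)$ acts on $\fsl_2(\B_r)$ with eigenvalues $1,\alpha/\beta,\beta/\alpha\in\Z_p^\times$ that are pairwise distinct modulo the maximal ideal, hence pairwise differ by units of $\B_r$. Conjugating so that $d$ is diagonal — harmless, the conclusion being invariant under conjugation by $\GL_2(\B_r)$ — and writing $\fsl_2=\ft\oplus\fu^+\oplus\fu^-$ for the Cartan and the two root subspaces, the submodule $\fH_r$, which is stable under $\Ad$ of the normalizer of the pro-$p$ group it is attached to and hence under $\Ad(d)$, splits as $\fH_r=\fH_r^0\oplus\fH_r^+\oplus\fH_r^-$ with $\fH_r^0=\fH_r\cap\ft(\B_r)$ and $\fH_r^{\pm}=\fH_r\cap\fu^{\pm}(\B_r)$. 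One then checks (as in Hida's treatment, using $\overline{\rho_\theta}\vert_{H_0}$ absolutely irreducible together with the non-existence of CM families in positive slope, Section~\ref{cmforms}, which excludes the dihedral alternative and forces the Zariski closure of $\rho_\theta(H_0)$ to contain $\SL_2$) that each of $\fH_r^0,\fH_r^+,\fH_r^-$ is nonzero: the bracket $[\fH_r^+,\fH_r^-]\subset\fH_r^0$, which on matrix units reads $[xe_{12},ye_{21}]=xy(e_{11}-e_{22})$, then propagates non-vanishing among the three.

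\emph{Steps 2 and 3: the module structure and a bracket bootstrap.} By the construction of Section~\ref{liealg}, relative Sen theory applied to $\rho_\theta\vert_{G_p}$ over $\C_p$ produces the element $v$ described above, which normalizes $\fH_r$; after inverting $P_1$ it is diagonalizable, and in its eigenbasis conjugation by $v$ multiplies the $\fu^+$-component of a matrix by $u^{-1}(1+T)$ and the $\fu^-$-component by its inverse. Thus $\fH_r^+$ (resp.\ $\fH_r^-$) becomes stable under multiplication by $u^{-1}(1+T)$ (resp.\ its inverse), and through $[\fH_r^+,\fH_r^-]\subset\fH_r^0$ the summand $\fH_r^0$ inherits a similar stability; this is the positive-slope substitute for the $\Lambda$-module structure that $C_T$ gives in the ordinary case. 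Finally one runs the usual Pink-theoretic bootstrap over $\I_0[P_1^{-1}]$: since the matrix entries of elements of $\fH_r$ lie in $\I_0^\circ$, the content ideals $\fa^0,\fa^+,\fa^-$ of the three summands are honest ideals of $\I_0[P_1^{-1}]$, nonzero by Step 1; the identities $[\fH_r^+,\fH_r^-]\subset\fH_r^0$, $[\fH_r^0,\fH_r^{\pm}]\subset\fH_r^{\pm}$ and their iterates give inclusions $\fa^+\fa^-\subset\fa^0$, $\fa^0\fa^{\pm}\subset\fa^{\pm}$, and so on; combining these with the multiplicative stability just obtained shows that $\fH_r^0$ and $\fH_r^{\pm}$ each contain $\fa\cdot\B_r$ for $\fa=\fa^0\fa^+\fa^-$ (or a fixed power of it), whence $\fl=\fa$ satisfies $\fl\cdot\fsl_2(\B_r)\subset\fH_r$.

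The main obstacle is everything packaged into Section~\ref{liealg} and invoked in Step 2: manufacturing the normalizing element $v$ from the Sen operator, which forces the passage to $\C_p$-coefficients and the introduction of $\B_r$; identifying the degeneration locus of $v$ (where its two eigenvalues collide) with $V(P_1)$; and controlling the denominators carefully enough that the resulting ideal $\fl$ genuinely lies in $\I_0[P_1^{-1}]$ rather than in the larger ring $\B_r$. A secondary, more technical, difficulty is that the frame diagonalizing $d$ in Step 1 and the frame diagonalizing $v$ in Step 2 need not coincide — $d$ lies in $\rho_\theta(H_0)$, not necessarily in the decomposition group at $p$ — so one must either arrange a common frame or exploit that the desired inclusion $\fl\cdot\fsl_2(\B_r)\subset\fH_r$ is conjugation-invariant while performing all constructions over one and the same $\B_r$.
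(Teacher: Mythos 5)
Your overall architecture does match the paper's: the Sen operator (after $\widehat\otimes\,\C_p$) supplies the missing normalizing element, its eigenvalues $1$ and $u^{-1}(1+T)$ collide exactly along $P_1$, conjugation by it multiplies the nilpotent pieces by $u^{-1}(1+T)$ and its inverse, density of $\C_p[T]$ in $B_{r,\C_p}$ together with closedness of $\fH_{r,\C_p}$ upgrades this to a $B_{r,\C_p}$-module structure, and the frame mismatch between $d$ and the Sen frame is resolved, as you suggest, by the conjugation-invariance of $\fa\cdot\fsl_2(\B_{r,\C_p})$.

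There is, however, a genuine gap in your Steps 1 and 3. You establish (and only claim) that the nilpotent pieces $\fH_r^{\pm}$ are \emph{nonzero}, and then assert that the module structure plus the nonvanishing of their content ideals yields $\fH_r^{\pm}\supset\fa\cdot\B_r$. This implication fails whenever $\I_0\supsetneq\Lambda_{h,0}$: the module $\fU^+(\B_r)\cong\B_r$ has generic rank $[Q(\I_0):Q(\Lambda_{h,0})]>1$ over $B_r$, so a nonzero closed $B_{r,\C_p}$-submodule (for instance $B_{r,\C_p}\cdot x$ for a single $x\in\I_0$) need not contain $\fa\cdot\B_{r,\C_p}$ for any nonzero ideal $\fa$ of $\I_0$, and its content ideal gives no such containment. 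What is actually needed is that $\fu^{\pm}$ contains a basis of $Q(\I_0)$ over $Q(\Lambda_{h,0})$ (Corollary \ref{lambdaspan}), so that its $\Lambda_{h,0}$-span contains a $\Lambda_{h,0}$-lattice and hence, by Lemma \ref{lattice}, a nonzero ideal of $\I_0$. This fullness statement is the hard arithmetic input and occupies all of Section \ref{imgfinslope}: one specializes at a classical weight $k>h+1$, where all points above $P_k$ are classical and the weight map is \'etale, invokes the Ribet--Momose big-image theorem there, glues over the primes above $P_k$ via Lang's arguments (Propositions \ref{openproj} and \ref{openprod}, plus the reduction from $\rho\vert_H$ to $\rho_0$ via the subnormal-subgroup theorem), and lifts back from the special fibre with the approximation lemma (Proposition \ref{approx}). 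Your appeal to absolute irreducibility and the non-existence of CM families delivers at best nonvanishing, not this lattice statement. A secondary omission: since the Sen operator only exists after $\widehat\otimes\,\C_p$, the inclusion produced by your argument is a priori $\fl\cdot\fsl_2(\B_{r,\C_p})\subset\fH_{r,\C_p}$, and the descent to $\fl\cdot\fsl_2(\B_r)\subset\fH_r$ (carried out in the paper by an orthonormal-basis argument on each finite-dimensional quotient $\fH_{r,\fa}$, followed by a projective limit) still has to be supplied.
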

\noindent The largest such ideal $\fl$ is called the Galois level of $\theta$.

We also introduce the notion of fortuitous CM congruence ideal for $\theta$ (see Section \ref{congrid}). It is the ideal $\fc\subset \I$
given by the product of the primary ideals modulo which a congruence between $\theta$ and a slope $\leq h$ CM form occurs. 
Following the proof of Hida's Theorem II we are also able to show (Theorem \ref{comparison}) that the set of primes of $\I_0=\I_0^\circ[p^{-1}]$
containing $\fl$ coincides
with the set of primes containing $\fc\cap\I_0$, except possibly for the primes of $\I_0$ above 
$P_1$ (the weight $1$ primes). 

Several generalizations of the present work are currently being studied by one of the authors\footnote{A. Conti}. They include a generalization of \cite{hidatil}, where the authors treated the ordinary case for $\GSp_4$ with a residual representation induced from the one associated with a Hilbert modular form, to the finite slope case and to bigger groups and more types of residual representations.

\bigskip

\textbf{Acknowledgments.} This paper owes much to Hida's recent paper \cite{hida}. We also thank Jaclyn Lang for making her dissertation \cite{lang} available to us and for some very useful remarks pertaining to Section \ref{imgfinslope}. 
We thank the referee of this article for the careful reading of the manuscript and for useful suggestions which hopefully led to improvements.

\bigskip

\section{The eigencurve}

%All rigid analytic spaces we consider are implicitly $\Q_p$-analytic. Before Section \ref{sectcong} all spaces are defined over $\Q_p$: indeed the weight space and the eigencurve can be admissibly covered by affinoid subdomains defined over $\Q_p$.%Later we will need to consider rigid $\Q_p$-analytic spaces defined only over a finite extension $L/\Q_p$.

\subsection{The weight space}

Let us recall that we have fixed $p>2$ be a prime integer.
We call \textit{weight space} the rigid analytic space over $\Q_p$, $\cW$, canonically associated with the formal scheme over $\Z_p$, ${\rm Spf}(\Z_p[[\Z_p^\times]])$. The $\C_p$-points of $\cW$ parametrize continuous homomorphisms $\Z_p^\times\to\C_p^\times$.
%satisfying $\cW(\C_p)=\Hom_\cont(\Z_p^\times,\C_p^\times)$.

Let $X$ be a rigid analytic space defined over some finite extension $L/\Q_p$. We say that a subset $S$ of $X(\C_p)$ is Zariski-dense if the only closed analytic subvariety $Y$ of $X$ satisfying $S\subset Y(\C_p)$ is $X$ itself.

%\begin{remark}
%All the rigid spaces we consider are $\Q_p$-analytic. Before Section \ref{sectcong} all the rigid analytic spaces we consider are defined over $\Q_p$: indeed the weight space and the eigencurve can be admissibly covered by affinoid subdomains defined over $\Q_p$. Later we will need to consider rigid $\Q_p$-analytic spaces defined only over a finite extension $L/\Q_p$.
%\end{remark}
%
For every $r>0$, we denote by $\cB(0,r)$, respectively $\cB(0,r^-)$, the closed, respectively open, disc in $\C_p$ of centre $0$ and radius $r$.
The space $\cW$ is isomorphic to a disjoint union of
$p-1$ copies of the open unit disc $\cB(0,1^-)$ centered in $0$ and indexed by the group $\Z/(p-1)\Z=\widehat{\mu}_{p-1}$. If $u$ denotes a topological generator of $1+p\Z_p$, then an isomorphism is given by
$$ \Z/(p-1)\Z\times \cB(0,1^-)\to\cW,\quad (i,v)\mapsto\chi_{i,v}, $$
where $\chi_{i,v}((\zeta,u^x))=\zeta^i (1+v)^x$. Here we wrote an element of $\Z_p^\times$ uniquely as a pair $(\zeta, u^x)$ with $\zeta\in \mu_{p-1}$ and $x\in \Z_p$. We make once and for all the choice $u=1+p$.

%Let X be a \Q_p-rigid analytic space. We say that a subset S of X(\C_p) is Zariski-dense if, for any global analytic function $f$ on $X$ the set $S\cap\{f\neq 0\}$ is nonempty.

We say that a point $\chi\in\cW(\C_p)$ is classical if there exist $k\in\N$ and a finite order character $\psi\colon\Z_p^\times\to\C_p^\times$ such that $\chi$ is the character $z\mapsto z^k\psi(z)$. The set of classical points is Zariski-dense in $\cW(\C_p)$.% The set of classical points is an accumulation subset of $\cW$.

If $\Spm\, R\subset\cW$ is an affinoid open subset, we denote by $\kappa=\kappa_R\colon\Z_p^\times\to R^\times$ its tautological character 
given by $\kappa(t)(\chi)=\chi(t)$ for every $\chi\in\Spm\, R$.
Recall (\cite[Prop. 8.3]{buzzard}) that $\kappa_R$ is $r$-analytic for every sufficiently small radius $r>0$ (by which we mean that it extends to a rigid analytic function on $\Z_p^\times \cB(1,r)$).
%%Denote by $S_0(p)$ the monoid
%%$$\left\lbrace\left(\begin{array}{cc} a&b\\c&d\end{array}\right)\in M_2(\Z)\cap\GL_2(\Q),\,p\nmid a,\,p\mid c\right\rbrace$$

%%Note that $S_0(p)\cap\SL_2(\Z)=\Gamma_0(p)$. For any $r>0$ rational such that $r+1$ is a radius of analyticity for $\kappa$, let $A_\kappa[r](R)$ be the $R$-Banach module of $R$-valued $r$-analytic functions over $\Z_p$, that is which extend to $\Z_p+B(0,p^{-r})$, endowed with an action of $S_0(p)$ given, for $\gamma=\left(\begin{array}{cc} a&b\\c&d\end{array}\right)$, by
%%
%%$$(\gamma\cdot f)(z)=\kappa(a-cz)f\left(\frac{dz-b}{-cz+a}\right)$$
%%
%%Note that the space $A_\kappa[r](R)$ does not depend on $\kappa$ but the action of $S_0(p)$ on it does. Let $\gamma^\prime=\det(\gamma)\gamma^{-1}\in S_0(p)$. Let $D_\kappa[r](R)$ be the Banach $R$-module $\Hom_{\Q_p}(A_\kappa[r](R),R)$ with the contragredient action $(\gamma\cdot \ell)(f)=\ell(\gamma^\prime\cdot f)$.
%and let $D_\kappa[r](R)$ be the $\kappa$-eigenspace for the $\Z_p^\times$-action on the Banach $R$-module $\Hom_{\Q_p}(A[r](\Q_p),R)$, given by $(a\cdot \ell)(f)=\ell (f(a\cdot -))$.
%The spaces $A[r](\Q_p)$ and $D_\kappa[r](R)$ carry a left action of the Iwahori subgroup 
%$I=\{\left(\begin{array}{cc} a&b\\c&d\end{array}\right)\in\GL_2(\Z_p); c\equiv 0\pmod p\}$

\subsection{Adapted pairs and the eigencurve}\label{sectadapt}

Let $N$ be a positive integer prime to $p$. We recall the definition of the spectral curve $Z^N$ and of the cuspidal eigencurve $C^N$ of 
tame level $\Gamma_1(N)$. These objects were constructed in \cite{colmaz} for $p>2$ and $N=1$ and in \cite{buzzard} in general. We follow the presentation of \cite[Part II]{buzzard}.
Let $\Spm\,R \subset\cW$ be an affinoid domain and let $r=p^{-s}$ for $s\in\Q$ be a radius smaller than the radius of analyticity of $\kappa_R$.
We denote by $M_{R,r}$ the $R$-module of $r$-overconvergent modular forms of weight $\kappa_R$.
It is endowed it with a continuous action of the Hecke operators $T_\ell$, $\ell\nmid Np$, and $U_p$.
The action of $U_p$ on $M_{R,r}$ is completely continuous, so we can consider its associated Fredholm series 
$F_{R,r}(T)=\det(1-U_pT\vert M_{R,r})\in R\{\{T\}\}$.
These series are compatible when $R$ and $r$ vary, in the sense that there exists $F\in\Lambda\{\{T\}\}$ that restricts to $F_{R,r}(T)$ for every $R$ and $r$.
%%and radius $r$ to the characteristic series $F_{R,r}(T)$ of the operator $U_p$ acting on $M_{R,r}$.%; we denote by $F_k$ the specialization of $F$ to a weight $k$.
%%We denote by $X(N; p)$ the compactified modular curve over $\C$ 
%%given by the level subgroup $\Gamma(N;p)=\Gamma_1(N)\cap\Gamma_0(p)$. For $\Spm\, R\subset\cW$
%%fixed as above, one can view $D_\kappa(R)$ as a left $\Gamma(N;p)$-module, hence as a locally constant sheaf on $X(N;p)$. For each $r>0$ the interior cohomology $\rmH^1_!(X(N;p),D_\kappa[r](R))$ is a Banach $R$-module. Let $c$ be the involution of $X(N;p)$ the involution associated with the double class $[\Gamma_1(N;p)\diag(1,-1)\Gamma_1(N;p)]$; let 
%%$$M_{R,r}^\pm=\rmH^{1,\pm}_!(X(N;p),D_\kappa[r](R))$$
%%be the $\pm$-eigenspace for the action of $c$. 

%%Since $S_0(p)$ acts on $D_\kappa[r](R)$, all Hecke operators $T_\ell=[\Gamma_1(N;p)\diag(1,\ell)\Gamma_1(N;p)]$ for all primes $\ell$ not dividing $Np$, as well as $U_p=[\Gamma_1(N;p)\diag(1,p)\Gamma_1(N;p)]$ act on the Banach modules $M_{R,r}^\pm$ for any $R$ and any rational radius of analyticity $r>0$ for $\kappa$ ; by \cite[Lemma III.6.21]{bell}, 
%%all the $T_\ell$ act continuously (with norm $\leq 1$) and the action of $U_p$ is completely continuous.
%%We briefly recall the construction of the spectral and eigencurve.

%%Let $F_{R,r}(T)\in R\{\{T\}\}$ be the Fredholm series of $U_p$ acting on $M_{R,r}^\pm$. 
The series $F_{R,r}(T)$ converges everywhere on the $R$-affine line $\Spm\,R\times \A^{1,\an}$, so it defines
a rigid curve $Z^{N}_{R,r}=\{F_{R,r}(T)=0\}$ in $\Spm\,R\times\A^{1,\an}$. 
When $R$ and $r$ vary, these curves glue into a rigid space $Z^N$ endowed with 
a quasi-finite and flat morphism $w_Z\colon Z^N\to\cW$. The curve $Z^N$ is called the spectral curve
associated with the $U_p$-operator. 
For every $h\geq 0$, let us consider
$$ Z_R^{N,\leq h}=Z^N_R\cap\left(\Spm\,R\times B(0,p^h)\right). $$
By \cite[Lemma 4.1]{buzzard} $Z_R^{N,\leq h}$ is quasi-finite and flat over $\Spm\,R$.

We now recall how to construct an admissible covering of $Z^N$.

\begin{definition}
We denote by $\calC$ the set of affinoid domains $Y\subset Z$ such that:
\begin{itemize}
\item there exists an affinoid domain $\Spm\,R\subset\cW$ such that $Y$ is a union of connected components of $w_Z^{-1}(\Spm\,R)$;
\item the map $w_Z\vert_Y\colon Y\to\Spm\,R$ is finite.
\end{itemize}
\end{definition}

\begin{proposition} \cite[Th. 4.6]{buzzard}
The covering $\calC$ is admissible.
\end{proposition}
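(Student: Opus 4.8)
The plan is to reduce the statement to a purely local assertion over weight space, prove that assertion using Riesz theory for the completely continuous operator $U_p$, and then feed the result into the abstract (BGR) criterion for admissibility of a covering.

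First I would fix an admissible affinoid covering $\{\Spm\,R\}$ of $\cW$ and, for each such $R$ and each integer $h\geq 0$ (with $r=p^{-s}$ smaller than the radius of analyticity of $\kappa_R$), consider the piece $Z_R^{N,\leq h}$; by \cite[Lemma 4.1]{buzzard} it is quasi-finite and flat over $\Spm\,R$, and by the very construction of $Z^N$ the family $\{Z_R^{N,\leq h}\}$ is an admissible covering of $Z^N$. It therefore suffices to produce, for each fixed $\Spm\,R$ and $h$, a \emph{finite} covering of $Z_R^{N,\leq h}$ by elements of $\calC$ that are affinoid subdomains of $Z_R^{N,\leq h}$: indeed, given any morphism $\Spm\,A\to Z^N$ from an affinoid, quasi-compactness of $\Spm\,A$ lets one cover it by finitely many preimages of sets $Z_R^{N,\leq h}$, and pulling back the finite affinoid refinements to be constructed yields a finite affinoid-subdomain refinement of the pullback of $\calC$ to $\Spm\,A$, which is exactly the criterion for $\calC$ to be admissible.

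The heart of the matter is the local statement: after replacing $\Spm\,R$ by a member $\Spm\,R'$ of a finite affinoid subdomain covering of it, the Fredholm series $F_{R',r}(T)$ admits a factorization $F_{R',r}=G\cdot H$ in $R'\{\{T\}\}$ where $G\in R'[T]$ has unit leading coefficient and all its ``slopes'' $\leq h$, where $H\in R'\{\{T\}\}$ has all its ``slopes'' $>h$, and where $G$ and $H$ generate the unit ideal. This is the Weierstrass-type / Riesz decomposition of the Fredholm hypersurface attached to a completely continuous operator (the key analytic input of \cite{colmaz}, \cite{buzzard}): one shrinks $R$ until the Newton polygon of $F_{R',r}$ has a corner at the horizontal distance governed by the generic slope-$\leq h$ degree, which may require perturbing $h$ slightly within $\Q$ so that no point of $Z^N$ above $\Spm\,R'$ has slope exactly $h$ --- possible because $w_Z$ is quasi-finite and flat and weight-space affinoids can be taken arbitrarily small. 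The zero locus $\{G=0\}\subset\Spm\,R'\times\A^{1,\an}$ is then finite and flat of degree $\deg G$ over $\Spm\,R'$, hence proper and so closed, while its complement in $w_Z^{-1}(\Spm\,R')$ is the Fredholm hypersurface $\{H=0\}$, which is also closed; since $G$ and $H$ are coprime the two loci are disjoint, so $\{G=0\}$ is open and closed in $w_Z^{-1}(\Spm\,R')$, i.e. a union of connected components of it on which $w_Z$ is finite. Thus $\{G=0\}=Z_{R'}^{N,\leq h}$ belongs to $\calC$.

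Finally I would assemble things: choosing such $\Spm\,R'$ in a finite subcovering of $\Spm\,R$, the sets $Z_{R'}^{N,\leq h}=Z_R^{N,\leq h}\times_{\Spm\,R}\Spm\,R'$ are affinoid subdomains of $Z_R^{N,\leq h}$ lying in $\calC$ and covering it, which completes the reduction above. The main obstacle is the factorization step: making the Riesz decomposition of $F_{R,r}$ work uniformly over an affinoid neighbourhood in $\cW$, and in particular handling the boundary case of slope exactly $h$, is the one genuinely analytic ingredient; everything else is formal manipulation of admissible coverings together with the already-quoted quasi-finiteness and flatness of $w_Z$.
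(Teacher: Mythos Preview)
The paper does not give its own proof of this proposition: it is stated with a bare citation to \cite[Th.~4.6]{buzzard} and nothing more. So there is no in-paper argument to compare against; what you have written is, in effect, a sketch of Buzzard's proof rather than an alternative to anything the authors do.

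As a sketch of Buzzard's argument your outline is essentially on target: the reduction to a local statement over weight space, the Riesz/Weierstrass slope factorization of the Fredholm series $F_{R',r}$ after shrinking the affinoid, and the open-and-closed argument identifying the slope-$\leq h$ locus as a union of connected components finite over the base are exactly the ingredients of \cite[Th.~4.6]{buzzard}. One small point: your claim that the $Z_R^{N,\leq h}$ already form an admissible covering of $Z^N$ ``by the very construction'' is itself a lemma in Buzzard (essentially Lemma~4.5 there) and uses the Fredholm property in a nontrivial way, so in a full write-up you would want to cite or prove that separately rather than fold it into ``construction''. Otherwise the proposal is sound.
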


Note in particular that an element $Y\in\calC$ must be contained in $Z_R^{N,\leq h}$ for some $h$.
%%We recall a definition from \cite[II.1.8]{bell}: 
%%
%%\begin{definition} Let $\Spm\,R\subset\cW$ be an affinoid open subset and $h>0$ be a rational number; the couple $(R,h)$ is called an adapted
%%pair if for any $r$ sufficiently large $Z_{R,r}^{N,\leq h}$ is finite (flat) over $\Spm\,R$.
%%For such a pair, one has $F_{R,r}(T)=Q(T)S(T)$ where $Q$ is a polynomial of constant degree $M$, with $Q(0)=1$, and the equation $Q=0$ defines $Z_{R,r}^{N,\leq h}$
%%in $\Spm\,R\times \A^1$.
%%Let $Q^\ast(T)$ be the reciprocal polynomial of $Q(T)$.
%%If $(R,h)$ is adapted, one defines
%%$$ M_{R,r}^{\pm,\leq h}=\Ker\,Q^\ast(U_p)\subset M_{R,r}^\pm $$
%%\end{definition}
%%
%%We also recall \cite[Prop.II.1.13]{bell}:
%%
%%\begin{proposition}
%%The family of affinoid subdomains $Z_{R,r}^{N,\leq h}$ for all adapted pairs $(R,h)$ and radii $r$
%%forms an admissible covering of $Z^N$.
%%\end{proposition}

For every $R$ and $r$ as above and every $Y\in\calC$ such that $w_Z(Y)=\Spm\,R$, we can associate with $Y$ a direct factor $M_Y$ of $M_{R,r}$ by the construction in \cite[Sec. I.5]{buzzard}.
The abstract Hecke algebra $\calH=\Z[T_\ell]_{\ell\nmid Np}$ acts on $M_{R,r}$ and $M_Y$ is stable with respect to this action.
Let $\T_Y$ be the $R$-algebra generated by the image of $\calH$ in $\End_R(M_Y)$ and let $C_Y^N=\Spm\,\T_Y$.
%If $(R,h)$ is an adapted couple, let $\T_{R,r}^\leq h$ be the $R$-algebra generated by the image of $\calH$ in $\End_R(M_{R,r}^{\pm,\leq h})$.
%% This image does not depend on the sign $\pm$ by \cite[Th.IV.2.1]{bell} and is therefore denoted by $\T_{R,r}^{\leq h}$.
Note that it is reduced as all Hecke operators are self-adjoint for a certain pairing and mutually commute.%% (see \cite[Th.IV.2.1]{bell}).

For every $Y$ the finite covering $C_Y^N\to\Spm\,R$ factors through $Y\to\Spm\,R$.
The eigencurve $C^N$ is defined by gluing the affinoids $C_Y^N$ into a rigid curve, endowed with a finite morphism $C^N\to Z^N$.
%% given locally by $((a_\ell),\alpha)\mapsto\alpha^{-1}$, where $\alpha$ is a root of $Q^\ast$ and $\alpha^{-1}$ is a root of $Q$.
The curve $C^N$ is reduced and flat over $\cW$ since it is so locally.
%%We call it the $N$-eigencurve.

%Fix a finite slope $h$. Beginning with Sec. \ref{sectcong} we will study congruences between families of slope $\leq h$ which are finite over a wide open subset of the weight space (an increasing union of affinoid subdomains). In order to have an estimate of the maximal radius of such a wide open we want to know which pairs $(R,h)$ in a connected component of $\cW$ are adapted.

We borrow the following terminology from Bella\"\i che.

\begin{definition}\label{adapt} \cite[Def. II.1.8]{bell} 
Let $\Spm\,R\subset\cW$ be an affinoid open subset and $h>0$ be a rational number.
The couple $(R,h)$ is called adapted if $Z_R^{N,\leq h}$ is an element of $\calC$.
\end{definition}
%w_Z^{-1}(\Spm\,R)\cap

\noindent By \cite[Cor. II.1.13]{bell} the sets of the form $Z_R^{N,\leq h}$ are sufficient to admissibly cover the spectral curve.

Now we fix a finite slope $h$. We want to work with families of slope $\leq h$ which are finite over a wide open subset of the weight space.
In order to do this it will be useful to know which pairs $(R,h)$ in a connected component of $\cW$ are adapted.
If $\Spm\,R^\prime\subset\Spm\,R$ are affinoid subdomains of $\cW$ and $(R,h)$ is adapted then $(R^\prime,h)$ is also adapted 
by \cite[Prop. II.1.10]{bell}. By \cite[Lemma 4.3]{buzzard}, the affinoid $\Spm\,R$ is adapted to $h$ if and only if the weight map $Z_R^{N,\leq h}\to\Spm\, R$ has fibres of constant degree.

\begin{remark}
Given a slope $h$ and a classical weight $k$, it would be interesting to have a lower bound for the radius of a disc of centre $k$ adapted to $h$. A result of Wan \textnormal{(\cite[Th. 2.5]{wan})} asserts that for a certain radius $r_h$ depending only on $h,N$ and $p$, the degree of the fibres of $Z_{\cB(k,r_h)}^{N,\leq h}\to\Spm\, \cB(k,r_h)$ at classical weights is constant. Unfortunately we do not know whether the degree is constant at all weights of $\cB(k,r_h)$, so this is not sufficient to answer our question. Estimates for the radii of adapted discs exist in the case of eigenvarieties for groups different than $\GL_2$; see for example the results of Chenevier on definite unitary groups \textnormal{(\cite[Sec. 5]{chenevier})}.
\end{remark}

\subsection{Pseudo-characters and Galois representations}

Let $K$ be a finite extension of $\Q_p$ with valuation ring $\cO_K$.
Let $X$ be a rigid 
%$\Q_p$-
analytic variety defined over $K$.
We denote by $\cO(X)$ the ring of global analytic functions on $X$ equipped with the coarsest locally convex topology making the restriction map $\cO(X)\to\cO(U)$ continuous for every affinoid $U\subset X$. It is a Fr\'echet space isomorphic to the inverse limit over all affinoid domains $U$ of the $K$-Banach spaces $\cO(U)$. We denote by $\cO(X)^\circ$ the $\cO_K$-algebra of functions bounded by $1$ on $X$, equipped with the topology induced by that on $\cO(X)$. The question of the compactness of this ring is related to the following property of $X$.

\begin{definition}\label{defnested}
\cite[Def. 7.2.10]{bellchen}
We say that a rigid 
%$\Q_p$-
analytic variety $X$ defined over $K$ is nested if there is an admissible covering $X=\bigcup X_i$ by open affinoids $X_i$ 
defined over $K$ such that the maps $\cO(X_{i+1})\to\cO(X_i)$ induced by the inclusions are compact.
\end{definition}

We equip the ring $\cO(X)^\circ$ with the topology induced by that on $\cO(X)=\varprojlim_i \cO(X_i)$.

\begin{lemma}\label{nestcpt}
\cite[Lemma 7.2.11(ii)]{bellchen}
If $X$ is reduced and nested, then $\cO(X)^\circ$ is a compact (hence profinite) $\cO_K$-algebra. 
\end{lemma}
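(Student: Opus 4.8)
The plan is to realise $\cO(X)^\circ$ as an inverse limit of the closed unit balls of the affinoid algebras appearing in a nested covering of $X$, and then to deduce compactness from the compactness of the transition maps.

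First I would fix a nested admissible covering $X=\bigcup_i X_i$ as in Definition \ref{defnested}, arranged so that $X_i\subset X_{i+1}$ and each restriction $\cO(X_{i+1})\to\cO(X_i)$ is compact. Since any affinoid domain $U\subset X$ is quasi-compact, the admissible covering $\{U\cap X_i\}$ of $U$ has a finite subcovering, and as the $X_i$ are nested this forces $U\subseteq X_m$ for $m$ large; hence $\{X_i\}$ is cofinal among the affinoid domains and $\cO(X)=\varprojlim_i\cO(X_i)$, with its inverse-limit (Fr\'echet) topology, i.e. the subspace topology from $\prod_i\cO(X_i)$. Because $X$ is reduced, each $\cO(X_i)$ is a reduced affinoid algebra, so its supremum seminorm is a Banach norm and $\cO(X_i)^\circ=\{f\colon |f|_{\sup}\le 1\}$ is precisely the closed unit ball. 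For $f\in\cO(X)$ one has $|f|_X\le 1$ if and only if $|f|_{X_i}\le 1$ for every $i$, so as a topological ring $\cO(X)^\circ=\varprojlim_i\cO(X_i)^\circ$, the limit being taken along the restriction maps and carrying the subspace topology from $\prod_i\cO(X_i)^\circ$.

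Next I would exploit nestedness. Compactness of $\phi_i\colon\cO(X_{i+1})\to\cO(X_i)$ means exactly that the bounded set $\cO(X_{i+1})^\circ$ has relatively compact image; since $\cO(X_i)^\circ$ is closed in $\cO(X_i)$ and contains this image, the closure $B_i$ of $\phi_i(\cO(X_{i+1})^\circ)$ is a \emph{compact} subset of $\cO(X_i)^\circ$. A short check shows the transition maps carry $B_{i+1}$ into $B_i$ (using that $\phi_i\circ\phi_{i+1}$ sends $\cO(X_{i+2})^\circ$ into $B_i$ and that $B_i$ is closed), and that any compatible system in $\varprojlim_i\cO(X_i)^\circ$ already lies in $\varprojlim_i B_i$, its $i$-th coordinate being the image of the $(i+1)$-st. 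Therefore $\cO(X)^\circ=\varprojlim_i\cO(X_i)^\circ=\varprojlim_i B_i$, which is a closed subspace of the product $\prod_i B_i$; the latter is compact by Tychonoff, so $\cO(X)^\circ$ is compact. Finally, each $\cO(X_i)$ is an ultrametric Banach space and hence totally disconnected and Hausdorff, so the same holds for the subspace $\cO(X)^\circ\subset\prod_i\cO(X_i)$, and a compact Hausdorff totally disconnected topological ring is profinite.

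The only steps requiring genuine care are the bookkeeping in the first two paragraphs: matching ``bounded by $1$ on $X$'' with the termwise closed unit balls of the $\cO(X_i)$ — this is exactly where reducedness is indispensable, since otherwise $\cO(X_i)^\circ$ need not even be bounded and the whole argument collapses — and verifying that a compact morphism of Banach spaces sends the unit ball to a set whose closure still sits inside the unit ball of the target. Everything past that is formal: an inverse limit of Hausdorff spaces along transition maps with relatively compact images is automatically compact.
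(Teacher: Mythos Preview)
The paper does not supply its own proof of this lemma; it is simply quoted from \cite[Lemma 7.2.11(ii)]{bellchen}. Your argument is correct and is essentially the standard one found there: realise $\cO(X)^\circ$ as $\varprojlim_i\cO(X_i)^\circ$ using reducedness to identify each $\cO(X_i)^\circ$ with the closed unit ball, then use compactness of the transition maps to trap the inverse limit inside a product of compact sets. The one point you flag as needing care---that the closure of $\phi_i(\cO(X_{i+1})^\circ)$ remains inside $\cO(X_i)^\circ$---follows not from compactness of $\phi_i$ alone but from the fact that restriction is contractive for the sup norm (since $X_i\subset X_{i+1}$), so $\phi_i$ has operator norm $\le 1$; you might make this explicit rather than leaving it as a caveat.
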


%We will also need the following:
%
%\begin{proposition}
%\begin{enumerate}
%Let $X$ be a nested rigid analytic space over $\Q_p$.
%\item
%If $Y\to X$ is a finite morphism, $Y$ is nested;
%\item
%if $Y$ is a union of irreducible components of $X$, $Y$ is nested.
%\end{enumerate}
%\end{proposition}

We will be able to apply Lemma \ref{nestcpt} to the eigenvariety thanks to the following.

\begin{proposition}
\cite[Cor. 7.2.12]{bellchen}
The eigenvariety $C^N$ is nested for $K=\Q_p$.
\end{proposition}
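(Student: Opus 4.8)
The plan is to build, component by component of $\cW$, an increasing admissible exhaustion of $C^N$ by tubes, and then to deduce compactness of the transition maps from the compactness of restriction maps between Tate algebras of nested closed discs. The starting point is that $\cW$ is itself nested: under the parametrization recalled above each of its $p-1$ connected components is an open unit disc, exhausted by the closed discs $\Spm\,R_i=\cB(0,r_i)$ with $r_i\in|\C_p^\times|$ and $r_i\nearrow 1$, and the restriction map $R_{i+1}=\cO(\cB(0,r_{i+1}))\to\cO(\cB(0,r_i))=R_i$ is compact: a power series $\sum_n a_nT^n$ with $|a_n|r_{i+1}^n\leq 1$ for all $n$ satisfies $|a_n|r_i^n\leq (r_i/r_{i+1})^n\to 0$, so the image of the unit ball of $R_{i+1}$ lands in the set of series with $|a_n|r_i^n\leq (r_i/r_{i+1})^n$ for all $n$, which is compact for the sup-norm on $\cB(0,r_i)$.

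Next I would fix one component of $\cW$, the discs $\Spm\,R_i$ above, and an increasing sequence of rational slopes $h_i\nearrow\infty$ for which each pair $(R_i,h_i)$ is adapted in the sense of Definition \ref{adapt}, so that $Y_i:=Z_{R_i}^{N,\leq h_i}\in\calC$; the fact that this can be arranged with $r_i\to 1$ and $h_i\to\infty$ simultaneously and with the $Y_i$ cofinal is exactly \cite[Cor. II.1.13]{bell}. Let $X_i\subset C^N$ be the preimage of $Y_i$ under the finite map $C^N\to Z^N$; then $X_i=C_{Y_i}^N=\Spm\,\T_{Y_i}$ is finite over $R_i$ and defined over $\Q_p$, the inclusions $R_i\subset R_{i+1}$ and $h_i<h_{i+1}$ force $X_i\subset X_{i+1}$, and, letting the components and the slopes vary, the $X_i$ form an admissible covering of $C^N$.

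It then remains to show that the restriction map $\cO(X_{i+1})\to\cO(X_i)$, that is, $\T_{Y_{i+1}}\to\T_{Y_i}$, is compact. Since $\T_{Y_{i+1}}$ is a finite $R_{i+1}$-module, choose generators $c_1,\dots,c_n$; this gives a continuous surjection $q\colon R_{i+1}^n\to\T_{Y_{i+1}}$, which is open by the open mapping theorem for $\Q_p$-Banach spaces. Restriction to $X_i$ carries each $c_j$ into $\T_{Y_i}$, so one obtains a commuting square with $q$ on top, the restriction $\T_{Y_{i+1}}\to\T_{Y_i}$ on the right, the coordinatewise restriction $R_{i+1}^n\to R_i^n$ on the left, and the $R_i$-linear map $e_j\mapsto c_j|_{X_i}$, from $R_i^n$ to $\T_{Y_i}$, on the bottom. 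The left arrow is compact by the first step and the bottom arrow is continuous, so the composite $R_{i+1}^n\to R_i^n\to\T_{Y_i}$ is compact; since $q$ is an open surjection, the image of the unit ball of $\T_{Y_{i+1}}$ under the restriction is contained, up to a scalar, in the image of the unit ball of $R_{i+1}^n$ under this compact composite, hence is relatively compact. Therefore the restriction $\T_{Y_{i+1}}\to\T_{Y_i}$ is compact and $C^N$ is nested over $\Q_p$ in the sense of Definition \ref{defnested}.

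The one genuinely substantive step is the second: the naive hope that a fixed disc of radius close to $1$ be adapted to all slopes is false — this is precisely the subtlety behind the Remark after Definition \ref{adapt}, where constancy of the fibre degree of $Z_{\cB(k,r_h)}^{N,\leq h}\to\Spm\,\cB(k,r_h)$ is known only at classical weights (Wan, \cite[Th. 2.5]{wan}). The way around it is not to demand such uniformity but to invoke the admissibility of Buzzard's covering $\calC$ (\cite[Th. 4.6]{buzzard}) together with \cite[Cor. II.1.13]{bell}, which furnishes a cofinal increasing family of adapted tubes over each component with no control on how the slope grows with the radius. Everything else — compactness on weight space and its transfer through a finite presentation of $\T_{Y_{i+1}}$ over $R_{i+1}$ — is soft functional analysis.
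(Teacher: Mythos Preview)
The paper does not give its own proof of this proposition; it merely records the statement and cites \cite[Cor.~7.2.12]{bellchen}. Your argument is a correct reconstruction of essentially the proof found there: one exhausts each component of $\cW$ by nested closed discs with compact restriction maps, pulls back adapted slope-bounded tubes to obtain an increasing affinoid cover of $C^N$, and transfers compactness from the discs to the Hecke algebras via a finite presentation and the open mapping theorem. The functional-analytic core of your third step is exactly \cite[Lemma~7.2.11(i)]{bellchen}, and your caveat about the second step is well placed: one does not get uniform adaptedness over large discs but only a cofinal family of adapted tubes, which is all that is needed.
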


Given a reduced nested subvariety $X$ of $C^N$ defined over a finite extension $K$ of $\Q_p$ there is a pseudo-character on $X$ obtained by interpolating the classical ones.

\begin{proposition}\label{pseudochar}
\cite[Th. IV.4.1]{bell}
There exists a unique pseudo-character
$$ \tau\colon G_{\Q,Np}\to \cO(X)^\circ $$
\noindent of dimension $2$ such that for every $\ell$ prime to $Np$, $\tau(\Frob_\ell)=\psi_X(T_\ell)$, where $\psi_X$ is the composition of $\psi\colon\calH\to\cO(C^N)^\circ$ with the restriction map $\cO(C^N)^\circ\to\cO(X)^\circ$.
\end{proposition}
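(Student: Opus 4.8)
The plan is to construct the pseudo-character $\tau$ by interpolation from the classical points, exactly as in Bella\"\i che's argument for the full eigencurve, and then to check that it lands in the bounded subring $\cO(X)^\circ$ by using the reduced and nested hypotheses together with Lemma \ref{nestcpt}. First I would recall that for each classical point $x$ of $C^N$ of weight $k$ and slope $\leq h$, the associated eigenform $f_x$ gives rise to an honest Galois representation $\rho_{f_x}\colon G_{\Q,Np}\to\GL_2(\overline{\Q}_p)$, whose trace $\tau_x=\Trace\,\rho_{f_x}$ satisfies $\tau_x(\Frob_\ell)=a_\ell(f_x)=\psi_X(T_\ell)(x)$ for all $\ell\nmid Np$. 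Since the classical points are Zariski-dense in $C^N$ (and, by pulling back along $X\into C^N$, their preimages are Zariski-dense in $X$ when $X$ is taken to be a union of irreducible components, or more generally one argues componentwise), the function $g\mapsto \tau(g)$ defined on group elements via continuity and density is forced to take values in $\cO(X)$, and it is a pseudo-character of dimension $2$ because each $\tau_x$ is and the defining polynomial identities are Zariski-closed conditions. Uniqueness is immediate: two pseudo-characters agreeing on all $\Frob_\ell$, $\ell\nmid Np$, agree on a dense set of $G_{\Q,Np}$ by Chebotarev and hence everywhere by continuity.

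The key points to carry out carefully are the following. (1) Continuity of $\tau$: one shows that $g\mapsto\tau(g)$ is continuous $G_{\Q,Np}\to\cO(X)$ by exhibiting it locally, on each affinoid $X_i$ of the nested covering, as a limit of the continuous classical traces; here the crucial input is that on a fixed affinoid the relevant Hecke eigenvalues are uniformly bounded (the $a_\ell$ are bounded by $2$ in absolute value at classical points, and $U_p$-slopes are bounded by $h$), so the interpolated function is bounded, and boundedness plus the pseudo-character relations propagate to all of $G_{\Q,Np}$ via Chebotarev density. (2) Integrality, i.e. that $\tau$ actually takes values in $\cO(X)^\circ$ and not merely in $\cO(X)$: since $X$ is reduced and nested, Lemma \ref{nestcpt} tells us $\cO(X)^\circ$ is a profinite $\cO_K$-algebra; the values $\tau(\Frob_\ell)=\psi_X(T_\ell)$ lie in $\cO(X)^\circ$ because $\psi\colon\calH\to\cO(C^N)^\circ$ already has image in the bounded subring (Hecke eigenvalues of overconvergent eigenforms of finite slope are bounded, cf. the construction of $\psi$), and then one invokes that $\cO(X)^\circ$ is closed in $\cO(X)$ and that $\tau(G_{\Q,Np})$ is contained in the closure of the $\cO_K$-subalgebra generated by the $\tau(\Frob_\ell)$ (again Chebotarev), which therefore lies in $\cO(X)^\circ$.

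I expect the main obstacle to be point (2), the integrality statement, together with the careful bookkeeping needed when $X$ is not irreducible. On an irreducible component the Zariski-density of classical points is the standard fact underlying the construction of the eigencurve, but for a general reduced nested subvariety $X\subset C^N$ one must either assume classical points are dense in $X$ (which holds for the $X$ we care about, namely those of the form coming from $\T_h$) or decompose into components; this is a minor but necessary technical point. The genuinely substantive issue is that interpolating traces a priori only produces a function valued in the Fr\'echet algebra $\cO(X)$, and upgrading to $\cO(X)^\circ$ is precisely where the nested hypothesis is indispensable: without it $\cO(X)^\circ$ need not be compact, the closure argument fails, and one cannot conclude. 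Everything else — the dimension-$2$ pseudo-character identities, uniqueness, the $\Frob_\ell$ normalization — follows formally from Zariski-density of classical points and Chebotarev, and I would treat it as routine, citing \cite[Th. IV.4.1]{bell} for the full eigencurve case and noting that restriction along $X\into C^N$ gives the statement for $X$.
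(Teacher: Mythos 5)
The paper gives no proof of this proposition: it is quoted verbatim from Bella\"\i che \cite[Th. IV.4.1]{bell}, so there is nothing in the text to compare your argument against. Your outline reconstructs the standard interpolation argument underlying that reference (Zariski-density of classical points, Chebotarev, and compactness of $\cO(X)^\circ$ via Lemma \ref{nestcpt} to get integrality, with the general case obtained by restriction along $X\into C^N$) and is essentially sound; the one imprecision is the aside that the $a_\ell$ are ``bounded by $2$ in absolute value,'' which is an archimedean bound irrelevant to the $p$-adic setting --- the correct reason $\psi(T_\ell)$ lies in $\cO(C^N)^\circ$ is, as you also note, that the Hecke operators have operator norm $\le 1$ in the construction of $\psi$.
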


\begin{remark} One can take as an example of $X$ a union of irreducible components of $C^N$ in which case $K=\Q_p$.
Later we will consider other examples where $K\neq\Q_p$.
\end{remark}

\bigskip

\section{The fortuitous congruence ideal}\label{sectcong}

%In this section, we no longer restrict ourselves to the $N$-new part $C^{N,N}$ of the eigencurve. We can therefore 
%use the results of section 1 and, but not the ramification result of Theorem \ref{ramifpts}. See Remark \ref{rmknonnew}
%for the reason why it is better not to restrict to $C^{N,N}$ when dealing with congruences.

In this section we will define families with slope bounded by a finite constant and coefficients in a suitable profinite ring.
We will show that any such family admits at most a finite number of classical specializations which are CM modular forms.
Later we will define what it means for a point (not necessarily classical) to be CM and we will associate with a family a congruence ideal describing its CM points.
Contrary to the ordinary case, the non-ordinary CM points do not come in families so the points detected by the congruence ideal do not correspond to a crossing between a CM and a non-CM family.
For this reason we call our ideal the ``fortuitous congruence ideal''.
%%%For a given "general family", that is a family which has no CM specialization, we define congruence ideals describing the intersection locus with potential CM families. Though we define such ideals only for $N$-new families we still have to work on a subvariety of $C^N$ which contains components of any level, as specified in Remark \ref{rmknonnew}.

\subsection{The adapted slope $\leq h$ Hecke algebra}\label{subsectnest}

Throughout this section we fix a slope $h>0$ .
%Following standard terminology we say that a subdomain of a rigid analytic variety is \textit{wide open} if it is an increasing union of affinoid domains such that the restriction maps on analytic functions are compact.
Let $C^{N,\leq h}$ be the subvariety of $C^N$ whose points have slope $\leq h$. Unlike the ordinary case treated in \cite{hida} the weight map $w^{\leq h}\colon C^{N,\leq h}\to\cW$ is not finite which means that a family of slope $\leq h$ is not in general defined by a finite map over the entire weight space. The best we can do in the finite slope situation is to place ourselves over the largest possible wide open subdomain $U$ of $\cW$ such that the restricted weight map $w^{\leq h}\vert_U\colon C^{N,\leq h}\times_{\cW}U\to U$ is finite. This is a domain ``adapted to $h$'' in the sense of Definition \ref{adapt} where only affinoid domains were considered.
The finiteness property will be necessary in order to apply going-up and going-down theorems.

%\begin{remark}
%We work for simplicity with discs centred at $0$, but all results hold for any other choice of a centre in $\cW$.
%\end{remark}

Let us fix a rational number $s_h$ such that for $r_h=p^{-s_h}$ the closed disc $B(0,r_h)$ is adapted for $h$.
%By the results of Section \ref{sectadapt}, we can assume $s_h\leq m(h)$.
We assume that $s_h>\frac{1}{p-1}$ (this will be needed later to assure the convergence of the exponential map). Let $\eta_h\in\overline{\Q}_p$ be an element 
of $p$-adic valuation $s_h$. Let $K_h$ be the Galois closure (in $\C_p$) of $\Q_p(\eta_h)$ and let $\cO_h$ be its valuation ring. Recall that $T$ is the variable on the open disc of radius $1$. Let $t=\eta_h^{-1}T$ and $\Lambda_h=\cO_h[[t]]$.
This is the ring of analytic functions, with $\cO_h$-coefficients and bounded by one, on the wide open disc $\cB_h$ of radius $p^{-s_h}$.
There is a natural map $\Lambda\to\Lambda_h$ corresponding to the restriction of analytic functions on the open disc of radius $1$, with $\Z_p$ coefficients and bounded by $1$, to the open disc of radius $r_h$. The image of this map is the ring $\Z_p[[\eta t]]\subset\cO_h$.
%Let $C=\widehat{\overline{\Q}}_p$ and $\cO_C$ its valuation ring. We form $\cO_C\widehat{\otimes}_{\cO_h}\Lambda_h$.

For $i\geq 1$, let $s_i=s_h+1/i$ and $\cB_i=\cB(0,p^{-s_i})$. The open disc $\cB_h$ is the increasing union of the affinoid discs $\cB_i$. 
For each $i$ a model for $\cB_i$ over $K_h$ is given by Berthelot's construction of $\cB_h$ as 
the rigid space associated with the $\cO_h$-formal scheme $\Spf\,\Lambda_h$. We recall it briefly following \cite[Sec. 7]{dejong}. 
Let 
$$ A_{r_i}^\circ=\cO_h\langle t,X_i\rangle/(pX_i-t^i). $$

We have $\cB_i=\Spm\,A_{r_i}^\circ[p^{-1}]$ as rigid space over $K_h$.
For every $i$ we have a morphism $A_{r_{i+1}}^\circ\to A_{r_i}^\circ$ given by
$$ X_{i+1}\mapsto X_i t $$
$$ t\mapsto t $$

We have induced morphisms $A_{r_{i+1}}^\circ[p^{-1}]\to A_{r_i}^\circ[p^{-1}]$, hence open immersions $\cB_i\to\cB_{i+1}$ defined over $K_h$. The wide open disc $\cB_h$ is defined as the inductive limit of the affinoids $\cB_i$ with these transition maps.
%%%Let $R_i=\cO_h\langle X,t \rangle/(\eta_hX-t^i)$.
We have $\Lambda_h=\varprojlim_i A_{r_i}^\circ$.

%For each $i$ we consider $R_i$ the Tate algebra $C\langle p^{-s_i}T\rangle$, so that $B(0,p^{-s_i})=\Spm R_i$. The inclusions $B(0,p^{-s_i})\subset B(0,p^{-s_j})$ for $i<j$ induce completely continuous restriction maps $R_j^\circ\to R_i^\circ$, hence the wide open disc $B_h$ is a nested rigid analytic variety as in Def. \ref{defnested}. The $\cO_C$-algebra $\cO(B_h)^\circ=\varprojlim_iR_i^\circ$ of functions bounded by one is isomorphic to   $\cO_C\widehat{\otimes}_{\cO_h}\Lambda_h$ (\cite[Lemma 7.3.4]{dejong}).

%%%%%\begin{remark}
%%%%%Since there is no constant term in Wan's radius, when the slope $h$ is sufficiently close to $0$, the wide open subdomain 
%%%%%$B_h$ can be taken to be the open disc $B(0,p^{-\frac{1}{p-1}})$.
%%%%%\end{remark}

Since the $s_i$ are strictly bigger than $s_h$ for each $i$, $\cB(0,p^{-s_i})=\Spm\, A_{r_i}^\circ[p^{-1}]$ is adapted to $h$. Therefore for every $r>0$ sufficiently small and for every $i\geq 1$ the image of the abstract Hecke algebra acting on $M_{A_{r_i},r}$ provides a finite affinoid $A_{r_i}^\circ$-algebra $\T_{A_{r_i}^\circ,r}^{\leq h}$. The morphism $w_{A_{r_i}^\circ,r}\colon\Spm\,\T_{A_{r_i}^\circ,r}^{\leq h}\to\Spm\, A_{r_i}^\circ$ is finite.
%By the results of Section 2 we can define for each $i$ the $N$-new subaffinoid $\Spm\T^{R_i,\leq h,N-new}\Spm\T^{R_i,\leq h}$ as the union of all $N$-new irreducible components of $\Spm\T^{R_i,\leq h}$.
For $i<j$ we have natural inclusions $\Spm\,\T_{A_{r_j}^\circ,r}^{\leq h}\to\Spm\,\T_{A_{r_i}^\circ,r}^{\leq h}$ and corresponding restriction maps $\T_{A_{r_i}^\circ,r}^{\leq h}\to\T_{A_{r_j}^\circ,r}^{\leq h}$. We call $C_h$ the increasing union $\bigcup_{i\in\N,r>0}\Spm\,\T_{A_{r_i}^\circ,r}^{\leq h}$; it is a wide open subvariety of $C^{N}$. We denote by $\T_h$ the ring of rigid analytic functions bounded by $1$ on $C_h$. We have $\T_h=\cO(C_h)^\circ=\varprojlim_{i,r}\T_{A_{r_i}^\circ,r}^{\leq h}$. There is a natural weight map $w_h\colon C_h\to \cB_h$ that restricts to the maps $w_{A_{r_i}^\circ,r}$. It is finite because the closed ball of radius $r_h$ is adapted to $h$.

\subsection{The Galois representation associated with a family of finite slope}

%Let $\T_h:=\cO(C_h)^\circ$.
Since $\cO(B_h)^\circ=\Lambda_h$, the map $w_h$ gives $\T_h$ the structure of a finite $\Lambda_h$-algebra; in particular $\T_h$ is profinite.

Let $\fm$ be a maximal ideal of $\T_h$. The residue field $k=\T_h/\fm$ is finite. 
Let Let $\T_\fm$ denote the localization of $\T_h$ at $\fm$. Since $\Lambda_h$ is henselian, 
$\T_\fm$ is a direct factor or $\T_h$, hence it is finite over $\Lambda_h$; 
it is also local noetherian and profinite. It is the ring of functions bounded by $1$ on a connected component of $C_h$.
Let $W=W(k)$ be the ring of Witt vectors of $k$. By the universal property of $W$, $\T_\fm$ is a $W$-algebra. 
%By the density of classical modular points, 
$\Spm\,\T_\fm$ contains points $x$ corresponding to cuspidal eigenforms 
$f_x$ of weight $w(x)=k_x\geq 2$ and level $Np$. Let $\Q^{Np}$ be the maximal extension of $\Q$ unramified 
outside $Np$ and let $G_{\Q,Np}=\Gal(\Q^{Np}/\Q)$. The Galois representations $\rho_{f_x}$ 
associated with $f_x$ give rise to a residual representation $\overline{\rho}\colon G_{\Q,Np}\to \GL_2(k)$ 
that is independent of $f_x$. By Proposition \ref{pseudochar}, we have a pseudo-character
$$ \tau_{\T_\fm}\colon G_{\Q,Np}\to\T_\fm $$
such that for every classical point $x\colon\T_\fm\to L$, defined over some finite extension $L/\Q_p$, 
the specialization of $\tau_{\T_\fm}$ at $x$ is the trace of the usual representation $G_{\Q,Np}\to\GL_2(L)$ attached to $x$.

\begin{proposition}\label{pseudocharrep}
If $\overline{\rho}$ is absolutely irreducible there exists a unique continuous irreducible Galois representation
$$ \rho_{\T_\fm}\colon G_{\Q,Np}\to\GL_2(\T_\fm), $$
lifting $\overline{\rho}$ and whose trace is $\tau_{\T_\fm}$.
\end{proposition}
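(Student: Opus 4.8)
The plan is to deduce the existence and uniqueness of $\rho_{\T_\fm}$ from the standard theory relating two-dimensional pseudo-characters to genuine representations over henselian local rings, in the form due to Nyssen and Rouquier, using the absolute irreducibility of $\overline{\rho}$ as the crucial hypothesis. First I would record that $\T_\fm$ is a local, henselian, profinite (hence complete noetherian) $W$-algebra with residue field $k$, and that $\tau_{\T_\fm}\colon G_{\Q,Np}\to\T_\fm$ is a continuous pseudo-character of dimension $2$ whose reduction modulo $\fm$ is the trace of $\overline{\rho}$. Since $\overline{\rho}$ is absolutely irreducible, its trace pseudo-character is non-degenerate in the sense required by the Nyssen--Rouquier theorem; because $\T_\fm$ is local and henselian, that theorem yields a continuous representation $\rho_{\T_\fm}\colon G_{\Q,Np}\to\GL_2(\T_\fm)$ with trace $\tau_{\T_\fm}$, and the reduction of $\rho_{\T_\fm}$ modulo $\fm$ is a representation with the same trace as $\overline{\rho}$, hence (again by absolute irreducibility, via Brauer--Nesbitt and Schur) isomorphic to $\overline{\rho}$. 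This gives the lifting property.

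Next I would address irreducibility of $\rho_{\T_\fm}$ itself. Here the point is that absolute irreducibility of the residual representation propagates upward: if $\rho_{\T_\fm}$ preserved a $\T_\fm$-submodule giving a nontrivial subquotient, reducing modulo $\fm$ would contradict the absolute irreducibility of $\overline{\rho}$; more precisely one argues that the $\T_\fm$-subalgebra of $\mathrm{M}_2(\T_\fm)$ generated by $\rho_{\T_\fm}(G_{\Q,Np})$ surjects onto $\mathrm{M}_2(k)$ modulo $\fm$, and then by Nakayama (applicable since $\T_\fm$ is noetherian local with $\mathrm{M}_2(\T_\fm)$ a finite module) it equals all of $\mathrm{M}_2(\T_\fm)$, so the representation is even absolutely irreducible. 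Finally, for uniqueness: if $\rho'$ is another continuous representation lifting $\overline{\rho}$ with trace $\tau_{\T_\fm}$, then $\rho'$ and $\rho_{\T_\fm}$ have equal traces and both are absolutely irreducible over the local ring $\T_\fm$; the standard rigidity statement (again Nyssen--Rouquier, or Carayol) that an absolutely irreducible representation over a local ring is determined up to conjugation by its trace then gives $\rho'\cong\rho_{\T_\fm}$.

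The main obstacle is not the algebraic input but the continuity bookkeeping: one must check that the representation produced by the pseudo-character machinery is continuous for the profinite topology on $\T_\fm$ and the natural topology on $G_{\Q,Np}$. This is handled by working with a lattice: since $\T_\fm=\varprojlim \T_{A_{r_i}^\circ,r}^{\leq h}$ is profinite and $\tau_{\T_\fm}$ is continuous, it factors through finite quotients at each finite level, one constructs $\rho_{\T_\fm}$ compatibly at each level (using that each finite quotient is still local henselian, or by passing to the limit after constructing it over $\T_\fm$ directly and invoking that a homomorphism to $\GL_2$ of a profinite ring with continuous trace is continuous), and then takes the inverse limit. A secondary technical point is that the Nyssen--Rouquier theorem in the cleanest form wants the residue field to be ``large enough'', e.g. that $\overline{\rho}$ be absolutely irreducible already over $k$ rather than only over $\overline{k}$; but absolute irreducibility of $\overline{\rho}$ over $k$ is exactly what is assumed, so no further base change is needed, and the representation is obtained over $\T_\fm$ itself without enlarging $W$.
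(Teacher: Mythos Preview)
Your proposal is correct and takes essentially the same approach as the paper: the paper's entire proof is the sentence ``This follows from a result of Nyssen and Rouquier (\cite{nyssen}, \cite[Cor. 5.2]{rouquier}), since $\T_\fm$ is local henselian.'' You have simply unpacked what that citation provides (existence, the lifting property, irreducibility via Nakayama, uniqueness, and the continuity check), which is fine but more than the paper itself supplies.
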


\noindent This follows from a result of Nyssen and Rouquier (\cite{nyssen}, \cite[Cor. 5.2]{rouquier}), since $\T_\fm$ is local henselian.

%%%%%In the sequel, we always work on a connected component of $C_h$ such that $\overline{\rho}$ is absolutely irreducible. %More precisely, we assume it is either dihedral or contains $\SL_2(\F_p)$.
Let $\I^\circ$ be a finite torsion-free $\Lambda_h$-algebra. We call  \textit{family} an irreducible component of $\Spec\,\T_h$ defined by a surjective morphism $\theta\colon\T_h\to\I^\circ$ of $\Lambda_h$-algebras. Since such a map factors via $\T_\fm\to\I^\circ$ for some maximal ideal $\fm$ of $\T_h$, we can define a residual representation $\overline{\rho}$ associated with $\theta$ as above. Suppose that $\overline{\rho}$ is irreducible. By Proposition \ref{pseudocharrep} we obtain a Galois representation $\rho\colon G_\Q\to\GL_2(\I^\circ)$ associated with $\theta$.

%We use the construction in Section ?? to define two sheaves $M^{\pm}$ on $C_{\Lambda}$. Let $\T$ be any affinoid contained in $C_{\Lambda}$; since $\bigcup_i\Spm\T^{R_i,\leq h,N-new}$ is an admissible covering of $C_{\Lambda}$, there exists $i$ such that $\T\subset\Spm\T^{R_i,\leq h, N-new}$. We constructed in Section ?? two sheaves $M_i^{\pm}$ on $\Spm\T^{R_i,\leq h,N-new}$, so two ideals $M_i^{\pm}(\T)\subset\T$ are defined. It follows from Prop. ?? that this definition is independent of our choice of $i$. In the exact same way we can define sheaves $\calL$, $\calF$ and $\calF\calL=\calF_1\calF_2\calL$ on $C_{\Lambda}$ starting with the corresponding sheaves defined on the affinoids $\Spm\T^{R_i,\leq h,N-new}$ in Section ??. Then the results of Th. ?? carry over to the wide open variety $C_{\Lambda}$:
%
%\begin{theorem}
%The weight map $\kappa:C^N\to B_{\Lambda}$ is ramified at $x\in C_{\Lambda}$ if and only if $x$ belongs to the closed locus of the sheaf $\calF\calL$.
%\end{theorem}

\begin{remark}
If $\eta_h\notin\Q_p$, the open disc $\cB_h$ is not defined over $\Q_p$. In particular $\Lambda_h$ is not a power series ring over $\Z_p$.
\end{remark}

\subsection{Finite slope CM modular forms}\label{cmforms}

%We recall some classical results, establishing the fact that any CM classical modular form is ordinary.
In this section we study non-ordinary finite slope CM modular forms. We say that a family is CM if all its classical points are CM.
We prove that for every positive slope $h>0$ there are no CM families with positive slope $\leq h$.
However, contrary to the ordinary case, every family of finite positive slope may contain classical CM points of weight $k\geq 2$.
%(and non classical CM points as well, as in the ordinary case).
Let $F$ be an imaginary quadratic field, $\ff$ an integral ideal in $F$, $I_{\ff}$ the group of fractional ideals prime to ${\ff}$.
Let $\sigma_1,\sigma_2$ be the embeddings of $F$ into $\C$ (say that $\sigma_1=\Id_F$) and let $(k_1,k_2)\in\Z^2$.
A Gr\"ossencharacter $\psi$ of infinity type $(k_1,k_2)$ defined modulo $\ff$ is a homomorphism
$\psi\colon I_{\ff}\to\C^\ast$ such that $\psi((\alpha))=\sigma_1(\alpha)^{k_1}\sigma_2(\alpha)^{k_2}$ for all $\alpha\equiv 1$ $(\mathrm{mod}^\times \ff)$ .
Consider the $q$-expansion 
$$ \sum_{\fa\subset\cO_F, (\fa,\ff)=1}\psi(\fa)q^{N(\fa)}, $$
where the sum is over ideals $\fa\subset\cO_F$ and $N(\fa)$ denotes the norm of $\fa$.
Let $F/\Q$ be an imaginary quadratic field of discriminant $D$ and let $\psi$ be a Gr\"ossencharacter of exact conductor $\ff$ and infinity type $(k-1,0)$.
By \cite[Lemma 3]{shim} the expansion displayed above defines a cuspidal newform $f(F,\psi)$ of level $N(\ff)D$.

Ribet proved in \cite[Th. 4.5]{ribet2} that if a newform $g$ of weight $k\geq 2$ and level $N$ has CM by a quadratic imaginary field $F$, one has
$g=f(F,\psi)$ for some Gr\"ossencharacter $\psi$ of $F$ of infinity type $(k-1,0)$. 

\begin{definition}\label{classcmform}
We say that a classical modular eigenform $g$ of weight $k$ and level $Np$ has CM by an imaginary quadratic field $F$
if its Hecke eigenvalues for the operators $T_\ell$, $\ell\nmid Np$, coincide with those of $f(F,\psi)$ 
for some Gr\"ossencharacter $\psi$ of $F$ of infinity type $(k-1,0)$. We also say that $g$ is CM without specifying the field.
\end{definition}

\begin{remark}
For $g$ as in the definition the Galois representations $\rho_g,\rho_{f(F,\psi)}\colon G_\Q\to\GL_2(\overline{\Q}_p)$ associated with $g$ and $f(F,\psi)$ are isomorphic, hence the image of the representation $\rho_g$ is contained in the normalizer of a torus in $\GL_2$, if and only if the form $g$ is CM. 
\end{remark}

\begin{proposition}\label{cmslopes}
Let $g$ be a CM modular eigenform of weight $k$ and level $Np^m$ with $N$ prime to $p$ and $m\geq 0$.
Then its $p$-slope is either $0$, $\frac{k-1}{2}$, $k-1$ or infinite.
\end{proposition}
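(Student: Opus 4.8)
The plan is to reduce the computation of the $p$-slope of a CM eigenform $g = f(F,\psi)$ to a local computation at $p$, governed by how the prime $p$ behaves in the imaginary quadratic field $F$. First I would record that the $\ell$-adic Galois representation $\rho_g$ is isomorphic to $\mathrm{Ind}_{G_F}^{G_\Q}(\psi_\ell)$, where $\psi_\ell\colon G_F\to\overline{\Q}_p^\times$ is the $p$-adic avatar of the Gr\"ossencharacter $\psi$ of infinity type $(k-1,0)$; this is the standard description of CM forms recalled just above the statement (Ribet, \cite{ribet2}). The $U_p$-eigenvalue $a_p$ of $g$, whose $p$-adic valuation is by definition the slope, can then be read off from the restriction of $\rho_g$ to a decomposition group $G_p\subset G_\Q$ at $p$, using the compatibility of local and global Langlands: $a_p$ is (the unit part of) the eigenvalue of a geometric Frobenius on the appropriate piece of $\rho_g|_{G_p}$, or more directly it is the value of the Gr\"ossencharacter at the relevant prime(s) above $p$.

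The key step is then the case division according to the splitting of $p$ in $F$. If $p$ splits in $F$, write $p\cO_F = \mathfrak{p}\overline{\mathfrak{p}}$; then $\rho_g|_{G_p}$ is a sum of two characters, and $a_p$ equals (up to a $p$-adic unit) $\psi(\mathfrak{p})$ or $\psi(\overline{\mathfrak{p}})$. The infinity type $(k-1,0)$ forces, via the interpolation property relating $\psi$ to its $p$-adic avatar and the product formula $\psi(\mathfrak{p})\psi(\overline{\mathfrak{p}}) \sim p^{k-1}$ up to units and the action of the two embeddings, that one of these has valuation $0$ and the other valuation $k-1$ — unless $g$ is $p$-old coming from level prime to $p$ with both roots of the Hecke polynomial relevant, in which case one still gets slopes $0$ or $k-1$. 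If $p$ is inert in $F$, then $G_p$ does not lie in $G_F$; the representation $\rho_g|_{G_p}$ is irreducible (induced from the unramified quadratic extension), so $g$ is supersingular at $p$ and the slope is $\frac{k-1}{2}$, the "balanced" value forced by the fact that $\det\rho_g|_{G_p}$ has Hodge–Tate weight $k-1$ and $a_p^2$ is, up to units, $p^{k-1}$ times a unit with $a_p$ itself not a unit, hence $v_p(a_p)=\frac{k-1}{2}$. Finally, if $p$ ramifies in $F$, say $p\cO_F = \mathfrak{p}^2$, then one analyzes $\psi(\mathfrak{p})$: depending on whether $\psi$ is ramified at $\mathfrak{p}$ one gets either slope $\frac{k-1}{2}$ (from $\psi(\mathfrak p)^2 \sim p^{k-1}$) or, if $\psi|_{\mathfrak p}$ is trivial, one is in the ordinary situation with slope $0$; the remaining degenerate possibilities, including $p\mid N(\ff)D$ making $a_p=0$, give infinite slope.

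I expect the main obstacle to be bookkeeping the normalizations: relating the classical Hecke eigenvalue $a_p$ (and its place in the weight-$k$ normalization where the total Hodge–Tate weight is $k-1$) to the value of the $p$-adic Gr\"ossencharacter $\psi_\ell$, keeping careful track of the twist by the cyclotomic character and of which of the two embeddings $\sigma_1,\sigma_2$ of $F$ into $\overline{\Q}_p$ is being used, so that the asserted list $\{0, \frac{k-1}{2}, k-1, \infty\}$ comes out exactly and not, say, shifted. A clean way to package this is to invoke the known structure of the local representation $\rho_g|_{G_p}$ attached to a CM form — principal series (ordinary, slopes $0$ and $k-1$), supersingular/induced from the unramified quadratic character (slope $\frac{k-1}{2}$), or genuinely ramified-at-$p$ with vanishing $U_p$-eigenvalue (infinite slope) — and match each local type with its Newton slope via the Eichler–Shimura–style description of $a_p$. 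Once the local picture is fixed, the valuation computation in each case is a short product-formula argument.
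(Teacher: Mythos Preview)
Your overall strategy---split into cases according to the behaviour of $p$ in $F$---is exactly what the paper does, but the paper carries it out by a direct elementary computation with the $q$-expansion $\sum_{(\fa,\ff)=1}\psi(\fa)q^{N(\fa)}$ and the definition of the infinity type, without invoking $\rho_g|_{G_p}$ or local--global compatibility. Concretely, the paper first treats the $p$-new case: inert gives $a_p=0$ since there is no integral ideal of norm $p$; split gives $a_p=\psi(\fp)+\psi(\overline{\fp})$, which is shown to be a unit by an explicit congruence argument; ramified gives $a_p=\psi(\fp)$ with $\psi(\fp)^n=\alpha^{k-1}$ for a generator $\alpha$ of $\fp^n$, hence valuation $(k-1)/2$. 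The $p$-old case is then reduced to computing roots of the Hecke polynomial $X^2-a_pX+\zeta p^{k-1}$ with $a_p$ already controlled.

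Your sketch has the inert and ramified cases partially transposed relative to this. In the inert case the newform has $a_p=0$, i.e.\ infinite slope; the value $\frac{k-1}{2}$ you assert there arises only for the $p$-stabilizations (roots of $X^2+\zeta p^{k-1}$). Your argument ``$a_p^2\sim p^{k-1}$ hence $v_p(a_p)=\frac{k-1}{2}$'' applies to the crystalline Frobenius eigenvalues, not to the $U_p$-eigenvalue of a $p$-new form. Conversely, in the ramified case slope $0$ does not occur: since $(\fp)^n=(\alpha)$ with $v_p(\alpha)=n/2$, one has $v_p(\psi(\fp))=(k-1)/2$ regardless of whether $\psi$ is ramified at $\fp$; the ``ordinary'' option you list there should be removed. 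Cleanly separating $p$-new from $p$-old, as the paper does, would straighten out these assignments; the Galois-representation packaging you propose is correct in principle but is heavier than needed and, as written, blurs exactly the distinction that makes the bookkeeping work.
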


\begin{proof}
Let $F$ be the quadratic imaginary field and $\psi$ the Gr\"ossencharacter of $F$ associated with the CM form $g$ by Definition \ref{classcmform}.
Let $\ff$ be the conductor of $\psi$.

%We proceed by cases and study the eigenvalue of $U_p$.
We assume first that $g$ is $p$-new, so that $g=f(F,\psi)$. Let $a_p$ be the $U_p$-eigenvalue of $g$.
If $p$ is inert in $F$ we have $a_p=0$, so the $p$-slope of $g$ is infinite.
%We determine the $T_p$-eigenvalue $a_p$.
%%%Since the discrimant $D$ of $F$ divides $N$ by Ribet's result, $p$ is prime to $D$ and does not ramify in $F$.
If $p$ splits in $F$ as $\fp\bar{\fp}$, then $a_p=\psi(\fp)+\psi(\bar{\fp})$. We can find an integer $n$ such that $\fp^n$ is 
a principal ideal $(\alpha)$ with $\alpha\equiv 1\,(\mathrm{mod}^\times\ff)$. Hence $\psi((\alpha))=\alpha^{k-1}$. Since $\alpha$ 
is a generator of $\fp^n$ we have $\alpha\in\fp$ and $\alpha\notin\bar{\fp}$; moreover $\alpha^{k-1}=\psi((\alpha))=\psi(\fp)^n$, 
so we also have $\psi(\fp)\in\fp-\bar{\fp}$. In the same way we find $\psi(\bar{\fp})\in\bar{\fp}-\fp$. We conclude that $\psi(\fp)+\psi(\bar{\fp})$ does not belong to $\fp$, so its $p$-adic valuation is $0$.

If $p$ ramifies as $\fp^2$ in $F$, then $a_p=\psi(\fp)$. As before we find n such that $\fp^n=(\alpha)$ with $\alpha\equiv 1\,(\mathrm{mod}^\times\ff)$. Then $(\psi(\fp))^n\psi(\fp^n)=\psi((\alpha))=\alpha^{k-1}=\fp^{n(k-1)}$. By looking at $p$-adic valuations we find that the slope is $\frac{k-1}{2}$.

%Suppose that p ramifies in F. Then p divides the discrimant D of F, so m\geq 1. 
%Let $m=1$. Then $p$ is split or ramified. If $p=\fp\bar{\fp}$, the $U_p$-eigenvalue $a_p$ is either $\psi(\fp)$ or $\psi(\bar{\fp})$ 
%hence the slope is $0$ or $k-1$. If $p$ is ramified, $a_p=\psi(\fp)$ and the slope is $\frac{k-1}{2}$.
%
%Let $m>1$. Either $p=\fp^2$ divides $D$ and $\ff$, or $p$ is inert and divides $\ff$, or $p$ splits and $p\cO_F$ divides $\ff$. In the first two cases, 
%the $U_p$-eigenvalue is $0$. If $p$ splits and $\fp^m$ or $\bar{\fp}^m$ divides $\ff$, then the $U_p$-eigenvalue has slope $0$ or $k-1$.  
%
If $g$ is not $p$-new, it is the $p$-stabilization of a CM form $f(F,\psi)$ of level prime to $p$. If $a_p$ is the $T_p$-eigenvalue of $f(F,\psi)$, the $U_p$-eigenvalue of $g$ is a root of the Hecke polynomial $X^2-a_pX+\zeta p^{k-1}$ for some root of unity $\zeta$. By our discussion of the $p$-new case, the valuation of $a_p$ belongs to the set $\left\{0,\frac{k-1}{2},k-1\right\}$. Then it is easy to see that the valuations of the roots of the Hecke polynomial belong to the same set.
%If this form is $p$-ordinary, which happens if $p$ splits in $F$ and is prime to $\ff$, then the 
%$U_p$-eigenvalue of $g$ is $\psi(\fp)$ or $\psi(\bar{\fp})$. The slope is $0$ or $k-1$. If $f(F,\psi)$ is not ordinary,
%the Hecke polynomial at $p$ is $X^2\pm p^{k-1}$ and the stabilizations have slope $\frac{k-1}{2}$. 
\end{proof}

We state a useful corollary.

\begin{corollary}
There are no CM families of strictly positive slope.
\end{corollary}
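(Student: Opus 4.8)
The plan is to deduce this from Proposition \ref{cmslopes} by a continuity/interpolation argument. Suppose, for contradiction, that there is a CM family $\theta\colon\T_h\to\I^\circ$ of some strictly positive slope $\leq h$; by definition of "CM family" all of its classical points are CM modular eigenforms. A CM family, being an irreducible component of $\T_h$, is finite and torsion-free over $\Lambda_h$, so it has classical specializations $f_x$ of arbitrarily large weight $k_x$ (the classical points of weight $\geq 2$ are Zariski-dense in the wide open disc $\cB_h$, and the weight map $w_h$ is finite). Each such $f_x$ has level $Np$ (so $m\leq 1$ in the notation of Proposition \ref{cmslopes}), and its $U_p$-slope equals the slope of the family at $x$.

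The key point is that the slope is bounded and locally constant enough along the family to contradict Proposition \ref{cmslopes}. First I would recall that, by construction, every point of $C_h$ has slope $\leq h$, so in particular every classical specialization $f_x$ of $\theta$ has $p$-slope in $[0,h]$, and by hypothesis this slope is strictly positive for at least one — hence, I claim, for all but finitely many — classical points. Indeed the $U_p$-eigenvalue is a global function on the family that is bounded by $1$ (it lies in $\I^\circ$), and its slope at $x$ is $v_p$ of its value at $x$; the locus where this slope jumps (i.e.\ where the $U_p$-eigenvalue specializes into a smaller disc) is a proper analytic subset, so generically the slope is the "generic slope" $h_0$ of the family, a fixed positive rational number. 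But by Proposition \ref{cmslopes} the slope of each classical CM point $f_x$ of weight $k_x$ lies in $\{0,\frac{k_x-1}{2},k_x-1,\infty\}$; since it is finite, positive, and $\leq h$, and since $k_x$ can be taken arbitrarily large along the family, the values $\frac{k_x-1}{2}$ and $k_x-1$ eventually exceed $h$, forcing the slope to be $0$ at all sufficiently large-weight classical points. This contradicts the fact that the generic slope $h_0$ is positive.

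Concretely I would organize it as follows. Step 1: from the hypothesis that $\theta$ has positive slope, extract a positive rational $h_0$ that is the slope of $f_x$ for a Zariski-dense set of classical points $x$ (the complement of the jump locus). Step 2: choose among these an $x$ with $k_x - 1 > 2h$; such $x$ exist since classical weights are unbounded on $\cB_h$ and $w_h$ is finite onto $\cB_h$. Step 3: apply Proposition \ref{cmslopes} to $f_x$: its slope is in $\{0,\frac{k_x-1}{2},k_x-1,\infty\}$, but it equals $h_0\in(0,h]$, and both $\frac{k_x-1}{2}$ and $k_x-1$ exceed $h$ by the choice of $x$, while $0$ and $\infty$ are excluded — contradiction. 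Hence no such family exists.

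The main obstacle is Step 1: justifying that the slope is "generically constant and positive" along a positive-slope family. One has to be a little careful because the slope function is only lower semicontinuous, and a priori a CM family could conceivably have its slope drop to $0$ on a dense set while being positive only on a sparse set — but this cannot happen, since the set where the $U_p$-eigenvalue $a_p \in \I^\circ$ has valuation $\geq \varepsilon$ for the generic slope $\varepsilon = h_0$ is the vanishing locus of $a_p/\eta$ (for a suitable uniformizer-power $\eta$) outside the zero locus, i.e.\ it is Zariski-open and dense; so the positive-slope hypothesis on the family propagates to a dense set of classical points and we may pick $x$ of large weight there. Once this semicontinuity bookkeeping is done, the contradiction with Proposition \ref{cmslopes} is immediate.
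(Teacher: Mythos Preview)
Your argument is correct and follows essentially the same route as the paper: both use Proposition~\ref{cmslopes} to force $(k-1)/2 \leq h$ for any positive-slope classical CM point on $C_h$, hence $k \leq 2h+1$, and then observe that a family has classical points of unbounded weight. The paper phrases this as a finiteness count (only finitely many positive-slope classical CM points on all of $C_h$, while a family has infinitely many classical points), whereas you pick a single large-weight point directly; your Step~1 worry is unnecessary, since you do not need to extract a fixed generic value $h_0$ but only that the slope at your chosen point lies in $(0,h]$, which is automatic on a non-ordinary component (the $U_p$-eigenvalue is a non-unit in the complete local domain $\I^\circ$, hence a non-unit at every specialization).
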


\begin{proof}
We show that the eigencurve $C_h$ contains only a finite number of points corresponding to classical CM forms. It will follow that almost all classical points of a family in $C_h$ are non-CM.
Let $f$ be a classical CM form of weight $k$ and positive slope. By Proposition \ref{cmslopes} its slope is at least $\frac{k-1}{2}$. If $f$ corresponds to a point of $C_h$ its slope must be $\leq h$, so we obtain an inequality $\frac{k-1}{2}\leq h$. The set of weights $\calK$ satisfying this condition is finite.
Since the weight map $C_h\to B_h$ is finite, the set of points of $C_h$ whose weight lies in $\calK$ is finite. Hence the number of CM forms in $C_h$ is also finite. 
\end{proof}

%%%Note that the ordinary part of the eigencurve is a connected component, so it cannot intersect any strictly positive slope component. In particular any point in a strictly positive slope family corresponds to a strictly positive slope form. %%% If a family as in the corollary existed it would have at least one classical point $x$ which would correspond to a classical CM modular form of strictly positive slope. Such a form does not exist by the previous theorem.

%%%Since there are no positive slope CM families and positive slope families cannot intersect ordinary ones, the congruence ideal $c_\theta\left[\frac{1}{p}\right]\in\Lambda\left[\frac{1}{p}\right]$ associated to any positive slope family $\theta:\T\to\I^\circ$ is trivial. Thanks to Theorem \ref{comparison} we deduce that, in the case $I=\Lambda$, the Galois level $\fl_\theta\subset\Lambda\left[\frac{1}{p}\right]$ is also trivial for any positive slope family.

We conclude that, in the finite positive slope case, classical CM forms can appear only as isolated points
in an irreducible component of the eigencurve $C_h$. In the ordinary case, the congruence ideal 
of a non-CM irreducible component is defined as the intersection ideal of the CM irreducible components 
with the given non-CM component. 
In the case of a positive slope family $\theta\colon\T_h\to\I^\circ$, we need to define the congruence ideal in a different way.

\subsection{Construction of the congruence ideal}\label{congrid}

Let $\theta\colon\T_h\to\I^\circ$ be a family. We write $\I=\I^\circ[p^{-1}]$.% and $\rho_\theta:G_\Q\to\GL_2(\I)$ be the associated Galois representation.%%%% Let $\fP$ be any prime of $\I$.
%%%%We denote by $e_\fP$ the greatest nonnegative integer such that the image of the representation $\G_\Q\to\GL_2(\I/\fP^{e_\fP})$ induced by $\rho_\theta$ is contained in the normalizer of a torus in $\GL_2(\I/\fP^{e_\fP})$. If $e_\fP\geq 1$ we say that $\fP$ is a CM point.

Fix an imaginary quadratic field $F$ where $p$ is inert or ramified; let $-D$ be its discriminant.
Let $\fQ$ be a primary ideal of $\I$; then $\fq=\fQ\cap\Lambda_h$ is a primary ideal of $\Lambda_h[p^{-1}]$.
The projection $\Lambda_h\to\Lambda_h/\fq$ defines a point of $\cB_h$ (possibly non-reduced) corresponding to a weight $\kappa_\fQ\colon\Z_p^\ast\to(\Lambda_h/\fq)^\ast$.
For $r>0$ we denote by $\cB_r$ the ball of centre $1$ and radius $r$ in $\C_p$. By \cite[Prop. 8.3]{buzzard} there exist $r>0$ and a character $\kappa_{\fQ,r}\colon\Z_p^\times\cdot\cB_r\to(\Lambda_h/\fq)^\times$ extending $\kappa_\fQ$.

Let $\sigma$ be an embedding $F\into\C_p$. Let $r$ and $\kappa_{\fQ,r}$ be as above.
For $m$ sufficiently large $\sigma(1+p^m\cO_F)$ is contained in $\Z_p^\times\cdot\cB_r$, the domain of definition of $\kappa_{\fQ,r}$.

For an ideal $\ff\subset\cO_F$ let $I_{\ff}$ be the group of fractional ideals prime to ${\ff}$.
For every prime $\ell$ not dividing $Np$ we denote by $a_{\ell,\fQ}$ the image of the Hecke operator $T_\ell$ in $\I^\circ/\fQ$.
We define here a notion of non-classical CM point of $\theta$ (hence of the eigencurve $C_h$) as follows.

\begin{definition}
Let $F,\sigma,\fQ,r,\kappa_{\fQ,r}$ be as above. We say that $\fQ$ defines a CM point of weight $\kappa_{\fQ,r}$ if there exist an integer $m>0$, an ideal $\ff\subset\cO_F$ with norm $N(\ff)$ such that $DN(\ff)$ divides $N$, a quadratic extension $(\I/\fQ)^\prime$ of $\I/\fQ$ and a homomorphism $\psi\colon I_{\ff p^m}\to (\I/\fQ)^{\prime\times}$ such that:

\begin{enumerate}
\item $\sigma(1+p^m\cO_F)\subset\Z_p^\times\cdot\cB_r$;
\item for every $\alpha\in\cO_F$ with $\alpha\equiv 1\, (\mathrm{mod}^\times\ff p^m)$, $\psi((\alpha))=\kappa_{\fQ,r}(\alpha)\alpha^{-1}$;
\item $a_{\ell,\fQ}=0$ if $l$ is a prime inert in $F$ and not dividing $Np$;
\item $a_{\ell,\fQ}=\psi(\fl)+\psi(\bar{\fl})$ if $\ell$ is a prime splitting as $\fl\bar{\fl}$ in $F$ and not dividing $Np$.
\end{enumerate}
\end{definition}

\noindent Note that $\kappa_{\fQ,r}(\alpha)$ is well defined thanks to condition $1$.

\begin{remark}
If $\fP$ is a prime of $\I$ corresponding to a classical form $f$ then $\fP$ is a CM point if and only if $f$ is a CM form in the sense 
of Section \ref{cmforms}.% (by the results in \cite{ribet2}).
\end{remark}

\begin{proposition}\label{finiteCM}
The set of CM points in $\Spec\,\I$ is finite.
\end{proposition}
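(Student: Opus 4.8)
The plan is to show that a CM point of $\theta$ forces a strong constraint on the weight $\kappa_{\fQ,r}$, namely that it must lie in a fixed finite set of weights, and then conclude by the finiteness of the weight map $w_h$. First I would recall the key observation underlying Proposition \ref{cmslopes}: the field $F$ is fixed so that $p$ is \emph{inert or ramified} in $F$, which means that every prime $\fl$ above a split prime $\ell$ has residue degree one, but more importantly that the CM form attached to a Gr\"ossencharacter of infinity type $(w,0)$ has $p$-slope equal to $w$ (if $p$ is inert, the $U_p$-eigenvalue vanishes and the point is not of finite slope at all) or $\frac{w}{2}$ (if $p$ ramifies). Translating this into the family setting: if $\fQ$ defines a CM point with weight $\kappa_{\fQ,r}$, then conditions (2)--(4) pin down $\psi$ and hence the $a_{\ell,\fQ}$ in terms of $\kappa_{\fQ,r}$, and the slope $\leq h$ condition on $C_h$ restricts how large the ``weight'' of $\kappa_{\fQ,r}$ can be.

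The key steps, in order, would be: (i) show that for $\fQ$ a primary CM ideal, the associated character $\kappa_{\fQ,r}$ is, up to a finite-order twist, of the form $z\mapsto z^{w}$ for some $w$ in a set determined by the arithmetic of $F$ and the bound $h$ --- concretely, one argues that $\psi$ extends to a Gr\"ossencharacter-like object whose infinity type is read off from condition (2), and the valuation computation of Proposition \ref{cmslopes} (applied now over $\I/\fQ$ rather than over $\overline{\Q}_p$) forces $v_p$ of the $U_p$-eigenvalue at $\fQ$ to equal $w$ or $\frac{w}{2}$; (ii) since $\fQ$ corresponds to a point of $C_h$, this eigenvalue has valuation $\leq h$, so $w$ (equivalently the ``weight'' of the point $\kappa_\fQ$ in $\cB_h$) lies in a finite set $\calK$; (iii) the locus of points of $\Spec\,\I$ whose image under $w_h$ lands in the finite set $\{\kappa : \text{weight} \in \calK\}\subset\cB_h$ is finite, because $w_h\colon C_h\to\cB_h$ is finite (as the closed ball of radius $r_h$ is adapted to $h$) and hence has finite fibres, and $\Spec\,\I$ is a closed subscheme of $\Spec\,\T_h$; (iv) conclude that the set of CM points is finite.

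The main obstacle I anticipate is step (i): making precise the passage from the ``abstract'' CM condition over the possibly non-reduced Artinian quotient $\I/\fQ$ to an honest slope computation. In the classical setting (Proposition \ref{cmslopes}) one works with a genuine Gr\"ossencharacter valued in $\C^\times$ and genuine $p$-adic valuations; here $\psi$ takes values in $(\I/\fQ)^{\prime\times}$, so one must either reduce to the reduced quotient (noting that slope is detected on the underlying reduced point, and a primary ideal $\fQ$ has a well-defined associated prime $\fP$ giving a classical or limit-of-classical point where the slope is literally defined via $U_p$) or interpolate the valuation argument. I would handle this by first treating the prime case --- where $\fP$ corresponds to an actual point of the eigencurve with a well-defined $U_p$-eigenvalue $a_{p,\fP}\in\I/\fP$ of valuation $\leq h$ --- running the trichotomy of Proposition \ref{cmslopes} to bound the weight, and then observing that a primary ideal $\fQ$ and its radical $\fP$ define the same point of the reduced eigencurve $C_h$, so finiteness of the set of such $\fP$ (which follows from steps (ii)--(iii)) gives finiteness of the set of CM primary ideals that can occur, modulo the fact that over a fixed such $\fP$ there are only finitely many $\fQ$ since $\I$ is Noetherian. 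A secondary point to check is that only finitely many imaginary quadratic fields $F$ (with $p$ inert or ramified and $DN(\ff)\mid N$) can contribute, which is immediate since $D\mid N$ bounds $D$.
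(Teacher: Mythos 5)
Your strategy (bound the weight of a CM point, then use finiteness of $w_h$) is the argument the paper uses for the \emph{Corollary} to Proposition \ref{cmslopes} (``there are no CM families of strictly positive slope''), where it works because the points in question are classical. But it does not prove Proposition \ref{finiteCM}, whose whole point is to handle the \emph{non-classical} CM points of Section \ref{congrid}: primary ideals $\fQ$ whose weight $\kappa_{\fQ,r}$ is an arbitrary point of $\cB_h$. Your step (i) is the gap. The definition of a CM point imposes conditions only on $a_{\ell,\fQ}$ for $\ell\nmid Np$; it says nothing about $a_{p,\fQ}=\theta(U_p)\bmod\fQ$, so there is no identity $a_{p,\fQ}=\psi(\fp)$ to feed into the slope trichotomy. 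Worse, condition (2) constrains $\psi$ only on ideals $(\alpha)$ with $\alpha\equiv 1\ (\mathrm{mod}^\times \ff p^m)$ --- such $\alpha$ are $p$-adic units, whereas the valuation computation in Proposition \ref{cmslopes} hinges on evaluating $\psi$ at a generator of $\fp^n$, which is \emph{not} a unit at $p$ and is not in the domain where $\kappa_{\fQ,r}$ is defined. So the CM condition does not visibly force $\kappa_{\fQ,r}$ into a finite set of weights, and your reduction to a finite set $\calK$ collapses. (A secondary error: ``over a fixed $\fP$ there are only finitely many $\fQ$ since $\I$ is Noetherian'' is false --- the powers of $\fP$ already give infinitely many $\fP$-primary ideals; one must phrase the statement in terms of the underlying primes.)

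The paper's proof goes the opposite way: it assumes there are infinitely many CM primes $\fP$, obtains an injection $\I\into\prod_\fP\I/\fP$, passes to a subfamily where the imaginary quadratic field $F$ and the ramification data of the characters $\lambda_\fP\colon G_F\to(\I/\fP)^\times$ are constant/bounded, and glues the $\lambda_\fP$ into a single continuous character $\lambda\colon G_F\to\I^\times$ with $\rho_\theta\cong\Ind_{G_F}^{G_\Q}\lambda$. This makes the whole family CM, contradicting the existence of non-CM classical specializations (which is exactly what the Corollary to Proposition \ref{cmslopes} supplies). In other words, the finiteness of \emph{classical} CM points is the input, not the conclusion, and the passage from ``infinitely many CM points'' to ``the family itself is CM'' is the idea your proposal is missing. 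If you want to salvage your approach you would need an independent argument that a formal CM structure modulo $\fQ$ forces $\kappa_{\fQ,r}$ to be near-classical (e.g.\ via the Sen weights of an induced representation), which is a substantially different and harder route.
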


\begin{proof}
By contradiction assume it is infinite. Then we have an injection $\I\into\prod_\fP\I/\fP$ where $\fP$ runs over the set
of CM prime ideals of $\I$. One can assume that the imaginary quadratic field of complex multiplication is constant along $\I$. We can also assume that the ramification
of the associated Galois characters
$\lambda_{\fP}\colon G_F\to (\I/\fP)^\times$ is bounded (in support and in exponents).
On the density one set of primes of $F$ prime to $\ff p$ and of degree one, they take value in the image of $\I^\times$ hence they define 
a continuous Galois character $\lambda\colon G_F\to\I^\times$ such that $\rho_\theta=\Ind^{G_\Q}_{G_F}\lambda$, which is absurd (by specialization at non-CM classical points which do exist).
\end{proof}

\begin{definition}
The (fortuitous) congruence ideal $\fc_\theta$ associated with the family $\theta$ is defined as the intersection of all the primary ideals of $\I$ corresponding to CM points.
\end{definition}

%%%%%Note that if we admit the Gouv\^ea-Mazur conjecture, for any elliptic curve with CM by $F$ and for any prime $p$ inert or ramified in $F$, 
%%%%%the corresponding CM modular form is of slope $1/2$ and weight $2$, hence fits in a $p$-adic family of slope $\leq 1/2$ over the disc $B(0,1/p)$. This family is not CM, but has a CM specialization in weight $2$.   

\begin{remark}\label{cmlocus} (Characterizations of the CM locus)
\begin{enumerate}[leftmargin=*]
\item Assume that $\overline{\rho}_\theta=\Ind^{G_\Q}_{G_K}\overline{\lambda}$ for a unique imaginary quadratic field $K$.
Then the closed subscheme $V(\fc_\theta)=\Spec\,\I/\fc_\theta\subset \Spec\,\I$ is the largest subscheme on which
there is an isomorphism of Galois representations $\rho_\theta\cong\rho_\theta\otimes\left(\frac{K/\Q}{\bullet}\right)$.
Indeed, for every artinian $\Q_p$-algebra $A$, a CM point $x \colon\I\to A$ is characterized by the conditions 
$x(T_\ell)=x(T_\ell)\left(\frac{K/\Q}{\ell}\right)$ for all
primes $\ell$ prime to $Np$.
\item Note that $N$ is divisible by the discriminant $D$ of $K$. Assume that $\I$ is $N$-new and that $D$ is prime to $N/D$. 
Let $W_D$ be the Atkin-Lehner involution associated with $D$. Conjugation by $W_D$ defines an automorphism $\iota_D$
of $\T_h$ and of $\I$.
Then $V(\fc_\theta)$ coincides with the (schematic) invariant locus $(\Spec\,\I)^{\iota_D=1}$. 
\end{enumerate}
\end{remark}

\bigskip

\section{The image of the representation associated with a finite slope family}\label{imgfinslope}

It is shown by J. Lang in \cite[Th. 2.4]{lang} that, under some technical hypotheses, 
the image of the Galois representation $\rho\colon G_\Q\to\GL_2(\I^\circ)$ associated with a non-CM ordinary family 
$\theta\colon\T\to\I^\circ$ contains a congruence subgroup of $\SL_2(\I^\circ_0)$, where $\I^\circ_0$ is the subring 
of $\I^\circ$ fixed by certain ``symmetries'' of the representation $\rho$. 
In order to study the Galois representation associated with a non-ordinary family we will 
adapt some of the results in \cite{lang} to this situation. Since the crucial step (\cite[Th. 4.3]{lang}) 
requires the Galois ordinarity of the representation (as in \cite[Lemma 2.9]{hida}), the results of this section 
will not imply the existence of a congruence subgroup of $\SL_2(\I^\circ_0)$ contained in the image of $\rho$. 
However, we will prove in later sections the existence of a ``congruence Lie subalgebra'' of $\fsl_2(\I_0^\circ)$ contained in a suitably defined Lie algebra
of the image of $\rho$ by means of relative Sen theory.

For every ring $R$ we denote by $Q(R)$ its total ring of fractions.

\subsection{The group of self-twists of a family}\label{selftwists}

We follow \cite[Sec. 2]{lang} in this subsection.
Let $h\geq 0$ and $\theta\colon\T_h\to\I^\circ$ be a family of slope $\leq h$ defined over a finite torsion free $\Lambda_h$-algebra $\I^\circ$.
Recall that there is a natural map $\Lambda\to\Lambda_h$ with image $\Z_p[[\eta t]]$.

\begin{definition}
We say that $\sigma\in\Aut_{Q(\Z_p[[\eta t]])}(Q(\I^\circ))$ is a conjugate self-twist for $\theta$ if there exists a Dirichlet character $\eta_\sigma\colon G_\Q\to\I^{\circ,\times}$ such that
$$ \sigma(\theta(T_\ell))=\eta_\sigma(\ell)\theta(T_\ell) $$
for all but finitely many primes $\ell$.
\end{definition}

Any such $\sigma$ acts on $\Lambda_h=\cO_h[[t]]$ by restriction, trivially on $t$ and by a Galois automorphism on $\cO_h$.
The conjugates self-twists for $\theta$ form a subgroup of $\Aut_{Q(\Z_p[[\eta t]])}(Q(\I^\circ))$.
%Moreover, given $\sigma$, the character $\eta_\sigma$ is unique; from this it follows that the set $\Gamma$ of elements $\sigma$ in $\Aut(Q(\I^\circ))$
%which are conjugate self-twists for $\theta$ is an abelian group.
We recall the following result which holds without assuming the ordinarity of $\theta$.

\begin{lemma}\label{gammaabel} \cite[Lemma 7.1]{lang}
$\Gamma$ is a finite abelian $2$-group.
\end{lemma}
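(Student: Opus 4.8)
The plan is to mimic the argument of \cite[Lemma 7.1]{lang}, checking that nothing in it used the ordinarity hypothesis. First I would show that $\Gamma$ is finite. Each $\sigma\in\Gamma$ restricts to a Galois automorphism of $\cO_h$ over $\Z_p$ (since it fixes $Q(\Z_p[[\eta t]])$ and sends $t$ to $t$), so there is a natural homomorphism $\Gamma\to\Gal(Q(\cO_h)/\Q_p)$; the target is finite because $\cO_h$ is the ring of integers of a finite Galois extension of $\Q_p$. It therefore suffices to bound the kernel, i.e. the subgroup of conjugate self-twists that act trivially on $\Lambda_h$. Such a $\sigma$ is an automorphism of $Q(\I^\circ)$ fixing $Q(\Lambda_h)$; since $\I^\circ$ is finite torsion-free over $\Lambda_h$, the extension $Q(\I^\circ)/Q(\Lambda_h)$ is finite, so its automorphism group over $Q(\Lambda_h)$ is finite, and the kernel embeds into it. Hence $\Gamma$ is finite.

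Next I would prove that $\Gamma$ is a $2$-group. Fix $\sigma\in\Gamma$ with associated character $\eta_\sigma\colon G_\Q\to\I^{\circ,\times}$, so that $\sigma(\theta(T_\ell))=\eta_\sigma(\ell)\theta(T_\ell)$ for almost all $\ell$. Applying $\sigma$ again and using that $\sigma$ fixes $\eta_\sigma(\ell)$ up to the same twisting behaviour, one computes $\sigma^2(\theta(T_\ell)) = \sigma(\eta_\sigma(\ell))\eta_\sigma(\ell)\theta(T_\ell)$, so $\sigma^2$ is itself a conjugate self-twist with character $\sigma(\eta_\sigma)\cdot\eta_\sigma$. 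The key point, exactly as in \cite{lang}, is that on the Galois side the twist $\rho_\theta\otimes\eta_\sigma\cong\rho_\theta{}^\sigma$ (where ${}^\sigma$ denotes applying $\sigma$ to matrix entries) forces, by comparing determinants and using that $\rho_\theta$ is (residually) absolutely irreducible, that $\eta_\sigma^2$ is trivial — more precisely $\eta_\sigma$ takes values in a set of roots of unity of order dividing $2$ after accounting for the determinant character. Once each $\eta_\sigma$ has order dividing $2$, the assignment $\sigma\mapsto\eta_\sigma$ is a homomorphism from $\Gamma$ to a group of exponent $2$, and I claim it is injective: if $\eta_\sigma$ is trivial then $\sigma(\theta(T_\ell))=\theta(T_\ell)$ for almost all $\ell$, and since the $\theta(T_\ell)$ topologically generate $\I^\circ$ over $\Lambda_h$ (and $\sigma$ fixes $\Lambda_h$ pointwise when it fixes $\cO_h$, but in general we get that $\sigma$ fixes a set of topological generators), $\sigma$ is the identity. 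Therefore $\Gamma$ embeds into an abelian group of exponent $2$, so it is a finite abelian $2$-group.

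The one delicate point to get right — and the main obstacle — is the injectivity argument combined with the claim that $\eta_\sigma$ has $2$-power order. The subtlety is that $\sigma$ need not fix $\Lambda_h$ pointwise (it may act nontrivially on $\cO_h$), so "$\eta_\sigma$ trivial $\Rightarrow\sigma(\theta(T_\ell))=\theta(T_\ell)\Rightarrow\sigma=\mathrm{id}$" requires knowing that $\sigma$ is determined by its action on the Hecke eigenvalues together with its action on $\Lambda_h$, which in turn uses that $\I^\circ$ is generated as a $\Lambda_h$-algebra by the $\theta(T_\ell)$. I would therefore first record that $\I^\circ$ is topologically generated over $\Lambda_h$ by $\{\theta(T_\ell)\}_{\ell\nmid Np}$ (immediate from the construction of $\T_h$ as the closure of the image of the abstract Hecke algebra $\calH$ and $\theta$ surjective). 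Then a $\sigma$ fixing all $\theta(T_\ell)$ and acting on $\cO_h$ must still satisfy $\sigma(\eta t)=\eta t$, but this alone does not pin down the action on $\cO_h$; one resolves this by noting that a trivial $\eta_\sigma$ together with the description of how $\sigma$ must act (via \cite{ribet3}, the self-twist is determined by the character) forces $\sigma$ to be trivial on the subfield of $Q(\cO_h)$ generated by the relevant Hecke data. I expect that, as in Lang's treatment, the cleanest route is to appeal directly to the structure: $\Gamma\hookrightarrow\Hom(G_\Q,\mu_2)$ via $\sigma\mapsto\eta_\sigma$ using residual absolute irreducibility to kill the kernel, and this map lands in $2$-torsion by the determinant computation. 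None of these steps uses $p$-ordinarity, which is exactly why the lemma survives verbatim into the finite slope setting.
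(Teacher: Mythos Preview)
The paper itself gives no proof of this lemma --- it simply cites \cite[Lemma~7.1]{lang} and moves on --- so there is nothing in the paper to compare your argument against beyond the bare assertion that Lang's proof carries over without ordinarity. Your finiteness argument is fine and is exactly what one does. The substantive gap is in the ``$\eta_\sigma$ has order dividing $2$'' step: comparing determinants in $\rho_\theta^{\sigma}\cong\rho_\theta\otimes\eta_\sigma$ yields only
\[
\eta_\sigma^{2}=\frac{\sigma(\det\rho_\theta)}{\det\rho_\theta},
\]
and this ratio is \emph{not} automatically trivial. Writing $\det\rho_\theta$ as a product of the weight--cyclotomic part (with values in $\Z_p[[\eta t]]^\times$, hence fixed by $\sigma$) and the finite-order nebentypus $\epsilon$, one gets $\eta_\sigma^{2}=\sigma(\epsilon)/\epsilon$, which is a nontrivial finite-order character in general. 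So your claimed embedding $\Gamma\hookrightarrow\Hom(G_\Q,\mu_2)$ is not available as stated, and the $2$-group conclusion does not follow from this alone. Lang's actual argument proceeds differently: one first shows injectivity of $\sigma\mapsto\eta_\sigma$ (so $\Gamma$ embeds, as a \emph{set}, into a finite group of Dirichlet characters), then uses the cocycle relation $\eta_{\sigma\tau}=\eta_\sigma\cdot\sigma(\eta_\tau)$ together with the determinant identity above to control the orders, ultimately reducing to the classical Ribet--Momose analysis of inner twists. The hedge ``after accounting for the determinant character'' is precisely where the real work lies, and you have not supplied it.

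On injectivity: you are right that this is the delicate point in the present setting, because $\sigma$ is only $Q(\Z_p[[\eta t]])$-linear and may move $\cO_h$. A $\sigma$ with $\eta_\sigma=1$ fixes every $\theta(T_\ell)$, hence fixes the $\Lambda_h$-order $\I^{\circ,\prime}=\Lambda_h[\theta(T_\ell)]$ mentioned in the paper's remark immediately following the lemma --- but only once you know $\sigma$ fixes $\Lambda_h$. To close this, one should argue that $\sigma|_{\cO_h}$ is determined by its effect on the Hecke data (e.g.\ via a classical specialization, where the Hecke field already contains the relevant coefficients), or else restrict attention to the kernel of $\Gamma\to\Gal(K_h/\Q_p)$ and handle the quotient separately. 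Your sketch correctly locates this issue but does not resolve it.
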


We suppose from now on that $\I^\circ$ is normal. The only reason for this hypothesis is that in this case $\I^\circ$ is stable under the action of $\Gamma$ on $Q(\I^\circ)$, which is not true in general. This makes it possible to define the subring $\I^\circ_0$ of elements of $\I^\circ$ fixed by $\Gamma$.

\begin{remark}
The hypothesis of normality of $\I^\circ$ is just a simplifying one.
We could work without it by introducing the $\Lambda_h$-order $\I^{\circ,\prime}=\Lambda_h[\theta(T_\ell),\ell\nmid Np]\subset\I^\circ$: this is an analogue of the $\Lambda$-order $\I^\prime$ defined in \cite[Sec. 2]{lang}, and it is stable under the action of $\Gamma$.
We would define $\I^\circ_0$ as the fixed subring of $\I^{\circ,\prime}$ and the arguments in the rest of the article could be adapted to this setting.
\end{remark}

%Let $\I^\circ_0$ be the fixed subring
%$\I^{\circ,\Gamma}$ of $\I^\circ$ by the group of self-twists for $\theta$.
We denote by $\cO_{h,0}$ the subring of $\cO_h$ fixed by $\Gamma$ and we put $\Lambda_{h,0}=\cO_{h,0}[[t]]$. We also denote by $K_{h,0}$ the field of fractions of $\cO_{h,0}$.

\begin{remark}\label{arithprimes}
By definition $\Gamma$ fixes $\Z_p[[\eta t]]$, so we have $\Z_p[[\eta t]]\subset\Lambda_{h,0}$. In particular it makes sense to speak about the ideal $P_k\Lambda_{h,0}$ for every arithmetic prime $P_k=(1+\eta t-u^k)\subset\Z_p[[\eta t]]$. Note that $P_k\Lambda_h$ defines a prime ideal of $\Lambda_h$ if and only if the weight $k$ belongs to the open disc $B_h$, otherwise $P_k\Lambda_h=\Lambda_h$. We see immediately that the same statement is true if we replace $\Lambda_h$ by $\Lambda_{h,0}$.
\end{remark}

Note that $\I^\circ_0$ is a finite extension of $\Lambda_{h,0}$ because $\I^\circ$ is a finite $\Lambda_h$-algebra.
Moreover, we have $K_h^\Gamma=K_{h,0}$ (although the inclusion $\Lambda_h\cdot\I_0^\circ\subset \I^\circ$ 
may not be an equality).
%Let $\I^\prime$ be the subring of $\I$ generated over $\Lambda$ by the image of $\theta_\I$. We deduce from Chebotarev's density theorem that $\rho$ takes values in $\GL_2(\I^\prime)$. Furthermore any $\sigma\in\Gamma$ retricts to an automorphism of $\I^\prime$ and we have an equivalence $\rho^\sigma\cong\rho\otimes\eta_\sigma$ as representations $G_\Q\to\GL_2(\I^\prime)$.

We define two open normal subgroups of $G_\Q$ by:

\begin{itemize}
\item $H_0=\bigcap_{\sigma\in\Gamma}\ker\eta_\sigma$;
\item $H=H_0\cap\ker(\det\overline{\rho})$.
\end{itemize}

\noindent Note that $H_0$ is an open normal subgroup of $G_\Q$ and that $H$ is
a pro-$p$ open normal subgroup of $H_0$ and of $G_\Q$.

\subsection{The level of a general ordinary family}

We recall the main result of \cite{lang}. Denote by $\T$ the big ordinary Hecke algebra, which is finite over $\Lambda=\Z_p[[T]]$. Let $\theta\colon\T\to\I^\circ$ be an ordinary family with associated Galois representation $\rho\colon G_\Q\to\GL_2(\I^\circ)$. The representation $\rho$ is $p$-ordinary, which means that its restriction $\rho\vert_{D_p}$ to a decomposition subgroup $D_p\subset G_\Q$ is reducible. There exist two characters $\varepsilon,\delta\colon D_p\to\I^{\circ,\times}$, with $\delta$ unramified, such that $\rho\vert_{D_p}$ is an extension of $\varepsilon$ by $\delta$.

Denote by $\F$ the residue field of $\I^\circ$ and by $\overline{\rho}$ the representation $G_\Q\to\GL_2(\F)$ obtained by reducing $\rho$ modulo the maximal ideal of $\I^\circ$. Lang introduces the following technical condition.

\begin{definition}
The $p$-ordinary representation $\overline{\rho}$ is called $H_0$-regular if $\overline{\varepsilon}\vert_{D_p\cap H_0}\neq\overline{\delta}\vert_{D_p\cap H_0}$.
\end{definition}

The following result states the existence of a Galois level for $\rho$.

\begin{theorem}\label{ordlevel} \cite[Th. 2.4]{lang}
Let $\rho\colon G_\Q\to\GL_2(\I^\circ)$ be the representation associated with an ordinary, non-CM family $\theta\colon \T\to\I^\circ$.
Assume that $p>2$, the cardinality of $\F$ is not $3$ and the residual representation $\overline{\rho}$ 
is absolutely irreducible and $H_0$-regular. Then there exists $\gamma\in\GL_2(\I^\circ)$ such that $\gamma\cdot \im\,\rho\cdot \gamma^{-1}$ contains a congruence subgroup of $\SL_2(\I^\circ_0)$.
\end{theorem}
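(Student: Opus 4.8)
\emph{Sketch of the argument.} The plan is to follow the strategy of Hida \cite{hida}, replacing the hypothesis on the residual image by the formalism of conjugate self-twists, exactly as in \cite{lang}. Write $G=\im\rho$ and let $\mathcal G=\rho(H)$; since $H$ is pro-$p$, $\mathcal G$ is a pro-$p$ subgroup of $\GL_2(\I^\circ)$ with determinant congruent to $1$ modulo the maximal ideal. After a first conjugation by $\gamma_0\in\GL_2(\I^\circ)$ we may assume $\rho(H_0)\subset\GL_2(\I^\circ_0)$. To $\mathcal G$ one attaches, via the theory of Lie algebras of pro-$p$ subgroups of $\SL_2$ used in \cite{hida} and \cite{lang}, a closed Lie subalgebra $L=L(\mathcal G)\subset\fsl_2(\I^\circ)$; because $p>2$ and $|\F|\neq 3$ the group $\mathcal G$ and the algebra $L$ determine one another tightly enough that it suffices to produce a nonzero ideal $\fl\subset\I^\circ_0$ with $\fl\cdot\fsl_2(\I^\circ_0)\subset L$. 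Indeed, exponentiating such a subalgebra (the exponential converges since $p>2$, after replacing $\fl$ by a high power) yields a congruence subgroup of $\SL_2(\I^\circ_0)$ contained in $\mathcal G\subset G$, and conjugating back by $\gamma_0^{-1}$ gives the theorem.

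The first key input is Galois ordinarity. Since $\rho$ is $p$-ordinary, a further conjugation puts $\rho\vert_{D_p}$ in upper triangular form with unramified quotient; class field theory identifies the restriction to inertia of the diagonal character with the weight character, so on a suitable generator of pro-$p$ inertia one finds an element of $\rho(I_p)$ that is upper triangular with eigenvalues $u^{-1}(1+T)$ and $1$, i.e. conjugate over $\I^\circ$ to $\diag(u^{-1}(1+T),1)$. This is the content of \cite[Th. 4.3]{lang} (compare \cite[Lemma 2.9]{hida}) and is the point that genuinely uses ordinarity. Decomposing $\fsl_2=\fn^+\oplus\ft\oplus\fn^-$ with respect to the diagonal torus, conjugation by this element scales $\fn^+$ by $u^{-1}(1+T)$ and $\fn^-$ by its inverse, while fixing $\ft$; as $L$ is stable under this conjugation, the graded pieces $L\cap\fn^\pm$ become modules over $\Z_p[[u^{-1}(1+T)-1]]=\Lambda$. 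Absorbing the off-diagonal entry of the inertia element (so as to work with the genuinely diagonal $C_T$) requires a change of basis internal to the Borel, and this is the delicate technical point.

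Next one checks that $L$ is full, i.e. $L\otimes_{\I^\circ}Q(\I^\circ)=\fsl_2(Q(\I^\circ))$: absolute irreducibility of $\overline\rho$ rules out the image lying in a Borel, and the non-CM hypothesis rules out its lying in the normalizer of a torus (which would force $\rho$ to be induced from a quadratic field, hence CM), so no proper algebraic subgroup of $\SL_2$ can contain the image; in particular $L\cap\fn^+$ and $L\cap\fn^-$ are nonzero. Now one descends to $\I^\circ_0$. For each $\sigma\in\Gamma$ there is $t_\sigma\in\GL_2(\I^\circ)$ with $t_\sigma\rho(g)t_\sigma^{-1}=\sigma(\rho(g))$ for all $g\in H$ (the twisting character being trivial on $H$); applying $\sigma$ to $L$ and comparing with the conjugate of $L$ by $t_\sigma$, and using $H_0$-regularity to select a $D_p\cap H_0$-adapted (hence $\Gamma$-stable) basis compatible with the ordinary normalization of $\rho\vert_{D_p}$, one shows that $L\cap\fn^+$ and $L\cap\fn^-$ descend to nonzero submodules of the form $\fa^+E$ and $\fa^-F$ with $\fa^\pm\subset\I^\circ_0$ nonzero ideals, where $E,F$ are the standard nilpotents. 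Finally a purely Lie-theoretic bootstrap — forming $[\fa^+E,\fa^-F]\subset L\cap\ft$ and bracketing this back with $E$ and $F$ — produces a nonzero ideal $\fl$ of $\I^\circ_0$ (a power of $\fa^+\fa^-$) with $\fl\cdot\fsl_2(\I^\circ_0)\subset L$, which by the first paragraph completes the proof. The main obstacle is the reconciliation of the two normalizations in the second and third paragraphs: the ordinary filtration dictates a basis over $\I^\circ$ in which inertia at $p$ acts by $C_T$, while the self-twist descent wants a basis over $\I^\circ_0$, and making these simultaneously workable is exactly where $H_0$-regularity enters; everything else is either the group/Lie-algebra dictionary or elementary bracket computations in $\fsl_2$.
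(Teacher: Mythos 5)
This theorem is not proved in the paper: it is quoted verbatim from Lang \cite[Th. 2.4]{lang} and used as a black box, the only in-text comment being that ``the proof relies on the analogous result proved by Ribet and Momose for the $p$-adic representation associated with a classical modular form.'' So there is no in-paper proof to compare against; I can only measure your sketch against Lang's actual argument. Your overall architecture --- Pink's dictionary between pro-$p$ subgroups of $\SL_2$ and their Lie algebras (this is where $p>2$ and $|\F|\neq 3$ enter), the $\Lambda$-module structure on the nilpotent pieces induced by the ordinarity element conjugate to $C_T$, the descent to $\I^\circ_0$ via the cocycle $\sigma\mapsto t_\sigma$ attached to the conjugate self-twists with $H_0$-regularity used to diagonalize the $t_\sigma$, and the final bracket bootstrap --- is the correct one and matches Hida and Lang.

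There is, however, a genuine gap at the fullness step, and it is exactly the step the paper singles out as the essential input. You argue: absolute irreducibility plus non-CM imply that no proper algebraic subgroup of $\SL_2$ contains the image, hence $L\cap\fn^{\pm}\neq 0$, hence these pieces ``descend to nonzero submodules of the form $\fa^{+}E$ and $\fa^{-}F$ with $\fa^{\pm}\subset\I^\circ_0$ nonzero ideals.'' That last inference does not follow. The pieces $L\cap\fn^{\pm}$ are a priori only $\Lambda$-submodules of $\I^\circ_0$ (via the $C_T$-conjugation), and a nonzero $\Lambda$-submodule of $\I^\circ_0$ need not contain a nonzero ideal of $\I^\circ_0$: the subring $\Lambda$ itself is one, whenever $\I^\circ_0\neq\Lambda$. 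Correspondingly, Zariski density of the image over the generic fibre --- which is all your algebraic-subgroup argument yields --- is compatible with the image being conjugate into $\SL_2(\Lambda)$, which contains no congruence subgroup of $\SL_2(\I^\circ_0)$ in general. What is actually needed is that $L\cap\fn^{\pm}$ contains a basis of $Q(\I^\circ_0)$ over $Q(\Lambda)$, i.e.\ a $\Lambda$-lattice in the sense of Lemma \ref{lattice}, so that the lattice-versus-ideal lemma applies. This is obtained from the Ribet--Momose open-image theorem at the classical specializations lying over a well-chosen arithmetic prime $P$ of $\Lambda$ where the Hecke algebra is \'etale, combined with the gluing statements controlling the image in $\prod_{\fP\mid P}\SL_2(\I^\circ_0/\fP)$ (Lang's Prop. 5.1 and Cor. 6.3, quoted in this paper as Propositions \ref{openprod} and \ref{openproj}), and then lifted back from $\I^\circ_0/P\I^\circ_0$ to $\I^\circ_0$ by a Pink-theoretic or approximation argument plus Nakayama. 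Your sketch never invokes Ribet--Momose, which is the one ingredient the paper explicitly identifies as carrying the proof; without it the bootstrap in your last paragraph produces no nonzero ideal $\fl$.
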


\noindent The proof relies on the analogous result proved by Ribet (\cite{ribet1}) and Momose (\cite{momose}) 
for the $p$-adic representation associated with a classical modular form.

\subsection{An approximation lemma}\label{sectapprox}

In this subsection we prove an analogue of \cite[Lemma 4.5]{hidatil}.
It replaces in our approach the use of Pink's Lie algebra theory, which is relied upon in the case of ordinary representations
in \cite{hida} and \cite{lang}.
Let $\I_0^\circ$ be a domain that is finite torsion free over $\Lambda_h$. It does not need to be related to a Hecke algebra for the moment. 

Let $N$ be an open normal subgroup of $G_\Q$ and let $\rho\colon N\to\GL_2(\I_0^\circ)$ be an arbitrary continuous representation.
We denote by $\fm_{\I_0^\circ}$ the maximal ideal of $\I_0^\circ$ and by $\F=\I_0^\circ/\fm_{\I_0^\circ}$ its residue field of cardinality $q$.
In the lemma we do not suppose that $\rho$ comes from a family of modular forms.
We will only assume that it satisfies the following technical condition:

\begin{definition}\label{Z_p-reg}
Keep notations as above.
We say that the representation $\rho\colon N\to\GL_2(\I_0^\circ)$ is $\Z_p$-regular if there exists $d\in\im\,\rho$ with 
%$$ \rho(\delta)=\left(\begin{array}{cc} d_1&0\\0&d_2\end{array}\right) $$
eigenvalues $d_1,d_2\in\Z_p$ such that $d_1^2\not\equiv d_2^2\pmod{p}$.
We call $d$ a $\Z_p$-regular diagonal element.
If $N^\prime$ is an open normal subgroup of $N$ then we say that $\rho$ is $(N^\prime,\Z_p)$-regular if $\rho\vert_{N^\prime}$ is $\Z_p$-regular.
\end{definition}

%We fix $g$ as in the definition and we will write in the rest of the section $\im\,\rho$ instead of its conjugate $g\cdot\im\,\rho\cdot g^{-1}$.
Note that $\rho(\delta)\in\im\,\rho$ is of finite order dividing $p-1$.
%Let $T$ denote the maximal torus of diagonal matrices in $\GL_2$.
Let $B^{\pm}$ denote the Borel subgroups consisting of upper, respectively lower, triangular matrices in $\GL_2$.
Let $U^{\pm}$ be the unipotent radical of $B^{\pm}$.

\begin{proposition}\label{approx}
Let $\I_0^\circ$ be a finite torsion free $\Lambda_{h,0}$-algebra, $N$ an open normal subgroup of $G_\Q$ and $\rho$ a continuous representation $N\to\GL_2(\I_0^\circ)$ that is $\Z_p$-regular. Suppose (upon replacing $\rho$ by a conjugate) that the $\Z_p$-regular element is diagonal.
Let $\bP$ be an ideal of $\I_0^\circ$ and $\rho_\bP\colon N\to\GL_2(\I_0^\circ/\bP)$ be the representation given by the reduction of $\rho$ modulo $\bP$. Let $U^\pm(\rho)$, respectively $U^\pm(\rho_\bP)$ be the upper and lower unipotent subgroups of the image $\im\,\rho$, respectively $\im\,\rho_\bP$.
Then the natural maps $U^+(\rho_\theta)\to U^+(\rho_\bP)$ and $U^-(\rho_\theta)\to U^-(\rho_\bP)$ are surjective.%%%, $\im\,\rho_\theta\cap U^\pm(\I^\circ)\neq 0$.
\end{proposition}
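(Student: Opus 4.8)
The plan is to exploit the presence of a $\Z_p$-regular diagonal element $d\in\im\,\rho$ together with the fact that $d$ acts on the unipotent subgroups by conjugation via its character ratio, which has order prime to $p$. Concretely, write $d=\diag(d_1,d_2)$ with $d_1,d_2\in\Z_p^\times$ and $d_1^2\not\equiv d_2^2\pmod p$; set $\lambda=d_1/d_2\in\Z_p^\times$, so that $\lambda^2\not\equiv 1\pmod p$, and in particular $\lambda-1$ and $\lambda+1$ are both units in $\Z_p$. For any $u=\bigl(\begin{smallmatrix}1&x\\0&1\end{smallmatrix}\bigr)\in U^+(\rho)$ one has $dud^{-1}u^{-1}=\bigl(\begin{smallmatrix}1&(\lambda-1)x\\0&1\end{smallmatrix}\bigr)$, and similarly $\lambda^{-1}-1$ appears for $U^-$. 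The point is that $U^\pm(\rho)$ is stable under $u\mapsto dud^{-1}$, hence under $u\mapsto dud^{-1}u^{-1}$, so that raising to the power "multiply the off-diagonal entry by $\lambda-1$" is an operation internal to $U^+(\rho)$.

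First I would reduce to showing surjectivity after identifying $U^+(\rho)$ with a subset $V\subset \I_0^\circ$ (the set of upper-right entries occurring in $\im\,\rho$) and $U^+(\rho_\bP)$ with its image $\bar V$ in $\I_0^\circ/\bP$; the claim is that the reduction map $V\to\bar V$ is surjective, which is tautological since $\bar V$ is \emph{defined} as the image. The subtlety the proposition is really asserting is the compatibility of the \emph{group} structures: one must check that $U^+(\rho_\bP)$, \emph{a priori} only the unipotent part of the reduced image $\im\,\rho_\bP=\overline{\im\,\rho}$, actually equals the reduction of $U^+(\rho)$ and not something larger coming from lower-triangular or diagonal elements of $\im\,\rho$ whose reductions happen to become upper unipotent. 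This is where the $\Z_p$-regularity enters decisively.

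The key steps, in order, are: (1) record that $\bar d=\rho_\bP(d)$ is still diagonal with entries $\bar d_1,\bar d_2$ and $\bar d_1^2\neq\bar d_2^2$ in $\I_0^\circ/\bP$ — note here we need $d_1,d_2\in\Z_p$ so their reductions are genuinely scalars and the inequality is preserved because $d_1^2-d_2^2\in\Z_p^\times$ maps to a unit; (2) given $\bar u\in U^+(\rho_\bP)$, lift it arbitrarily to some $g\in\im\,\rho$ with $\rho_\bP(g)=\bar u$, and form $g'=\bigl[\,[d,g],[d,g]\,\bigr]$-type commutators, or more simply consider the element $g_1=[d,g]=dgd^{-1}g^{-1}\in\im\,\rho$; (3) decompose $g$ (and hence $g_1$) along the big cell — after possibly multiplying by a suitable power of $d$ using regularity, show $g$ lies in the big cell $U^-DU^+$ because its reduction is unipotent upper-triangular and the obstruction to being in the big cell is a condition on the reduction; write $g=u^-\cdot t\cdot u^+$; (4) conjugating by $d$ and taking the commutator kills $t$ to first approximation and multiplies the $U^+$-part by $(\lambda-1)$ and the $U^-$-part by $(\lambda^{-1}-1)$, so iterating / solving the resulting linear system (using that $\lambda\pm1\in\Z_p^\times$) one isolates an element of $U^+(\rho)$ whose reduction is $\bar u$. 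The cleanest formulation is probably: the operator $\varphi\colon g\mapsto [d,g]$ on $\im\,\rho$ preserves $\im\,\rho$, and on the associated graded of the big-cell filtration acts as multiplication by $\lambda-1$ on the $U^+$-graded piece and $\lambda^{-1}-1$ on the $U^-$-piece and $0$ on the torus; since these scalars are units, a Vandermonde/telescoping argument over $\bZ_p$ produces the required preimage.

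\textbf{Main obstacle.} The hard part will be step (3)–(4): making rigorous the claim that an element $g\in\im\,\rho$ whose reduction mod $\bP$ is upper unipotent can be adjusted, \emph{within $\im\,\rho$}, to an honestly upper-unipotent element with the same reduction. One cannot simply take the $U^+$-component of the big-cell decomposition of $g$, because that component need not lie in $\im\,\rho$. The resolution is to work with the commutator operator $\varphi=[d,-]$, which \emph{does} preserve $\im\,\rho$, and to show that $\varphi$ (or a polynomial in $\varphi$ with $\Z_p$-coefficients, chosen via the fact that the eigenvalues $1,\lambda-1,\lambda^{-1}-1$ of the linearization are distinct units) acts as a projector onto the $U^+$-part modulo $\bP$; controlling the higher-order terms in this non-linear setting — i.e. checking the argument is not merely infinitesimal — is the technical heart, and is exactly the role played in the ordinary case by Pink's Lie-algebra theory that this lemma is designed to replace.
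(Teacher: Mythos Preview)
Your diagnosis of the difficulty is exactly right: the whole content of the proposition is to show that an element $g\in\im\,\rho$ whose reduction modulo $\bP$ is upper unipotent can be corrected, \emph{inside} $\im\,\rho$, to an honestly upper-unipotent element with the same reduction, and this is where $\Z_p$-regularity is used. But your proposed mechanism has a genuine gap. You want to build a projector onto the $U^+$-part as ``a polynomial in $\varphi=[d,-]$ with $\Z_p$-coefficients''; at the level of the Lie algebra this is Lagrange interpolation and works fine, but at the group level $\varphi$ is a self-map of a non-abelian group and there is no meaning to $\Z_p$-linear combinations of its iterates. You flag this yourself as ``the technical heart'', but you do not supply a way through it, and the commutator map alone does not iterate to something that converges $\bP$-adically: $[d,g]$ does not visibly move $g$ deeper into the filtration $\Gamma_U(\bP^j)$.

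The paper's approach closes this gap by replacing $d$ with the finite-order element $\delta=\lim_{n\to\infty} d^{p^n}$, whose eigenvalue ratio $\alpha=\delta_1/\delta_2$ is a root of unity of order $a\mid p-1$. One then defines on any $p$-profinite group normalized by $\delta$ the averaging map
\[
\Delta(x)=\bigl[x\cdot\ad(\delta)(x)^{\alpha^{-1}}\cdots\ad(\delta^{a-1})(x)^{\alpha^{1-a}}\bigr]^{1/a},
\]
which makes sense because $a$ is prime to $p$ so $a$-th roots exist uniquely. On the abelianization of $\Gamma_U(\bP^j)$ this is literally the projector onto the $\alpha$-eigenspace for $\ad(\delta)$, which is $U^+$; since $\alpha^2\neq 1$ the other eigenspaces (diagonal and $U^-$) are killed. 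A short commutator estimate shows $\mathrm{D}\Gamma_U(\bP^j)\subset\Gamma_B(\bP^{2j})$, so $\Delta(u)\in\Gamma_B(\bP^{2j})\cap\Gamma_U(\bP^j)$, and applying $\Delta$ once more lands in $\Gamma_U(\bP^{2j})$ while leaving the image modulo $\bP^j$ unchanged. Thus $\Delta^{2m}$ sends a lift $v\in K_U(\bP)$ into $K_U(\bP^{2^m})$ without changing its reduction modulo $\bP$, and $\Delta^\infty(v)\in U^+(\I_0^\circ)\cap\im\,\rho$ is the desired preimage. The two ideas you were missing are (i) passing from $d$ to its finite-order limit $\delta$, which makes a genuine group-theoretic averaging operator available, and (ii) the quadratic improvement $\bP^j\rightsquigarrow\bP^{2j}$ coming from the commutator estimate, which gives $\bP$-adic convergence of the iteration.
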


\begin{remark}
The ideal $\bP$ in the proposition is not necessarily prime. At a certain point we will need to take $\bP=P\I_0^\circ$ for a prime ideal $P$ of $\Lambda_{h,0}$.
\end{remark}
%We fix $\underline{k}$ congruent to $\underline{k}^0$ such that for any $i\neq j$, $k_i-i\not\equiv k_j-j\pmod{p-1}$. 
%Choosing $\s\in I_p$ such that the modulo $p$ cyclotomic character is a primitive $p-1$st root of unity, 

%We have a Lazard valuation on  $\Spin_{2n+1}(\Lambda_n)$ given by the filtration  $\{\Gamma_{\spin}(p^i)|i\ge1\}$
%which induces a filtration  $\{H_i=H\cap\Gamma_{\spin}(p^i)\}_i$
%on any closed subgroup  $H$  of $\Spin_{2n+1}(\Lambda_n)$.   
%Thus we can consider the Lazard's saturated Lie algebra  $\hG$ of  $\Log(H)$.
%If $U\subset \Spin_{2n+1}$ is a unipotent subgroup, write $U=U_0\supset U_1\supset\cdots$
%for the $i$-th derived subgroups  $U_i$  (i.e., the central sequence).
%On $U_i/U_{i-1}$, $\Log(u)=u-1$.
%Thus as long as  $p$  is bigger than  the length of the central descending sequence,
%$\Log(U)=\UG$  is a Lie algebra, and  $\Log:U\ra\UG$  is a morphism of schemes (i.e., algebraic)
%defined over  $\Zp$ if  $p>2n-1$, since
%the maximal length is  $2n-1$ for  $\Spin_{2n+1}$.
%Similarly, writing  $\TG$  for the algebraic (abelian) Lie algebra of  $\widehat{T}$,
%on  $\Tc'=\Tc\cap \Spin_{2n+1}$,$\Log:\Tc'\ra\TG$  is well defined.

As in \cite[Lemma 4.5]{hidatil} we need two lemmas.
Since the argument is the same for $U^+$ and $U^-$,
we will only treat here the upper triangular case $U=U^+$ and $B=B^+$.

For $\ast=U, B$ and every $j\geq 1$ we define the groups 
$$ \Gamma_{\ast}(\bP^j)=\{x\in \SL_2(\I_0^\circ)\, |\, x\,\, (\mathrm{mod}\,\,\bP^j)\in \ast (\I_0^\circ/\bP^j)\}.$$
Let $\Gamma_{\I_0^\circ}(\bP^j)$
be the kernel of the reduction morphism $\pi_j\colon \SL_2(\I_0^\circ)\to \SL_2(\I_0^\circ/\bP^j)$.
Note that $\Gamma_{U}(\bP^j)=\Gamma_{\I_0^\circ}(\bP^j)U(\I_0^\circ)$ consists of matrices 
$\left(\begin{array}{cc}a&b\\c&d\end{array}\right)$ such that $a,d\equiv 1 \pmod{\bP^j}$, $c\equiv 0\pmod{\bP^j}$.
Let $K=\im\,\rho$ and
$$ K_{U}(\bP^j)=K\cap\Gamma_{U}(\bP^j),\quad
K_{B}(\bP^j)=K\cap\Gamma_{B}(\bP^j). $$

Since $U(\I_0^\circ)$ and $\Gamma_{\I_0^\circ}(\bP)$
are $p$-profinite, the groups $\Gamma_{U}(\bP^j)$ and $K_{U}(\bP^j)$ for all $j\geq 1$ are also $p$-profinite.
Note that
$$ \left[\left(\begin{smallmatrix}a&b\\
c&-a\end{smallmatrix}\right),\left(\begin{smallmatrix}e&f\\
g&-e\end{smallmatrix}\right)\right]
=\left(\begin{smallmatrix}bg-cf&2(af-be)\\
2(ce-ag)&cf-bg\end{smallmatrix}\right). $$
\noindent From this we obtain the following.

\begin{lemma}\label{ijk}
If $X,Y\in\fsl_2(\I_0^\circ)\cap\left(\begin{smallmatrix}\bP^j&\bP^k\\
\bP^i&\bP^j\end{smallmatrix}\right)$  with  $i\ge j\ge k$, then 
$[X,Y]\in \left(\begin{smallmatrix}\bP^{i+k}&\bP^{j+k}\\
\bP^{i+j}&\bP^{i+k}\end{smallmatrix}\right)$.
\end{lemma}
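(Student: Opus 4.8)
The plan is to read the conclusion off the explicit commutator identity displayed just above the statement, so there is little to do beyond bookkeeping with powers of the ideal $\bP$. Write $X=\left(\begin{smallmatrix}a&b\\ c&-a\end{smallmatrix}\right)$ and $Y=\left(\begin{smallmatrix}e&f\\ g&-e\end{smallmatrix}\right)$ in $\fsl_2(\I_0^\circ)$; the hypothesis $X,Y\in\left(\begin{smallmatrix}\bP^j&\bP^k\\ \bP^i&\bP^j\end{smallmatrix}\right)$ then says precisely that $a,e\in\bP^j$, $b,f\in\bP^k$ and $c,g\in\bP^i$ (the trace-zero shape is automatic, so the two diagonal conditions amount to the single condition $a,e\in\bP^j$).

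First I would substitute these into $[X,Y]=\left(\begin{smallmatrix}bg-cf&2(af-be)\\ 2(ce-ag)&cf-bg\end{smallmatrix}\right)$ and bound each entry using $\bP^m\bP^n\subseteq\bP^{m+n}$. The two diagonal entries are $\pm(bg-cf)$ with $bg,cf\in\bP^k\bP^i\subseteq\bP^{i+k}$, hence lie in $\bP^{i+k}$. The upper right entry $2(af-be)$ has $af,be\in\bP^j\bP^k\subseteq\bP^{j+k}$, hence lies in $\bP^{j+k}$. The lower left entry $2(ce-ag)$ has $ce,ag\in\bP^i\bP^j\subseteq\bP^{i+j}$, hence lies in $\bP^{i+j}$. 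Collecting these four statements gives exactly $[X,Y]\in\left(\begin{smallmatrix}\bP^{i+k}&\bP^{j+k}\\ \bP^{i+j}&\bP^{i+k}\end{smallmatrix}\right)$, as claimed.

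I do not expect any real obstacle: the argument uses only the $2\times 2$ commutator formula and multiplicativity of the powers of $\bP$. The hypothesis $i\ge j\ge k$ is not needed for the estimate itself; its role is that the target matrix $\left(\begin{smallmatrix}\bP^{i+k}&\bP^{j+k}\\ \bP^{i+j}&\bP^{i+k}\end{smallmatrix}\right)$ again has exponents weakly increasing from the upper right to the lower left (since $j+k\le i+k\le i+j$), which is the shape needed to feed the lemma back into itself when it is applied iteratively in the proof of Proposition \ref{approx}.
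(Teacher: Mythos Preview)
Your argument is correct and is exactly the approach the paper intends: the lemma is stated immediately after the explicit commutator formula with the remark ``From this we obtain the following,'' and your entry-by-entry bookkeeping with $\bP^m\bP^n\subset\bP^{m+n}$ is precisely that derivation. Your closing observation about the role of $i\ge j\ge k$ is also accurate.
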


We denote by $\mathrm{D}\Gamma_{U}(\bP^j)$ the topological commutator subgroup $(\Gamma_{U}(\bP^j),\Gamma_{U}(\bP^j))$. Lemma \ref{ijk} tells us that
\begin{equation}\label{BN} \mathrm{D}\Gamma_{U}(\bP^j)\subset\Gamma_{B}(\bP^{2j})\cap\Gamma_{U}(\bP^j). \end{equation}

By the $\Z_p$-regularity assumption, there exists a diagonal element $d\in K$ whose eigenvalues are in $\Z_p$ and distinct modulo $p$.
Consider the element $\delta=\lim_{n\to\infty}d^{p^n}$, which belongs to $K$ since this is $p$-adically complete.
% For any $n$, $d^{p^n}$ normalizes $K$, so $\delta$ also normalizes $K$.
In particular $\delta$ normalizes $K$. It is also diagonal with coefficients in $\Z_p$, so it normalizes $K_{U}(\bP^j)$ and $\Gamma_{B}(\bP^j)$. Since $\delta^p=\delta$, the eigenvalues $\delta_1$ and $\delta_2$ of $\delta$ are roots of unity of order dividing $p-1$. They still satisfy $\delta_1^2\neq\delta_2^2$ as $p\neq 2$. % As a consequence they belong to $\Z_p$, so $\delta$ normalizes $K_{U}(\bP^j)$ and $\Gamma_{B}(\bP^j)$.

Set $\alpha=\delta_1/\delta_2\in\F_p^\times$ and let $a$ be the order of $\alpha$ as a root of unity. 
We see $\alpha$ as an element of $\Z_p^\times$ via the Teichm\"uller lift. 
Let $H$ be a $p$-profinite group normalized by $\delta$. 
Since $H$ is $p$-profinite, every $x\in H$ has a unique $a$-th root. 
We define a map $\Delta\colon H\to H$ given by 
$$ \Delta(x)=[x\cdot \ad(\delta) (x)^{\alpha^{-1}}\cdot\ad(\delta^2)(x)^{\alpha^{-2}}\cdot\,\cdots\,\cdot
\ad(\delta^{a-1})(x)^{\alpha^{1-a}}]^{1/a} $$

\begin{lemma}\label{2jlem}
If $u\in \Gamma_{U}(\bP^j)$ for some $j\ge 1$,
then $\Delta^2(u)\in \Gamma_{U}(\bP^{2j})$
and $\pi_j(\Delta(u))=\pi_j(u)$.
\end{lemma}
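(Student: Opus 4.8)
The strategy is to track how the operator $\Delta$ interacts with the filtration by the subgroups $\Gamma_U(\bP^j)$, using the structure of the diagonal conjugation by $\delta$. First I would write $u = \left(\begin{smallmatrix}1+a&b\\c&(1+a)^{-1}\end{smallmatrix}\right)$ with $a,c\in\bP^j$ and $b\in\I_0^\circ$ arbitrary, i.e. $u\in\Gamma_U(\bP^j)$. Since $\delta=\diag(\delta_1,\delta_2)$, conjugation $\ad(\delta^k)$ scales the upper-right entry by $(\delta_1/\delta_2)^k=\alpha^k$ and the lower-left entry by $\alpha^{-k}$, while fixing the diagonal. So in the product defining $\Delta(u)$, each factor $\ad(\delta^k)(u)^{\alpha^{-k}}$ has the same shape as $u$ modulo $\bP^{2j}$: to first order (i.e. modulo $\Gamma_B(\bP^{2j})\cap\Gamma_U(\bP^j)$, using \eqref{BN} to absorb commutators of elements of $\Gamma_U(\bP^j)$) the upper-right entry of $\ad(\delta^k)(u)^{\alpha^{-k}}$ is $\alpha^{-k}\cdot(\alpha^k b)=b$ and the lower-left entry is $\alpha^{-k}\cdot(\alpha^{-k}c)=\alpha^{-2k}c$. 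Taking the product over $k=0,\dots,a-1$ and then the $a$-th root, the upper-unipotent part of $\Delta(u)$ is unchanged (the $b$'s simply add up to $ab$, whose $a$-th root is $b$ again), which gives $\pi_j(\Delta(u))=\pi_j(u)$; whereas the lower-left entry of $\Delta(u)$ becomes, modulo $\bP^{2j}$, the $a$-th root of $\big(\sum_{k=0}^{a-1}\alpha^{-2k}\big)c = 0$ because $\alpha^{-2}$ is a primitive $a$-th root of unity (here we use $\delta_1^2\neq\delta_2^2$, so $\alpha^2\neq 1$ and $\alpha^{-2}$ still has order $a$). Hence the lower-left entry of $\Delta(u)$ lies in $\bP^{2j}$, i.e. $\Delta(u)\in\Gamma_B(\bP^{2j})\cap\Gamma_U(\bP^j)$.

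The congruence statement $\pi_j(\Delta(u))=\pi_j(u)$ follows already from the computation above, since modulo $\bP^j$ every factor $\ad(\delta^k)(u)^{\alpha^{-k}}$ reduces to $\pi_j(u)$ (the off-diagonal entries of $u$ are killed modulo $\bP^j$ except the free upper-right entry $b$, and the exponent bookkeeping on $b$ cancels as explained), so the product of $a$ of them followed by the $a$-th root reduces to $\pi_j(u)$ as well.

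For the second application of $\Delta$: we have just shown $\Delta(u)\in\Gamma_B(\bP^{2j})\cap\Gamma_U(\bP^j)$, so write $v=\Delta(u)=\left(\begin{smallmatrix}1+a'&b'\\c'&(1+a')^{-1}\end{smallmatrix}\right)$ with $a'\in\bP^j$ (a priori only this), $c'\in\bP^{2j}$, $b'$ arbitrary. Now rerun the same computation for $\Delta(v)$. The lower-left entry of $\Delta(v)$ is the $a$-th root of $\big(\sum_k\alpha^{-2k}\big)c'$ plus error terms coming from commutators, hence automatically in $\bP^{2j}$ (it already was, and $\sum_k\alpha^{-2k}=0$ only makes it better); the point is to also control the diagonal part. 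The diagonal entries of $\ad(\delta^k)(v)$ are unchanged, so the diagonal part of the product is governed by $(1+a')^a$ times commutator corrections; by Lemma \ref{ijk} applied with the filtration data of $v$ (lower-left in $\bP^{2j}$, upper-right free, diagonal in $\bP^j$) the commutators of two such elements land in $\left(\begin{smallmatrix}\bP^{2j}&\bP^{j}\\ \bP^{3j}&\bP^{2j}\end{smallmatrix}\right)$, in particular their diagonal entries are in $\bP^{2j}$; combined with the fact that raising to exponents $\alpha^{-k}$ (a unit) and taking $a$-th roots preserves the property ``diagonal entry $\equiv 1\bmod\bP^{2j}$'' once we know it modulo $\bP^{2j}$, we conclude that $\Delta(v)=\Delta^2(u)$ has diagonal entries $\equiv 1\pmod{\bP^{2j}}$ and lower-left entry in $\bP^{2j}$, i.e. $\Delta^2(u)\in\Gamma_U(\bP^{2j})$, as claimed.

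The main obstacle I anticipate is the careful bookkeeping in the second step: after one application of $\Delta$ we only gain that the \emph{lower-left} entry improved to $\bP^{2j}$, and the diagonal is still only in $\bP^j$; one must check that the second application of $\Delta$ (which is designed to kill the lower-left entry) simultaneously does not spoil, and in fact repairs, the diagonal — this is exactly where the commutator estimate of Lemma \ref{ijk} and the geometric-sum vanishing $\sum_{k=0}^{a-1}\alpha^{-2k}=0$ must be combined precisely, keeping track of which entries are free and which are small. The exponentiation by the $p$-adic units $\alpha^{-k}$ and the $a$-th root extraction in a $p$-profinite group must also be handled via the binomial series, but these are routine once the filtration is set up.
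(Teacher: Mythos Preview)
Your explicit matrix-entry approach is a legitimate alternative to the paper's cleaner abstract argument (the paper observes that $\Delta$ is, on each relevant abelianization, the projection onto the $\alpha$-eigenspace of $\ad(\delta)$, and then runs the two steps $\Gamma_U(\bP^j)\to\Gamma_B(\bP^{2j})\cap\Gamma_U(\bP^j)\to\Gamma_U(\bP^{2j})$ without ever writing matrix entries). Your first step and the congruence $\pi_j(\Delta(u))=\pi_j(u)$ are fine; the minor slip that ``$\alpha^{-2}$ still has order $a$'' need not hold when $a$ is even, but the sum $\sum_{k=0}^{a-1}\alpha^{-2k}$ still vanishes since $\alpha^2\neq 1$, so your conclusion there is unaffected.

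The second step, however, has a genuine gap. You write that the diagonal part of the product is ``governed by $(1+a')^a$''. That is not correct: each factor $\ad(\delta^k)(v)^{\alpha^{-k}}$ has diagonal $(1+a')^{\alpha^{-k}}$, so the product has diagonal $(1+a')^{\sum_{k=0}^{a-1}\alpha^{-k}}=(1+a')^{0}=1$, using $\alpha\neq 1$. This vanishing of $\sum_k\alpha^{-k}$ on the $1$-eigenspace is exactly the mechanism by which $\Delta$ kills the diagonal, and it is what you are missing. With your stated main term $(1+a')^a$, after the $a$-th root you would get diagonal $\equiv 1+a'\pmod{\bP^{2j}}$, which is only in $1+\bP^j$, and your commutator bound from Lemma~\ref{ijk} (diagonal corrections in $\bP^{2j}$) cannot repair this. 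Once you replace $(1+a')^a$ by $(1+a')^{\sum_k\alpha^{-k}}=1$, your commutator estimate (correctly computed with $i=2j$, $k=0$) finishes the argument. In short: you used $\sum_k\alpha^{-2k}=0$ for the lower-left entry but forgot to use the companion identity $\sum_k\alpha^{-k}=0$ for the diagonal.
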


\begin{proof}
If $u\in \Gamma_{U}(\bP^j)$, we have $\pi_j(\Delta(u))=\pi_j(u)$
as $\Delta$ is the identity map on $U(\I_0^\circ/\bP^j)$.
Let $\mathrm{D}\Gamma_{U}(\bP^j)$  be the topological commutator subgroup of  $\Gamma_{U}(\bP^j)$.
Since $\Delta$ induces the projection of the $\Z_p$-module $\Gamma_{U}(\bP^j)/\mathrm{D}\Gamma_{U}(\bP^j)$ onto its $\alpha$-eigenspace for $\ad(d)$,
it is a projection onto $U(\I_0^\circ) \mathrm{D}\Gamma_{U}(\bP^j)/\mathrm{D}\Gamma_{U}(\bP^j)$.
The fact that this is exactly the $\alpha$-eigenspace comes from the Iwahori 
decomposition of $\Gamma_{U}(\bP^j)$, hence
a similar direct sum decomposition holds in the abelianization $\Gamma_{U}(\bP^j)/\mathrm{D}\Gamma_{U}(\bP^j)$.

By \eqref{BN}, $\mathrm{D}\Gamma_{U}(\bP^j)\subset \Gamma_{B}(\bP^{2j})\cap \Gamma_{U}(\bP^j)$.
Since the $\alpha$-eigenspace
of $\Gamma_{U}(\bP^j)/\mathrm{D}\Gamma_{U}(\bP^j)$ is inside  $\Gamma_{B}(\bP^{2j})$,
$\Delta$ projects $u\Gamma_{U}(\bP^j)$ to
$$ \overline{\Delta}(u)\in (\Gamma_{B}(\bP^{2j})\cap \Gamma_{U}(\bP^j))/\mathrm{D}\Gamma_{U}(\bP^j). $$
In particular, $\Delta(u)\in \Gamma_{B}(\bP^{2j})\cap \Gamma_{U}(\bP^j)$.
Again apply  $\Delta$. Since $\Gamma_{B}(\bP^{2j})/\Gamma_{\I_0^\circ}(\bP^{2j})$ is sent
to $\Gamma_{U}(\bP^{2j})/\Gamma_{\I_0^\circ}(\bP^{2j})$ by $\Delta$,
we get $\Delta^2(u)\in \Gamma_{U}(\bP^{2j})$ as desired.
\end{proof}

%\begin{prop}\label{multprop1}Let  $\rho:\Gal(\oQ/\Q)\ra \GSpin_{2n+1}(\Lambda_n)$ be a continuous representation.
%Suppose \begin{enumerate}
%\item  $p>2n-1>2$,
%\item For an arithmetic $P\in\Spec(\Lambda_n)(\oQ_p)$,
%the specialization $\rho_P=\rho_f$ has $\Z_p$-full image ; in other words, $\IM(\rho_P)$
%contains an open subgroup of  $\Spin_{2n+1}(\Z_p)$,
%\item  ($\Zp$-regularity) $\IM(\rho)$  contains  
%an element $\delta\in \widehat{T}(\Lambda_n)$, such that the squares
%of the eigenvalues of  $\spin(\delta)$ are in $\Zp$  distinct modulo  $p\Zp$  (this is equivalent to  $\Zp$-regularity defined via 
%the adjoint representation of  $\rho$  regarded it has image in the adjoint group $SO(2n+1)$  of  $\Spin(2n+1)$).
%\item  the character  $\mu\circ\rho(\s)/\epsilon(\s)^{-d}(t_1\cdot t_n)^{\ell(\s)}$  has finite order.
%\end{enumerate}
%Then $\IM(\rho)\cap U_\al(\Lambda_n)\ne\{1\}$  for all roots  $\al$.
%\end{prop}
%\begin{proof}
%
%Put  $H_s$  to be the image of  $H$  in  $\GSpin(A/P^s)$.
%Then  $H_s$  is torsion-free  $p$-profinite group,
%and we have surjection  $H\twra H_s$.
%We equip  $H$  with a Lazard filtration  $\{H\cap\Gamma_{\Lambda_n}^{\GSpin(2n+1)}(p^i)U(p^{i-1}\Lambda_n)\}$.

\begin{proof}
We can now prove Proposition \ref{approx}.
%%%By $\Z_p$-fullness of $\rho_\bP$, there exists $1\ne\overline{ u}\in U(\bI/\bP)\cap\im(\rho_\bP)$.
Let $\overline{u}\in U(\I_0^\circ/\bP)\cap\im(\rho_\bP)$.
Since the reduction map $\im(\rho)\to\im(\rho_\bP)$ induced by $\pi_1$ 
is surjective, there exists $v \in\im(\rho)$
such that $\pi_1(v)=\overline{u}$.
Take $u_1\in U(\I_0^\circ)$ such that $\pi_1(u_1)=\overline{u}$ (this is possible since $\pi_1\colon U(\Lambda_h)\to U(\Lambda_h/P)$ is surjective).
Then $v u_1^{-1}\in\Gamma_{\I_0^\circ}(\bP)$,
so $v\in K_{U}(\bP)$.

By compactness of $K_{U}(\bP)$ and by Lemma \ref{2jlem}, starting with $v$ as above, we see that
$\lim_{m\to \infty}\Delta^m(v)$ converges $\bP$-adically 
to  $\Delta^\infty(v)\in U(\I_0^\circ)\cap K$
with $\pi_1(\Delta^\infty(v))=\overline{u}$.
%%%In particular, $\Delta^\infty(v)$ is a non-trivial unipotent element in $U(\bI)\cap K$.
\end{proof}

\begin{remark}
Proposition \ref{approx} is true with the same proof if we replace $\Lambda_{h,0}$ by $\Lambda_h$ and $\I_0^\circ$ by a finite torsion free $\Lambda_h$-algebra.
\end{remark}

As a first application of Proposition \ref{approx} we give a result that we will need in the next subsection.
Given a representation $\rho\colon G_\Q\to\GL_2(\I^\circ)$ and every ideal $\bP$ of $\I^\circ$ we define $\rho_\bP$, $U^{\pm}(\rho)$ and $U^{\pm}(\rho_\bP)$ as above, by replacing $\I_0^\circ$ by $\I^\circ$.
%Let $U_{\pm}(\I^\circ)$ be the upper and lower unipotent subgroups of $\SL_2(\I^\circ)$.
%Given a representation $\rho:G_\Q\to\GL_2(\I^\circ)$ and let $U_{\pm}(\rho)=\im\,\rho\cap U_\pm(\I^\circ)$.

\begin{proposition}\label{nontrivunip}
Let $\theta\colon\T_h\to\I^\circ$ be a family of slope $\leq h$ and $\rho_\theta\colon G_\Q\to\GL_2(\I^\circ)$ 
be the representation associated with $\theta$.
Suppose that $\rho_\theta$ is $(H_0,\Z_p)$-regular and let $\rho$ be a conjugate of $\rho_\theta$ such that $\im\,\rho\vert_{H_0}$ contains a diagonal $\Z_p$-regular element. 
%Suppose that there exists $g\in\GL_2(\I^\circ)$ such that $g\cdot\rho_\theta\cdot g^{-1}$ is $(H_0,\Z_p)$-regular.
%Let $\rho=g\cdot \rho_\theta\cdot g^{-1}\vert_{H_0}$.
Then $U^+(\rho)$ and $U^-(\rho)$ are both nontrivial.
\end{proposition}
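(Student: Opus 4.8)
The plan is to apply Proposition \ref{approx} with $N=H_0$ and $\bP$ the prime of $\I^\circ$ cut out by a well-chosen classical point, reducing the statement to the big-image theorem of Ribet and Momose at that point. First I would produce the point: by Coleman's classicality criterion, every point of $\Spec\,\I^\circ$ lying above an integer weight $k\in\cB_h$ with $k-1>h$ is classical of weight $k\geq 2$; since $\I^\circ$ is torsion-free over the domain $\Lambda_h$, the finite map $\Spec\,\I^\circ\to\cB_h$ is surjective, and $\cB_h$ contains infinitely many such integers $k$ (those with $v_p(k)$ large), so $\Spec\,\I^\circ$ contains infinitely many classical points of weight $\geq 2$. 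By the corollary in Section \ref{cmforms} all but finitely many of them are non-CM, so we may fix a classical non-CM point of weight $\geq 2$, cut out by a prime $\fP\subset\I^\circ$; write $f$ for the associated eigenform.

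Second, I would show that $U^+(\rho_\fP|_{H_0})$ and $U^-(\rho_\fP|_{H_0})$ are nontrivial, where $\rho_\fP=\rho\bmod\fP$. After extending scalars to $\overline{\Q}_p$, the representation $\rho_\fP$ is isomorphic to $\rho_f$, and by the theorem of Ribet and Momose (cf. Theorem \ref{ordlevel}), $f$ being non-CM of weight $\geq 2$, the image of $\rho_f$ is open in $\GL_2$ of the valuation ring of its coefficient field. Openness is invariant under conjugation and $\im\,\rho_\fP$ is, by construction, contained in $\GL_2(\I^\circ/\fP)$; hence $\im\,\rho_\fP$ is open in $\GL_2(\I^\circ/\fP)$ and therefore contains a genuine principal congruence subgroup of $\SL_2(\I^\circ/\fP)$ of some level $p^m$. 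Since $H_0$ has finite index in $G_{\Q,Np}$ and $\im\,\rho_\fP$ is compact, $\im\,(\rho_\fP|_{H_0})$ is again open in $\GL_2(\I^\circ/\fP)$, so it contains nontrivial upper and lower unipotent elements, i.e. $U^{\pm}(\rho_\fP|_{H_0})\neq 1$.

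Finally, the representation $\rho|_{H_0}$ is $\Z_p$-regular with a \emph{diagonal} $\Z_p$-regular element by hypothesis, so Proposition \ref{approx} (in the variant noted in the Remark following it, with $\I^\circ$ in place of $\I_0^\circ$), applied with $N=H_0$ and the ideal $\bP=\fP$, gives that the natural maps $U^{\pm}(\rho|_{H_0})\to U^{\pm}(\rho_\fP|_{H_0})$ are surjective. Hence $U^{\pm}(\rho|_{H_0})$ are nontrivial, and since $H_0\subseteq G_\Q$ one has $U^{\pm}(\rho|_{H_0})\subseteq U^{\pm}(\rho)$, which finishes the proof. The one delicate point is the second step: one must reconcile the fact that Ribet--Momose pins down $\rho_f$ only up to conjugation with the fact that here the basis is fixed (so as to keep the $\Z_p$-regular element diagonal), and must account for $\I^\circ/\fP$ possibly being a non-maximal order in its fraction field; both issues are handled by observing that openness of the image in $\GL_2$ of \emph{any} order with the correct fraction field already suffices and is conjugation-stable.
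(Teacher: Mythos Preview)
Your proof is correct and follows essentially the same route as the paper: choose a classical prime $\fP$, invoke Ribet--Momose to obtain nontrivial unipotents in $\im\,\rho_\fP$, and then lift via Proposition~\ref{approx} (with $\I^\circ$ in place of $\I_0^\circ$, as permitted by the Remark following it). You are more careful than the paper about ensuring the chosen point is non-CM and about invoking Coleman's classicality criterion.

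One small imprecision worth flagging: the sentence ``hence $\im\,\rho_\fP$ is open in $\GL_2(\I^\circ/\fP)$'' does not follow from Ribet--Momose as stated. What Ribet--Momose actually gives is openness in $\GL_2(\cO_{L_0})$ for the subfield $L_0\subset\mathrm{Frac}(\I^\circ/\fP)$ cut out by the inner twists of $f$, and if $\Gamma$ is nontrivial this inclusion is strict. This does not affect your conclusion: since $\bar d=d\bmod\fP$ is diagonal with distinct eigenvalues and normalizes $\im\,\rho_\fP$, the $\Q_p$-Lie algebra of $\im\,\rho_\fP$ decomposes into $\Ad(\bar d)$-eigenspaces, and irreducibility of $\rho_f$ forces both the upper and lower nilpotent pieces to be nonzero. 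Your closing remark already gestures at this; the paper's own proof is equally brief at this step.
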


%Recall that for a $\Z_p$-regular representation $\rho$ we always work with an element of its $\GL_2(\I^\circ)$ conjugacy class for which the image of an element giving regularity is diagonal.

\begin{proof}
By density of classical points in $\T_h$ we can choose a prime ideal $\bP\subset\I^\circ$ corresponding to a classical modular form $f$.
The modulo $\bP$ representation $\rho_\bP$ is the $p$-adic representation classically associated with $f$.
By the results of \cite{ribet1} and \cite{momose}, there exists an ideal $\fl_\bP$ of $\Z_p$ such that $\im\,\rho_\bP$ contains the congruence subgroup $\Gamma_{\Z_p}(\fl_\bP)$.
In particular $U^+(\rho_\bP)$ and $U^-(\rho_\bP)$ are both nontrivial.
Since the maps $U^+(\rho)\to U^+(\rho_\bP)$ and $U^-(\rho)\to U^-(\rho_\bP)$ are surjective we find nontrivial elements in $U^+(\rho)$ and $U^-(\rho)$.
\end{proof}

%We denote by $\fm_\I$ the maximal ideal of $\I$, $\F=\I/\fm_\I$ and $\overline{\rho}$ the residual representation $G_\Q\to\GL_2(\F)$.
We adapt the work in \cite[Sec. 7]{lang} to show the following.

\begin{proposition}\label{propI0}
Suppose that the representation $\rho\colon G_\Q\to\GL_2(\I^\circ)$ is $(H_0,\Z_p)$-regular. 
%Assume moreover that $\overline{\rho}\vert_{H_0}$ 
%is absolutely irreducible.
Then there exists $g\in\GL_2(\I^\circ)$ such that the conjugate representation $g\rho g^{-1}$ satisfies the following two properties:
\begin{enumerate}
\item the image of $g\rho g^{-1}\vert_{H_0}$ is contained in $\GL_2(\I^\circ_0)$;
\item the image of $g\rho g^{-1}\vert_{H_0}$ contains a diagonal $\Z_p$-regular element.
\end{enumerate}
\end{proposition}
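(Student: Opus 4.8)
The plan is to follow the strategy of \cite[Sec. 7]{lang}, adapting it to the finite slope setting. The starting point is the observation that the group $\Gamma$ of conjugate self-twists acts on $\rho$ in the following sense: for each $\sigma\in\Gamma$ there is a Dirichlet character $\eta_\sigma\colon G_\Q\to\I^{\circ,\times}$ with $\sigma\circ\theta(T_\ell)=\eta_\sigma(\ell)\theta(T_\ell)$ for almost all $\ell$, and since $\rho$ is the unique semisimple lift of $\overline\rho$ with trace $\tau_{\T_h}$, this forces $\sigma\circ\rho\cong\rho\otimes\eta_\sigma$ as representations into $\GL_2(Q(\I^\circ))$, where $\sigma$ acts entrywise. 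Concretely, for each $\sigma$ there is a matrix $t_\sigma\in\GL_2(Q(\I^\circ))$, well defined up to $Q(\I^\circ)^\times$, with $\sigma(\rho(g)) = \eta_\sigma(g)\, t_\sigma\,\rho(g)\,t_\sigma^{-1}$ for all $g\in G_\Q$; the cocycle relation $t_{\sigma\tau} \equiv {}^\sigma t_\tau \cdot t_\sigma$ holds up to scalars. Using the irreducibility of $\overline\rho$ (hence of $\rho$) and a Hilbert 90 / cohomological vanishing argument exactly as in \cite[Sec. 7]{lang}, one normalizes the $t_\sigma$ so that they form an honest cocycle and then trivializes it: there exists $g\in\GL_2(Q(\I^\circ))$ such that, after replacing $\rho$ by $g\rho g^{-1}$, one has $\sigma(\rho(h)) = \eta_\sigma(h)\,\rho(h)$ for all $\sigma\in\Gamma$ and all $h\in G_\Q$. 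In particular, restricting to $H_0 = \bigcap_\sigma \ker\eta_\sigma$, every entry of $\rho(h)$ for $h\in H_0$ is fixed by $\Gamma$, so $\rho(H_0)\subset\GL_2(Q(\I^\circ)^\Gamma)$. Since $\I^\circ$ is normal, $\I^\circ$ is stable under $\Gamma$ and $Q(\I^\circ)^\Gamma\cap\I^\circ = \I^\circ_0$; but a priori $g$ only lies in $\GL_2(Q(\I^\circ))$, so one must still argue that the entries land in $\I^\circ_0$ and not merely in $Q(\I^\circ_0)$. This is handled as in \cite{lang}: the entries of $\rho(h)$, being algebraic over $\Lambda_h$ and integral (they lie in the profinite ring $\I^\circ$ after an $\I^\circ$-conjugation, which one arranges by clearing denominators and using that $\I^\circ$ is integrally closed), must lie in $\I^\circ\cap Q(\I^\circ)^\Gamma = \I^\circ_0$. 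This gives property (1) and, simultaneously, shows $g$ can be taken in $\GL_2(\I^\circ)$.

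It remains to arrange property (2): that the conjugated representation still has a diagonal $\Z_p$-regular element in the image of $H_0$. Here the point is that $(H_0,\Z_p)$-regularity is a property of the conjugacy class of $\rho|_{H_0}$, so after the global conjugation by $g$ above there is \emph{some} $\Z_p$-regular $d\in\im(g\rho g^{-1}|_{H_0})$ with eigenvalues $d_1,d_2\in\Z_p$, $d_1^2\not\equiv d_2^2\pmod p$; what one wants is to further conjugate so that this $d$ becomes diagonal, \emph{without destroying} property (1). Since $d$ has distinct eigenvalues in $\Z_p\subset\I^\circ_0$, it is diagonalizable by a matrix $c\in\GL_2(\I^\circ_0)$ (the eigenvectors can be chosen with entries in $\I^\circ_0$, up to scaling, because the characteristic polynomial splits over $\Z_p$ and $d$ has entries in $\I^\circ_0$; one may need to pass to $\I^\circ_0[p^{-1}]$ and then clear denominators, invoking normality of $\I^\circ_0$). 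Conjugating the whole representation by such a $c\in\GL_2(\I^\circ_0)$ preserves property (1), because $\I^\circ_0$-conjugation sends $\GL_2(\I^\circ_0)$-valued representations to $\GL_2(\I^\circ_0)$-valued ones, and it makes $d$ diagonal, giving property (2). Replacing $g$ by $cg\in\GL_2(\I^\circ)$ completes the argument.

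The main obstacle I expect is the descent of the conjugating matrix from $\GL_2(Q(\I^\circ))$ to $\GL_2(\I^\circ)$, i.e. the integrality step: a priori the cocycle-trivialization only produces $g\in\GL_2(Q(\I^\circ))$, and one must use the hypothesis that $\I^\circ$ is normal (integrally closed in $Q(\I^\circ)$) together with the fact that $\rho$ already takes values in the profinite ring $\I^\circ$ to conclude that a suitable representative of $g$ lies in $\GL_2(\I^\circ)$ and that the entries of $\rho(H_0)$ land in $\I^\circ_0$ rather than merely $Q(\I^\circ_0)$. This is exactly the role played by the normality assumption, as flagged in the remark following Lemma \ref{gammaabel}, and the argument of \cite[Sec. 7]{lang} goes through here verbatim since none of it uses ordinarity; the cohomological input ($H^1$ of the relevant profinite group with coefficients in $\mathrm{PGL}_2$ or the vanishing used to split the cocycle) is insensitive to the slope. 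A secondary, more bookkeeping-level point is checking that the two conjugations (the global one by $g$ and the diagonalizing one by $c$) are compatible — i.e. that $c$ can genuinely be chosen over $\I^\circ_0$ — which again reduces to normality of $\I^\circ_0$ and the splitting of the characteristic polynomial of $d$ over $\Z_p$.
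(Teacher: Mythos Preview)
Your high-level outline is in the right spirit, but the order of the two conjugations is reversed compared to the paper, and this reversal hides the only nontrivial step.

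You propose to first ``trivialize the cocycle via Hilbert 90 / cohomological vanishing'' and only afterwards diagonalize the $\Z_p$-regular element $d$. The problem is that there is no off-the-shelf vanishing of $H^1(\Gamma,\mathrm{PGL}_2(Q(\I^\circ)))$ to invoke; the cocycle $\sigma\mapsto\bt_\sigma$ is not known to be a coboundary for abstract reasons. What Lang actually does in \cite[Sec. 7]{lang} (and what the paper reproduces) is the opposite order: \emph{first} conjugate so that $d$ is diagonal, \emph{then} show that each $\bt_\sigma$ is itself diagonal, and only \emph{then} construct the trivializing change of basis by finding $\Gamma$-fixed vectors inside each coordinate line $V_i=\I^\circ e_i$. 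The diagonality of the $\bt_\sigma$ is what makes this last step a question about $H^1$ of the torus rather than of $\mathrm{PGL}_2$, and even there the argument is concrete (Artin's lemma on the residue field, then a Hensel-type lift using that $\Gamma$ is a $2$-group and $p>2$) rather than a citation.

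Proving that the $\bt_\sigma$ are diagonal is also where the only genuinely new input in the finite-slope setting appears: evaluating the intertwining relation at the diagonal $d$ forces $\bt_\sigma$ to be diagonal or antidiagonal, and to exclude the antidiagonal case the paper evaluates at a nontrivial upper-unipotent element of $\im\rho$. The existence of such an element is the content of Proposition \ref{nontrivunip}, which in turn rests on the approximation lemma (Proposition \ref{approx}) together with the Ribet--Momose result at a classical specialization. Your write-up does not mention this step, and without it the argument is incomplete: you cannot show $\bt_\sigma$ is diagonal, hence cannot reduce to the torus, hence cannot trivialize the cocycle.

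Finally, your concern about integrality (descent of $g$ from $\GL_2(Q(\I^\circ))$ to $\GL_2(\I^\circ)$) is not where the difficulty lies. In the paper's argument the basis vectors $v_1,v_2$ are constructed directly inside $(\I^\circ)^2$, so the change-of-basis matrix is automatically integral; no separate ``clearing denominators'' step is needed.
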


\begin{proof}
As usual we choose a $\GL_2(\I^\circ)$-conjugate of $\rho$ such that a $\Z_p$-regular element $d$ is diagonal.
We still write $\rho$ for this conjugate representation and we show that it also has property (1).

Recall that for every $\sigma\in\Gamma$ there is a character $\eta_\sigma\colon G_\Q\to(\I^\circ)^\times$ and an equivalence $\rho^\sigma\cong\rho\otimes\eta_\sigma$.
Then for every $\sigma\in\Gamma$ there exists $\bt_\sigma\in\GL_2(\I^\circ)$ such that, for all $g\in G_\Q$,
\begin{equation}\label{rhoequiv}
\rho^\sigma(g)=\bt_\sigma\eta_\sigma(g)\rho(g)\bt_\sigma^{-1}.
\end{equation}

We prove that the matrices $\bt_\sigma$ are diagonal.
Let $\rho(t)$ be a non-scalar diagonal element in $\im\,\rho$ (for example $d$).
Evaluating (\ref{rhoequiv}) at $g=t$ we find that $\bt_\sigma$ must be either a diagonal or an antidiagonal matrix.
Now by Proposition \ref{nontrivunip} there exists a nontrivial element $\rho(u^+)\in\im\,\rho\cap U^+(\I^\circ)$.
Evaluating (\ref{rhoequiv}) at $g=u^+$ we find that $\bt_\sigma$ cannot be antidiagonal.
%%Let $\rho(t)$ be any non-scalar diagonal element in $\im\,\rho$ (for example $\rho(\delta)$).
%%We replace $g$ by $t$ in (\ref{rhoequiv}) and by a direct computation we find that $\bt_\sigma$ must be either a diagonal or an antidiagonal matrix.
%%Now by Prop. \ref{nontrivunip} there exists a nontrivial element $\rho(u)\in\im\,\rho\cap U_+(\I)$.
%%This time we replace $g$ by $u$ in (\ref{rhoequiv}) and we find that $\bt_\sigma$ cannot be antidiagonal.
%Moreover, if $\bt_\sigma=\diag(a_1,a_2)$, then it acts of $\rho(u)$ by multiplying its upper right entry by $a_1/a_2$. Since 
% then it acts on A\in\im\,\rho\cap U_+(\I) by multiplying the upper right entry of A by a_1/a_2. 

It is shown in \cite[Lemma 7.3]{lang} that there exists an extension $A$ of $\I^\circ$, at most quadratic, and a function $\zeta\colon\Gamma\to A^\times$ such that $\sigma\to\bt_\sigma\zeta(\sigma)^{-1}$ defines a cocycle with values in $\GL_2(A)$. 
%It is possible to multiply $\bt_\sigma$ by a scalar (depending on $\sigma$) in such a way that the equality above remains true and that $\sigma\mapsto \bt_\sigma$ becomes a cocycle. 
The proof of this result does not require the ordinarity of $\rho$. Eq. (\ref{rhoequiv}) remains true if we replace $\bt_\sigma$ with $\bt_\sigma\zeta(\sigma)^{-1}$, so we can and do suppose from now on that $\bt_\sigma$ is a cocycle with values in $\GL_2(A)$. In the rest of the proof we assume for simplicity that $A=\I^\circ$, but everything works in the same way if $A$ is a quadratic extension of $\I^\circ$ and $\F$ is the residue field of $A$.

Let $V=(\I^\circ)^2$ be the space on which $G_\Q$ acts via $\rho$. As in \cite[Sec. 7]{lang} we use the cocycle $\bt_\sigma$ to define a twisted action of $\Gamma$ on $(\I^\circ)^2$.
For $v=(v_1,v_2)\in V$ we denote by $v^\sigma$ the vector $(v_1^\sigma,v_2^\sigma)$. %with $\Gamma$ acting on each coordinate. % via its action on $\I$.
We write $v^{[\sigma]}$ for the vector $\bt_\sigma^{-1}v^\sigma$.
Then $v\to v^{[\sigma]}$ gives an action of $\Gamma$ since $\sigma\mapsto \bt_\sigma$ is a cocycle. Note that this action is $\I^\circ_0$-linear.
%When speaking of the action of $\Gamma$ we will refer from now on to this new action.

Since $\bt_\sigma$ is diagonal for every $\sigma\in\Gamma$, the submodules $V_1=\I^\circ(1,0)$ and $V_2=\I^\circ(0,1)$ are stable under the action of $\Gamma$.
We show that each $V_i$ contains an element fixed by $\Gamma$.
We denote by $\F$ the residue field $\I^\circ/\fm_\I^\circ$.
Note that the action of $\Gamma$ on $V_i$ induces an action of $\Gamma$ on the one-dimensional $\F$-vector space $V_i\otimes\I^\circ/\fm_{\I^\circ}$.
We show that for each $i$ the space $V_i\otimes\I^\circ/\fm_{\I^\circ}$ contains a nonzero element $\overline{v}_i$ fixed by $\Gamma$.
This is a consequence of the following argument, a form of which appeared in an early preprint of \cite{lang}.
Let $w$ be any nonzero element of $V_i\otimes\I^\circ/\fm_{\I^\circ}$ and let $a$ be a variable in $\F$. The sum
$$ S_{aw}=\sum_{\sigma\in\Gamma}(aw)^{[\sigma]} $$
is clearly $\Gamma$-invariant. We show that we can choose $a$ such that $S_{aw}\neq 0$. Since $V_i\otimes\I^\circ/\fm_{\I^\circ}$ is one-dimensional, for every $\sigma\in\Gamma$ there exists $\alpha_\sigma\in\F$ such that $w^{[\sigma]}=\alpha_\sigma w$. Then
$$ S_{aw}=\sum_{\sigma\in\Gamma}(aw)^{[\sigma]}=\sum_{\sigma\in\Gamma}a^\sigma w^{[\sigma]}=\sum_{\sigma\in\Gamma}a^\sigma\alpha_\sigma w=\left(\sum_{\sigma\in\Gamma}a^\sigma\alpha_\sigma a^{-1}\right)aw. $$
By Artin's lemma on the independence of characters, the function $f(a)=\sum_{\sigma\in\Gamma}a^\sigma\alpha_\sigma a^{-1}$ cannot be identically zero on $\F$. By choosing a value of $a$ such that $f(a)\neq 0$ we obtain a nonzero element $\overline{v}_i=S_{aw}$ fixed by $\Gamma$.
%This follows by the argument at the end of the proof of \cite[Th. 7.6]{lang}, using Artin's lemma on the linear independence of characters.

We show that $\overline{v}_i$ lifts to an element $v_i\in V_i$ fixed by $\Gamma$. %%%%%We identify $V_i$ with $\I^\circ$ by choosing a basis. 
%%%%%%%Let $Q(\I^\circ)$ and $Q(\I^\circ_0)$ be the fields of fractions respectively of $\I^\circ$ and $\I^\circ_0$, and let $n$ be the degree $[Q(\I^\circ)\colon Q(\I^\circ_0)]$. As $Q(\I^\circ_0)$-vector spaces we have $Q(\I^\circ)=\I^\circ\otimes_{\I^\circ_0}Q(\I^\circ_0)$. For every $\sigma\in\Gamma$ we extend the $\I^\circ_0$-linear map $[\sigma]\colon v\mapsto v^{[\sigma]}$ to a linear endomorphism of $Q(\I^\circ)$ by letting it act trivially on the $Q(\I^\circ_0)$-coefficients. We still denote this map by $[\sigma]$.
%%%\Gamma is a finite abelian 2-group by Lemma \ref{gammaabel}. We show that its action on \I actually factors through a cyclic quotient. The action on
Let $\sigma_0\in\Gamma$. By Lemma \ref{gammaabel} $\Gamma$ is a finite abelian $2$-group, so the minimal polynomial $P_m(X)$ of $[\sigma_0]$ acting on $V_i$ divides $X^{2^k}-1$ for some integer $k$. In particular the factor $X-1$ appears with multiplicity at most $1$. We show that its multiplicity is exactly $1$.
%By definition we have $P_m([\sigma_0])-1=0$ on $Q(\I)$, hence the same holds on $\I$.
If $\overline{P_m}$ is the reduction of $P_m$ modulo $\fm_{\I^\circ}$ then $\overline{P_m}([\sigma_0])=0$ on $V_i\otimes\I^\circ/\fm_{\I^\circ}$.
By our previous argument there is an element of $V_i\otimes\I^\circ/\fm_{\I^\circ}$ fixed by $\Gamma$ (hence by $[\sigma_0]$) so we have $(X-1)\mid\overline{P_m(X)}$. Since $p>2$ the polynomial $X^{2^k}-1$ has no double roots modulo $\fm_{\I^\circ}$, so neither does $\overline{P_m}$. By Hensel's lemma the factor $X-1$ lifts to a factor $X-1$ in $P_m$ and $\overline{v}_i$ lifts to an element $v_i\in V_i$ fixed by $[\sigma_0]$. Note that $\I^\circ\cdot v_i=V_i$ by Nakayama's lemma since $\overline{v}_i\neq 0$. %???

We show that $v_i$ is fixed by all of $\Gamma$.
Let $W_{[\sigma_0]}=\I^\circ v_i$ be the one-dimensional eigenspace for $[\sigma_0]$ in $V_i$. Since $\Gamma$ is abelian $W_{[\sigma_0]}$ is stable under $\Gamma$.
Let $\sigma\in\Gamma$. Since $\sigma$ has order $2^k$ in $\Gamma$ for some $k\geq 0$ and $v_i^{[\sigma]}\in W_{[\sigma_0]}$, there exists a root of unity $\zeta_\sigma$ of order $2^k$ satisfying $v_i^{[\sigma]}=\zeta_\sigma v_i$. Since $\overline{v}_i^{[\sigma]}=\overline{v}_i$, the reduction of $\zeta_\sigma$ modulo $\fm_{\I^\circ}$ must be $1$. As before we conclude that $\zeta_\sigma=1$ since $p\neq 2$. 
%$[\sigma]$ for any $\sigma\in\Gamma$.
%We show that W_{[\sigma_0]} is actually fixed by \Gamma. Let \sigma\in\Gamma and w\in W_{[\sigma_0]}. Since \sigma has order 2^k in \Gamma for some k\geq 0, there exists a root of unity \zeta_\sigma of order 2^k satisfying w^{[\sigma]}=\zeta_\sigma w. Since 
%%%%%
%%%We show that we can take $v\in\I^\circ$. We choose a basis $f_1,...,f_n$ for $Q(\I^\circ)$ over $Q(\I^\circ_0)$ such that $f_i\in\I^\circ$ for all $i$. Write $v=\sum_{i=1}^nv_if_i$. By multiplying by a suitable denominator in $\I^\circ_0$ we can suppose that $v_i\in\I^\circ_0$ for all $i$, hence that $v=\sum_i\alpha v_if_i$ is in $\I^\circ$.
%; now $\alpha v$ is in the same $[\sigma_0]$-eigenspace of $v$ and $\alpha v=\sum_i\alpha v_if^i$ is in $\I$.
%%%%%For every $\sigma\in\Gamma$ we have $v_i^{[\sigma]}=\gamma_\sigma v_i$ for some $\gamma_\sigma\in\I^\circ_0$. Since $\overline{v}_i\in\F$ is fixed by $[\sigma]$ for every $\sigma\in\Gamma$, we have $\overline{\gamma_\sigma}=1$. Since $\gamma_\sigma$ is a root of $X^{2^k}-1$, which has no double roots in $\F$, we must have $\gamma_\sigma=1$. We conclude that $v_i\in\I^\circ\cong V_i$ is the desired element fixed by $\Gamma$. 
%%%%%Note that $\I^\circ\cdot v_i=V_i$ since the reduction of $v$ modulo $\fm_{\I^\circ}$ is $\overline{v}_i$, which is nonzero.

We found two elements $v_1\in V_1$, $v_2\in V_2$ fixed by $\Gamma$. We show that every element of $v\in V$ fixed by $\Gamma$ must belong to the $\I^\circ_0$-submodule generated by $v_1$ and $v_2$. We proceed as in the end of the proof of \cite[Th. 7.5]{lang}. % Note that for $i=1,2$ $v_i$ is a generator of $V_i$ over $\I$, since by construction its image in $V_i\otimes\I/\fm_\I$ is nonzero.
Since $V_1$ and $V_2$ are $\Gamma$-stable we must have $v\in V_1$ or $v\in V_2$. Suppose without loss of generality that $v\in V_1$. Then $v=\alpha v_1$ for some $\alpha\in\I^\circ$. If $\alpha\in\I^\circ_0$ then $v\in\I^\circ_0 v_1$, as desired. If $\alpha\notin\I^\circ_0$ then there exists $\sigma\in\Gamma$ such that $\alpha^\sigma\neq\alpha$. Since $v$ is $[\sigma]$-invariant we obtain $(\alpha v_1)^{[\sigma]}=\alpha^\sigma v_1^{[\sigma]}=\alpha^\sigma v_1\neq \alpha v$, so $\alpha v_1$ is not fixed by $[\sigma]$, a contradiction.

Now $(v_1,v_2)$ is a basis for $V$ over $\I^\circ$, so the $\I^\circ_0$ submodule $V_0=\I^\circ_0v_1+\I^\circ_0v_2$ is an $\I^\circ_0$-lattice in $V$.
Recall that $H_0=\bigcap_{\sigma\in\Gamma}\ker\eta_\sigma$.
We show that $V_0$ is stable under the action of $H_0$ via $\rho\vert_{H_0}$, i.e. that if $v\in V$ is fixed by $\Gamma$, so is $\rho(h)v$ for every $h\in H_0$. 
% We proved that $V_0$ is the subset of elements of $V$ fixed by $\Gamma$, so 
This is a consequence of the following computation, where $v$ and $h$ are as before and $\sigma\in\Gamma$:
$$ (\rho(h)v)^{[\sigma]}=\bt_\sigma^{-1}\rho(h)^\sigma v^\sigma=\bt_\sigma^{-1}\eta_\sigma(h)\rho(h)^\sigma v^\sigma=\bt_\sigma^{-1}\bt_\sigma\rho(h)\bt_\sigma^{-1}v^\sigma=\rho(h)v^{[\sigma]}. $$

Since $V_0$ is an $\I^\circ_0$-lattice in $V$ stable under $\rho\vert_{H_0}$, we conclude that $\im\,\rho\vert_{H_0}\subset\GL_2(\I^\circ_0)$.
\end{proof}

\subsection{Fullness of the unipotent subgroups}

From now on we write $\rho$ for the element in its $\GL_2(\I^\circ)$ conjugacy class such that $\rho\vert_{H_0}\in\GL_2(\I^\circ_0)$.
Recall that $H$ is the open subgroup of $H_0$ defined by the condition $\det\overline{\rho}(h)=1$ for every $h\in H$.
%We assume from now on that $\rho$ is $(H,\Z_p)$-regular (recall that this means that $\rho\vert_H$ is $\Z_p$-regular).
As in \cite[Sec. 4]{lang} we define a representation $H\to\SL_2(\I^\circ_0)$ by
$$ \rho_0=\rho\vert_H\otimes\left(\det\rho\vert_H\right)^{-\frac{1}{2}}. $$

We can take the square root of the determinant thanks to the definition of $H$.
We will use the results of \cite{lang} to deduce that the $\Lambda_{h,0}$-module generated by the unipotent subgroups of the image of $\rho_0$ is big.
We will later deduce the same for $\rho$.

%\begin{remark}
%We note that Lemma \ref{approx} is still true if we replace $\Lambda_h$ by $\Lambda_{h,0}$ and $\I^\circ$ by the finite extension $\I^\circ_0$ of $\Lambda_{h,0}$.
%Moreover since $\rho$ is $(H,\Z_p)$-regular we have that $\rho_0$ is $\Z_p$-regular, so it satisfies the hypothesis of the lemma.
%\end{remark}
%
%Note that we cannot simply define $\rho_0=\rho\vert_H\cap\SL_2(\I^\circ_0)$ (in which case the fullness of the image of $\rho$ would easily follow from that of $\rho_0$) because the arguments in \cite[Sec. 5]{lang} leading to Prop. \ref{openproj} below would require the hypothesis that the image of $\Frob_l$ is in $\im\,\rho_0\cap\SL_2(\I^\circ_0)$ for almost all $l$.

We fix from now on a height one prime $P\subset\Lambda_{h,0}$ with the following properties:
\begin{enumerate}
\item there is an arithmetic prime $P_k\subset\Z_p[[\eta t]]$ satisfying $k>h+1$ and $P=P_k\Lambda_{h,0}$;
\item every prime $\fP\subset\I^\circ$ lying above $P$ corresponds to a non-CM point.
\end{enumerate}
%Such a point exists since classical points of weight $k>h+1$ form a Zariski dense set in $\Spm\,\T_h$ (hence there are infinitely many of them on every irreducible component) and by the results of Section \ref{cmforms} there are only finitely many CM points in $\Spm\,\T_h$.
Such a prime always exists. Indeed, by Remark \ref{arithprimes} every classical weight $k>h+1$ contained in the disc $B_h$ defines a prime $P=P_k\Lambda_{h,0}$ satisfying (1), so such primes are Zariski-dense in $\Lambda_{h,0}$, while the set of CM primes in $\I^\circ$ is finite by Proposition \ref{finiteCM}.

\begin{remark}\label{etale}
Since $k>h+1$, every point of $\Spec\,\T_h$ above $P_k$ is classical by \cite[Th. 6.1]{coleman}. Moreover the weight map is \'etale at every 
such point by \cite[Th. 11.10]{kisin}. In particular the prime $P\I_0^\circ=P_k\I_0^\circ$ splits as a product of distinct primes of $\I_0^\circ$.
\end{remark}

Make the technical assumption that the order of the residue field $\F$ of $\I^\circ$ is not $3$.
For every ideal $\bP$ of $\I^\circ_0$ over $P$ we let $\pi_\bP$ be the projection $\SL_2(\I^\circ_0/\bP)\to\SL_2(\I^\circ_0/\bP)$.
We still denote by $\pi_\bP$ the restricted maps $U^\pm(\I^\circ_0/\bP)\to U^\pm(\I^\circ_0/\bP)$.
%Let $U_{\pm}$ be the upper and lower unipotent subgroups of $\SL_2(\I^\circ_0)$.

Let $G=\im\,\rho_0$. For every ideal $\bP$ of $\I^\circ_0$ we denote by $\rho_{0,\bP}$ the representation $\pi_\bP(\rho_0)$ and by $G_\bP$ the image of $\rho_\bP$. Clearly $G_\bP=\pi_\bP(G)$.
We state two results from Lang's work that come over unchanged to the non-ordinary setting.

\begin{proposition}\label{openproj}\cite[Cor. 6.3]{lang}
Let $\fP$ be a prime of $\I^\circ_0$ over $P$. Then $G_{\fP}$ contains a congruence subgroup $\Gamma_{\I^\circ_0/\fP}(\fa)\subset\SL_2(\I^\circ_0/\fP)$. In particular $G_\fP$ is open in $\SL_2(\I^\circ_0/\fP)$.
\end{proposition}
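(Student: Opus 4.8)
The plan is to follow \cite[Cor. 6.3]{lang} and reduce the statement to the theorem of Ribet and Momose on the image of the Galois representation attached to a classical non-CM eigenform; the point is that this reduction uses only the inequality $k>h+1$ and not the ordinarity of $\theta$. First I would choose a prime $\fP'$ of $\I^\circ$ lying over $\fP$, hence over $P=P_k\Lambda_{h,0}$. Since $k>h+1$, Remark \ref{etale} shows that $\fP'$ corresponds to a classical cuspidal eigenform $f$ of weight $k$ and level $Np$, that the weight map is \'etale at $\fP'$, and --- by property $(2)$ of $P$ --- that $f$ is non-CM. From \'etaleness one deduces that $\I^\circ/\fP'$ is a complete local domain, finite over $\cO_h$, whose fraction field $E$ is commensurable with the completion at $p$ of the Hecke field of $f$, and that the reduction $\rho_{\fP'}$ of $\rho$ modulo $\fP'$ is the usual $p$-adic representation of $f$. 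Recalling (Proposition \ref{propI0}) that $\rho\vert_{H_0}$ is valued in $\GL_2(\I^\circ_0)$ and that $\rho_0=\rho\vert_H\otimes(\det\rho\vert_H)^{-1/2}$, one gets that $\rho_{0,\fP}$ is, up to this twist and the restriction to $H$, the $\SL_2$-valued representation attached to $f$, with values in $\SL_2(\I^\circ_0/\fP)\subset\SL_2(\I^\circ/\fP')$.

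Next I would invoke the theorem of Ribet and Momose (\cite{ribet1}, \cite{momose}) --- the same deep input as in Theorem \ref{ordlevel}, in the form used there for $\rho_0$ itself --- applied to the classical non-CM form $f$: the image of the associated $\SL_2$-valued representation (the twisted restriction analogous to $\rho_0$, whose reduction modulo $\fP$ is $\rho_{0,\fP}$) is contained in $\SL_2(\cO_0)$ and contains a $\GL_2(E)$-conjugate of a congruence subgroup of $\SL_2(\cO_0)$, where $\cO_0$ is the ring of integers of the subfield of $E$ fixed by the group of inner twists of $f$. Being contained in $\SL_2(\cO_0)$, that conjugate congruence subgroup is an open subgroup of $\SL_2(\cO_0)$, hence contains an honest congruence subgroup of $\SL_2(\cO_0)$; restricting from $G_\Q$ to $H$ only replaces it by a further open subgroup, so the conclusion persists. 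It then remains to identify $\cO_0$ with $\I^\circ_0/\fP$ up to commensurability. One inclusion is formal: every $\sigma\in\Gamma$ reduces modulo $\fP'$ to an inner twist of $f$, because $\sigma(\theta(T_\ell))=\eta_\sigma(\ell)\theta(T_\ell)$ while $a_\ell(f)$ is the image of $\theta(T_\ell)$; taking $\Gamma$-invariants of $0\to\fP'\to\I^\circ\to\I^\circ/\fP'\to 0$ and using that $H^1(\Gamma,-)$ is annihilated by $|\Gamma|$, one sees that $\I^\circ_0/\fP$ has finite index in the ring of invariants of $\I^\circ/\fP'$ under the image $\overline{\Gamma}$ of $\Gamma$, i.e.\ in the integers of $E^{\overline{\Gamma}}$.

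The hard part will be the reverse inclusion: a priori $f$ could carry inner twists not coming from the self-twists of the family, in which case $\cO_0$ is strictly smaller than $\I^\circ_0/\fP$ and the assertion fails for that $\fP$. I would deal with this by taking $P$ suitably generic --- besides $(1)$ and $(2)$, imposing that the classical forms lying over $P$ have inner-twist group exactly $\overline{\Gamma}$. This is legitimate because the arithmetic primes satisfying $(1)$ and $(2)$ are Zariski-dense in $\Lambda_{h,0}$ (Remark \ref{arithprimes}, Proposition \ref{finiteCM}), whereas a classical point whose inner-twist group is strictly larger is accidental: the Hecke-eigenvalue identities defining an inner twist, if they held on a Zariski-dense set of classical points, would hold on the whole component and thus already define a self-twist of $\theta$, so such points are not Zariski-dense and can be avoided. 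With $P$ chosen in this way $\cO_0$ and $\I^\circ_0/\fP$ are commensurable, so $G_\fP=\im\,\rho_{0,\fP}$ contains a congruence subgroup of $\SL_2(\I^\circ_0/\fP)$, and openness in $\SL_2(\I^\circ_0/\fP)$ follows at once since this ring is compact. Making the genericity step, and the commensurability bookkeeping between $\cO_0$, $\I^\circ_0/\fP$ and the Hecke field, fully rigorous is the most delicate point I foresee.
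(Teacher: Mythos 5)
The paper gives no independent proof here: it simply cites Lang's Cor.~6.3 and records (in the remark following Proposition \ref{openprod}) that her argument transports to the finite-slope setting precisely because $P$ was chosen so that all points above it are classical, non-CM, and \'etale over the weight space. Your reconstruction follows exactly that route --- specialize at a classical non-CM point, invoke Ribet--Momose, and compare the ring fixed by the inner twists of $f$ with $\I_0^\circ/\fP$ --- so in substance you are re-deriving Lang's Cor.~6.3 rather than taking a different path. Your identification of the two inputs that make the transport work ($k>h+1$ giving classicality and \'etaleness, property (2) giving non-CM) matches the paper's remark.

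The one place where your argument is genuinely incomplete is the ``reverse inclusion'' step, and you are right to flag it as the delicate point. Two comments. First, a small imprecision: only the decomposition subgroup $\Gamma_{\fP'}=\{\sigma\in\Gamma:\sigma(\fP')=\fP'\}$ reduces to inner twists of $f$, not all of $\Gamma$; \'etaleness is what guarantees that $(\I^\circ/\fP')^{\Gamma_{\fP'}}$ still has fraction field $Q(\I_0^\circ/\fP)$. Second and more seriously, your density argument for the extra genericity condition does not work as stated: an inner twist of $f_{\fP'}$ is a pair $(\sigma_{\fP'},\eta)$ with $\sigma_{\fP'}$ an automorphism of the Hecke field \emph{of that particular form}, so ``the same Hecke-eigenvalue identity holding on a Zariski-dense set of classical points'' is not a well-posed closed condition --- the automorphisms at different points need not a priori be restrictions of a single automorphism of $Q(\I)$. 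The correct argument (Momose's, as adapted by Hida and Lang) first bounds the set of candidate characters $\eta$ (finitely many, since their conductor and order are controlled by the level), and then, for a fixed $\eta$ occurring at infinitely many classical points, uses the fact that the twisted forms $f_{\fP'}\otimes\eta$ populate finitely many irreducible components of a fixed eigenvariety to produce an identification of components, and only then a global automorphism, i.e.\ a conjugate self-twist of $\theta$. This is the content of the lemma on inner twists of arithmetic specializations in Lang's \S 2 and Hida's paper, and it is what licenses the equality $\cO_0\sim\I_0^\circ/\fP$ without your ad hoc condition (3); if you prefer to impose (3) on $P$, you still need this component-level argument to know that primes satisfying it exist. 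With that step supplied, the rest of your proof (openness of a congruence subgroup, stability under restriction to $H$ and the determinant twist) is fine.
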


\begin{proposition}\label{openprod}\cite[Prop. 5.1]{lang}
Assume that for every prime $\fP\subset\I^\circ_0$ over $P$ the subgroup $G_\fP$ is open in $\SL_2(\I^\circ_0/\fP)$.
Then the image of $G$ in $\prod_{\fP|P}\SL_2(\I^\circ_0/\fP)$ through the map $\prod_{\fP|P}\pi_\fP$ contains a product of congruence subgroups $\prod_{\fP|P}\Gamma_{\I^\circ_0/\fP}(\fa_\fP)$.
\end{proposition}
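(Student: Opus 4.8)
\emph{Sketch of the argument.} The plan is to rewrite the assertion as a statement about a subdirect product and then settle it by a Goursat-type analysis, the real work being to bound the overlap between the factors. First I would use Remark~\ref{etale}: the primes of $\I_0^\circ$ lying over $P$ are pairwise distinct, say $\fP_1,\dots,\fP_n$, so $P\I_0^\circ=\fP_1\cdots\fP_n$ and the Chinese Remainder Theorem gives $\I_0^\circ/P\I_0^\circ\cong\prod_{i=1}^n R_i$ with $R_i:=\I_0^\circ/\fP_i$, hence $\SL_2(\I_0^\circ/P\I_0^\circ)\cong\prod_i\SL_2(R_i)$. Let $\overline G$ denote the image of $G$ in this product. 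For each $i$ put $G^{(i)}:=\overline G\cap\SL_2(R_i)$, the subgroup of elements that are trivial in every coordinate except the $i$-th. The subgroups $G^{(1)},\dots,G^{(n)}$ pairwise commute and have pairwise trivial intersection, so together they generate their internal direct product inside $\overline G$; hence it is enough to show that, for each $i$, the projection of $G^{(i)}$ onto the $i$-th factor (on which it is injective) contains a congruence subgroup $\Gamma_{R_i}(\fa_i)$ for some nonzero ideal $\fa_i\subset R_i$.

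Fixing $i$, I would view $\overline G$ as a subdirect product of $B:=G_{\fP_i}$, its image in the $i$-th coordinate, and of $A$, its image in $\prod_{j\ne i}\SL_2(R_j)$. By construction $G^{(i)}=\ker(\overline G\to A)$, so Goursat's lemma identifies the image $N$ of $G^{(i)}$ in $B$ as a normal subgroup of $B$ with $B/N\cong A/M$ for some $M\trianglelefteq A$. By hypothesis $B=G_{\fP_i}$ is open in $\SL_2(R_i)$, so $B\supseteq\Gamma_{R_i}(\fc_i)$ for a nonzero $\fc_i$; it therefore suffices to prove that $N$ is open in $B$, equivalently that the overlap $B/N\cong A/M$ is \emph{finite}. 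Here I would argue by induction on $n$: the case $n=1$ is the hypothesis, and for $n\ge2$ the inductive hypothesis applied to the reduction of $\rho_0$ modulo the $n-1$ primes $\fP_j$ ($j\ne i$) gives $A\supseteq\prod_{j\ne i}\Gamma_{R_j}(\fc_j)$. Intersecting $\overline G$ with $\Gamma_{R_i}(\fc_i)\times\prod_{j\ne i}\Gamma_{R_j}(\fc_j)$ replaces $\overline G$, $A$, $B$ by finite-index subgroups and so does not affect the openness of the images; thus we may reduce to the case in which $\overline G$, $A$, $B$ are pro-$p$, $B$ is a congruence subgroup of $\SL_2(R_i)$, and $A$ is a pro-$p$ subdirect product of congruence subgroups attached to the $\fP_j$, $j\ne i$.

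The heart of the matter — and the step I expect to be the main obstacle — is to show that such a pro-$p$ subdirect product cannot have an infinite Goursat overlap with the congruence subgroup $B$ coming from the distinct prime $\fP_i$. The successive quotients of the congruence filtration of $B=\Gamma_{R_i}(\fc_i)$ are abelian and isomorphic to $\fsl_2\otimes(\fc_i^m/\fc_i^{m+1})$, i.e. finitely generated $R_i$-modules, and similarly for $A$ and the $R_j$. A common quotient $B/N\cong A/M$ is automatically Galois-equivariant, since for every $g\in H$ the matrix $\rho_0(g)$ has matching images in $B/N$ and in $A/M$; if this common quotient were infinite, then comparing congruence-graded pieces level by level would produce a $G$-equivariant identification of nontrivial reductions of $\rho_0$ modulo ideals above $\fP_i$ with reductions modulo ideals above some $\fP_j$, $j\ne i$, contradicting the fact that $\fP_i$ and $\fP_j$ are distinct primes of $\I_0^\circ$ over $P$ (the corresponding classical eigenforms have different Hecke eigenvalues). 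Any non-pro-$p$ part of the overlap is harmless, being finite; one may also record that $\mathrm{PSL}_2$ of a finite quotient of the $1$-dimensional complete local ring $R_i$ is generated by its unipotent elements, which keeps the estimate within elementary group theory. Carrying out this overlap estimate in detail is precisely the delicate point and follows \cite[Prop.~5.1]{lang}. Once $B/N$ is known to be finite we get $N\supseteq\Gamma_{R_i}(\fa_i)$ for some nonzero $\fa_i$, and running over all $i$ yields $\prod_i\Gamma_{R_i}(\fa_i)\subseteq\prod_i G^{(i)}\subseteq\overline G$, which is the claim.
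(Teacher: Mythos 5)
The paper does not actually reprove this statement: Proposition \ref{openprod} is quoted as \cite[Prop. 5.1]{lang}, and the only added content is the remark that the proof transfers to the non-ordinary setting because $P$ was chosen so that $P\I_0^\circ$ splits into \emph{distinct} primes, all of whose points are classical and non-CM. Your CRT decomposition, the reduction to the subgroups $G^{(i)}=\ker(\overline{G}\to A)$, and the Goursat analysis of the common quotient $B/N\cong A/M$ do reproduce the skeleton of Lang's (ultimately Ribet's, cf. \cite{ribet3}) argument, and like the paper you defer the decisive overlap estimate to \cite[Prop. 5.1]{lang}. So structurally you are where the paper is.

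However, the one step you do try to justify independently is justified with the wrong invariant, and as stated it would fail. You claim the Goursat overlap must be finite because ``the corresponding classical eigenforms have different Hecke eigenvalues.'' That is not sufficient: two eigenforms with distinct Hecke eigensystems can perfectly well have Galois representations that become isomorphic after restriction to an open subgroup and twisting by a finite-order character, and in exactly that situation the adjoint images are entangled and the common quotient $B/N\cong A/M$ is infinite. The correct argument is that an infinite $\overline{G}$-equivariant common quotient forces an isomorphism $\rho_{\fP_i}\vert_{H_0}\cong{}^\sigma\rho_{\fP_j}\vert_{H_0}\otimes\eta$, i.e.\ the two specializations are related by a conjugate self-twist; since $\I_0^\circ$ is by definition the fixed ring of the full group $\Gamma$ of conjugate self-twists (Section \ref{selftwists}) and $H_0=\bigcap_{\sigma\in\Gamma}\ker\eta_\sigma$, such a relation would identify $\fP_i$ and $\fP_j$ as primes of $\I_0^\circ$ over $P$, contradicting their distinctness (which is where Remark \ref{etale} enters). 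This is the entire reason the proposition is stated over $\I_0^\circ$ and for the group $G=\im\,\rho_0\vert_H$ rather than over $\I^\circ$ for $\im\,\rho$; a correct write-up must invoke the self-twist machinery at this point rather than the bare inequality of Hecke eigenvalues.
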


\begin{remark}
The proofs of Propositions \ref{openproj} and \ref{openprod} rely on the fact that the big ordinary Hecke algebra is \'etale over $\Lambda$ at every arithmetic point. In order for these proofs to adapt to the non-ordinary setting it is essential that the prime $P$ satisfies the properties above Remark \ref{etale}.
\end{remark}

We let $U^\pm(\rho_0)=G\cap U^{\pm}(\I^\circ_0)$ and $U^\pm(\rho_\bP)=G_\bP\cap U^{\pm}(\I^\circ_0/\bP)$.
We denote by $U(\rho_\bP)$ either the upper or lower unipotent subgroups of $G_{\bP}$ (the choice will be fixed throughout the proof).
By projecting to the upper right element we identify $U^+(\rho_0)$ with a $\Z_p$-submodule of $\I^\circ_0$ and $U^+(\rho_{0,\bP})$ with a $\Z_p$-submodule of $\I^\circ_0/\bP$. We make analogous identifications for the lower unipotent subgroups.
We will use Proposition \ref{openprod} and Proposition \ref{approx} to show that, for both signs, $U^{\pm}(\rho)$ spans $\I^\circ_0$ over $\Lambda_{h,0}$.
%This would be sufficient if we were in the ordinary case where $\I^\circ$ and $\I^\circ_0$ are extensions of $\I^\circ$ and $\im\,\rho_0\cap U_{\pm}$ is a $\Lambda$-algebra; in the non-ordinary case we will need instead to work on a closed ball of radius $r$, then take an extension to $C$ of the corresponding algebra and conjugate by the Sen operator.

First we state a version of \cite[Lemma 4.10]{lang}, with the same proof. Let $A$ and $B$ be Noetherian rings with $B$ integral over $A$. We call $A$-\textit{lattice} an $A$-submodule of $B$ generated by the elements of a basis of $Q(B)$ over $Q(A)$.

\begin{lemma}\label{lattice}
Any $A$-lattice in $B$ contains a nonzero ideal of $B$. Conversely, every nonzero ideal of $B$ contains an $A$-lattice.
\end{lemma}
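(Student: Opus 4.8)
The statement is a standard commutative-algebra fact about $A$-lattices inside a finite extension $B$, so the plan is to argue purely ring-theoretically, using that $B$ is a finitely generated $A$-module (since $B$ is integral over the Noetherian ring $A$ and, implicitly, module-finite — this is the situation of interest, where $A=\Lambda_{h,0}$ and $B=\I_0^\circ$). Write $L=\Lambda_{h,0}$-lattice generically as $M\subset B$, spanned by a $Q(A)$-basis $b_1,\dots,b_n$ of $Q(B)$.

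For the first assertion, the idea is to clear denominators. Pick any nonzero ideal $J_0\subset B$ (for instance $J_0=B$ if we only want a nonzero ideal, but to land inside $M$ we work harder): first choose a common denominator. Since $B$ is a finite $A$-module, fix generators $c_1,\dots,c_m$ of $B$ over $A$; each $c_j$ lies in $Q(B)=\sum Q(A)b_i$, so there is a single nonzero $a\in A$ with $a c_j\in \sum A b_i = M$ for all $j$, whence $aB\subset M$. Now $aB$ is a nonzero ideal of $B$ (nonzero because $B$ is torsion-free over the domain $A$, or at least $a$ is a non-zero-divisor), and it is contained in $M$. This gives the first half.

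For the converse, let $I\subset B$ be a nonzero ideal; I want to produce an $A$-lattice inside $I$. Choose $0\neq x\in I$. Then $xB\subset I$, so it suffices to find an $A$-lattice inside the principal ideal $xB$. Multiplication by $x$ is a $Q(A)$-linear automorphism of $Q(B)$, so it sends the $Q(A)$-basis $b_1,\dots,b_n$ to another $Q(A)$-basis $xb_1,\dots,xb_n$ of $Q(B)$; hence $\sum_i A\,(x b_i)$ is an $A$-lattice, and it is contained in $xB\subset I$. (If $x$ is a zero-divisor one first replaces it by a non-zero-divisor in $I$, which exists because $I\neq 0$ and, in the cases we use, $B$ is a domain; in general one uses prime avoidance over the finitely many associated primes of $B$.) This completes the argument.

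\textbf{Main obstacle.} There is no serious obstacle; the only point requiring a little care is the existence of a \emph{single} element $a\in A$ (resp. a non-zero-divisor $x\in B$) clearing all denominators simultaneously, which uses the finiteness of $B$ as an $A$-module together with the Noetherian hypothesis, and the fact that $A$ is a domain (so $Q(A)$ makes sense and nonzero elements of $A$ are non-zero-divisors on the torsion-free module $B$). Everything else is formal manipulation of bases and ideals.
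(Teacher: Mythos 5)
Your argument is correct and is the standard denominator-clearing proof: the paper itself supplies no proof, stating only that the lemma is a version of Lang's Lemma 4.10 ``with the same proof'', and that proof is exactly the argument you give (clear denominators of a generating set of $B$ to get $aB\subset M$; multiply a basis by a nonzero element of the ideal for the converse). The caveats you flag — module-finiteness of $B$ over $A$ and the domain hypotheses needed to find non-zero-divisors — are precisely the hypotheses in force in the application ($A=\Lambda_{h,0}$, $B=\I_0^\circ$), so nothing is missing.
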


We prove the following proposition by means of Proposition \ref{approx}. We could also use Pink theory as in \cite[Sec. 4]{lang}.

\begin{proposition}\label{lambdaspan_0}
Consider $U^\pm(\rho_0)$ as subsets of $Q(\I^\circ_0)$. For each choice of sign the $Q(\Lambda_{h,0})$-span of $U^\pm(\rho_0)$ is $Q(\I^\circ_0)$. Equivalently the $\Lambda_{h,0}$-span of $U^\pm(\rho_0)$ contains a $\Lambda_{h,0}$-lattice in $\I^\circ_0$.
%Equivalently, (\im\,\rho\cap U_{\pm})\subset\I^\circ_0 contains a generating set for \I^\circ_0 over \Lambda.
\end{proposition}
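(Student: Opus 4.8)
The plan is to combine the surjectivity statement of Proposition \ref{approx} with the openness result of Proposition \ref{openprod} and the lattice dictionary of Lemma \ref{lattice}. Fix a choice of sign, say $+$, and recall that $U^+(\rho_0)$ is identified with a $\Z_p$-submodule $\fU$ of $\I_0^\circ$, while for each prime $\fP$ of $\I_0^\circ$ above $P$ the reduction $U^+(\rho_{0,\fP})$ is a $\Z_p$-submodule of $\I_0^\circ/\fP$. First I would apply Proposition \ref{approx} with $\I_0^\circ$ in place of $\I_0^\circ$, $N=H$, and the ideal $\bP=\fP$: since $\rho_0$ is $\Z_p$-regular (the regular element $d$ from the hypothesis on $\rho$, pushed into $H$ after twisting by the determinant, still has eigenvalues in $\Z_p$ distinct modulo $p$), the natural map $U^+(\rho_0)\to U^+(\rho_{0,\fP})$ is surjective. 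Hence $\fU$ surjects onto $U^+(\rho_{0,\fP})$ for every $\fP\mid P$.

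Next I would bring in Proposition \ref{openprod}: by Proposition \ref{openproj} each $G_\fP$ is open in $\SL_2(\I_0^\circ/\fP)$, so the image of $G=\im\,\rho_0$ in $\prod_{\fP\mid P}\SL_2(\I_0^\circ/\fP)$ contains a product of congruence subgroups $\prod_\fP \Gamma_{\I_0^\circ/\fP}(\fa_\fP)$. Looking at upper unipotent parts, this means the image of $\fU$ in $\prod_{\fP\mid P}\I_0^\circ/\fP$ contains $\prod_\fP \fa_\fP$ (viewing each $\fa_\fP$ inside $\I_0^\circ/\fP$ via the upper-right entry); in particular the $\Z_p$-module $\fU$ surjects onto $\prod_{\fP\mid P}\I_0^\circ/\fP$ modulo a product of nonzero ideals. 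By Remark \ref{etale} the ideal $P\I_0^\circ$ is a product of the distinct primes $\fP\mid P$, so by CRT $\I_0^\circ/P\I_0^\circ\cong\prod_{\fP\mid P}\I_0^\circ/\fP$, and therefore the image of the $\Lambda_{h,0}$-span of $\fU$ in $\I_0^\circ/P\I_0^\circ$ contains the image of the nonzero ideal $\prod_\fP\widetilde{\fa_\fP}$ (any lift to $\I_0^\circ$).

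Now let $M\subset\I_0^\circ$ be the $\Lambda_{h,0}$-submodule generated by $U^+(\rho_0)$. The previous paragraph shows $M+P\I_0^\circ\supset J$ for a nonzero ideal $J\subset\I_0^\circ$; shrinking $J$ using Lemma \ref{lattice} I may assume $J$ contains a $\Lambda_{h,0}$-lattice, hence $J$ has full rank over $\Lambda_{h,0}$. From $M+P\I_0^\circ\supset J$ I would conclude that $M$ itself has full rank: tensoring with $Q(\Lambda_{h,0})$, the image of $P$ becomes a unit (as $P$ is a nonzero prime of $\Lambda_{h,0}$, hence $P\not\subset(0)$ and $P$ maps to something nonzero, indeed invertible, in the fraction field), so $M\otimes Q(\Lambda_{h,0})\supset J\otimes Q(\Lambda_{h,0})=Q(\I_0^\circ)$. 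Thus the $Q(\Lambda_{h,0})$-span of $U^+(\rho_0)$ is all of $Q(\I_0^\circ)$, and by Lemma \ref{lattice} the $\Lambda_{h,0}$-span of $U^+(\rho_0)$ contains a $\Lambda_{h,0}$-lattice in $\I_0^\circ$. The same argument applies verbatim to $U^-$.

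The step I expect to be the main obstacle is the passage from the ``mod $P$'' statement to a full-rank statement over $\Lambda_{h,0}$: one must be careful that Proposition \ref{approx} is only being invoked modulo the single height-one prime $P$ (chosen generically via Remark \ref{etale} to be étale and non-CM), and that the congruence subgroups $\fa_\fP$ produced by Proposition \ref{openprod} depend on $\fP$; the clean way around this is precisely the lattice/rank argument above, which only needs one good prime $P$ rather than control at all primes simultaneously. A secondary subtlety is checking that $\rho_0$ inherits $\Z_p$-regularity from $\rho$ — this is where the definition of $H$ as $H_0\cap\ker(\det\overline\rho)$ and the fact that $\det\rho\vert_H$ has a square root are used, so that the twisted element still has $\Z_p$-rational eigenvalues distinct modulo $p$.
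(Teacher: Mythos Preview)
Your overall strategy is correct and matches the paper's: reduce modulo the chosen \'etale non-CM prime $P$, use Propositions \ref{openproj} and \ref{openprod} to produce a nonzero ideal inside $U^\pm(\rho_{0,P\I_0^\circ})$, lift via Proposition \ref{approx}, and then promote the mod-$P$ fullness to fullness over $\Lambda_{h,0}$. You also correctly identify the last step as the crux.

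However, your proposed resolution of that step is wrong. From $M+P\I_0^\circ\supset J$ you tensor with $Q(\Lambda_{h,0})$ and claim $M\otimes Q(\Lambda_{h,0})\supset J\otimes Q(\Lambda_{h,0})$. But tensoring the inclusion gives
\[
\bigl(M\otimes Q(\Lambda_{h,0})\bigr)+\bigl(P\I_0^\circ\otimes Q(\Lambda_{h,0})\bigr)\supset J\otimes Q(\Lambda_{h,0}),
\]
and since $P$ is a nonzero ideal of $\Lambda_{h,0}$, the term $P\I_0^\circ\otimes Q(\Lambda_{h,0})$ equals $Q(\I_0^\circ)$ itself. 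The inclusion therefore becomes $Q(\I_0^\circ)\supset J\otimes Q(\Lambda_{h,0})$, which is vacuous and says nothing about $M$. Inverting $P$ destroys exactly the information you need.

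The fix is Nakayama's lemma, using that $\Lambda_{h,0}$ is \emph{local} and $P$ lies in its maximal ideal. Concretely: by Proposition \ref{approx} with $\bP=P\I_0^\circ$, the image of $U(\rho_0)$ in $\I_0^\circ/P\I_0^\circ$ contains the nonzero ideal $\fa_P$; lift $\fa_P$ to an ideal $\fa\subset\I_0^\circ$ and observe that $\Lambda_{h,0}\cdot\bigl(\fa\cap U(\rho_0)\bigr)$ is a $\Lambda_{h,0}$-submodule of the finitely generated $\Lambda_{h,0}$-module $\fa$ that surjects onto $\fa/P\fa$. Nakayama then gives $\Lambda_{h,0}\cdot\bigl(\fa\cap U(\rho_0)\bigr)=\fa$, so $\fa\subset\Lambda_{h,0}\cdot U(\rho_0)$. (Equivalently, lift a $\Lambda_{h,0}/P$-basis of a lattice contained in $\fa_P$ to elements of $U(\rho_0)$ and check directly, using that the height-one prime $P$ is principal in the regular local ring $\Lambda_{h,0}$, that a $\Lambda_{h,0}$-relation among these lifts would force a nontrivial relation modulo $P$.)

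On your secondary subtlety: your proposed verification of $\Z_p$-regularity for $\rho_0$ is not quite right either---twisting $d$ by $(\det\rho)^{-1/2}$ replaces the eigenvalues $d_1,d_2$ by $(d_1/d_2)^{1/2},(d_2/d_1)^{1/2}$, which need not lie in $\Z_p$. The paper sidesteps this by noting that the proof of Proposition \ref{approx} only needs the diagonal $\Z_p$-regular element $d$ to \emph{normalize} the image, and $d\in\rho(H_0)$ normalizes $\rho(H)$ (as $H\triangleleft H_0$) and hence also $\im\,\rho_0$.
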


\begin{proof}
%For any ideal $\bP$ of $\Lambda_{h,0}$ or $\I^\circ_0$ we denote by $G_\bP$ the image of the representation $\pi_\bP(\rho_0)$ in $\SL_2(\I^\circ_0/\bP)$ (so $G_\bP=\im\,\rho_{0,\bP}=\pi_\bP(\im\,\rho_0)$).
Keep notations as above. We omit the sign when writing unipotent subgroups and we refer to either the upper or lower ones (the choice is fixed throughout the proof).
%By \ref{openproj} \im\,\rho_{0,\fP} contains a congruence subgroup of \fsl_2(\I^\circ_0/\fP), hence U_{+,\fP} contains a basis for Q(\I^\circ_0/\fP) over Q(\Lambda/P).
Let $P$ be the prime of $\Lambda_{h,0}$ chosen above.
By Remark \ref{etale} the ideal $P\I_0^\circ$ splits as a product of distinct primes in $\I_0^\circ$.
When $\fP$ varies among these primes, the map $\bigoplus_{\fP|P}\pi_\fP$ gives embeddings of $\Lambda_{h,0}/P$-modules $\I^\circ_0/P\I^\circ_0\into\bigoplus_{\fP|P}\I^\circ_0/\fP$ and $U(\rho_{P\I_0^\circ})\into\bigoplus_{\fP|P}U(\rho_\fP)$.
%where the products are over primes $\fP$ of $\I^\circ_0$ over $P$.
The following diagram commutes:
\begin{equation}\label{immersions}
\begin{tikzcd}[baseline=(current bounding box.center)]
U(\rho_{P\I_0^\circ}) \arrow[hook]{r}{\bigoplus_{\fP|P}\pi_\fP}\arrow[hook]{d}
& \bigoplus_{\fP|P}U(\rho_\fP)\arrow[hook]{d}\\
\I^\circ_0/P\I^\circ_0 \arrow[hook]{r}{\bigoplus_{\fP|P}\pi_\fP}
& \bigoplus_{\fP|P}\I^\circ_0/\fP
\end{tikzcd}
\end{equation}

By Proposition \ref{openprod} there exist ideals $\fa_\fP\subset\I^\circ_0/\fP$ such that $(\bigoplus_{\fP|P}\pi_\fP)(G_{P\I_0^\circ})\supset\bigoplus_{\fP|P}\Gamma_{\I^\circ_0/\fP}(\fa_\fP)$.
In particular $(\bigoplus_{\fP|P}\pi_\fP)(U(\rho_{P\I_0^\circ}))\supset\bigoplus_{\fP|P}(\fa_\fP)$.
By Lemma \ref{lattice} each ideal $\fa_\fP$ contains a basis of $Q(\I^\circ_0/\fP)$ over $Q(\Lambda_{h,0}/P)$, so that the $Q(\Lambda_{h,0}/P)$-span of $\bigoplus_{\fP|P}\fa_\fP$ is the whole $\bigoplus_{\fP|P}Q(\I^\circ_0/\fP)$.
Then the $Q(\Lambda_{h,0}/P)$-span of $(\bigoplus_{\fP|P}\pi_\fP)(G_\fP\cap U(\rho_{\fP}))$ is also $\bigoplus_{\fP|P}Q(\I^\circ_0/\fP)$.
By commutativity of diagram (\ref{immersions}) we deduce that the $Q(\Lambda_{h,0}/P)$-span of $G_P\cap U(\rho_{P\I_0^\circ})$ is $Q(\I^\circ_0/P\I^\circ_0)$.
In particular $G_{P\I_0^\circ}\cap U(\rho_{P\I_0^\circ})$ contains a $\Lambda_{h,0}/P$-lattice, hence by Lemma \ref{lattice} a nonzero ideal $\fa_P$ of $\I^\circ_0/P\I^\circ_0$.

Note that the representation $\rho_0\colon H\to\SL_2(\I_0^\circ)$ satisfies the hypotheses of Proposition \ref{approx}.
Indeed we assumed that $\rho\colon G_\Q\to\GL_2(\I)$ is $(H_0,\Z_p)$-regular, so the image of $\rho\vert_{H_0}$ contains a diagonal $\Z_p$-regular element $d$.
Since $H$ is a normal subgroup of $H_0$, $\rho(H)$ is a normal subgroup of $\rho(H_0)$ and it is normalized by $d$.
By a trivial computation we see that the image of $\rho_0=\rho\vert_H\otimes(\det\rho\vert_H)^{-1/2}$ is also normalized by $d$.

Let $\fa$ be an ideal of $\I^\circ_0$ projecting to $\fa_P\subset U(\rho_{0,P\I_0^\circ})$. By Proposition \ref{approx} applied to $\rho_0$ we obtain that the map $U(\rho_0)\to U(\rho_{0,P\I_0^\circ})$ is surjective, so the $\Z_p$-module $\fa\cap U(\rho_0)$ also surjects to $\fa_P$.
Since $\Lambda_{h,0}$ is local we can apply Nakayama's lemma to the $\Lambda_{h,0}$-module $\Lambda_{h,0}(\fa\cap U(\rho_0)$ to conclude that it coincides with $\fa$. Hence $\fa\subset\Lambda_{h,0}\cdot U(\rho_0)$, so the $\Lambda_{h,0}$-span of $U(\rho_0)$ contains a $\Lambda_{h,0}$-lattice in $\I^\circ_0$.
\end{proof}

We show that Proposition \ref{lambdaspan_0} is true if we replace $\rho_0$ by $\rho\vert_H$. 
%This is done in \cite[Prop. 4.2]{lang} for an ordinary representation by 
This will be a consequence of the description of the subnormal sugroups of $\GL_2(\I^\circ)$ presented in \cite{taz}, but 
%We will also follow this approach, but 
we need a preliminary step because we cannot induce a $\Lambda_{h,0}$-module structure on the unipotent subgroups of $G$. 
For a subgroup $\cG\subset\GL_2(\I^\circ_0)$ define $\cG^p=\{g^p,\, g\in G\}$ and $\widetilde{\cG}=\cG^p\cap(1+p\Mat_2(\I^\circ_0))$. Let $\widetilde{\cG}^{\Lambda_{h,0}}$ be the subgroup of $\GL_2(\I^\circ)$ generated by the set $\{g^\lambda\colon g\in\widetilde{\cG}, \lambda\in\Lambda_{h,0}\}$ where $g^\lambda=\exp(\lambda\log g)$. We have the following.

\begin{lemma}\label{uniplattice}
The group $\widetilde{\cG}^{\Lambda_{h,0}}$ contains a congruence subgroup of $\SL_2(\I^\circ_0)$ if and only if both of the unipotent subgroups $\cG\cap U^+(\I^\circ_0)$ and $\cG\cap U^-(\I^\circ_0)$ contain a basis of a $\Lambda_{h,0}$-lattice in $\I^\circ_0$.
\end{lemma}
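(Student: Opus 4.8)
The plan is to pass to Lie algebras. Since $p>2$ and $\I_0^\circ$ is complete, $\exp$ and $\log$ are mutually inverse homeomorphisms between $1+p\Mat_2(\I_0^\circ)$ and $p\Mat_2(\I_0^\circ)$, and by the Baker--Campbell--Hausdorff formula $\widetilde{\cG}^{\Lambda_{h,0}}=\exp(\fg)$, where $\fg$ is the smallest closed $\Lambda_{h,0}$-Lie subalgebra of $p\Mat_2(\I_0^\circ)$ containing $\log g$ for all $g\in\widetilde{\cG}$ (note $\widetilde{\cG}\subset\cG\cap(1+p\Mat_2(\I_0^\circ))$). Write $\fu^+=\I_0^\circ E_{12}$, $\fu^-=\I_0^\circ E_{21}$ and $H=E_{11}-E_{22}$. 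The first reduction is the Pink-type dictionary used in \cite{lang}: $\widetilde{\cG}^{\Lambda_{h,0}}$ contains a congruence subgroup of $\SL_2(\I_0^\circ)$ if and only if $\fg\cap\fsl_2(\I_0^\circ)$ has full $\Lambda_{h,0}$-rank in $\fsl_2(\I_0^\circ)$ --- the ``only if'' because $\log\Gamma_{\I_0^\circ}(\fa)\supset\fsl_2(\fa')$ for a nonzero ideal $\fa'$ once $\fa$ is small, and the ``if'' because, by Lemma \ref{lattice} together with the fact that a full-rank $\Lambda_{h,0}$-Lie subalgebra of $\fsl_2(\I_0^\circ)$ contains $\fsl_2(\fc)$ for some nonzero ideal $\fc$, one recovers a congruence subgroup inside $\exp(\fg)$.

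For $(\Leftarrow)$, I would argue directly. If $u=1+aE_{12}\in\cG\cap U^+(\I_0^\circ)$, then $u^p=1+paE_{12}\in\cG^p\cap(1+p\Mat_2(\I_0^\circ))=\widetilde{\cG}$, so $paE_{12}=\log(u^p)\in\fg$; since $\fg$ is a $\Lambda_{h,0}$-module, $pL^+E_{12}\subset\fg$, where $L^+$ is the full-rank $\Lambda_{h,0}$-lattice spanned by the entries of $\cG\cap U^+$ (which exists by hypothesis), and symmetrically $pL^-E_{21}\subset\fg$. From $[aE_{12},bE_{21}]=abH$ we get $p^2(L^+L^-)H\subset\fg$, which is full-rank on the diagonal line, so $\fg\cap\fsl_2(\I_0^\circ)$ is full-rank and the dictionary applies. (Equivalently, at the group level $\widetilde{\cG}^{\Lambda_{h,0}}$ contains $U^+(p\fb^+)$ and $U^-(p\fb^-)$ for nonzero ideals $\fb^\pm$ by Lemma \ref{lattice}, and one concludes with the classification of subnormal subgroups of $\GL_2(\I^\circ)$ of \cite{taz}.)

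For $(\Rightarrow)$, the Lie-theoretic part is easy: $\widetilde{\cG}^{\Lambda_{h,0}}\supset\Gamma_{\I_0^\circ}(\fa)$ forces $\fg\cap\fsl_2(\I_0^\circ)$ to be full-rank, and a rank count (the projection $\fsl_2(\I_0^\circ)\to\I_0^\circ H\oplus\fu^-$ has kernel $\fu^+$, so $\fg\cap\fu^+$ has $\Lambda_{h,0}$-rank at least $\rank_{\Lambda_{h,0}}\I_0^\circ$) shows $\fg\cap\fu^+$, $\fg\cap\fu^-$ and $\fg\cap\I_0^\circ H$ are all of full rank. The real point is to descend from $\fg$ to $\cG$ itself. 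For this I would use that $\cG$ is normalized by a $\Z_p$-regular diagonal element $d$, hence by $\delta=\lim_n d^{p^n}$ --- which is the situation in every application of the lemma, $\cG$ being the image of a twisted restriction of $\rho_\theta$. Then $\cG_1=\cG\cap(1+p\Mat_2(\I_0^\circ))$ is $\delta$-stable, so the operator $\Delta$ of Lemma \ref{2jlem}, and its weight-zero analogue $\Delta_0$ projecting onto the $\ad(\delta)$-invariant part, both map $\cG_1$ into itself; iterating them produces honest elements of $\cG_1\cap U^\pm(\I_0^\circ)$ and honest diagonal elements $\diag(w_1,w_2)\in\cG_1$. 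Conjugating $1+aE_{12}$ by $\diag(w_1,w_2)$ gives $1+(w_1w_2^{-1})aE_{12}$, so the $\Lambda_{h,0}$-span $M^+$ of the entries of $\cG\cap U^+$ is stable under multiplication by the ratios $w_1w_2^{-1}=\exp(2c)$, with $c$ running over the $H$-components of $\log$ of the diagonal elements of $\cG_1$; since $\exp(2c)-1=2c\cdot(\text{unit})$, these ratios generate a $\Lambda_{h,0}$-subalgebra of $\I_0^\circ$ whose rank is governed by the span of those $c$'s, which the full-rankness of $\fg$ (via the $\ad(\delta)$-eigenspace decomposition) forces to be full. Combined with $M^+\neq 0$, extracted from $\fg\cap\fu^+\neq 0$ by $\Delta^\infty$ and a weight argument, this yields $M^+$ of full $\Lambda_{h,0}$-rank, and symmetrically $M^-$, which is the assertion.

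The step I expect to be the main obstacle is precisely this last descent in $(\Rightarrow)$: the unipotent subgroups of $\cG$ carry only a $\Z_p$-module structure, so the full-rank $\Lambda_{h,0}$-linear information sitting in $\fg\cap\fu^+$ need not come from genuine unipotents of $\cG$ --- bracketing non-unipotent elements of $\cG$ can manufacture new unipotents only after $\Lambda_{h,0}$-linearisation --- and the implication $(\Rightarrow)$ uses the normalising regular element in an essential way. Carefully interleaving the $\ad(\delta)$-eigenspace decompositions of $\fg$ and of the $\Lambda_{h,0}$-span of $\log\cG_1$, the $\Delta$- and $\Delta_0$-extraction of honest torus and unipotent elements of $\cG$, and the conjugation action that forces $M^\pm$ to fill out --- so that ``$\fg$ full-rank'' really does imply ``$M^\pm$ full-rank'' --- is where the work lies.
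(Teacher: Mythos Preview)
Your $(\Leftarrow)$ direction is essentially the paper's argument: push the lattice basis from $\cG\cap U^\pm$ to $\widetilde{\cG}\cap U^\pm$ by taking $p$-th powers, take the $\Lambda_{h,0}$-span to get nonzero ideals in the unipotent subgroups of $\widetilde{\cG}^{\Lambda_{h,0}}$ via Lemma~\ref{lattice}, and invoke the ``standard argument'' from \cite[Lemma~2.9]{hida} and \cite[Prop.~4.2]{lang}.

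For $(\Rightarrow)$ the paper takes a much shorter route than yours. It simply asserts the identity
\[
U^+(\I_0^\circ)\cap\widetilde{\cG}^{\Lambda_{h,0}}=\Lambda_{h,0}\cdot(\widetilde{\cG}\cap U^+(\I_0^\circ)),
\]
and then runs the chain of equivalences: congruence subgroup $\Leftrightarrow$ the unipotent subgroups of $\widetilde{\cG}^{\Lambda_{h,0}}$ contain an ideal $\Leftrightarrow$ (via the displayed identity) $\Lambda_{h,0}\cdot(\widetilde{\cG}\cap U^\pm)$ contains an ideal $\Leftrightarrow$ (Lemma~\ref{lattice}) $\widetilde{\cG}\cap U^\pm$ contains a lattice basis $\Leftrightarrow$ $\cG\cap U^\pm$ does. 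No $\Z_p$-regular element, no $\Delta$-operator, no torus extraction.

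The difference, and the trade-off: your descent argument is honest about exactly the point you flag as the obstacle --- products of $\Lambda_{h,0}$-powers of non-unipotent elements of $\widetilde{\cG}$ could in principle produce new unipotents in $\widetilde{\cG}^{\Lambda_{h,0}}$ not coming from $\widetilde{\cG}\cap U^+$, so the inclusion $\subset$ in the displayed identity is not tautological. The paper treats this as part of the package imported from \cite{hida} and \cite{lang}, while you supply an independent mechanism via $\ad(\delta)$-eigenspace extraction. The cost is that you import the hypothesis that $\cG$ is normalized by a $\Z_p$-regular diagonal element, which is \emph{not} in the lemma as stated (it holds in every application, but the lemma is phrased for an arbitrary $\cG\subset\GL_2(\I_0^\circ)$). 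So strictly speaking you are proving a slightly weaker statement than the paper claims; the paper's shortcut is the displayed identity, which is what you should either accept as the ``standard'' input or isolate and justify separately if you want to avoid the extra hypothesis.
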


\begin{proof}
%We identify as usual the unipotent subgroups with the $\Z_p$-submodules of $\I^\circ_0$ defined by their nontrivial coefficient.
It is easy to see that $\cG\cap U^+(\I^\circ_0)$ contains the basis of a $\Lambda_{h,0}$-lattice in $\I^\circ_0$ if and only if the same is true for $\widetilde{\cG}\cap U^+(\I^\circ_0)$. The same is true for $U^-$.
By a standard argument, used in the proofs of \cite[Lemma 2.9]{hida} and \cite[Prop. 4.2]{lang}, $\cG^{\Lambda_{h,0}}\subset\GL_2(\I^\circ_0)$ contains a congruence subgroup of $\SL_2(\I^\circ_0)$ if and only if both its upper and lower unipotent subgroup contain an ideal of $\I^\circ_0$.
We have $U^+(\I^\circ_0)\cap\cG^{\Lambda_{h,0}}=\Lambda_{h,0}(\cG\cap U^+(\I^\circ_0))$, so by Lemma \ref{lattice} $U^+(\I^\circ_0)\cap\cG^{\Lambda_{h,0}}$ contains an ideal of $\I^\circ_0$ if and only if $\cG\cap U^+(\I^\circ_0)$ contains a basis of a $\Lambda_{h,0}$-lattice in $\I^\circ_0$. We proceed in the same way for $U^-$.
\end{proof}

Now let $G_0=\im\,\rho\vert_H$, $G=\im\,\rho_0$. Note that $G_0\cap\SL_2(\I^\circ_0)$ is a normal subgroup of $G$.
Let $f\colon\GL_2(\I^\circ_0)\to\SL_2(\I^\circ_0)$ be the homomorphism sending $g$ to $\det(g)^{-1/2}g$.
We have $G=f(G_0)$ by definition of $\rho_0$. We show the following.

\begin{proposition}\label{unipbasis}
The subgroups $G_0\cap U^{\pm}(\I^\circ_0)$ both contain the basis of a $\Lambda_{h,0}$-lattice in $\I^\circ_0$ if and only if $G\cap U^{\pm}(\I^\circ_0)$ both contain the basis of a $\Lambda_{h,0}$-lattice in $\I^\circ_0$.
\end{proposition}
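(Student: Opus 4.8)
The plan is to deduce Proposition~\ref{unipbasis} from the relationship $G = f(G_0)$ together with the description of subnormal subgroups of $\GL_2(\I^\circ)$ from \cite{taz}, exactly as suggested by the placement in the text. First I would observe that the map $f\colon\GL_2(\I_0^\circ)\to\SL_2(\I_0^\circ)$, $g\mapsto\det(g)^{-1/2}g$, makes sense on $G_0 = \im\,\rho\vert_H$ because on $H$ the determinant $\det\overline\rho$ is trivial, so $\det\rho\vert_H$ takes values in $1+\fm$ and has a square root in $\I_0^{\circ,\times}$ (here $p>2$ is used). The key point is that $f$ is the identity on unipotent matrices: if $u\in U^\pm(\I_0^\circ)$ then $\det u = 1$, so $f(u)=u$. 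More generally, for $g\in G_0$ we have $g = f(g)\cdot(\det g)^{1/2}$, i.e.\ $G_0$ and $G$ differ only by a central scalar factor with values in $1+\fm\subset\Lambda_{h,0}$ (in fact in $\cO_{h,0}^\times$ after adjusting, but this is not needed).

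The forward direction is then nearly immediate: if $g\in G_0\cap U^+(\I_0^\circ)$ then $f(g)=g\in G\cap U^+(\I_0^\circ)$, so the inclusion $G_0\cap U^+(\I_0^\circ)\subseteq G\cap U^+(\I_0^\circ)$ holds, and likewise for $U^-$; hence if the left-hand subgroups contain a basis of a $\Lambda_{h,0}$-lattice, so do the right-hand ones. For the reverse direction, take $\gamma\in G\cap U^+(\I_0^\circ)$. Then $\gamma = f(g_0)$ for some $g_0\in G_0$, so $g_0 = \gamma\cdot(\det g_0)^{1/2}$ is an upper-triangular matrix whose diagonal entries are both equal to the scalar $s:=(\det g_0)^{1/2}$, which lies in $1+\m_{\I_0^\circ}$. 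Replacing $g_0$ by a $p$-power limit (using that $G_0$ is $p$-adically complete and $s$ is a $1$-unit, so $s^{p^n}\to 1$), or more simply by noting $\gamma = g_0\cdot s^{-1}$ where $s^{-1}$ is central, one finds that $g_0 s^{-1}\in G_0$ is unipotent upper-triangular with the same upper-right entry as $\gamma$. Concretely: write $g_0=\left(\begin{smallmatrix}s&s\gamma_{12}\\0&s\end{smallmatrix}\right)$; then $g_0^{p^n}=\left(\begin{smallmatrix}s^{p^n}&p^n s^{p^n}\gamma_{12}\\0&s^{p^n}\end{smallmatrix}\right)$, which does not immediately land in $U^+$, so instead I would argue via the unipotent part of the Jordan decomposition, which is a limit of polynomials in $g_0$ and hence lies in the closed subgroup $G_0$; its upper-right entry is $\gamma_{12}$. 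Either way one produces an element of $G_0\cap U^+(\I_0^\circ)$ with the same upper-right coordinate as $\gamma$, so the two $\Z_p$-submodules $G_0\cap U^+(\I_0^\circ)$ and $G\cap U^+(\I_0^\circ)$ of $\I_0^\circ$ coincide; similarly for $U^-$. The statement about containing the basis of a $\Lambda_{h,0}$-lattice is then literally the same condition on both sides.

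The main obstacle I anticipate is making the extraction of the unipotent part clean in this $\Lambda_{h,0}$-algebra setting: one wants the semisimple--unipotent (multiplicative Jordan) decomposition $g_0 = g_{0,s}\,g_{0,u}$ with $g_{0,s}$ the scalar $s$ and $g_{0,u}$ unipotent, and one needs $g_{0,u}\in G_0$. Since $g_{0,s}=s\cdot\Id$ is central and $g_{0,u}=s^{-1}g_0$, and $s = (\det g_0)^{1/2}\in\I_0^{\circ,\times}$ is a well-defined element, $g_{0,u}$ is an explicit matrix; the only subtlety is whether it lies in $G_0$. But $G_0=\im\,\rho\vert_H$ and $f(g_0)=\gamma\in G$, and the fibre of $f$ over $\gamma$ inside $\GL_2(\I_0^\circ)$ is $\{\zeta\gamma : \zeta^2 = \det(\zeta\gamma)\}=\{\pm\gamma\}$ (the square roots of $1$), so in fact $g_0\in\{\pm\gamma\}$ already, whence $s\in\{\pm1\}$ and $g_0 = \pm\gamma$; if $g_0=-\gamma$ replace $H$-argument is unnecessary since $(-\gamma)\cdot(-\Id)=\gamma$ and $-\Id$ need not be in $G_0$, but then simply note $g_0^2=\gamma^2\in G_0$ and $\gamma^2$ is again unipotent upper-triangular with upper-right entry $2\gamma_{12}$; as $p\neq 2$, $2$ is a unit and this still shows $\gamma_{12}$ times a unit lies in $G_0\cap U^+(\I_0^\circ)$, which suffices for the lattice statement. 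Thus the genuine content is light once the Jordan/scalar bookkeeping is organised, and no appeal to \cite{taz} beyond what is already invoked in Lemma~\ref{uniplattice} is needed for this particular proposition.
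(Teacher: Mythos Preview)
Your forward direction is correct and, incidentally, is the only direction needed for Corollary~\ref{lambdaspan}. The reverse direction, however, has a genuine error.

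The fibre computation is wrong. If $f(g_0)=\gamma$ with $\gamma$ unipotent, then $g_0=(\det g_0)^{1/2}\gamma=s\gamma$ with $s\in 1+\fm_{\I_0^\circ}$; but \emph{any} $1$-unit $s$ gives $f(s\gamma)=\gamma$, since $\det(s\gamma)=s^2$ and $(s^2)^{1/2}=s$. Your condition ``$\zeta^2=\det(\zeta\gamma)$'' is vacuous (it reads $\zeta^2=\zeta^2$), so the fibre is all of $(1+\fm)\cdot\gamma$, not $\{\pm\gamma\}$. Consequently $g_0^2=s^2\gamma^2\neq\gamma^2$ in general, and the squaring salvage fails. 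The Jordan-decomposition alternative is not justified either: over an infinite-rank $\Z_p$-algebra there is no reason the unipotent part $g_{0,u}=s^{-1}g_0$ lies in the closed subgroup $G_0$, and ``limit of polynomials in $g_0$'' produces elements of the algebra generated by $g_0$, not of the group $G_0$.

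There is an easy repair in the spirit of your approach: the $\Z_p$-regular element $\delta$ normalizes $G_0$ (since $H\lhd H_0$), and the commutator $[\delta,g_0]=\delta g_0\delta^{-1}g_0^{-1}=(\delta\gamma\delta^{-1})\gamma^{-1}$ kills the central scalar $s$, landing in $G_0\cap U^+(\I_0^\circ)$ with upper-right entry $(\alpha-1)\gamma_{12}$, a unit multiple of $\gamma_{12}$. This is essentially the $\Delta$-operator from Proposition~\ref{approx}. The paper instead takes an indirect route: it passes via Lemma~\ref{uniplattice} to the equivalent statement about $\widetilde{G_0}^{\Lambda_{h,0}}$ and $\widetilde{G}^{\Lambda_{h,0}}$ containing congruence subgroups, observes that $\widetilde{G_0}^{\Lambda_{h,0}}\cap\SL_2(\I_0^\circ)$ is normal (hence subnormal) in $\widetilde{G}^{\Lambda_{h,0}}$ and non-central, and invokes Tazhetdinov's subnormal-structure theorem \cite{taz} to conclude.
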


\begin{proof}
Since $G=f(G_0)$ we have $\widetilde{G}=f(\widetilde{G_0})$.
This implies that $\widetilde{G}^{\Lambda_{h,0}}=f(\widetilde{G_0}^{\Lambda_{h,0}})$.
We remark that $\widetilde{G_0}^{\Lambda_{h,0}}\cap\SL_2(\I_0^\circ)$ is a normal subgroup of $\widetilde{G}^{\Lambda_{h,0}}$.
Indeed $\widetilde{G_0}^{\Lambda_{h,0}}\cap\SL_2(\I_0^\circ)$ is normal in $\widetilde{G_0}^{\Lambda_{h,0}}$, so its image $f(G_0^{\Lambda_{h,0}}\cap\SL_2(\I_0^\circ))=G_0^{\Lambda_{h,0}}\cap\SL_2(\I_0^\circ)$ is normal in $f(G_0^{\Lambda_{h,0}})=\widetilde{G}^{\Lambda_{h,0}}$.
%Since $f$ is the identity on $\widetilde{G_0}^{\Lambda_{h,0}}\cap\SL_2(\I^\circ_0)$ we conclude that $G_0^{\Lambda_{h,0}}\cap\SL_2(\I^\circ_0)$ is normal in $f(G_0^{\Lambda_{h,0}})=G^{\Lambda_{h,0}}$.

By \cite[Cor. 1]{taz} a subgroup of $\GL_2(\I^\circ_0)$ contains a congruence subgroup of $\SL_2(\I^\circ_0)$ if and only if it is subnormal in $\GL_2(\I^\circ_0)$ and it is not contained in the centre. We note that $\widetilde{G_0}^{\Lambda_{h,0}}\cap\SL_2(\I^\circ_0)=(\widetilde{G_0}\cap\SL_2(\I^\circ_0))^{\Lambda_{h,0}}$ is not contained in the subgroup $\{\pm 1\}$. Otherwise also $\widetilde{G_0}\cap\SL_2(\I^\circ_0)$ would be contained in $\{\pm 1\}$ and $\im\,\rho\cap\SL_2(\I^\circ_0)$ would be finite, since $\widetilde{G_0}$ is of finite index in $G_0^p$.
This would give a contradiction: indeed if $\fP$ is an arithmetic prime of $\I^\circ$ of weight greater than $1$ and $\fP^\prime=\fP\cap\I^\circ_0$, the image of $\rho$ modulo $\fP^\prime$ contains a congruence subgroup of $\SL_2(\I_0^\circ/\fP^\prime)$ by the result of \cite{ribet1}.

Since $\widetilde{G_0}^{\Lambda_{h,0}}\cap\SL_2(\I^\circ_0)$ is a normal subgroup of $\widetilde{G}^{\Lambda_{h,0}}$, we deduce by \cite[Cor. 1]{taz} that $\widetilde{G_0}^{\Lambda_{h,0}}\cap\SL_2(\I^\circ_0)$ (hence $\widetilde{G_0}^{\Lambda_{h,0}}$) contains a congruence subgroup of $\SL_2(\I^\circ_0)$ if and only if $\widetilde{G}^{\Lambda_{h,0}}$ does.
%Now $(\widetilde{G_0}\cap\SL_2(\I_0^\circ))^{\Lambda_{h,0}}=\widetilde{G_0}^{\Lambda_{h,0}}\cap\SL_2(\I^\circ_0)$.
By applying Lemma \ref{uniplattice} to $\cG=G_0$ and $\cG=G$ we obtain the desired equivalence.
\end{proof}

By combining Propositions \ref{lambdaspan_0} and \ref{unipbasis} we obtain the following.

\begin{corollary}\label{lambdaspan}
The $\Lambda_{h,0}$-span of each of the unipotent subgroups $\im\,\rho\cap U^{\pm}$ contains a $\Lambda_{h,0}$-lattice in $\I^\circ_0$.
\end{corollary}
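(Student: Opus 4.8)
The plan is to deduce the corollary formally by chaining Proposition~\ref{lambdaspan_0} with Proposition~\ref{unipbasis} and then enlarging from $\rho\vert_H$ to $\rho$ via an elementary inclusion of groups.

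First I would record the translation between the two phrasings that occur in the preceding results. Since $\I_0^\circ$ is finite over $\Lambda_{h,0}$, the total ring of fractions $Q(\I_0^\circ)$ is a finite-dimensional $Q(\Lambda_{h,0})$-algebra; hence a subset of $\I_0^\circ\subset Q(\I_0^\circ)$ whose $Q(\Lambda_{h,0})$-span is all of $Q(\I_0^\circ)$ automatically contains a $Q(\Lambda_{h,0})$-basis of $Q(\I_0^\circ)$, i.e.\ the basis of a $\Lambda_{h,0}$-lattice in $\I_0^\circ$ (this is also exactly the content of the ``Equivalently'' clause of Proposition~\ref{lambdaspan_0}, combined with Lemma~\ref{lattice}). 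Applying this to Proposition~\ref{lambdaspan_0}, for each choice of sign the unipotent subgroup $U^\pm(\rho_0)=G\cap U^\pm(\I_0^\circ)$, with $G=\im\,\rho_0$, contains the basis of a $\Lambda_{h,0}$-lattice in $\I_0^\circ$.

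Next I would invoke Proposition~\ref{unipbasis}, whose hypothesis is precisely that $G\cap U^\pm(\I_0^\circ)$ contain the basis of a $\Lambda_{h,0}$-lattice for both signs: it yields that $G_0\cap U^\pm(\I_0^\circ)$ also contains the basis of a $\Lambda_{h,0}$-lattice in $\I_0^\circ$ for both signs, where $G_0=\im\,\rho\vert_H$. Finally, since $H\subset H_0$ and $\rho\vert_{H_0}$ takes values in $\GL_2(\I_0^\circ)$, one has the inclusion $G_0\cap U^\pm(\I_0^\circ)\subset\im\,\rho\cap U^\pm$, so the $\Lambda_{h,0}$-span of $\im\,\rho\cap U^\pm$ contains that of $G_0\cap U^\pm(\I_0^\circ)$, which in turn contains a $\Lambda_{h,0}$-lattice in $\I_0^\circ$ by the previous step. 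This proves the corollary.

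There is no genuine obstacle here: all the substantive work sits in Propositions~\ref{lambdaspan_0} and~\ref{unipbasis} (which rest on Proposition~\ref{openprod} at the arithmetic prime $P$ and on the subnormal subgroup classification of \cite{taz}), and the corollary is a bookkeeping consequence. The only point requiring a word of care is the passage between ``the $Q(\Lambda_{h,0})$-span is $Q(\I_0^\circ)$'' and ``contains the basis of a $\Lambda_{h,0}$-lattice'', which is handled by Lemma~\ref{lattice}.
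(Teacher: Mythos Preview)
Your argument is correct and is exactly what the paper does: the corollary is stated immediately after Propositions~\ref{lambdaspan_0} and~\ref{unipbasis} with the one-line justification ``By combining Propositions~\ref{lambdaspan_0} and~\ref{unipbasis} we obtain the following.'' Your write-up simply makes explicit the bookkeeping (the equivalence via Lemma~\ref{lattice} and the trivial inclusion $\im\,\rho\vert_H\subset\im\,\rho$) that the paper leaves to the reader.
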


Unlike in the ordinary case we cannot deduce from the corollary that $\im\,\rho$ contains a congruence subgroup of $\SL_2(\I^\circ_0)$, since we are working over $\Lambda_h\neq\Lambda$ and we cannot induce a $\Lambda_h$-module structure (not even a $\Lambda$-module structure) on $\im\,\rho\cap U^{\pm}$.
The proofs of \cite[Lemma 2.9]{hida} and \cite[Prop. 4.3]{lang} rely on the existence, in the image of the Galois group, of an element inducing by conjugation a $\Lambda$-module structure on $\im\,\rho\cap U^{\pm}$. In their situation this is predicted by the condition of Galois ordinarity of $\rho$.
In the non-ordinary case we will find an element with a similar property via relative Sen theory.
In order to do this we will need to work with a suitably defined Lie algebra rather than with the group itself.

\bigskip

\section{Relative Sen theory}\label{sentheory}

We recall the notations of Section \ref{subsectnest}.
In particular $r_h=p^{-s_h}$, with $s_h\in\Q$, is the $h$-adapted radius (which we also take smaller than $p^{-\frac{1}{p-1}})$, $\eta_h$ is an element in $\C_p$ of norm $r_h$, $K_h$ is the Galois closure in $\C_p$ of $\Q_p(\eta_h)$ and $\cO_h$ is the ring of integers in $K_h$.
The ring $\Lambda_h$ of analytic functions bounded by $1$ on the open disc $\cB_h=\cB(0,r_h^-)$ is identified to $\cO_h[[t]]$. We take a sequence of radii $r_i=p^{-s_h-1/i}$ converging to $r_h$ and denote by $A_{r_i}=K_h\langle t,X_i\rangle/(pX_i-t^i)$ the $K_h$-algebra defined in Section \ref{subsectnest} which is a form over $K_h$ of the $\C_p$-algebra of analytic functions on the closed ball $\cB(0,r_i)$ (its Berthelot model). We denote by $A_{r_i}^\circ$ the $\cO_h$-subalgebra of functions bounded by $1$.
Then $\Lambda_h=\varprojlim_{i} A_{r_i}^\circ$ where $A_{r_j}^\circ\to A_{r_i}^\circ$ for $i<j$ is the restriction of analytic functions.
%We write from now on $r$ for any one of the radii $r_i$.

We defined in Section \ref{selftwists} a subring $\I_0^\circ\subset\I^\circ$, finite over $\Lambda_{h,0}\subset\Lambda_h$. For $r_i$ as above, we write $A_{0,r_i}^\circ=\cO_{h,0}\langle t,X_i\rangle/(pX_i-t^i)$ with maps $A_{0,r_j}^\circ\to A_{0,r_i}^\circ$ for $i<j$, so that $\Lambda_{h,0}=\varprojlim_iA_{0,r_i}^\circ$.
Let $\I_{r_i}^\circ=\I^\circ\widehat{\otimes}_{\Lambda_h} A_{r_i}^\circ$ and $\I_{0,r_i}^\circ=\I_0^\circ\widehat{\otimes}_{\Lambda_{h,0}} A_{0,r_i}^\circ$, both endowed with their $p$-adic topology. Note that $(\I_{r_i}^\circ)^\Gamma=\I_{r_i,0}^\circ$.

Consider the representation $\rho\colon G_\Q\to\GL_2(\I^\circ)$ associated with a family $\theta\colon \T_h\to\I^\circ$.
We observe that $\rho$ is continuous with respect to the profinite topology of $\I^\circ$ but not with respect to the $p$-adic topology.
For this reason we fix an arbitrary radius $r$ among the $r_i$ defined above and consider the representation $\rho_r\colon G_\Q\to\GL_2(\I_{r}^\circ)$ obtained by composing $\rho$ with the inclusion $\GL_2(\I^\circ)\into\GL_2(\I_{r}^\circ)$.
This inclusion is continuous, hence the representation $\rho_r$ is continuous with respect to the $p$-adic topology on $\GL_2(\I_{0,r}^\circ)$.

Recall from Proposition \ref{propI0} that, after replacing $\rho$ by a conjugate, there is an open normal subgroup $H_0\subset G_\Q$ such that the restriction $\rho\vert_{H_0}$ takes values in $\GL_2(\I_0^\circ)$ and is $(H_0,\Z_p)$-regular. Then the restriction $\rho_r\vert_{H_0}$ gives a representation $H_0\to\GL_2(\I_{0,r}^\circ)$ which is continuous with respect to the $p$-adic topology on $\GL_2(\I_{0,r}^\circ)$.

%Let $r\in p^\Q$ be a radius smaller than $p^{-\frac{1}{p-1}}$ and let $A_r$ be the ring of analytic functions bounded by $1$ on the closed disc $B(0,r)$.
%We can also fix an $r_0<p^{-\frac{1}{p-1}}$ and write $\Lambda_h=\bigcap_{r_0\leq r<p^{-\frac{1}{p-1}}} A_r$ where the $A_r$ are seen as subsets of $A_{r_0}$ through the natural maps. We define $\I_r=\I\widehat{\otimes}_{\Lambda_h} A_r$ and $\I_{0,r}=\I_0\widehat{\otimes}_{\Lambda_h} A_r$ and endow them with the $p$-adic topology; they are both finite $A_r$-algebras.
%Let $\rho_r$ be the representation $G_\Q\to\GL_2(\I_r)$ obtained by composing $\rho$ with the map $\GL_2(\I)\to\GL_2(\I_r)$ induced by $\I\to\I_r$.

\subsection{Big Lie algebras}\label{liealg}

%We keep the notations of the previous sections.
%Recall that $H$ is an open subgroup of $G_\Q$ such that $\rho\vert_H$ takes values in $\GL_2(\I_0^\circ)$, so $\rho_r\vert_H$ takes values in $\GL_2(\I_{0,r}^\circ)$.
Recall that $G_p\subset G_\Q$ denotes our chosen decomposition group at $p$.
Let $G_r$ and $G_r^\loc$ be the images respectively of $H_0$ and $G_p\cap H_0$ under the representation $\rho_r\vert_{H_0}\colon H_0\to\GL_2(\I_{0,r}^\circ)$.
Note that they are actually independent of $r$ since they coincide with the images of $H_0$ and $G_p\cap H_0$ under $\rho$.

For every ring $R$ and ideal $I\subset R$ we denote by $\Gamma_{\GL_2(R)}(I)$ the $\GL_2$-congruence subgroup consisting of elements $g\in\GL_2(R)$ such that $g\equiv\Id_2\pmod{I}$.
Let $G_r^{\prime}=G_r\cap\Gamma_{\GL_2(\I_{0,r}^\circ)}(p)$ and $G_r^{\prime,\loc}=G_r^\loc\cap\Gamma_{\GL_2(\I_{0,r}^\circ)}(p)$, so that $G_r^{\prime}$ and $G_{r}^{\prime,\loc}$ are pro-$p$ groups.
Note that the congruence subgroups $\Gamma_{\GL_2(\I_{0,r})}(p^m)$ are open in $\GL_2(\I_{0,r})$ for the $p$-adic topology.
In particular $G_r^\prime$ and $G_{r}^{\prime,\loc}$ can be identified with the images under $\rho$ of the absolute Galois groups of  finite extensions  of $\Q$ and respectively $\Q_p$.

\begin{remark}\label{propsubgr}
We remark that we can choose an arbitrary $r_0$ and set, for every $r$, $G_r^{\prime}=G_r\cap\Gamma_{\GL_2(\I_{0,r_0}^\circ)}(p)$. 
%which is independent of $r$ since $G_r$ is. 
Then $G_r^\prime$ is a pro-$p$ subgroup of $G_r$ for every $r$ and it is independent of $r$ since $G_r$ is. 
%We can identify it with the image under $\rho_r$ (and $\rho$) of an open subgroup of $G_\Q$ which is independent of $r$.
This will be important in Section \ref{comparison} where we will take projective limits over $r$ of various objects.
\end{remark}

We set $A_{0,r}=A_{0,r}^\circ[p^{-1}]$ and $\I_{0,r}=\I_{0,r}^\circ[p^{-1}]$.
We consider from now on $G_r^{\prime}$ and $G_r^{\prime,\loc}$ as subgroups of $\GL_2(\I_{0,r})$ through the inclusion $\GL_2(\I_{0,r}^\circ)\into\GL_2(\I_{0,r})$.

%We define Lie subalgebras of \Mat_2(\I_{0,r}) associated to G_r^{\prime} and G_r^{\prime,\loc}.

%\begin{definition}
%The Lie algebras \fH_r and \fH_r^\loc are defined as the p-adic closures respectively of the following subsets of \fgl_2(\I_{0,r}):
%
%$$ K_{h,0}\cdot\Log(G_r^{\prime})=K_{h,0}\cdot\{X\in\fgl_2(\I_{0,r}); \exp(tX)\in G_r^{\prime} \,{\rm for}\, t\,\mbox{\rm small in }\,\Q_p\} $$
%and
%$$ K_{h,0}\cdot\Log()=K_{h,0}\cdot\{X\in\fgl_2(\I_{0,r}); \exp(tX)\in G_r^{\loc,\prime} \,{\rm for}\, t\,\mbox{\rm small in }\,\Q_p\} $$
%\end{definition}
%
%The sets K_{h,0}\cdot\Log(G_r^{\prime}) and K_{h,0}\cdot\Log(G_r^{\loc,\prime}) are obviously unions of $\K_{h,0}$-lines. However, since the pro-$p$-groups $G_r^{\prime}$ and $G_r^{\loc,\prime}$ need not be topologically of finite type, these sets are not necessarily $p$-adically closed, hence they are not necessarily stable by addition and bracket. We prove that their $p$-adic closures are.
%
%\begin{lemma}
%The sets $\fH_r$ and $\fH_{r,p}$ are $\K_{h,0}$-Banach vector spaces which are stable by Poisson bracket.
%\end{lemma}
%
%\begin{proof} 
%If $X,Y\in\fH$, one has 
%$$ X+Y=\lim_{t\to 0} t^{-1}\cdot \Log(\exp(tX)\exp(tY)),\quad [X,Y]=\lim_{t\to 0} t^{-2}\cdot \Log(\exp(tX)\exp(tY)\exp(-tX)\exp(-tY)) $$
%Therefore $\fH_r$ is a Lie algebra. We proceed in the same way for $\fH_r^\loc$.
%\end{proof}

%We denote as usual by \rho_{r,P}:H\to\GL_2(\I_{0,r}/P) the composition of \rho_r with the natural projection.
We want to define big Lie algebras associated with the groups $G_r^{\prime}$ and $G_r^{\prime,\loc}$. 
For every nonzero ideal $\fa$ of the principal ideal domain $A_{0,r}$, we denote by $G_{r,\fa}^\prime$ and $G_{r,\fa}^{\prime,\loc}$ the images respectively of $G_r^{\prime}$ and $G_r^{\prime,\loc}$ under the natural projection $\GL_2(\I_{0,r})\to\GL_2(\I_{0,r}/\fa \I_{0,r})$. The pro-$p$ groups $G_{r,\fa}^\prime$ and $G_{r,\fa}^{\prime,\loc}$ 
are topologically of finite type
so we can define the corresponding $\Q_p$-Lie algebras $\fH_{r,\fa}$ and $\fH_{r,\fa}^\loc$ using the $p$-adic logarithm map: $\fH_{r,\fa}=\Q_p\cdot\Log\, G_{r,\fa}^{\prime}$ and $\fH_{r,\fa}^\loc=\Q_p\cdot\Log\, G_{r,\fa}^{\prime,\loc}$.
They are closed Lie subalgebras of the finite dimensional $\Q_p$-Lie algebra $\Mat_2(\I_{0,r}/\fa\I_{0,r})$.

%We clearly have an inclusion \fH_r\into\fH_r^\loc and surjections \fH_r\to\fH_{r,P} and \fH_r^\loc\to\fH_{r,P}^\loc.

%As we will see in the next subsection, the Sen operator associated to the group G_r^{\loc,\prime} does not satisfy the property we need with respect to the algebras \fH_r^\loc \fH_r, so we need to introduce some more Lie algebras.

Let $B_r=\varprojlim_{(\fa,P_1)=1}A_{0,r}/ \fa A_{0,r}$ where the inverse limit is taken over nonzero ideals $\fa\subset A_{0,r}$ prime to 
$P_1=(u^{-1}(1+T)-1)$ (the reason for excluding $P_1$ will become clear later).
We endow $B_r$ with the projective limit topology coming from the $p$-adic topology on each quotient. 
We have a topological isomorphism of $K_{h,0}$-algebras
$$ B_r\cong\prod_{P\neq P_1} \widehat{(A_{0,r})}_{P}, $$
where the product is over primes $P$  and $\widehat{(A_{0,r})}_P=\varprojlim_{m\geq 1}A_{0,r}/P^mA_{0,r}$ 
denotes the $K_{h,0}$-Fr\'echet space inverse limit of the finite dimensional $K_{h,0}$-vector spaces $A_{0,r}/P^mA_{0,r}$.
Similarly, let $\B_r=\varprojlim_{(\fa,P_1)=1}\I_{0,r}/\fa\I_{0,r}$, where as before $\fa$ varies over 
all nonzero ideals of $A_{0,r}$ prime to $P_1$. We have
$$ \B_r\cong\prod_{P\neq P_1} \widehat{(\I_{0,r})}_{P\I_{0,r}}\cong\prod_{\fP\nmid P_1} \widehat{(\I_{0,r})}_{\fP}\cong\varprojlim_{(\fQ,P_1)=1}\I_{0,r}/\fQ, $$
where the second product is over primes $\fP$ of $\I_{0,r}$ and the projective limit is over primary ideals $\fQ$ of $\I_{0,r}$. Here $\widehat{(\I_{0,r})}_\fP$ denotes the projective limit of finite dimensional $K_{h,0}$-algebras (endowed with the $p$-adic topology).
The last isomorphism follows from the fact that $\I_{0,r}$ is finite over $A_{0,r}$, so that there is an isomorphism $\I_{0,r}\otimes\widehat{(A_{0,r})}_P=\prod_\fP\widehat{(\I_{0,r})}_\fP$ where $P$ is a prime of $A_{0,r}$ and $\fP$ 
varies among the primes of $\I_{0,r}$ above $P$.
We have natural continuous inclusions $A_{0,r}\into B_r$ and $\I_{0,r}\into \B_r$, both with dense image. The map $A_{0,r}\into\I_{0,r}$ induces an inclusion $B_r\into\B_r$ with closed image. Note however that $\B_r$ is not finite over $B_r$.
We will work with $\B_r$ for the rest of this section, but we will need $B_r$ later.

%We endow $\B_r$ with the projective limit topology.
For every $\fa$ we have defined Lie algebras $\fH_{r,\fa}$ and $\fH_{r,\fa}^\loc$ associated with the finite type Lie groups $G_{r,\fa}^\prime$ and $G_{r,\fa}^{\prime,\loc}$. We take the projective limit of these algebras to obtain Lie subalgebras of $\Mat_2(\B_r)$.

\begin{definition}\label{blalg}
The Lie algebras associated with $G_{r}^\prime$ and $G_{r}^{\prime,\loc}$ are the closed $\Q_p$-Lie subalgebras of $\Mat_2(\B_r)$ given respectively by
$$ \fH_{r}=\varprojlim_{(\fa,P_1)=1}\fH_{r,\fa} $$
and
$$ \fH_{r}^\loc=\varprojlim_{(\fa,P_1)=1}\fH_{r,\fa}^\loc, $$
where as usual the products are taken over nonzero ideals $\fa\subset A_{0,r}$ prime to $P_1$.
\end{definition}

%The natural map $\GL_2(\I_{0,r})\into\GL_2(B)$ induces inclusions $\fH_r\into\fH_{r,B} and \fH_r^\loc\into\fH_{r,B}^\loc$.

For every ideal $\fa$ prime to $P_1$, we have continuous homomorphisms $\fH_r\to\fH_{r,\fa}$ and $\fH_r^\loc\to\fH_{r,\fa}^\loc$.
Since the transition maps are surjective these homomorphisms are surjective.

\begin{remark}\label{primlim}
The limits in Definition \ref{blalg} can be replaced by limits over primary ideals of $\I_{0,r}$. Explicitly, let $\fQ$ be a primary ideal of $\I_{0,r}$. Let $G_{r,\fQ}^\prime$ be the image of $G_r^\prime$ via the natural projection $\GL_2(\I_{0,r})\to\GL_2(\I_{0,r}/\fQ)$ and let $\fH_{r,\fQ}$ be the Lie algebra associated with $G_{r,\fQ}^\prime$ (which is a finite type Lie group). We have an isomorphism of topological Lie algebras
$$ \fH_{r}=\varprojlim_{(\fQ,P_1)=1}\fH_{r,\fQ}, $$
where the limit is taken over primary ideals $\fQ$ of $\I_{0,r}$. This is naturally a subalgebra of $\Mat_2(\B_r)$ since $\B_r\cong\varprojlim_{(\fQ,P_1)=1}\I_{0,r}/\fQ$. The same goes for the local algebras.
\end{remark}

\subsection{The Sen operator associated with a Galois representation}\label{senconstr}

%The goal of this section is to show that the Lie-algebra $\fH_{r,B}^\loc$ is normalized by the conjugate of a simple, diagonal matrix with coefficients in $\Lambda_C$.

Recall that there is a finite extension $K/\Q_p$ such that $G_r^{\prime,\loc}$ is the image of $\rho\vert_{\Gal(\overline{K}/K)}$ 
and, for an ideal $P\subset A_{0,r}$ and $m\geq 1$, $G_{r,P^m}^{\prime,\loc}$ is the image of $\rho_{r,P^m}\vert_{\Gal(\overline{K}/K)}$. Following \cite{sen1} and \cite{sen2} we can define a Sen operator 
associated with $\rho_r\vert_{\Gal(\overline{K}/K)}$ and $\rho_{r,P^m}\vert_{\Gal(\overline{K}/K)}$ 
for every ideal $P\subset A_{0,r}$ and every $m\geq 1$. We will see that these operators satisfy a compatibility property. 
We write for the rest of the section $\rho_r$ and $\rho_{r,P^m}$ while implicitly taking the domain to be $\Gal(\overline{K}/K)$.

We begin by recalling the definition of the Sen operator associated with a representation $\tau\colon\Gal(\overline{K}/K)\to\GL_m(\cR)$ 
where $\cR$ is a Banach algebra over a $p$-adic field $L$. We follow \cite{sen2}. We can suppose $L\subset K$; 
if not we just restrict $\tau$ to the open subgroup $\Gal(\overline{K}/KL)\subset\Gal(\overline{K}/K)$.

Let $L_{\infty}$ be a totally ramified $\Z_p$-extension of $L$.
Let $\gamma$ be a topological generator of $\Gamma=\Gal(L_{\infty}/L)$, $\Gamma_n\subset\Gamma$ the subgroup generated by $\gamma^{p^n}$ and $L_{n}=L_{\infty}^{\gamma^{p^n}}$, so that $L_{\infty}=\cup_n L_{n}$.
Let $L_n^\prime=L_{n}K$ and $G_n^\prime=\Gal(\overline{L}/L_n^\prime)$.
If $\cR^m$ is the $\cR$-module over which $\Gal(\overline{K}/K)$ acts via $\tau$, define an action of $\Gal(\overline{K}/K)$ on $\cR\widehat{\otimes}_L \C_p$ by letting $\sigma\in\Gal(\overline{K}/K)$ map $x\otimes y$ to $\tau(\sigma)(x)\otimes\sigma(y)$.
Then by the results of \cite{sen1} and \cite{sen2} there is a matrix
$M\in \GL_m\left(\cR\widehat{\otimes}_L \C_p\right)$, an integer $n\ge 0$ and a representation
$\delta\colon\Gamma_n\to \GL_m(\cR\otimes_L L_{n}^\prime)$ such that for all $\sigma\in G_n^\prime$
$$ M^{-1}\tau(\sigma)\sigma(M)=\delta(\sigma). $$

\begin{definition}
The Sen operator associated with $\tau$ is
$$ \phi=\lim_{\sigma\to 1}\frac{\log(\delta\bigl(\sigma)\bigr)}{\log(\chi(\sigma))}\in \Mat_m(\cR\widehat{\otimes}_L \C_p). $$
\end{definition}

The limit exists as for $\sigma$ close to $1$ the map $\displaystyle \sigma\mapsto 
\frac{\log(\delta\bigl(\sigma)\bigr)}{\log(\chi(\sigma))}$ is constant.
It is proved in \cite[Sec. 2.4]{sen2} that $\phi$ does not depend on the choice of $\delta$ and $M$.

If $L=\cR=\Q_p$, we define the Lie algebra $\fg$ associated with $\tau(\Gal(\overline{K}/K))$ as the $\Q_p$-vector space generated by the image of the $\Log$ map in $\Mat_m(\Q_p)$. In this situation the Sen operator $\phi$ associated with $\tau$ has the following property.

\begin{theorem}\label{liealgsen}\cite[Th. 1]{sen1}
For a continuous representation $\tau\colon G_K\to\GL_m(\Q_p)$, the Lie algebra $\fg$ of the group $\tau(\Gal(\overline{K}/K))$ is the smallest $\Q_p$-subspace of $\Mat_m(\Q_p)$ such that $\fg{\otimes} \C_p$ contains $\phi$.
\end{theorem}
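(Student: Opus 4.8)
The plan is to reduce the statement to the unramified case and then to Sen's explicit description of the image in terms of the operator $\phi$. First I would recall that, enlarging $L$ if necessary so that the representation $\tau$ and the cyclotomic data are defined over $L$, one reduces to working with the fixed $\Z_p$-extension $L_\infty/L$, its Galois group $\Gamma$, and a chosen topological generator $\gamma$. The key point, already recorded before the statement, is that there exist a matrix $M\in\GL_m(\C_p)$, an integer $n\ge 0$, and a representation $\delta\colon\Gamma_n\to\GL_m(L'_n)$ with $M^{-1}\tau(\sigma)\sigma(M)=\delta(\sigma)$ for all $\sigma\in G'_n$; since here $\cR=\Q_p$ the scalars involved lie in $L'_n\subset\C_p$, and $\delta$ takes values in a finite extension of $\Q_p$. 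So up to the conjugation by $M$ (which does not change the Lie algebra $\fg$, only conjugates $\phi$), the image $\tau(G'_n)$ is captured by $\delta(\Gamma_n)$, which is a one-parameter-type pro-$p$ group, and $\phi$ is essentially $\log\delta(\gamma^{p^n})/\log\chi(\gamma^{p^n})$.

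Next I would argue the two inclusions separately. For one direction: since $\delta(\sigma)=\exp\!\big(\log\chi(\sigma)\cdot\phi\big)$ for $\sigma$ in a small enough open subgroup of $\Gamma_n$ (this is where the constancy of $\sigma\mapsto\log\delta(\sigma)/\log\chi(\sigma)$ near $1$ is used), the group $\delta(\Gamma_n)$ lies in the pro-$p$ group generated by $\exp(\Z_p\cdot\phi)$, so $\Log\,\tau(G_K)$ — after the harmless conjugation by $M$ and passing to an open subgroup — lies in $\C_p\cdot\phi$ intersected with $\Mat_m(\Q_p)$; since $\fg$ is by definition the $\Q_p$-span of $\Log\,\tau(G_K)$, this shows that any $\Q_p$-subspace $\fg'$ with $\fg'\otimes\C_p\ni\phi$ must contain $\fg$ after we also check that $\fg\otimes\C_p$ does contain $\phi$. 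That last fact is the content of the converse inclusion: $\phi$ is a $\C_p$-limit of $\C_p$-multiples of $\log\delta(\sigma)$, each of which, pulled back through the conjugation by $M$, is a $\C_p$-multiple of an element of $\Log\,\tau(G_K)\subset\fg$, hence $\phi\in\fg\otimes_{\Q_p}\C_p$. Combining, $\fg\otimes\C_p$ contains $\phi$ and is minimal with this property, because any smaller $\Q_p$-subspace would fail to contain the whole logarithm of an open subgroup of the image, contradicting that $\fg$ is generated by those logarithms.

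The main obstacle I anticipate is bookkeeping the descent from $\C_p$ back to $\Q_p$: one must check that although $M$ and $\phi$ a priori only live over $\C_p$, the conjugation $gM\mapsto M^{-1}gM$ that identifies $\tau(G'_n)$ with $\delta(\Gamma_n)$ does not move the $\Q_p$-rational Lie algebra $\fg$, and that the minimality is genuinely a statement about $\Q_p$-subspaces and not merely $\C_p$-subspaces. Concretely one uses that $\fg$ is defined intrinsically as the $\Q_p$-span of $\Log$ of the (compact, finitely generated) pro-$p$ group $\tau(G_K)$, together with the fact that for a commutative $\Q_p$-Lie algebra coming from such a group, $\fg$ is $1$-dimensional as soon as $\phi\neq 0$, so the minimality reduces to the dichotomy $\phi=0$ versus $\fg=\Q_p\cdot v$ for any $v$ with $v\otimes\C_p$ proportional to $\phi$. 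This is exactly Sen's Theorem~1 in \cite{sen1}, and I would cite it at this point rather than reproduce the full argument.
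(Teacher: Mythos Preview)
The paper does not prove this theorem at all: it is stated as a citation of \cite[Th.~1]{sen1}, and immediately afterwards the authors remark that its proof ``relies heavily on the fact that the image of the Galois group is a finite dimensional Lie group'' and that ``in the relative case it is doubtful that its proof can be generalized.'' So there is no proof in the paper to compare your proposal against; the correct thing to do here is simply to cite Sen, which is in fact how you end your proposal.

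That said, the sketch you give before that citation contains a genuine misconception that would derail any actual attempt. You treat the relation $M^{-1}\tau(\sigma)\sigma(M)=\delta(\sigma)$ as ``conjugation by $M$ (which does not change the Lie algebra $\fg$)'', but it is not conjugation: $\sigma$ acts nontrivially on $M\in\GL_m(\C_p)$, so this is a cocycle relation, not an inner automorphism. Consequently the passage from $\tau(G'_n)$ to $\delta(\Gamma_n)$ is not an isomorphism of groups up to conjugacy, and the image $\tau(G_K)$ is \emph{not} captured by the abelian one-parameter group $\exp(\Z_p\cdot\phi)$. Your conclusion that ``$\fg$ is $1$-dimensional as soon as $\phi\neq 0$'' is therefore false in general: think of the representation on the Tate module of an abelian variety, where $\fg$ can be all of $\fsl_m$ while $\phi$ is a single (semisimple) operator. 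The actual content of Sen's theorem is that the $\Q_p$-span of the ``$\C_p$-coordinates'' of the single operator $\phi$ recovers the whole (possibly large, non-abelian) Lie algebra $\fg$; the proof uses the Ax--Sen--Tate theorem and a careful analysis of the $\C_p$-semilinear action, not a reduction to a one-parameter subgroup.
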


\noindent This theorem is valid in the absolute case above, but relies heavily on the fact that the image of the Galois group is a finite dimensional Lie group.
In the relative case it is doubtful that its proof can be generalized.

\subsection{The Sen operator associated with $\rho_r$}

Set $\I_{0,r,\C_p}=\I_{0,r}\widehat{\otimes}_{K_{h,0}}\C_p$. It is a Banach space for the natural norm.
Let $\B_{r,\C_p}=\B_r\widehat{\otimes}_{K_{h,0}}\C_p$; it is the topological $\C_p$-algebra completion of $\B_r\otimes_{K_{h,0}} \C_p$ for the (uncountable) 
set of nuclear seminorms $p_{\fa}$ given by the norms on $\I_{0,r,\C_p}/\fa\I_{0,r,\C_p}$ via the specialization morphisms
$\pi_\fa\colon \B_r\otimes_{K_{h,0}}\C_p\to \I_{0,r,\C_p}/\fa\I_{0,r,\C_p}$.
Let $\fH_{r,\fa,\C_p}=\fH_{r,\fa}\otimes_{K_{h,0}}\C_p$ and $\fH_{r,\fa,\C_p}^\loc=\fH_{r,\fa,}^\loc\otimes_{K_{h,0}}\C_p$.
Then we define $\fH_{r,\C_p}=\fH_r\widehat{\otimes}_{K_{h,0}}\C_p$ as the topological $\C_p$-Lie algebra completion of 
$\fH_r\otimes_{K_{0,h}}\C_p$ for the (uncountable) 
set of seminorms $p_\fa$ given by the norms on $\fH_{r,\fa,\C_p}$ and similar specialization morphisms $\pi_\fa\colon\fH_{r,}\otimes_{K_{h,0}}\C_p\to\fH_{r,\fa,\C_p}$. We define in the same way $\fH_{r,\C_p}^\loc$ in terms of the norms on $\fH^{\loc}_{r,\fa,\C_p}$. Note that by definition we have 
$$\fH_{r,\C_p}=\varprojlim_{(\fa,P_1)=1}\fH_{r,\fa,\C_p},\,\,\mathrm{and}\,\, \fH_{r,\C_p}^\loc=\varprojlim_{(\fa,P_1)=1}\fH_{r,\fa,\C_p}^\loc.$$

We apply the construction of the previous subsection to $L=K_{h,0}$, $\cR=\I_{0,r}$ which is a Banach $L$-algebra with the $p$-adic topology, and $\tau=\rho_r$. We obtain an operator $\phi_r\in \Mat_2(\I_{0,r,\C_p})$.
%We would like to have an analogue of Th. \ref{senliealg} in this situation in order to claim that \phi_r\in\fH_r^\loc; however we do not know if such a property is true, since Sen's proof relies in a crucial way on his filtration theorem for finite dimensional p-adic Lie groups while the groups appearing here can be infinite dimensional.
Recall that we have a natural continuous inclusion $\I_{0,r}\into \B_r$, inducing inclusions $\I_{0,r,\C_p}\into \B_{r,\C_p}$ and $\Mat_2(\I_{0,r,\C_p})\into \Mat_2(\B_{r,\C_p})$. We denote all these inclusions by $\iota_{\B_r}$ since it will be clear each time to which we are referring to. 
We will prove in this section that $\iota_{\B_r}(\phi_r$) is an element of $\fH_{r,\C_p}^\loc$.

%We prove instead that \phi_r can be naturally mapped to an element of \fH_{r,C}^\loc\widehat{\otimes}C.
%We do this by showing that \phi_r projects to the Sen operators for the representations \rho_{r,P^m} where P varies over the primes of A_{0,r} and m\in\N.

Let $\fa$ be a nonzero ideal of $A_{0,r}$. Let us apply Sen's construction to $L=K_{h,0}$, $\cR=\I_{0,r}/\fa\I_{0,r}$ and $\tau=\rho_{r,\fa}\colon\Gal(\overline{K}/K)\to\GL_2(\I_{0,r}/\fa\I_{0,r})$; we obtain an operator $\phi_{r,\fa}\in \Mat_2(\I_{0,r}/\fa\I_{0,r}\widehat{\otimes}_{K_{h,0}}\C_p)$.

Let 
$$\pi_\fa\colon\Mat_2(\I_{0,r}\widehat{\otimes}_{K_{h,0}}\C_p)\to \Mat_2(\I_{0,r}/\fa\I_{0,r}\widehat{\otimes}_{K_{h,0}}\C_p)$$
and 
$$\pi_\fa^\times\colon\GL_2(\I_{0,r}\widehat{\otimes}_{K_{h,0}}\C_p)\to\GL_2(\I_{0,r}/\fa\I_{0,r}\widehat{\otimes}_{K_{h,0}}\C_p)$$
be the natural projections.

\begin{proposition}\label{senproj}
We have $\phi_{r,\fa}=\pi_\fa(\phi_r)$ for all $\fa$.
\end{proposition}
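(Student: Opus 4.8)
The plan is to show that the Sen operator construction is compatible with the quotient map $\I_{0,r}\to\I_{0,r}/\fa\I_{0,r}$, which is essentially a functoriality statement for Sen's construction under a continuous morphism of Banach $L$-algebras. First I would recall the defining data for $\phi_r$: by Sen's theory applied to $\cR=\I_{0,r}$ and $\tau=\rho_r$, there exist a matrix $M\in\GL_2(\I_{0,r}\widehat{\otimes}_{K_{h,0}}\C_p)$, an integer $n\geq 0$ and a representation $\delta\colon\Gamma_n\to\GL_2(\I_{0,r}\otimes_{K_{h,0}}L_n^\prime)$ satisfying $M^{-1}\rho_r(\sigma)\sigma(M)=\delta(\sigma)$ for all $\sigma\in G_n^\prime$, and then $\phi_r=\lim_{\sigma\to 1}\log(\delta(\sigma))/\log(\chi(\sigma))$.

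Next I would apply the ring homomorphism $\pi_\fa^\times$ (extended $\C_p$-linearly) to this identity. Since $\pi_\fa$ is a continuous morphism of Banach $K_{h,0}$-algebras that is $\Gal(\overline K/K)$-equivariant (the action on the source being $\sigma\mapsto\rho_r(\sigma)\otimes\sigma$ and on the target $\sigma\mapsto\rho_{r,\fa}(\sigma)\otimes\sigma$, and $\pi_\fa\circ\rho_r=\rho_{r,\fa}$ by construction), applying $\pi_\fa^\times$ to $M^{-1}\rho_r(\sigma)\sigma(M)=\delta(\sigma)$ yields
\[
\pi_\fa^\times(M)^{-1}\,\rho_{r,\fa}(\sigma)\,\sigma\bigl(\pi_\fa^\times(M)\bigr)=\pi_\fa^\times(\delta(\sigma))
\]
for all $\sigma\in G_n^\prime$, where $\pi_\fa^\times(M)\in\GL_2(\I_{0,r}/\fa\I_{0,r}\widehat{\otimes}_{K_{h,0}}\C_p)$ and $\pi_\fa^\times\circ\delta\colon\Gamma_n\to\GL_2(\I_{0,r}/\fa\I_{0,r}\otimes_{K_{h,0}}L_n^\prime)$ is a representation of the required type. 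Thus the triple $(\pi_\fa^\times(M),n,\pi_\fa^\times\circ\delta)$ is a valid choice of Sen data for $\rho_{r,\fa}$. Because $\pi_\fa$ is continuous, it commutes with the $p$-adic logarithm on matrices congruent to the identity (for $\sigma$ close enough to $1$, $\delta(\sigma)$ is in the relevant congruence subgroup, so $\log$ converges and is computed entrywise by convergent power series that $\pi_\fa$ respects) and with the limit defining $\phi$. Hence
\[
\pi_\fa(\phi_r)=\pi_\fa\Bigl(\lim_{\sigma\to 1}\tfrac{\log(\delta(\sigma))}{\log(\chi(\sigma))}\Bigr)=\lim_{\sigma\to 1}\tfrac{\log(\pi_\fa^\times(\delta(\sigma)))}{\log(\chi(\sigma))}=\phi_{r,\fa},
\]
the last equality being the definition of $\phi_{r,\fa}$ via the Sen data just produced, using the independence of $\phi_{r,\fa}$ from the choice of data (\cite[Sec. 2.4]{sen2}).

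The one point that needs care — and which I expect to be the main technical obstacle — is checking that $\pi_\fa^\times$ genuinely intertwines the Sen construction on the nose, i.e. that the completed tensor products and the $\Gal(\overline K/K)$-semilinear actions behave well under $\pi_\fa$: one must verify that $\pi_\fa$ extends to a continuous $\C_p$-linear map $\I_{0,r}\widehat\otimes_{K_{h,0}}\C_p\to(\I_{0,r}/\fa\I_{0,r})\widehat\otimes_{K_{h,0}}\C_p$ compatible with the two semilinear actions, and that it sends $\GL_2$ into $\GL_2$ (surjectivity of $\I_{0,r}\to\I_{0,r}/\fa\I_{0,r}$ and the fact that it is a ring map handle this, since a matrix invertible over the source maps to an invertible matrix over the quotient). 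Granting these compatibilities, which are formal consequences of $\pi_\fa$ being a continuous $\Gal(\overline K/K)$-equivariant ring homomorphism, the equality $\phi_{r,\fa}=\pi_\fa(\phi_r)$ follows from the functoriality of each step in Sen's construction together with the well-definedness (independence of choices) of the Sen operator.
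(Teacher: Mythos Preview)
Your proof is correct and follows essentially the same route as the paper: push forward the Sen data $(M,n,\delta)$ via $\pi_\fa^\times$ to obtain valid Sen data for $\rho_{r,\fa}$, then use continuity of $\pi_\fa$ (to commute with $\log$ and the limit) together with the well-definedness of the Sen operator from \cite[Sec.~2.4]{sen2}. The paper's argument is slightly terser, but the strategy and the key steps are identical.
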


\begin{proof}
Recall from the construction of $\phi_r$ that there exist $M\in\GL_2\left(\I_{0,r,\C_p}\right)$, $n\geq 0$ and
$\delta\colon\Gamma_n\to\GL_2(\I_{0,r}\widehat{\otimes}_{K_{h,0}}K_{h,0,n}^\prime)$ such that for all $\sigma\in G_n^\prime$ we have
\begin{equation}\label{sigmadelta}
M^{-1}\rho_r(\sigma)\sigma(M)=\delta(\sigma)
\end{equation}
and
\begin{equation}\label{eqsen}
\phi_r=\lim_{\sigma\to 1}\frac{\log(\delta\bigl(\sigma)\bigr)}{\log(\chi(\sigma))}.
\end{equation}

%Now let Q be a primary ideal of A_{0,r} and \rho_{r,Q}:\Gal(\overline{K}/K)\to\GL_2(\I_{0,r}/Q\I_{0,r}).
Let $M_\fa=\pi^\times_\fa(M)\in\GL_2(\I_{0,r,\C_p}/\fa\I_{0,r,\C_p})$ and $\delta_\fa=\pi_\fa^\times\circ\delta\colon\Gamma_n\to\GL_2((\I_{0,r}/\fa\I_{0,r})\widehat{\otimes}_{K_{h,0}}K_{h,0,n}^\prime)$
%be the images of M and \delta under the respective projections.
Denote by $\phi_{r,\fa}\in \Mat_2((\I_{0,r}/\fa\I_{0,r})\widehat{\otimes}_{K_{h,0}}K_{h,0,n}^\prime)$ the Sen operator associated with $\rho_{r,\fa}$.
Now (\ref{sigmadelta}) gives
\begin{equation}\label{sigmadeltaQ}
M_\fa^{-1}\rho_{r,\fa}(\sigma)\sigma(M_\fa)=\delta_\fa(\sigma)
\end{equation}
so we can calculate $\phi_{r,\fa}$ as
\begin{equation}\label{eqsenQ}
\phi_{r,\fa}=\lim_{\sigma\to 1}\frac{\log(\delta_\fa\bigl(\sigma)\bigr)}{\log(\chi(\sigma))}\in \Mat_2(\cR\widehat{\otimes}_L \C_p).
\end{equation}

\noindent By comparing this with (\ref{eqsen}) we see that $\phi_{r,\fa}=\pi_\fa(\phi_r)$.
%under the projection \Mat_2(\I_{0,r}\widehat{\otimes}_K_{h,0}K_{h,0,n}^\prime)\to \Mat_2((\I_{0,r}/Q\I_{0,r})\widehat{\otimes}_K_{h,0}K_{h,0,n}^\prime).
\end{proof}

Let $\phi_{r,\B_r}=\iota_{\B_r}(\phi_r)$. For a nonzero ideal $\fa$ of $A_{0,r}$ let $\pi_{\B_r,\fa}$ be the natural projection $\B_r\to\I_{0,r}/\fa\I_{0,r}$. Clearly $\pi_{\B_r,\fa}(\phi_{r,\B_r})=\pi_{\fa}(\phi_r)$ and $\phi_{r,\fa}=\pi_{\fa}(\phi_r)$ by Proposition \ref{senproj}, so we have $\phi_{r,\B_r}=\varprojlim_{(\fa,P_1)=1}\phi_{r,\fa}$.

We apply Theorem \ref{liealgsen} to show the following.

\begin{proposition}\label{liealgsenQ}
Let $\fa$ be a nonzero ideal of $A_{0,r}$ prime to $P_1$. The operator $\phi_{r,\fa}$ belongs to the Lie algebra $\fH_{r,\fa,\C_p}^\loc$.
\end{proposition}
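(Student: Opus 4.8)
The crucial observation is that, although $\I_{0,r}$ is infinite-dimensional over $K_{h,0}$, the quotient $R:=\I_{0,r}/\fa\I_{0,r}$ is finite-dimensional over $K_{h,0}$, hence over $\Q_p$: the affinoid domain $A_{0,r}$ has dimension one, so $A_{0,r}/\fa$ is artinian and finite over $K_{h,0}$, and $\I_{0,r}$ is finite over $A_{0,r}$. Therefore the image $G_{r,\fa}^{\prime,\loc}$ is a finite-dimensional $p$-adic Lie group, and Sen's Theorem \ref{liealgsen} --- which is only available in this ``absolute'' situation --- becomes applicable to $\rho_{r,\fa}$; this finiteness is the sole role played here by reducing modulo $\fa$. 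The plan is to present $\rho_{r,\fa}\vert_{\Gal(\overline{K}/K)}$ as a $\GL_N(\Q_p)$-valued representation by restriction of scalars, invoke Theorem \ref{liealgsen}, and carry the conclusion back.

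First I would fix a finite Galois extension $K/\Q_p$ with $K_{h,0}\subseteq K$ and $\rho_{r,\fa}(\Gal(\overline{K}/K))=G_{r,\fa}^{\prime,\loc}$; enlarging $K$ affects neither the Sen operator $\phi_{r,\fa}$ nor $\fH_{r,\fa}^{\loc}$ (the Lie algebra of an open subgroup). Writing $d=\dim_{\Q_p}R$, let $\iota\colon R\into\End_{\Q_p}(R)=\Mat_d(\Q_p)$ be the regular representation: a continuous injective $\Q_p$-algebra homomorphism. It induces $\iota\colon\Mat_2(R)\into\Mat_{2d}(\Q_p)$ and, after $-\widehat{\otimes}_{\Q_p}\C_p$, an injection $\iota_{\C_p}\colon\Mat_2(R)\widehat{\otimes}_{\Q_p}\C_p\into\Mat_{2d}(\C_p)$. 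Put $\widetilde{\rho}=\iota\circ\rho_{r,\fa}\vert_{\Gal(\overline{K}/K)}\colon\Gal(\overline{K}/K)\to\GL_{2d}(\Q_p)$.

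Then I would verify two compatibilities. (i) The $\Q_p$-Lie algebra of $\im\widetilde{\rho}$ equals $\iota(\fH_{r,\fa}^{\loc})$: being a continuous $\Q_p$-algebra embedding, $\iota$ intertwines $\Log$ and $\exp$ on the relevant pro-$p$ subgroups, so $\Log\,\widetilde{\rho}(\Gal(\overline{K}/K))=\iota(\Log\,G_{r,\fa}^{\prime,\loc})$ and their $\Q_p$-spans correspond. (ii) The Sen operator $\widetilde{\phi}$ of $\widetilde{\rho}$ corresponds to $\phi_{r,\fa}$ under $\iota_{\C_p}$, once $R\widehat{\otimes}_{K_{h,0}}\C_p$ is identified with the relevant factor of $R\widehat{\otimes}_{\Q_p}\C_p$: if $M$, $n$ and $\delta\colon\Gamma_n\to\GL_2(R\otimes_{K_{h,0}}K_{h,0,n}^{\prime})$ realize $\phi_{r,\fa}$ for $\rho_{r,\fa}$ in the sense of Section \ref{senconstr}, then $\iota(M)$ and $\iota\circ\delta$ realize $\widetilde{\phi}$ for $\widetilde{\rho}$; this is the exact analogue of the computation proving Proposition \ref{senproj}, carried out for the coefficient map $\iota$ instead of a reduction $\pi_\fa$, and it also uses that the Sen operator is unaffected by restricting scalars from $K_{h,0}$ to $\Q_p$ (choosing $K_{h,0,\infty}=K_{h,0}\cdot\Q_p^{\mathrm{cyc}}$, the normalization by $\log\chi$ makes the operators match). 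Granting (i) and (ii), Theorem \ref{liealgsen} applied to $\widetilde{\rho}$ gives $\widetilde{\phi}\in\iota(\fH_{r,\fa}^{\loc})\otimes_{\Q_p}\C_p=\iota_{\C_p}(\fH_{r,\fa}^{\loc}\otimes_{\Q_p}\C_p)$, so by injectivity of $\iota_{\C_p}$ and projection to $R\widehat{\otimes}_{K_{h,0}}\C_p$ one gets $\phi_{r,\fa}\in\fH_{r,\fa}^{\loc}\otimes_{K_{h,0}}\C_p=\fH_{r,\fa,\C_p}^{\loc}$. This is the base case from which, passing to the limit over $\fa$, one recovers $\iota_{\B_r}(\phi_r)\in\fH_{r,\C_p}^{\loc}$.

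The delicate point is compatibility (ii): the construction of the Sen operator depends a priori on the auxiliary base field $L$ and the totally ramified $\Z_p$-extension $L_\infty/L$, so one must pick these coherently for $L=K_{h,0}$ and $L=\Q_p$ and check that the $\log\chi$-normalization makes the two operators correspond under $\iota_{\C_p}$ --- equivalently, that restriction of scalars is harmless for the Sen operator. One should also record that $\fH_{r,\fa}^{\loc}$ is stable under multiplication by $K_{h,0}$ inside $\Mat_2(R)$, so that $\fH_{r,\fa}^{\loc}\otimes_{K_{h,0}}\C_p$ really is a $\C_p$-subspace of $\Mat_2(R\widehat{\otimes}_{K_{h,0}}\C_p)$ --- a reflection of the fact that Sen's theory produces a Lie algebra defined over $K_{h,0}$ --- or else bypass this by working with $\otimes_{\Q_p}$ throughout. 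The remaining steps are routine.
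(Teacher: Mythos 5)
Your proposal is correct and follows essentially the same route as the paper: the paper also restricts scalars to $\Q_p$ via a choice of $\Q_p$-basis of $\I_{0,r}/\fa\I_{0,r}$ (your regular representation $\iota$ is their $\alpha$), checks that the $p$-adic logarithm and hence the Lie algebras correspond under this embedding, transports the pair $(M_\fa,\delta_\fa)$ through $\alpha_{\C_p}$ to identify the two Sen operators, and then applies Theorem \ref{liealgsen}. Your extra care about the auxiliary base field $L$ and the $\log\chi$-normalization addresses a point the paper passes over silently, but it does not change the argument.
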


\begin{proof}
Let $n$ be the dimension over $\Q_p$ of $\I_{0,r}/\fa\I_{0,r}$; by choosing a $\Q_p$-basis $(\omega_1,\ldots,\omega_n)$ of this algebra,
we can define an injective ring morphism $\alpha\colon\Mat_2(\I_{0,r}/\fa\I_{0,r})\into \Mat_{2n}(\Q_p)$ and an injective group morphism $\alpha^\times\colon\GL_2(\I_{0,r}/Q\I_{0,r})\into\GL_{2n}(\Q_p)$. In fact, an endomorphism $f$ of the $(\I_{0,r}/\fa\I_{0,r})$-module 
$(\I_{0,r}/\fa\I_{0,r})^2=(\I_{0,r}/\fa\I_{0,r})\cdot e_1\oplus (\I_{0,r}/\fa\I_{0,r})\cdot e_2$ is $\Q_p$-linear, so it induces an endomorphism $\alpha(f)$ of the $\Q_p$-vector space $(\I_{0,r}/\fa\I_{0,r})^2=\bigoplus_{i,j} \Q_p\cdot \omega_i e_j$; furthermore if $\alpha$ is an automorphism then $\alpha(f)$ is one too.
In particular $\rho_{r,\fa}$ induces a representation $\rho_{r,\fa}^\alpha=\alpha^\times\circ\rho_{r,\fa}\colon\Gal(\overline{K}/K)\to\GL_{2n}(\Q_p)$. The image of $\rho_{r,\fa}^\alpha$ is the group $G_{r,\fa}^{\loc,\alpha}=\alpha^\times(G_{r,\fa}^\loc)$.
We consider its Lie algebra $\fH_{r,\fa}^{\loc,\alpha}=\Q_p\cdot\Log\,(G_{r,\fa}^{\loc,\alpha})\subset \Mat_{2n}(\Q_p)$. 
The $p$-adic logarithm commutes with $\alpha$ in the sense that $\alpha(\Log\, x)=\Log\,(\alpha^\times(x))$ for every $x\in\Gamma_{\I_{0,r}/\fa\I_{0,r}}(p)$, so we have $\fH_{r,\fa}^{\loc,\alpha}=\alpha(\fH_{r,\fa}^\loc)$ (recall that $\fH_{r,\fa}^\loc=\Q_p\cdot \Log\, G_{r,\fa}^\loc)$.

Let $\phi_{r,\fa}^\alpha$ be the Sen operator associated with $\rho_{r,\fa}^\alpha\colon\Gal(\overline{K}/K)\to\GL_{2n}(\Q_p)$. By Theorem \ref{liealgsen} we have $\phi_{r,\fa}^\alpha\in\fH_{r,\fa,\C_p}^{\loc,\alpha}=\fH_{r,\fa}^{\loc,\alpha}\widehat{\otimes}\C_p$.
Denote by $\alpha_{\C_p}$ the map $\alpha\widehat{\otimes}1\colon\Mat_2(\I_{0,r,\C_p}/\fa\I_{0,r,\C_p})\into \Mat_{2n}(\C_p)$.
We show that $\phi_{r,\fa}^{\alpha_{\C_p}}=\alpha_{\C_p}(\phi_{r,\fa})$, from which it follows that $\phi_{r,\fa}\in\fH_{r,\fa,\C_p}^\loc$ since $\fH_{r,\fa,\C_p}^{\loc,\alpha_{\C_p}}=\alpha_{\C_p}(\fH_{r,\fa,{\C_p}}^\loc)$ and $\alpha_{\C_p}$ is injective. Now let $M_\fa$, $\delta_\fa$ be as in (\ref{sigmadeltaQ}) and $M_\fa^{\alpha_{\C_p}}=\alpha_{\C_p}(M_\fa)$, $\delta_\fa^{\alpha_{\C_p}}=\alpha_{\C_p}\circ\delta_\fa$.
By applying $\alpha_C$ to (\ref{sigmadelta}) we obtain $(M_\fa^{\alpha_{\C_p}})^{-1}\rho_{r,\fa}^{\alpha_{\C_p}}(\sigma)\sigma(M_\fa^{\alpha_{\C_p}})=\delta_\fa^{\alpha_{\C_p}}(\sigma)$ for every $\sigma\in G_n^\prime$, so we can calculate
$$ \phi_{r,\fa}^{\alpha_{\C_p}}=\lim_{\sigma\to 1}\frac{\log(\delta_\fa^{\alpha_{\C_p}}\bigl(\sigma)\bigr)}{\log(\chi(\sigma))}, $$
which coincides with $\alpha_{\C_p}(\phi_{r,\fa})$.
\end{proof}

%We deduce the following corollary.

\begin{proposition}\label{seninalg}
The element $\phi_{r,\B_r}$ belongs to $\fH_{r,{\C_p}}^\loc$, hence to $\fH_{r,{\C_p}}$.
\end{proposition}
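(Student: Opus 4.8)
The plan is to deduce Proposition~\ref{seninalg} from Proposition~\ref{liealgsenQ} by a straightforward passage to the projective limit. Recall that, by definition, $\fH_{r,\C_p}^\loc=\varprojlim_{(\fa,P_1)=1}\fH_{r,\fa,\C_p}^\loc$ (the limit being over nonzero ideals $\fa$ of $A_{0,r}$ prime to $P_1$), with the transition maps induced by the surjections $\fH_{r,\fb,\C_p}^\loc\to\fH_{r,\fa,\C_p}^\loc$ whenever $\fa\mid\fb$; these are the restrictions of the projections $\pi_{\fb,\fa}\colon\Mat_2(\I_{0,r,\C_p}/\fb\I_{0,r,\C_p})\to\Mat_2(\I_{0,r,\C_p}/\fa\I_{0,r,\C_p})$. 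So an element of $\fH_{r,\C_p}^\loc$ is nothing but a compatible family $(x_\fa)_\fa$ with $x_\fa\in\fH_{r,\fa,\C_p}^\loc$ and $\pi_{\fb,\fa}(x_\fb)=x_\fa$.

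First I would record, as already noted right after Proposition~\ref{senproj}, that $\phi_{r,\B_r}=\varprojlim_{(\fa,P_1)=1}\phi_{r,\fa}$, i.e. that the family $(\phi_{r,\fa})_\fa$ is compatible under the transition maps and that $\phi_{r,\B_r}=\iota_{\B_r}(\phi_r)$ is its projective limit in $\Mat_2(\B_{r,\C_p})$. The compatibility is exactly the content of Proposition~\ref{senproj} combined with the transitivity $\pi_{\fb,\fa}\circ\pi_\fb=\pi_\fa$: for $\fa\mid\fb$ one has $\pi_{\fb,\fa}(\phi_{r,\fb})=\pi_{\fb,\fa}(\pi_\fb(\phi_r))=\pi_\fa(\phi_r)=\phi_{r,\fa}$. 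Next, Proposition~\ref{liealgsenQ} tells us that each term $\phi_{r,\fa}$ lies in $\fH_{r,\fa,\C_p}^\loc$ (using here that $\fa$ is prime to $P_1$, which is exactly the index set over which both limits are taken). Therefore the compatible family $(\phi_{r,\fa})_\fa$ is a family of elements of the $\fH_{r,\fa,\C_p}^\loc$ compatible under the transition maps of the system defining $\fH_{r,\C_p}^\loc$, so its limit $\phi_{r,\B_r}$ lies in $\varprojlim_{(\fa,P_1)=1}\fH_{r,\fa,\C_p}^\loc=\fH_{r,\C_p}^\loc$. Finally, since $\fH_{r,\C_p}^\loc\subset\fH_{r,\C_p}$ (as $G_r^{\prime,\loc}\subset G_r^\prime$, hence $\fH_{r,\fa}^\loc\subset\fH_{r,\fa}$ for every $\fa$, and this inclusion is compatible with the transition maps), we conclude $\phi_{r,\B_r}\in\fH_{r,\C_p}$ as well.

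The one point deserving care, and the only real obstacle, is the interaction of the projective limits with the completed tensor product $\widehat{\otimes}_{K_{h,0}}\C_p$: one must make sure that ``taking $\fH_{r}^\loc=\varprojlim\fH_{r,\fa}^\loc$ and then completing over $\C_p$'' agrees with ``completing each $\fH_{r,\fa}^\loc$ over $\C_p$ and then taking the limit''. But this is precisely how $\fH_{r,\C_p}^\loc$ was \emph{defined} in Subsection~\ref{senconstr}: it is the completion of $\fH_r^\loc\otimes_{K_{h,0}}\C_p$ for the system of seminorms $p_\fa$ pulled back from the norms on the finite-dimensional $\C_p$-algebras $\fH_{r,\fa,\C_p}^\loc$, and the displayed identity $\fH_{r,\C_p}^\loc=\varprojlim_{(\fa,P_1)=1}\fH_{r,\fa,\C_p}^\loc$ is stated there. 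So no further analysis is needed; the argument is purely formal once Propositions~\ref{senproj} and~\ref{liealgsenQ} are in hand.
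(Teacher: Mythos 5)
Your proof is correct and is essentially the paper's own argument: both deduce the result formally by combining the definition $\fH_{r,\C_p}^\loc=\varprojlim_{(\fa,P_1)=1}\fH_{r,\fa,\C_p}^\loc$ with the compatibility $\phi_{r,\B_r}=\varprojlim_\fa\phi_{r,\fa}$ from Proposition~\ref{senproj} and the membership $\phi_{r,\fa}\in\fH_{r,\fa,\C_p}^\loc$ from Proposition~\ref{liealgsenQ}. Your extra remarks on the compatibility of the transition maps and on the inclusion $\fH_{r,\C_p}^\loc\subset\fH_{r,\C_p}$ only make explicit what the paper leaves implicit.
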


\begin{proof}
By definition of the space $\fH_{r,{\C_p}}^\loc$ as completion 
of the space $\fH_{r}^\loc\otimes_{K_{h,0}}\C_p$ for the seminorms $p_\fa$ 
given by the norms on $\fH_{r,\fa,{\C_p}}^\loc$, we have
$\fH_{r,{\C_p}}^\loc=\varprojlim_{(\fa,P_1)=1}\fH_{r,\fa,{\C_p}}^\loc$.
By Proposition \ref{senproj}, we have $\phi_{r,\B_r}=\varprojlim_{\fa}\phi_{r,\fa}$ and by Proposition \ref{liealgsenQ} we have for every $\fa$, $\phi_{r,\fa}\in\fH_{r,\fa,{\C_p}}$. We conclude that $\phi_{r,\B_r}\in\fH_{r,{\C_p}}^\loc$.
\end{proof}

%Since $\fH_{r,{\C_p}}^\loc\subset\fH_{r,{\C_p}}$ we also have $\phi_{r,\B_r}\in\fH_{r,{\C_p}}$.

\begin{remark}\label{primlimsen}
In order to prove that our Lie algebras are ``big'' it will be useful to work with primary ideals of $A_r$, as we did in this subsection. However, in light of Remark \ref{primlim}, all of the results can be rewritten in terms of primary ideals $\fQ$ of $\I_{0,r}$. This will be useful in the next subsection, when we will interpolate the Sen operators corresponding to the classical modular representations.
\end{remark}

From now on we identify $\I_{0,r,{\C_p}}$ with a subring of $\B_{r,{\C_p}}$ via $\iota_{\B_r}$, so we also identify $\Mat_2(\I_{0,r})$ with a subring of $\Mat_2(\B_r)$ and $\GL_2(\I_{0,r,{\C_p}})$ with a subgroup of $\GL_2(\B_{r,{\C_p}})$. In particular we identify $\phi_r$ with $\phi_{r,\B_r}$ and we consider $\phi_r$ as an element of $\fH_{r,{\C_p}}\cap\Mat_2(\I_{0,r,{\C_p}})$.
%%%%%Now consider $\Phi_r=\exp(\phi_r)\in\GL_2(\I_{0,r,{\C_p}})$. Since $\phi_r\in\fH_{r,{\C_p}}$ the matrix $\Phi_r$ normalizes $\fH_{r,{\C_p}}$.

\subsection{The characteristic polynomial of the Sen operator}\label{charpolyn}

Sen proved the following result.

\begin{theorem}\label{charpolsen}
Let $L_1$ and $ L_2$ be two $p$-adic fields. %and let $\C_p=\widehat{\overline{L_1}}$.
Assume for simplicity that $L_2$ contains the normal closure of $L_1$. Let
$\tau\colon\Gal(\overline{L}_1/L_1)\to\GL_m(L_2)$ be a continuous representation.
For each embedding $\sigma\colon L_1\to L_2$,
there is a Sen operator $\phi_{\tau,\sigma}\in \Mat_m(\C_p\otimes_{L_1,\sigma}L_2)$ associated with $\tau$ and $\sigma$.
If $\tau$ is Hodge-Tate and its Hodge-Tate weights with respect to $\sigma$ are $h_{1,\sigma},\ldots,h_{m,\sigma}$ (with multiplicities, if any), then the characteristic polynomial of $\phi_{\tau,\sigma}$ is $\prod_{i=1}^m(X-h_{i,\sigma})$. 
\end{theorem}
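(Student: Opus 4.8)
This is a classical theorem of Sen; the statement here is just a mild reformulation allowing for a base field $L_2$ that need not contain $L_1$. The idea is to reduce to the case $L_1 = L_2$ already treated in Sen's original work (\cite{sen1}, \cite{sen2}) by a base-change argument, and then invoke the Hodge--Tate decomposition together with the Tate--Sen formalism. First I would fix an embedding $\sigma\colon L_1\to L_2$ and observe that the Sen operator $\phi_{\tau,\sigma}$ lives in $\Mat_m(\C_p\otimes_{L_1,\sigma}L_2)$, which via $\sigma$ we may regard as $\Mat_m$ over a suitable $\C_p$-algebra; after extending scalars along $\sigma$ we are reduced to studying a representation of $\Gal(\overline{L}_1/L_1)$ on a finite-dimensional $\C_p\otimes_{L_1,\sigma}L_2$-module, and since $L_2$ contains the normal closure of $L_1$ this tensor product splits as a product of copies of $\C_p$ indexed by the embeddings extending $\sigma$, so the problem decomposes componentwise.

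Next I would recall Sen's construction of $\phi_{\tau,\sigma}$: one descends the semilinear action of $\Gal(\overline{L}_1/L_1)$ on $\C_p\otimes V$ to a $\Gamma$-module over the cyclotomic (or a chosen totally ramified $\Z_p$-) extension, obtains the matrix cocycle $\delta(\sigma)$ with $M^{-1}\tau(\sigma)\sigma(M)=\delta(\sigma)$, and defines $\phi_{\tau,\sigma}=\lim_{\sigma\to 1}\log\delta(\sigma)/\log\chi(\sigma)$. The key structural fact is that the generalized eigenspaces of $\phi_{\tau,\sigma}$ acting on $\C_p\otimes V$ recover the Hodge--Tate decomposition: namely the eigenspace for the eigenvalue $-n$ (or $n$, depending on normalization) corresponds, after twisting by $\C_p(n)$ and taking $\Gamma$-invariants, to the $\sigma$-component of $(V\otimes_{\Q_p}\C_p(n))^{\Gal(\overline{L}_1/L_1)}$. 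When $\tau$ is Hodge--Tate, $\C_p\otimes V$ is the direct sum of these pieces, the dimension of the piece for weight $h$ equals the multiplicity of $h$ among $h_{1,\sigma},\dots,h_{m,\sigma}$, and hence $\phi_{\tau,\sigma}$ is diagonalizable with exactly those eigenvalues; its characteristic polynomial is therefore $\prod_{i=1}^m (X-h_{i,\sigma})$.

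The main technical step, and the one I expect to be the real obstacle, is verifying that the Hodge--Tate weights in the sense used here (the jumps of the Hodge--Tate filtration, defined via $D_{\mathrm{HT}}$) coincide with the eigenvalues of the Sen operator up to sign and that passing through the embedding $\sigma$ does not disturb the multiplicities. This is where one must be careful about normalizations and about the fact that $\C_p\otimes_{L_1,\sigma}L_2$ is not a field but a product; one needs that the Sen operator is compatible with the projection onto each factor, which follows formally from the functoriality of Sen's construction (as in Proposition \ref{senproj} above, applied with the ring $\cR$ replaced by the various factors). Once compatibility is in place, the characteristic polynomial is multiplicative over the factors and the Hodge--Tate hypothesis, being insensitive to which embedding one uses in the sense that each embedding contributes its own set of weights $h_{i,\sigma}$, gives the claimed factorization on each factor and hence on the whole. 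I would close by remarking that the non-Hodge--Tate case is not needed, so no further argument about the semisimplicity of $\phi_{\tau,\sigma}$ is required beyond what the Hodge--Tate hypothesis already supplies.
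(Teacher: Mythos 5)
The paper does not actually prove Theorem \ref{charpolsen}: it is introduced with the sentence ``Sen proved the following result'' and is used as a black box, with \cite{sen1} and \cite{sen2} as the references. So there is no internal proof to compare yours against; what can be assessed is whether your sketch is a faithful reconstruction of Sen's argument, and it essentially is. The decomposition $\C_p\otimes_{L_1,\sigma}L_2\cong\prod_\iota\C_p$ (over $L_1$-embeddings $\iota\colon L_2\to\C_p$ extending the fixed inclusion of $\sigma(L_1)$) reduces everything to the absolute case, and the theorem then follows from three standard facts: the Sen operator is independent of the choices of $M$ and $\delta$, it is additive over direct sums of $\C_p$-semilinear representations, and on $\C_p(n)$ it is multiplication by $n$ (take $\delta(\gamma)=\chi(\gamma)^n$, so $\log\delta(\gamma)/\log\chi(\gamma)=n$). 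The Hodge--Tate hypothesis gives $\C_p\otimes V\cong\bigoplus_i\C_p(h_{i,\sigma})$ on each factor, whence $\phi_{\tau,\sigma}$ is diagonalizable with exactly the eigenvalues $h_{i,\sigma}$ and the characteristic polynomial is $\prod_i(X-h_{i,\sigma})$.

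The only point where your write-up is vaguer than it needs to be is the one you yourself flag as ``the real obstacle'': the identification of the Hodge--Tate weights (jumps of the filtration on $D_{\mathrm{HT}}$) with the Sen eigenvalues. For a Hodge--Tate representation this is not an additional technical step beyond the computation on $\C_p(n)$ above --- the weight $h$ appears with multiplicity $\dim_{L_1}\bigl(V\otimes\C_p(-h)\bigr)^{\Gal(\overline{L}_1/L_1)}$, which is by definition the dimension of the $\C_p(h)$-isotypic component, i.e.\ the $h$-eigenspace of $\phi_{\tau,\sigma}$ --- so modulo fixing the sign convention (which must match the paper's use in Section \ref{charpolyn}, where a weight-$k$ form has Sen eigenvalues $0$ and $k-1$), there is no gap. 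I would only caution that the general statement that the Sen operator always has the ``generalized Hodge--Tate weights'' as eigenvalues, semisimple or not, requires more of Sen's theory; but as you note, the non-Hodge--Tate case is not needed here.
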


Now let $k\in\N$ and $P_k=(u^{-k}(1+T)-1)$ be the corresponding arithmetic prime of $A_{0,r}$.
Let $\fP_f$ be a prime of $\I_r$ above $P$, associated with the system of Hecke eigenvalues of a classical modular form $f$.
Let $\rho\colon\G_Q\to\GL_2(\I_r)$ be as usual. The specialization of $\rho$ modulo $\fP$ is the representation $\rho_f\colon G_\Q\to\GL_2(\I_r/\fP)$ classically associated with $f$, defined over the field $K_f=\I_r/\fP\I_r$. By a theorem of Faltings (\cite{faltings}), when the weight of the form $f$ is $k$, the representation $\rho_f$ is Hodge-Tate of Hodge-Tate weights $0$ and $k-1$. Hence by Theorem \ref{charpolsen} the Sen operator $\phi_f$ associated with $\rho_f$ has characteristic polynomial $X(X-(k-1))$.
Let $\fP_{f,0}=\fP_f\cap\I_{0,r}$. With the notations of the previous subsection, the specialization of $\rho_r$ modulo $\fP_{f,0}$ gives a representation $\rho_{r,\fP_{f,0}}\colon\Gal(\overline{K}/K)\to\GL_2(\I_{0,r}/\fP_{f,0})$, which coincides with $\rho_f\vert_{\Gal(\overline{K}/K)}$. In particular the Sen operator $\phi_{r,\fP_{f,0}}$ associated with $\rho_{r,\fP_{f,0}}$ is $\phi_f$.

By Proposition \ref{senproj} and Remark \ref{primlimsen}, the Sen operator $\phi_r\in \Mat_2(\I_{0,r,\C_p})$ specializes modulo $\fP_{f,0}$ to the Sen operator $\phi_{r,\fP_{f,0}}$ associated with $\rho_{r,\fP_{f,0}}$, for every $f$ as above. %In the exact same way we can show that it specializes to $\rho_{r,\fP_{f,0}}$ for a prime $\fP_{f,0}\subset\I_{r,0}$ as above.
%The characteristic polynomial of its Sen operator $\phi_{r,\fP_{f,0}}$ is $X(X-(k-1))$.
Since the primes of the form $\fP_{f,0}$ are dense in $\I_{0,r,\C_p}$, the eigenvalues of $\phi_{r,Q}$ are given by the unique interpolation of those of $\rho_{r,\fP_{f,0}}$. %Our goal is to obtain an element in $\GL_2(\B_{r,\C_p})$, normalizing $\fH_{r,{\C_p}}$ and having the form of the matrix...
%The two elements obtained via this interpolation are the eigenvalues of $\phi_{r,Q}$.
%In particular we deduce the following.
This way we will recover an element of $\GL_2(\B_{r,\C_p})$ with the properties we need.

Given $f\in A_{0,r}$ we define its $p$-adic valuation by $v_p^\prime(f)=\inf_{x\in\cB(0,r)}v_p(f(x))$, where $v_p$ is our chosen valuation on $\C_p$.
Then if $v^\prime(f-1)\leq p^{-\frac{1}{p-1}}$ there are well-defined elements $\log(f)$ and $\exp(\log(f))$ in $A_{0,r}$, and $\exp(\log(f))=f$. 
%Then the exponential \exp(f) is well-defined as an element of A(0,r) if and only if v_p(f)\leq p^{-\frac{1}{p-1}}.

%For any primary ideal $\fQ$ of $\I_{0,r}$, we write ``$\exp$'' and ``$\log$'' for the classical exponential and logarithm maps $\exp\colon\Mat_2(\I_{0,r}/\fQ)\to\GL_2(\I_{0,r}/\fQ)$ and $\log\colon\GL_2(\I_{0,r}/\fQ)\to\Mat_2(\I_{0,r}/\fQ)$. We still write ``$\exp$'' and ``$\log$'' for the maps $\exp\colon\Mat_2(\B_{r})\to\GL_2(\B_{r})$ and $\log\colon\GL_2(\B_{r})\to\Mat_2(\B_{r})$, defined in the natural way as projective limits over $\fQ$.???
%%
%%%We choose a ``normalization'' of the Sen operator by replacing \phi_r by \log(u)\phi_r; we do this implicitly in order to simplify notations. Let \Phi_r=\exp(\phi_r).
Let $\phi^\prime_r=\log(u)\phi_r$. % and $\Phi_r=\exp(\phi^\prime_r)$. 
%The second element is well defined since the radius of convergence $p^{-\frac{1}{p-1}}$ of the exponential is larger than $r$ (by our choice of $r_h$ in Section \ref{subsectnest}). 
Note that $\phi^\prime_r$ is a well-defined element of $\Mat_2(\B_{r,\C_p})$ since $\log(u)\in\Q_p$.
Recall that we denote by $C_T$ the matrix $\diag(u^{-1}(1+T),1)$.
We have the following.

\begin{proposition}\label{sendiag}
\begin{enumerate}[leftmargin=*]
\item The eigenvalues of $\phi^\prime_r$ are $\log(u^{-1}(1+T))$ and $0$.
In particular the exponential $\Phi_r=\exp(\phi^\prime_r)$ is defined in $\GL_2(\B_{r,\C_p})$.
Moreover $\Phi^\prime_r$ is conjugate to $C_T$ in $\GL_2(\B_{r,\C_p})$.
\item The element $\Phi^\prime_r$ of part (1) normalizes $\fH_{r,{\C_p}}$.
\end{enumerate}
\end{proposition}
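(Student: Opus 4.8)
The strategy is to exploit the interpolation property established just above: the Sen operator $\phi_r$ specializes, modulo every prime $\fP_{f,0}$ coming from a classical form $f$ of weight $k$, to the Sen operator $\phi_f$ of $\rho_f$, whose characteristic polynomial is $X(X-(k-1))$ by Faltings and Theorem \ref{charpolsen}. Hence $\phi_r$ has trace and determinant interpolating $k-1$ and $0$ respectively. Now $k-1$, as a function of the weight, is exactly the image of $\Log(u^{-1}(1+T))/\log u$ under the specialization at $P_k$: indeed at the arithmetic prime $P_k=(u^{-k}(1+T)-1)$ one has $u^{-1}(1+T)\mapsto u^{k-1}$, so $\log(u^{-1}(1+T))\mapsto (k-1)\log u$. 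Since the primes $\fP_{f,0}$ are dense in $\I_{0,r,\C_p}$ (equivalently, in $\B_{r,\C_p}$), the characteristic polynomial of $\phi_r$, computed in the ring $\B_{r,\C_p}$, must be $X\bigl(X-\frac{1}{\log u}\log(u^{-1}(1+T))\bigr)$. Multiplying by $\log u$, the eigenvalues of $\phi_r^\prime=\log(u)\phi_r$ are $\log(u^{-1}(1+T))$ and $0$; this is part (1), first sentence.

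For the exponential: $\phi_r^\prime$ is (after passing to a suitable extension, or working over $\B_{r,\C_p}$ where the relevant idempotents exist because the two eigenvalues $\log(u^{-1}(1+T))$ and $0$ are distinct modulo every prime $\neq P_1$ — this is precisely why $P_1$ was excluded in the definition of $\B_r$) conjugate to the diagonal matrix $\diag(\log(u^{-1}(1+T)),0)$. One checks that $v_p^\prime$ of $\log(u^{-1}(1+T))$ is large enough for the exponential series to converge in $A_{0,r}\widehat\otimes\C_p$: here the hypothesis $s_h>\frac{1}{p-1}$ is used, so that $u^{-1}(1+T)-1=u^{-1}(1+T-u)$ has $v_p^\prime>\frac{1}{p-1}$ on $\cB(0,r_h)$, whence $\log(u^{-1}(1+T))$ has the same valuation, which is $>\frac{1}{p-1}$, guaranteeing that $\exp$ converges on it. Therefore $\Phi_r^\prime=\exp(\phi_r^\prime)$ is defined in $\GL_2(\B_{r,\C_p})$ and, being conjugate to $\diag(\exp\log(u^{-1}(1+T)),\exp 0)=\diag(u^{-1}(1+T),1)=C_T$, it is conjugate to $C_T$. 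This completes part (1).

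For part (2): $\phi_r\in\fH_{r,\C_p}$ by Proposition \ref{seninalg}, and $\fH_{r,\C_p}$ is a Lie algebra, so $\ad(\phi_r^\prime)=\log(u)\ad(\phi_r)$ maps $\fH_{r,\C_p}$ into itself. Since $\fH_{r,\C_p}$ is a closed $\C_p$-Lie subalgebra of the (pro-finite-dimensional) Lie algebra $\Mat_2(\B_{r,\C_p})$, and since conjugation by $\Phi_r^\prime=\exp(\phi_r^\prime)$ acts on $\Mat_2(\B_{r,\C_p})$ as $\exp(\ad(\phi_r^\prime))$ — a convergent series because $\ad(\phi_r^\prime)$ is topologically nilpotent in each finite-level quotient $\fH_{r,\fa,\C_p}$, its eigenvalues being differences of $\{0,\log(u^{-1}(1+T))\}$ which have positive valuation away from $P_1$ — it follows that $\Ad(\Phi_r^\prime)=\sum_{n\geq0}\frac{1}{n!}\ad(\phi_r^\prime)^n$ preserves $\fH_{r,\C_p}$. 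Hence $\Phi_r^\prime$ normalizes $\fH_{r,\C_p}$. I expect the main obstacle to be the convergence and well-definedness bookkeeping: one must justify that $\log$ and $\exp$ make sense at the relevant level (the valuation estimate near $P_1$ genuinely fails, which is why $P_1$ is thrown away), and that the identity $\Ad(\exp\phi')=\exp(\ad\phi')$ together with preservation of the closed subalgebra passes correctly to the projective limit over ideals $\fa$ prime to $P_1$; the eigenvalue computation itself is routine once the interpolation statement from Subsection \ref{charpolyn} is in hand.
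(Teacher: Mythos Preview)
Your argument is correct and follows the same route as the paper's: interpolate the eigenvalues of $\phi_r$ using the density of classical points and Faltings' computation, use the bound $s_h>\tfrac{1}{p-1}$ for convergence of $\exp$, and invoke Proposition~\ref{seninalg} for part (2). The paper is terser on two points you spell out---the diagonalizability of $\phi'_r$ (equivalently of $\Phi'_r$) over $\B_{r,\C_p}$, where the exclusion of $P_1$ is exactly what makes the eigenvalue difference a unit, and the passage from $\phi'_r\in\fH_{r,\C_p}$ to ``$\exp(\phi'_r)$ normalizes $\fH_{r,\C_p}$'' via $\Ad(\exp X)=\exp(\ad X)$---but the substance is the same.
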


\begin{proof}
For every $\fP_{f,0}$ as in the discussion above, the element $\log(u)\phi_r$ specializes to $\log(u)\phi_{r,\fP_{f,0}}$ modulo $\fP_{f,0}$. If $\fP_{f,0}$ is a divisor of $P_k$, the eigenvalues of $\log(u)\phi_{r,\fP_{f,0}}$ are $\log(u)(k-1)$ and $0$. 
%so those of $\exp(\log(u)\phi_{r,\fP_{f,0}})$ are $\exp(\log(u)(k-1))=u^{k-1}$ and $1$.
Since $1+T=u^k$ modulo $\fP_{f,0}$ for every prime $\fP_{f,0}$ dividing $P_k$, we have $\log(u^{-1}(1+T))=\log(u^{k-1})=(k-1)\log(u)$ modulo $\fP_{f,0}$. Hence the eigenvalues of $\log(u)\phi_{r,\fP_{f,0}}$ are interpolated by $\log(u^{-1}(1+T))$ and $0$.

Recall that in Section \ref{subsectnest} we chose $r_h$ smaller than $p^{-\frac{1}{p-1}}$.
%, the radius of convergence of the exponential. Note that $v_p(u^{-1}(1+T)-1)=v_p(T)$.
Since $r<r_h$, $v_p^\prime(T)<p^{-\frac{1}{p-1}}$. In particular $\log(u^{-1}(1+T))$ is defined and $\exp(\log(u^{-1}(1+T)))=u^{-1}(1+T)$, so $\Phi_r=\exp(\phi^\prime_r)$ is also defined and its eigenvalues are $u^{-1}(1+T)$ and $1$.
%the eigenvalues of $\exp(\log(u)\phi_{r,\fP_{f,0}})$ are interpolated by $u^{-1}(1+T)$ and $1$.
The difference between the two is $u^{-1}(1+T)-1$; this belongs to $P_1$, hence it is invertible because of our definition of $\B_r$. This proves (1).

By Proposition \ref{seninalg}, $\phi_r\in\fH_{r,{\C_p}}$. Since $\fH_{r,{\C_p}}$ is a $\Q_p$-Lie algebra, $\log(u)\phi_r$ is also an element of $\fH_{r,{\C_p}}$. Hence its exponential $\Phi^\prime_r$ normalizes $\fH_{r,{\C_p}}$.
\end{proof}

\bigskip

\section{Existence of the Galois level for a family with finite positive slope}

Let $r_h\in p^\Q\cap]0,p^{-\frac{1}{p-1}}[$ be the radius chosen in Section \ref{sectcong}. As usual we write $r$ for any one of the radii $r_i$ of Section \ref{subsectnest}.
Recall that $\fH_{r}\subset \Mat_2(\B_r)$ is the Lie algebra attached to the image of $\rho_{r}$ (see Definition \ref{blalg}) 
and $\fH_{0,r,\C_p}=\fH_{r}\widehat{\otimes}\C_p$.
Let $\fu^\pm$, respectively $\fu^\pm_{\C_p}$, be the upper and lower nilpotent subalgebras of $\fH_{r}$, respectively $\fH_{r,\C_p}$.

\begin{remark} \label{remind_r} The commutative Lie algebra $\fu^\pm$ is independent of $r$ because it is equal to $\Q_p\cdot\Log(U(\I_0^\circ)\cap G_r^\prime)$ which is independent of $r$, provided $r_0\leq r<r_h$.
\end{remark}

We fix $r_0\in p^\Q\cap]0,r_h[$ arbitrarily and we work from now on with radii $r$ satisfying $r_0\leq r<r_h$. 
%All subsequent rings in this section are indexed by $r$, with $r\geq r_0$.
As in Remark \ref{propsubgr} this fixes a finite extension of $\Q$ corresponding to the inclusion $G_r^\prime\subset G_r$.
For $r<r^\prime$ we have a natural inclusion $\I_{0,r^\prime}\into\I_{0,r}$. 
Since $\B_r=\varprojlim_{(\fa P_1)=1}\I_{0,r}/\fa \I_{0,r}$ this induces an inclusion $\B_{r^\prime}\into\B_r$.
We will consider from now on $\B_{r^\prime}$ as a subring of $\B_r$ for every $r<r^\prime$. We will also consider $\Mat_2(\I_{0,r^\prime,\C_p})$ and $\Mat_2(\B_{r^\prime})$ as subsets of $\Mat_2(\I_{0,r,\C_p})$ and $\Mat_2(\B_r)$ respectively.
These inclusions still hold after taking completed tensors with $\C_p$.

%We define a ring $\B=\varprojlim_{(\fa,P_1)=1}\I_0/\fa \I_0$ where $\fa$ varies over the nonzero ideals of $A_{0,r}$ prime to $P_1$. 
%%%%%For $r<r^\prime$ the maps $A_{r^\prime}\into A_{0,r}$ and $\I_{0,r^\prime}\into \I_{0,r}$ induce injections $\B_{r^\prime}\into \B_r$. We identify $\B_{r^\prime}$ with a subring of $\B_r$ (so every $\B_r$ is a subring of $\B_{r_0}$). 
%Since $\bigcap_r\I_{0,r}=\I_0$ we have $\B=\bigcap_r\B_r$ and we have a continuous injection $\I_0\into \B$ with dense image.

%We also define $B=\varprojlim_{(\fa, P_1)=1}A_{0,r}/\fa$. All of the remarks of the two previous paragraphs go unchanged when replacing $\I_{0,r}$ by $A_{0,r}$, $\B_r$ by $B_r$ for any $r$ and $\B$ by $B$. In particular we have a map $\Lambda_{h,0}\into B$ with dense image, since $\Lambda_h=\bigcap_rA_{0,r}$. The maps $B_r\into\B_r$ also induce an injection $B\into\B$ with closed image.

%We define a Lie subalgebra of $M_2(\B_{r_0})$ as $\fH=\bigcap_r\fH_r$. Since $\fH_r\subset M_2(\B_r)$ and $\bigcap_r M_2(\B_r)=M_2(\B)$ we have that $\fH$ is a Lie subalgebra of $M_2(\B)$. We set as usual $B_C=B\widehat{\otimes}_{K_{h,0}}C$, $\B_C=\B\widehat{\otimes}_{K_{h,0}}C$ and $\fH_C=\fH\widehat{\otimes}_{K_{h,0}}C=\bigcap_r\fH_{r,C}$.

%Recall that $\phi_r\in \Mat_2(\I_{0,r,\C_p})$ is the Sen operator associated with $\rho_r$ and $\Phi_r\in\GL_2(\I_{0,r,\C_p})$ is its exponential.
Recall the elements $\phi^\prime_r=\log(u)\phi_r\in \Mat_2(\B_{r,\C_p})$ and $\Phi^\prime_r=\exp(\phi^\prime_r)\in\GL_2(\I_{0,r,\C_p})$ defined at the end of the previous section.
The Sen operator $\phi_r$ is independent of $r$ in the following sense: if $r<r^\prime<r_h$ and $\B_{r^\prime,\C_p}\to\B_{r,\C_p}$ is the natural inclusion then the image of $\phi_{r^\prime}$ under the induced map $\Mat_2(\B_{r^\prime,\C_p})\to \Mat_2(\B_{r,\C_p})$ is $\phi_r$.
We deduce that $\phi^\prime_r$ and $\Phi^\prime_r$ are also independent of $r$ (in the same sense).
% so the family $(\Phi_r)_{r<r_h}$ 
%defines an element $\Phi$ in the projective limit $\varprojlim_{r<r_h}\GL_2(\I_{r,C})$, which is just $\bigcap_{r<r_h}\GL_2(\I_{0,r,C})=\GL_2(\I_{0,C})$. This is the same as saying that for any $r<r_h$ the coefficients of $\Phi_r\in\GL_2(\I_{r,C})$ are actually elements of $\I_{0,C}$ (which is seen as a subset of $\I_{0,r,C}$ through the natural inclusion).

By Proposition \ref{sendiag}, for every $r<r_h$
%the eigenvalues of $\Phi_r$ for any $r$ are $u^{-1}(1+T)$ and $1$,
% so the same is true for $\Phi$. Since $u^{-1}(1+T)-1$ is invertible in $B$ we deduce that $\Phi$ can be diagonalized in $\GL_2(\B_{r\C)$. 
there exists an element $\beta_r\in\GL_2(\B_{r,\C_p})$ such that $\beta_r\Phi^\prime_r\beta_r^{-1}=C_T$. Since $\Phi^\prime_r$ normalizes $\fH_{r,\C_p}$,  $C_T=\beta_r\Phi^\prime_r\beta_r^{-1}$ normalizes $\beta_r\fH_{r,\C_p}\beta_r^{-1}$ .

%%%Let $P_k=1+T-(1+p)^k$ be an arithmetic prime in $\Lambda_h$; it belongs to $B(0,r)$ for some $r<r_h$.
%%%Let $\fP$ be a prime above $P_k$ corresponding to a classical modular form $f$. Denote $\Phi_{r,\fP}=\Phi_\fP$ the reduction modulo $P_k$ of the Sen operator (seen as an element of $\GL_2(\I_r)$).
%By an interpolation argument as in Sec. \ref{charpolyn} we obtain that the characteristic polynomial of $\Phi\in\GL_2(\I)$ is $(X-1-T)(X-1)$, hence there exists $\beta\in\GL_2(\I)$ such that $\beta\Phi\beta^{-1}=\left(\begin{array}{cc} 1+T&0\\0&1\end{array}\right)$.

%%If $\beta_r\in\GL_2(\I_{0,r,C}[T^{-1}])$ is such that $\beta_r\Phi_r\beta_r^{-1}=\left(\begin{array}{cc} 1+T&0\\0&1\end{array}\right)$ then it is clear that $\beta_r$ defines an element $\beta\in\varprojlim_{r<p^{-\frac{1}{p-1}}}\GL_2(\I_{0,r,C}[T^{-1}])$, which again is just $\bigcap_{r<p^{-\frac{1}{p-1}}}\GL_2(\I_{0,r,C}[T^{-1}])=\GL_2(\I_0[T^{-1}])$.

%Now $\beta\fH_{0,C}\beta^{-1}$ is a well-defined Lie subalgebra of $\fsl_2(\I_{0,C})$. Since $\Phi_r$ normalizes $\fH_{0,r,C}$ for any $r$, $\Phi$ normalizes $\fH_{0,C}$ and $\beta\Phi\beta^{-1}$ normalizes $\beta\fH_{0,C}\beta^{-1}$. In particular conjugation by $\beta\Phi\beta^{-1}$ induces a structure of $\Lambda_C$-modules on the upper and lower nilpotent subalgebras $\fu_{\pm,C}^\beta$ of $\beta\fH_{0,C}\beta^{-1}$ as we show below.

%This matrix normalizes the Lie subalgebra $\beta\fH_C\beta^{-1}$ of $\fsl_2(\I_{0,C}[T^{-1}])$.

We denote by $\fU^{\pm}$ the upper and lower nilpotent subalgebras of $\fsl_2$. The action of $C_T$ on $\fH_{r,\C_p}$ by conjugation is semisimple, so we can decompose $\beta_r\fH_{r,\C_p}\beta_r^{-1}$ as a sum of eigenspaces for $C_T$:
$$ \beta_r\fH_{r,\C_p}\beta_r^{-1}=\left(\beta_r\fH_{r,\C_p}\beta_r^{-1}\right)[1]\oplus \left(\beta_r\fH_{r,\C_p}\beta_r^{-1}\right)[u^{-1}(1+T)]\oplus \left(\beta_r\fH_{r,\C_p}\beta_r^{-1}\right)[u(1+T)^{-1}] $$
with
$$ \left(\beta_r\fH_{r,\C_p}\beta_r^{-1}\right)[u^{-1}(1+T)]\subset \fU^+(\B_{r,\C_p}) \mbox{\qquad and \qquad} \left(\beta_r\fH_{r,\C_p}\beta_r^{-1}\right)[u(1+T)^{-1}]\subset\fU^-(\B_{r,\C_p}). $$
%%%%%Note that $\left(\beta_r\fH_{r,\C_p}\beta_r^{-1}\right)[u^{-1}(T+1)]\neq 0$ and $\left(\beta_r\fH_{r,\C_p}\beta_r^{-1}\right)[u(T+1)^{-1}]\neq 0$ because otherwise $\fH_{r,\C_p}$ would be solvable, and this is not the case as $\fH_r\cap\fU^{\pm}\neq 0$ by Proposition \ref{??????}.

\noindent Moreover, the formula $$\left(\begin{array}{cc}u^{-1}(1+T)&0\\0&1\end{array}\right)
\left(\begin{array}{cc}1&\lambda\\0&1\end{array}\right)
\left(\begin{array}{cc}u^{-1}(1+T)&0\\0&1\end{array}\right)^{-1}=
\left(\begin{array}{cc}1&u^{-1}(1+T)\lambda\\0&1\end{array}\right)$$
shows that the action of $C_T$ by conjugation coincides with multiplication by $u^{-1}(1+T)$. 
By linearity this gives an action of the polynomial ring $\C_p[T]$ on $\beta_r\fH_{r,\C_p}\beta_r^{-1}\cap\fU^+(\B_{r,\C_p})$, 
compatible with the action of ${\C_p}[T]$ on $\fU^+(\B_{r,\C_p})$ given by the inclusions 
$\C_p[T]\subset\Lambda_{h,0,{\C_p}}\subset B_{r,\C_p}\subset\B_{r,\C_p}$. Since ${\C_p}[T]$ is dense in $A_{h,0,{\C_p}}$
for the $p$-adic topology, it is also dense in $B_{r,\C_p}$. Since $\fH_{r,\C_p}$ is a closed Lie subalgebra of $\Mat_2(\B_{r,\C_p})$, 
we can define by continuity a $B_{r,\C_p}$-module structure on 
$\beta_r\fH_{r,\C_p}\beta_r^{-1}\cap\fU^+(\B_{r,\C_p})$, compatible with that on $\fU^+(\B_{r,\C_p})$. Similarly
we have
$$ \left(\begin{array}{cc}u^{-1}(1+T)&0\\0&1\end{array}\right)\left(\begin{array}{cc}1&0\\ \mu &1\end{array}\right)\left(\begin{array}{cc}u^{-1}(1+T)&0\\0&1\end{array}\right)^{-1}=
\left(\begin{array}{cc}1&0\\u(1+T)^{-1}\mu &1\end{array}\right). $$

We note that $1+T$ is invertible in $A_{0,r}$ since $T=p^{s_h}t$ where $r_h=p^{-s_h}$. Therefore $C_T$ is invertible and by twisting by $(1+T)\mapsto (1+T)^{-1}$ we can also give $\beta_r\fH_{r,\C_p}\beta_r^{-1}\cap\fU^-(\B_{r,\C_p})$
a structure of $B_{r,\C_p}$-module compatible with that on $\fU^-(\B_{r,\C_p})$.
%%%Our goal is to show that $\fu_{\pm,C}^\beta$ contain the basis of a $\Lambda_C$-lattice in $\I_{0,C}$ and to conclude as in the previous section.

%We set $\fu^\pm=\fH\cap\fU^\pm(\B)$ and $\fu^\pm_{\C_p}=\fH_C\cap\fU^\pm(\B_C)=\fu^\pm\widehat{\otimes}_{K_{h,0}}C$; for the conjugate algebra we write $\fH_C^\beta=\beta\fH_C\beta^{-1}$ and $\fu^{\pm,\beta}_{C}=\fH_C^\beta\cap\fU^\pm(\B_C)$.
%We will deduce a 'fullness' result for \fH_C^\beta thanks to the following, which is a consequence of Prop. \ref{lambdaspan}.
By combining the previous remarks with Corollary \ref{lambdaspan}, we prove the following ``fullness'' result for the big Lie algebra $\fH_r$.

\begin{theorem}\label{betalevel}
Suppose that the representation $\rho$ is $(H_0,\Z_p)$-regular. Then there exists a nonzero ideal $\fl$ of $\I_0$, independent of $r<r_h$, such that for every such $r$ the Lie algebra $\fH_r$ contains $\fl\cdot\fsl_2(\B_r)$.
\end{theorem}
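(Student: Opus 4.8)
The plan is to run the argument over $\C_p$: I would use the element $\Phi'_r\in\GL_2(\B_{r,\C_p})$ — which is conjugate to $C_T$ via $\beta_r$ and normalizes $\fH_{r,\C_p}$ by Proposition~\ref{sendiag} — to put a $B_{r,\C_p}$-module structure on the ``off-diagonal'' pieces of $\fH_{r,\C_p}$, then transport along this structure the fullness of the unipotent subalgebras recorded in Corollary~\ref{lambdaspan}, and finally descend from $\C_p$ back to $\B_r$. The hypothesis that $\rho$ is $(H_0,\Z_p)$-regular is used exactly through Corollary~\ref{lambdaspan}.

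First I would set up the eigenspace decomposition. Since $\Phi'_r$ normalizes $\fH_{r,\C_p}$ and is conjugate to $C_T=\diag(u^{-1}(1+T),1)$, the operator $\ad(\Phi'_r)$ acts semisimply on $\fH_{r,\C_p}$ with eigenvalues $1$, $u^{-1}(1+T)$ and $u(1+T)^{-1}$, giving $\fH_{r,\C_p}=\fH_{r,\C_p}[1]\oplus\fH_{r,\C_p}[u^{-1}(1+T)]\oplus\fH_{r,\C_p}[u(1+T)^{-1}]$; as recalled before the statement, conjugating by $\beta_r$ carries the last two summands into $\fU^\pm(\B_{r,\C_p})$, and — since $\ad(\Phi'_r)$ acts on them by multiplication by $u^{\mp1}(1+T)^{\pm1}$, $\C_p[T]$ is dense in $B_{r,\C_p}$, and $\fH_{r,\C_p}$ is closed — these two eigenspaces are $B_{r,\C_p}$-modules. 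The point that makes everything work is that the eigenvalues of $\Phi'_r$ differ by $u^{-1}(1+T)-1\in P_1$, which is invertible in $\B_r$ precisely by construction of $\B_r$: hence $\Phi'_r$ is already diagonalizable over the \emph{Noetherian} ring $\I_{0,r,\C_p}[P_1^{-1}]$, so $\beta_r$ and the two spectral idempotents of $\Phi'_r$ may be taken with entries there.

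Next I would feed in fullness. By Corollary~\ref{lambdaspan} and Lemma~\ref{lattice}, each abelian subalgebra $\fu^\pm=\fH_r\cap\fU^\pm(\B_r)$, identified via its off-diagonal entry with a subset of $\I_{0,r}$, has $\Lambda_{h,0}$-span containing a fixed nonzero ideal $\fl_0$ of $\I_0^\circ$. Projecting $\fu^\pm\otimes\C_p\subset\fH_{r,\C_p}$ onto the eigenspace $\fH_{r,\C_p}[u^{-1}(1+T)]$ and identifying the latter with $\B_{r,\C_p}$, this projection is multiplication by a fixed element $c_\pm\in\I_{0,r,\C_p}[P_1^{-1}]$; the key observation is that the ideal $(c_+,c_-)$ is nonzero, for otherwise both rank-one nilpotents $E_{12},E_{21}$ — hence, by bracketing, all of $\fsl_2(\B_{r,\C_p})$ — would lie in the sum of the two remaining eigenspaces, contradicting a rank count since $\Phi'_r$ is non-scalar over $\I_{0,r,\C_p}[P_1^{-1}]$. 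Working primary ideal by primary ideal of $\I_{0,r}$ as in Remark~\ref{primlim} (each $\I_{0,r}/\fQ$ being local Artinian, so that outside the vanishing loci of $\fl_0$ and of $(c_+,c_-)$ one of $c_\pm$ becomes a unit there) and closing with Nakayama exactly as in the proof of Proposition~\ref{lambdaspan_0}, I would obtain a single nonzero ideal $\fl_1$ of $\I_0[P_1^{-1}]$ with $\fl_1$ times the $u^{-1}(1+T)$-eigenspace of $\ad(\Phi'_r)$ in $\Mat_2(\B_{r,\C_p})$ contained in $\fH_{r,\C_p}$, and symmetrically for $u(1+T)^{-1}$. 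Bracketing, $[\fH_{r,\C_p}[u^{-1}(1+T)],\fH_{r,\C_p}[u(1+T)^{-1}]]\subset\fH_{r,\C_p}[1]$ sweeps out the traceless diagonal direction, so $\fl_1^2\cdot\fsl_2(\B_{r,\C_p})\subset\fH_{r,\C_p}$.

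Finally I would descend and check $r$-independence: $\fu^\pm$ (Remark~\ref{remind_r}), $\phi_r$ and hence $\Phi'_r$, $c_\pm$, $\fl_0$ and $\fl_1$ are all compatible under the maps $\Mat_2(\B_{r',\C_p})\to\Mat_2(\B_{r,\C_p})$; and since $\fH_r=\fH_{r,\C_p}\cap\Mat_2(\B_r)$, $\fsl_2(\B_r)=\fsl_2(\B_{r,\C_p})\cap\Mat_2(\B_r)$, and $\fl_1^2$ is the scalar extension of an ideal $\fl$ of $\I_0$, intersecting the inclusion above with $\Mat_2(\B_r)$ yields $\fl\cdot\fsl_2(\B_r)\subset\fH_r$ for every $r<r_h$. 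The main obstacle is the transfer step: the $B_{r,\C_p}$-module structure lives on the Sen eigenspaces, which are cut out by the transcendental conjugation $\beta_r$ and are only $B_{r,\C_p}$- (not $\B_{r,\C_p}$-) modules, while Corollary~\ref{lambdaspan} controls only the honest unipotent subalgebras $\fu^\pm$; matching them — showing the projections do not destroy the lattice-span property and that one ideal works uniformly over every primary quotient, since $\B_r$ is not module-finite over $B_r$ — is where the work concentrates, and it is exactly here that the Noetherian ring $\I_{0,r,\C_p}[P_1^{-1}]$ and the exclusion of the weight-one prime $P_1$ are indispensable.
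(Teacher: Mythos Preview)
Your overall plan is sound and uses all the right ingredients (Sen element, the $B_{r,\C_p}$-module structure on the eigenspaces, Corollary~\ref{lambdaspan}, bracketing, descent), but the execution diverges from the paper's in a way that creates a real gap.

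The paper's route is simpler. Rather than projecting the honest nilpotents $\fu^\pm$ onto the Sen eigenspaces and chasing projection coefficients $c_\pm$, the paper \emph{conjugates first}: from Corollary~\ref{lambdaspan} one gets a nonzero ideal $\fa=\fa^+\fa^-$ of $\I_0$ (defined over $\Q_p$ from the start) with $B_{r,\C_p}\fH_{r,\C_p}\supset\fa\cdot\fsl_2(\B_{r,\C_p})$. Since $\fa\cdot\fsl_2(\B_{r,\C_p})$ is a two-sided $\B_{r,\C_p}$-ideal of $\fsl_2$, it is \emph{stable under conjugation by $\beta_r$}; hence $B_{r,\C_p}(\beta_r\fH_{r,\C_p}\beta_r^{-1})\supset\fa\cdot\fsl_2(\B_{r,\C_p})$ as well. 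Now the nilpotent pieces $\fu^{\pm,\beta_r}_{\C_p}$ of $\beta_r\fH_{r,\C_p}\beta_r^{-1}$ are already $B_{r,\C_p}$-modules, so taking the $B_{r,\C_p}$-span changes nothing on them: $\fu^{\pm,\beta_r}_{\C_p}\supset\fa\B_{r,\C_p}$. Bracketing gives $\beta_r\fH_{r,\C_p}\beta_r^{-1}\supset\fa^2\cdot\fsl_2(\B_{r,\C_p})$, and untwisting (again by conjugation-stability) gives $\fH_{r,\C_p}\supset\fa^2\cdot\fsl_2(\B_{r,\C_p})$. The point is that $\fl=\fa^2$ is an ideal of $\I_0$ \emph{over $\Q_p$} from the outset, so the descent to $\fH_r$ is the straightforward finite-level basis argument the paper gives.

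Your path instead produces coefficients $c_\pm$ that live in $\I_{0,r,\C_p}[P_1^{-1}]$, i.e.\ they depend on the transcendental element $\beta_r$, so the ideal $(c_+,c_-)\fl_0$ you build is a priori only an ideal of $\I_{0,r,\C_p}[P_1^{-1}]$. Your final sentence asserts that ``$\fl_1^2$ is the scalar extension of an ideal $\fl$ of $\I_0$,'' but this is exactly what is not established: nothing in your argument forces $(c_+,c_-)$ to come from $\Q_p$, and the ``primary ideal by primary ideal'' Nakayama step you sketch would have to be run over primes of $\I_{0,r,\C_p}$, which are $\C_p$-points rather than $\overline{\Q}_p$-Galois orbits. (Incidentally, if you actually compute $c_\pm$ you find $c_+=a^2$, $c_-=-b^2$ where $(a,b)$ is the first row of $\beta_r$, so $(c_+,c_-)$ is the unit ideal in every local factor of $\B_{r,\C_p}$; this shows the obstacle is illusory, but also that the projection-coefficient bookkeeping is unnecessary.) The paper sidesteps all of this: by using conjugation-stability of $\fa\cdot\fsl_2$ instead of projections, the only ideal that ever appears is $\fa$, which was born over $\Q_p$.
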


\begin{proof}
Since $U^\pm(\B_r)\cong \B_r$, we can and shall identify $\fu^+=\Q_p\cdot\Log\, G_r^\prime\cap\fU^+(\B_r)$ with a $\Q_p$-vector subspace of $\B_r$ (actually of $\I_0$), and $\fu^+_{\C_p}$ with a $\C_p$-vector subspace of $\B_{r,\C_p}$. We repeat that these spaces are independent of $r$ since
$G_r^\prime$ is, provided that $r_0\leq r<r_h$ (see Remark \ref{propsubgr}).
By Corollary \ref{lambdaspan}, $\fu^\pm\cap\I_0$ contains a basis $\{e_{i,\pm}\}_{i\in I}$ for $Q(\I_0)$ over $Q(\Lambda_{h,0})$.
The set $\{e_{i,+}\}_{i\in I}\subset\fu^+$ is a basis for $Q(\I_{0})$ over $Q(\Lambda_{h,0})$, so $\fu^+$ contains the basis of a $\Lambda_{h,0}$-lattice in $\I_0$.
By Lemma \ref{lattice} we deduce that $\Lambda_{h,0}\fu^+$ contains a nonzero ideal $\fa^+$ of $\I_0$.
%As a consequence the span B_C\fu^+_C contains \fa^+\I_{0,C}B_C.
%Now for any $r<r_h$, the algebra $\fH_{r,\C_p}$ is closed in $M_2(\B_{r,\C_p})$ by construction,
%so its unipotent subalgebra $\fu^+_{r,\C_p}$ is closed in $\B_{r,\C_p}$.
%Since $B_C$ is closed in $\B_C$ the span $B_C\fu^+_C$ is also closed in $\B_C$.
Hence we also have $B_{r,\C_p}\fu^+_{\C_p}\supset B_{r,\C_p} \fa^+$. 
%that $B_{r,\C_p}\fu^+_{\C_p}$ also contains the closure of $B_{r,\C_p}\fa^+$ in $\B_{r,\C_p}$. 
Now $\fa^+$ is an ideal of $\I_0$ and $B_{r,\C_p}\I_{0,\C_p}=\B_{r,\C_p}$, so $B_{r,\C_p}\fa^+=\fa^+\B_{r,\C_p}$
is an ideal in $\B_{r,\C_p}$.
%We deduce from $B_{r,\C_p}\fu^+_{\C_p}\supset B_{r,\C_p}\cdot\fu^+_{C_p}\supset B_{r,\C_p}\cdot\fa^+$ that $B_{r,\C_p}\fu^+_{\C_p}$ also contains the closure of $\fa^+$ in $\B_{r,\C_p}$. Now $\fa^+$ is an ideal of $\I_{0,C}$ and $\I_{0,C}$ is dense in $\B_C$, so the closure of $\fa^+=\fa^+\I_{0,C}$ is $\fa^+B_C$.
We conclude that $B_{r,\C_p}\cdot\fu^+\supset\fa^+\B_{r,\C_p}$ for a nonzero ideal $\fa^+$ of $\I_0$.
%Since \fu^+_C contains an ideal \fa_+\I_{0,C} and \I_{0,C} is dense in \B_C we obtain that \fu^+_C contains \fa_+\B_C.
We proceed in the same way for the lower unipotent subalgebra, obtaining $B_{r,\C_p}\cdot\fu^-\supset\fa^-\B_{r,\C_p}$ for some nonzero ideal $\fa^-$ of $\I_0$.

Consider now the Lie algebra $B_{r,\C_p}\fH_{\C_p}\subset \Mat_2(\B_{r,\C_p})$.
Its nilpotent subalgebras are $B_{r,\C_p}\fu^+$ and $B_{r,\C_p}\fu^-$, and we showed $B_{r,\C_p}\fu^+\supset\fa^+\B_{r,\C_p}$ and $B_{r,\C_p}\fu^-\supset\fa^-\B_{r,\C_p}$.
Denote by $\ft\subset\fsl_2$ the subalgebra of diagonal matrices over $\Z$. By taking the Lie bracket, we see that 
$[\fU^+(\fa^+\B_{r,\C_p}),\fU^-(\fa^-\B_{r,\C_p})]$ spans $\fa^+\cdot\fa^-\cdot\ft(\B_{r,\C_p})$ over $B_{r,\C_p}$.
We deduce that $B_{r,\C_p}\fH_{\C_p}\supset\fa^+\cdot\fa^-\cdot\fsl_2(\B_{r,\C_p})$.
Let $\fa=\fa^+\cdot\fa^-$. Now $\fa\cdot\fsl_2(\B_{r,\C_p})$ is a $\B_{r,\C_p}$-Lie subalgebra of $\fsl_2(\B_{r,\C_p})$.
Recall that $\beta_r\in\GL_2(\B_{r,\C_p})$; hence by stability by conjugation we have
$\beta_r\left(\fa\cdot\fsl_2(\B_{r,\C_p})\right)\beta_r^{-1}=\fa\cdot\fsl_2(\B_{r,\C_p})$.
% is also a $\B_{r,\C_p}$ subalgebra since the conjugation 
%is $\B_{r,\C_p}$-linear, and it is also not solvable.
%In particular its upper and lower unipotent subalgebras must contain nontrivial ideals $\fb^{\pm}$ of $\B_{r,\C_p}$.
%By the same reasoning as before this implies that $\beta_r\fa\fsl_2(\B_{r,\C_p})\beta^{-1}\supset\fb\fsl_2(\B_{r,\C_p})$ 
%where $\fb=\fb^\cdot\fb^-$.
Thus, we constructed $\fa$ such that $B_{r,\C_p}\left(\beta_r\fH_{r,\C_p}\beta_r^{-1}\right)\supset\fa\cdot\fsl_2(\B_{r,\C_p})$. 
%from which $\beta B_C\fH_C\beta^{-1}\supset\beta\fa\fsl_2(\B_C)\beta^{-1}\supset\fb\fsl_2(\B_C)$.
%By $B_{r,\C_p}$-linearity of the conjugation we deduce $B_C\beta\fH_C\beta^{-1}\supset\fb\fsl_2(\B_C)$, so in particular
In particular, if $\fu_{\C_p}^{\pm,\beta_r}$ denote the unipotent subalgebras of $\beta_r\fH_{r,\C_p}\beta_r^{-1}$, we have $B_{r,\C_p}\fu_{\C_p}^{\pm,\beta_r}\supset\fa\B_{r,\C_p}$ for both signs. By the discussion preceding the proposition the subalgebras $\fu_{\C_p}^{\pm,\beta_r}$ have a structure of $B_{r,\C_p}$-modules, which means that $\fu_{\C_p}^{\pm,\beta_r}=B_{r,\C_p}\fu_{\C_p}^{\pm,\beta_r}$.
We conclude that $\fu_{\C_p}^{\pm,\beta_r}\supset \beta_r\left(\fa\cdot\fU^\pm(\B_{r,\C_p})\right)\beta_r^{-1}$ for both signs. 
By the usual argument of taking the bracket, we have $\beta_r\fH_{r,\C_p}\beta_r^{-1}\supset\fa^2\cdot\fsl_2(\B_{r,\C_p})$.
We can untwist by the invertible matrix $\beta_r$ to conclude that, for $\fl=\fa^2$, we have
$\fH_{r,\C_p}\supset\fl\cdot\fsl_2(\B_{r,\C_p})$.
%In the same way we deduce that $\fu_{-,C}$ contains an ideal $\fa_-$ of $\I_{0,C}$, so $\Lambda_C\fH_C$ contains $\fa_+\fa_-\fsl_2(\I_{0,C})$. Now apply Hida's lemma \ref{conjug} with $A=\I_{0,C}$, $\fs=\fa_+\fa_-\fsl_2(\I_{0,C})$ and $\gamma=\beta_r\in\GL_2(Q(\I_{0,C}))$ to find an ideal $\fa_\beta$ of $\I_{0,C}$ such that $\Lambda_C\beta_r\fH_C\beta_r^{-1}\supset\fa^\beta\fsl_2(\I_{0,C})$. It follows that $\Lambda_C\fu_{\pm,C}^\beta$ both contain the ideal $\fa_\beta$ of $\I_{0,C}$.
%Since conjugation by $\beta\Phi\beta^{-1}$ generates a $\Lambda_C$-module structure on $\fu_{+,C}^\beta$ we deduce that $\fu_{+,C}^\beta$ contains $\fa^\beta$. In the same way we find that $\fu_{-,C}^\beta\supset\fa^\beta$. We conclude that $\beta\fH_C\beta^{-1}\supset (\fa^\beta)^2\fsl_2(\I_{0,C})$.

Let us get rid of the completed extension of scalars to $\C_p$.
For every ideal $\fa\subset\I_{0,r}$ not dividing $P_1$, let $\fH_{r,\fa}$ be the image of $\fH_r$ in $\Mat_2(\I_{0,r}/\fa\I_{0,r})$. 
Consider the two finite dimensional $\Q_p$-vector spaces $\fH_{r,\fa}$ and $\fl\cdot\fsl_2(\I_{0,r}/\fa\I_{0,r})$. Note that they are both subspaces of the finite dimensional $\Q_p$-vector space $\Mat_2(\I_{0,r}/\fa\I_{0,r})$. After extending scalars to $\C_p$, we have
\begin{equation}\label{cpincl} \fl\cdot\fsl_2(\I_{0,r}/\fa \I_{0,r})\otimes\C_p\subset\fH_{r,\fa}\otimes\C_p. \end{equation}

Let $\{e_i\}_{i\in I}$ be an orthonormal basis of the Banach space $\C_p$ over $\Q_p$, with $I$ some index set, such that $1\in\{e_i\}_{i\in I}$. Let $\{v_j\}_{j=1,...,n}$ be a $\Q_p$-basis of $\Mat_2(\I_{0,r}/\fa\I_{0,r})$ such that, for some $d\leq n$, $\{v_j\}_{j=1,...,d}$ is a $\Q_p$-basis of $\fH_{r,\fa}$. 

Let $v$ be an element of $\fl\cdot\fsl_2(\I_{0,r}/\fa\I_{0,r})$. 
Then $v\otimes 1\in\fl\cdot\fsl_2(\I_{0,r}/\fa \I_{0,r})\otimes\C_p$ and by (\ref{cpincl}) we have $v\otimes 1\in\fH_{r,\fa}\otimes\C_p$. As $\{v_j\otimes e_i\}_{1\le j\le d, i\in I}$, respectively 
$\{v_j\otimes e_i \}_{1\le j\le n,i\in I}$  is an orthonormal basis of  $\fH_{r,\fa}\otimes\C_p$, respectively of $\Mat_2(\I_{0,r}/\fa\I_{0,r})\otimes \C_p$ over $\Q_p$,  there exist $\lambda_{j,i}\in\Q_p, (j,i)\in \{1,2,...,d\}\times I$ converging to $0$ in the filter of complements of finite subsets of $\{1,2,...,d\}\times I$ such that $v\otimes 1=\sum_{j=1,...,d;\, i\in I}\lambda_{j,i}(v_j\otimes e_i)$. 

But $v\otimes 1\in \Mat_2(\I_{0,r}/\fa\I_{0,r})\otimes 1\subset \Mat_2(\I_{0,r}/\fa\I_{0,r})\otimes \C_p$ and therefore $v\otimes 1=\sum_{1\le j\le n} a_j(v_j\otimes 1)$, for some $a_j\in \Q_p$, $j=1,...,n$. By the uniqueness of a representation of an element in a 
$\Q_p$-Banach space in terms of a given orthonormal basis we have
$$
v\otimes 1=\sum_{j=1}^d a_j(v_j\otimes 1),\mbox{\quad i.e.\quad} v=\sum_{j=1}^da_jv_j\in \fH_{r,\fa}.
$$

%Any element of $\Mat_2(\I_{0,r}/\fa\I_{0,r})$ admits a unique decomposition of the form $\sum_{j=1,...n; i\in I}\lambda^\prime_{j,i}v_j\otimes e_i$ with $\lambda^\prime_{j,i}\in\Q_p$. We deduce that $v$ is a $\Q_p$-linear combination of the $v_j$'s, $j=1,...,d$, hence it belongs to $\fH_{r,\fa}$. Therefore, we have
%$$\fl\cdot \fsl_2(\I_{0,r}/\fa \I_{0,r})\subset\fH_{r,\fa}.$$
By taking the projective limit over $\fa$, we conclude that
$$\fl\cdot \fsl_2(\B_r)\subset \fH_r.$$
\end{proof}

%We conjugate again to obtain a result about the algebra $\fH_C$.

%\begin{corollary}
%There exists a nonzero ideal $\fl$ of $\B_C$ such that $\fH_C\supset\fl\fsl_2(\B_C)$.
%\end{corollary}\bibitem[Ch04]{chenfam} G. Chenevier, \emph{Familles $p$-adiques de formes automorphes pour $\GL(n)$}, J. Reine Angew. Math. 570 (2004), 143-217.

%\begin{proof}
%By the proposition $\fH_C^\beta\supset\fl^\beta\fsl_2(\B_C)$ for some ideal $\fl^\beta\subset\B_C$.
%This implies that $\fH_C=\beta^{-1}\fH_C^\beta\beta\supset\beta^{-1}\fl^\beta\fsl_2(\B_C)\beta$.
%Since $\fl^\beta\fsl_2(\B_C)$ is a non solvable $\B_C$-Lie algebra, its conjugate $\beta^{-1}\fl^\beta\fsl_2(\B_C)\beta$ is also a non solvable $\B_C$-Lie algebra.
%It follows that both unipotent subalgebras of $\beta^{-1}\fl^\beta\fsl_2(\B_C)\beta$ contain some non zero ideal of $\B_C$, hence by the usual argument $\beta^{-1}\fl^\beta\fsl_2(\B_C)\beta$ contains $\fl\fsl_2(\B_C)$ for an ideal $\fl$ of $\B_C$.
%By composing with the previous inclusion we conclude that $\fH_C\supset\fl\fsl_2(\B_C)$.
%\end{proof}

%If $\fl$ is a nonzero ideal of $\B_C$ such that $\fH_C\supset\fl\fsl_2(\B_C)$, then $\fl_\I=\fl\cap\I_{0,C}$ is an ideal of $\I_{0,C}$ with the same property, $\fH_C\supset\fl_\I\fsl_2(\B_C)$.
%It is nonzero because $\I_{0,C}$ is dense in $\B_C$.
%Recall that $\fH_C$ is the $C$-Lie algebra associated with the image of the Galois representation coming from a family of modular forms $\theta:\T\to\I^\circ$.
%We give the following.

\begin{definition}
The Galois level of the family $\theta\colon\T_h\to\I^\circ$ is the largest ideal $\fl_\theta$ of $\I_{0}[P_1^{-1}]$ such that $\fH_{r}\supset\fl_\theta\cdot\fsl_2(\B_r)$ for all $r<r_h$.
\end{definition}

\noindent It follows by the previous remarks that $\fl_\theta$ is nonzero.

\bigskip

\section{Comparison between the Galois level and the fortuitous congruence ideal}\label{compar}

Let $\theta\colon\T_h\to\I^\circ$ be a family. We keep all the notations from the previous sections.
In particular $\rho\colon G_\Q\to\GL_2(\I^\circ)$ is the Galois representation associated with $\theta$. 
We suppose that the restriction of $\rho$ to $H_0$ takes values in $\GL_2(\I^\circ_0)$.
%%%We suppose throughout this section that $\overline{\rho}\vert_{H_0}$ is irreducible and that the restriction of $\rho$ to $H_0$ takes values in $\GL_2(\I^\circ_0)$.
%%%As noted in Remark \ref{propsubgr} we can replace $H_0$ by a finite index subgroup $H$ 
%%%such that the image of $\rho\vert_H$ is a pro-$p$ group.
%and $\rho:G_\Q\to\GL_2(\I^\circ)$ be its associated Galois representation.
Recall that $\I=\I^\circ[p^{-1}]$ and $\I_0=\I_0^\circ[p^{-1}]$.
Also recall that $P_1$ is the prime of $\Lambda_{h,0}$ generated by $u^{-1}(1+T)-1$.
Let $\fc\subset\I$ be the congruence ideal associated with $\theta$.
Set $\fc_0=\fc\cap\I_0$ and $\fc_1=\fc_0\I_0[P_1^{-1}]$.
Let $\fl=\fl_\theta\subset\I_0[P_1^{-1}]$ be the Galois level associated with $\theta$. %Recall that this ideal is prime to $P_1$.
For an ideal $\fa$ of $\I_0[P_1^{-1}]$ we denote by $V(\fa)$ the set of prime ideals of $\I_0[P_1^{-1}]$ containing $\fa$.
%, and by $V_1(\fa)$ the set of prime ideals in $\I_0[P_1^{-1}]$ containing $\fa\I_0[P_1^{-1}]$.
% the subset of prime ideals in $V(\fa)$ not dividing $P_1$.
We prove the following.

\begin{theorem}\label{comparison}
Suppose that
\begin{enumerate}
\item $\rho$ is $(H_0,\Z_p)$-regular;
\item there exists no pair $(F,\psi)$, where $F$ is a real quadratic field and $\psi\colon\Gal(\overline{F}/F)\to\F^\times$ 
is a character, such that $\overline{\rho}\colon G_\Q\to\GL_2(\F)\cong\Ind_F^\Q\psi$. 
\end{enumerate}
Then we have $V(\fl)=V(\fc_1)$.
\end{theorem}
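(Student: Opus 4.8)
The plan is to prove the two inclusions $V(\fc_1)\subseteq V(\fl)$ and $V(\fl)\subseteq V(\fc_1)$ by a pointwise analysis over the closed points of $\I_0[P_1^{-1}]$. Since $\I^\circ$ is finite over $\Lambda_{h,0}$ (being finite over $\Lambda_h$, itself finite over $\Lambda_{h,0}$) it is integral over $\I_0^\circ\supseteq\Lambda_{h,0}$, so $\I$ is integral over $\I_0$ and $\Spec(\I/\fc)\to\Spec(\I_0/\fc_0)$ is surjective by lying over; hence a prime $\fP$ of $\I_0[P_1^{-1}]$ lies in $V(\fc_1)$ exactly when some prime $\fq$ of $\I$ with $\fq\cap\I_0=\fP$ satisfies $\fc\subset\fq$, i.e.\ is a CM point. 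For such a $\fP$ the residue ring $L=\I_0/\fP$ is a $p$-adic field, the representation $\rho_\fP:=(\rho\bmod\fq)\vert_{H_0}\colon H_0\to\GL_2(L)$ is independent of the chosen $\fq$ (because $\rho\vert_{H_0}$ is valued in $\GL_2(\I_0^\circ)$) and lifts $\overline{\rho}\vert_{H_0}$, and for a suitable $r<r_h$ we obtain from the surjection $\B_r\to L$ a surjection $\fH_r\twoheadrightarrow\fH_{r,\fP}\subset\fsl_2(L)$ onto the reduction of $\fH_r$ at $\fP$. The inclusion $V(\fc_1)\subseteq V(\fl)$ is now immediate: if $\fc_1\subset\fP$, take a CM prime $\fq$ above $\fP$; by Remark \ref{cmlocus}(1) and the definition of a CM point, $\rho\bmod\fq\cong\Ind_F^\Q\lambda$, so the image of $\rho\bmod\fq$ lies in the normalizer of a split torus and the pro-$p$ group $G_{r,\fP}^\prime$ lies in the torus itself (as $p$ is odd); hence $\fH_{r,\fP}$ is contained in a Cartan subalgebra and does not contain $\fsl_2(L)$. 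Since $\fH_r\supset\fl\cdot\fsl_2(\B_r)$ and $L$ is a field, the image of $\fl$ in $L$ cannot be all of $L$, so $\fl\subset\fP$.

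For the reverse inclusion I would first show that $\fl\subset\fP$ forces $\fH_{r,\fP}$ to be solvable over $\overline{L}$. This uses the element $\Phi^\prime_r$ of Proposition \ref{sendiag}: conjugating by $\beta_r$ decomposes $\beta_r\fH_{r,\C_p}\beta_r^{-1}$ into its three $C_T$-eigenspaces, the two nilpotent ones carrying the $B_{r,\C_p}$-module structure set up before Theorem \ref{betalevel}. Because $\rho_0=\rho\vert_H\otimes(\det\rho\vert_H)^{-1/2}$ is essentially self-dual and transpose-inversion interchanges the upper and lower unipotents, the two nilpotent eigenspaces have the same ``content'', and following the construction of $\fl$ in the proof of Theorem \ref{betalevel} (with $[\fU^+,\fU^-]\supset\ft$ handling the diagonal part) one sees that $V(\fl)$ is cut out by this content. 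In particular $\fl\subset\fP$ annihilates one of the nilpotent eigenspaces modulo $\fP$, so $\beta_{r,\fP}\fH_{r,\fP}\beta_{r,\fP}^{-1}$, and hence $\fH_{r,\fP}$, is contained in a Borel subalgebra over $\overline{L}$; the descent of this statement from $\C_p$ back to $L$ is carried out exactly as at the end of the proof of Theorem \ref{betalevel}.

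Second, a solvable $\fH_{r,\fP}$ forces $\fP\in V(\fc_1)$. As $\fH_{r,\fP}$ is the Lie algebra of the pro-$p$ group $G_{r,\fP}^\prime$ and lies in a Borel subalgebra, $G_{r,\fP}^\prime$ lies in a Borel subgroup, so $\rho\bmod\fq$ is reducible in restriction to an open subgroup of $G_\Q$, for every $\fq$ above $\fP$. Since $\overline{\rho}$ is absolutely irreducible, so is $\rho\bmod\fq$; and an absolutely irreducible two-dimensional representation of $G_\Q$ that becomes reducible on an open subgroup is induced from a quadratic field, $\rho\bmod\fq\cong\Ind_E^\Q\chi$. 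Hypothesis (2) forces $E$ to be imaginary (a real $E$ would present $\overline{\rho}$ as induced from a real quadratic field) and unique, equal to $F$ (were $\overline{\rho}$ induced from two distinct imaginary quadratic fields, the real quadratic subfield of their biquadratic compositum would also induce it, contradicting (2); if $\overline{\rho}$ is not dihedral at all then by the present argument no closed point contains $\fl$, so both $\fl$ and $\fc_1$ are trivial). Finally $p$ is inert or ramified in $E=F$: otherwise the $U_p$-eigenvalue, which is a non-unit along $\I$ because the family has positive slope, would make $\rho\bmod\fq$ non-ordinary at $p$, whereas a $p$-split CM representation is ordinary by Proposition \ref{cmslopes}. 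Hence $\rho\bmod\fq\cong(\rho\bmod\fq)\otimes\left(\frac{F/\Q}{\bullet}\right)$, so $\fq$ is a CM point, $\fc\subset\fq$, and therefore $\fc_0\subset\fP$ and $\fc_1\subset\fP$. This proves $V(\fl)\subseteq V(\fc_1)$.

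The step I expect to be the main obstacle is the first half of the reverse inclusion: showing that the Galois level $\fl$ records \emph{all} of the degeneracy of $\fH_r$ and not merely a part of it. This rests on the $C_T$-eigenspace decomposition and the $B_{r,\C_p}$-module structure it puts on the nilpotent parts, on the essential self-duality of $\rho_0$ to identify the upper and lower contents, and on a careful descent of the conjugation by $\beta_r$ (which exists only over $\C_p$) down to the residue fields $L$. A secondary difficulty, in the last step, is upgrading ``$\fH_{r,\fP}$ solvable'' to a genuinely induced (dihedral) structure on $\rho\bmod\fq$ rather than a globally reducible one — for which absolute irreducibility of $\overline{\rho}$ is exactly what is needed — together with pinning the dihedral field down to $F$ and the place $p$ to the non-split case, where hypothesis (2) and the positive-slope hypothesis (through Proposition \ref{cmslopes}) are precisely what is required.
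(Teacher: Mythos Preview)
Your first inclusion $V(\fc_1)\subset V(\fl)$ is fine and matches the paper's argument.

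For $V(\fl)\subset V(\fc_1)$ there is a genuine gap in the step ``$\fl\subset\fP$ forces $\fH_{r,\fP}$ to lie in a Borel''. Your justification rests on two claims that do not hold as stated. First, the self-duality claim: it is true that $w\,g\,w^{-1}={}^tg^{-1}$ for $g\in\SL_2$, but this only shows $w(\im\rho_0)w^{-1}={}^t(\im\rho_0)$, which is \emph{not} $\im\rho_0$ in general; so transpose-inversion does not send $\fu^+$ to $\fu^-$ inside the same Lie algebra, and the two nilpotent contents need not agree. Second, and more seriously, the assertion that ``$V(\fl)$ is cut out by this content'' is not established: the proof of Theorem~\ref{betalevel} only produces \emph{some} ideal $\fa^2\subset\fl$ from the nilpotent parts, while $\fl$ is defined as the \emph{largest} ideal with $\fl\cdot\fsl_2(\B_r)\subset\fH_r$. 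Knowing $\fl\subset\fP$ does not by itself force either $\fu^{+,\beta_r}$ or $\fu^{-,\beta_r}$ to vanish modulo $\fP$; the defect could sit in the diagonal direction, or be spread in a way your eigenspace analysis does not capture. (A separate, smaller lacuna: passing from ``$\fH_{r,\fP}$ solvable'' to ``$\rho\bmod\fq$ induced from a quadratic field'' must exclude the possibility that the restriction to the open subgroup is a twist of a scalar, i.e.\ that the image is finite; this can be ruled out because the Sen operator modulo $\fP\neq P_1$ is nonzero, but you should say so.)

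The paper bypasses this obstacle by a different, more representation-theoretic idea. Rather than trying to prove $\fH_{r,\fP}$ is solvable, one takes the $P$-primary component $\fP$ of $\fl$, an ideal $\fA\supset\fP$ with $(\fA/\fP)_P\cong\kappa_P$, and forms the graded piece
\[
\fs=\bigl(\fA/\fP\cdot\fsl_2(\I_{0,r}[P_1^{-1}]/\fP)\bigr)\cap\fH_{r,\fP}.
\]
The point is that $\fs$ is stable under $\Ad(\rho_Q)$ for $G_\Q$ (not just $H_0$), because $\fH_{r,\fP}$ is the Lie algebra of the full image $\im\rho_{r,\fQ}$. After localizing at $P$, $\fs_P$ is a $G_\Q$-subrepresentation of $\fsl_2(\kappa_P)$. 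Two applications of Nakayama rule out $\dim_{\kappa_P}\fs_P\in\{0,3\}$ (the first by $\fH_{r,\fP}\supset\fP\cdot\fsl_2$, the second by maximality of $\fl$), and the $2$-dimensional case reduces to the $1$-dimensional one by passing to the orthogonal complement for the trace form. A one-dimensional $\Ad(\rho_Q)$-stable line is generated by an invertible $\phi$ (Schur), yielding $\rho_Q\cong\rho_Q\otimes\chi$ with $\chi$ quadratic; hypothesis~(2) then forces the corresponding field to be imaginary, so $Q$ is CM. This is exactly the missing mechanism that converts ``$\fl\subset P$'' into a structural statement about $\rho_Q$; your Sen/eigenspace framework does not supply it.
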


Before giving the proof we make some remarks. 
%In this section we always work with ideals of $\I_0[P_1^{-1}]$ or $\I[P_1^{-1}]$, which simply amounts to say that we look at points of $\I[P_1^{-1}]$ not lying above $P_1$.
Let $P$ be a prime of $\I_0[P_1^{-1}]$ and $Q$ be a prime factor of $P\I[P_1^{-1}]$. We consider $\rho$ as a representation $G_\Q\to\GL_2(\I[P_1^{-1}])$ by composing it with the inclusion $\GL_2(\I)\into\GL_2(\I[P_1^{-1}])$.
We have a representation $\rho_{Q}\colon G_\Q\to\GL_2(\I[P_1^{-1}]/Q)$ obtained by reducing $\rho$ modulo $Q$. Its restriction $\rho_{Q}\vert_{H_0}$ takes values in $\GL_2(\I_0[P_1^{-1}]/(Q\cap\I_0[P_1^{-1}]))=\GL_2(\I_0[P_1^{-1}]/P)$ and coincides with the reduction $\rho_P$ of $\rho\vert_{H_0}\colon H_0\to\GL_2(\I_0[P_1^{-1}])$ modulo $P$.
In particular $\rho_{Q}\vert_{H_0}$ is independent of the chosen prime factor $Q$ of $P\I[P_1^{-1}]$.
%We say that the image of a Galois representation is \textit{small} if it admits a finite index abelian subgroup.
%By the argument in \cite[Prop. 4.4]{ribet2} the image of \rho_{\I,Q} is small if and only it admits an order two abelian subgroup, which means that \rho_{\I,Q} is induced by a character of \Gal(F/\Q) for a quadratic extension F/\Q.
%If such an F is imaginary then Q is a CM point in the sense of Sec. \ref{cmforms}.

We say that a subgroup of $\GL_2(A)$ for some algebra $A$ finite over a $p$-adic field $K$ is \textit{small} if it admits a finite index abelian subgroup.
Let $P$, $Q$ be as above, $G_P$ be the image of $\rho_P\colon H_0\to\GL_2(\I_0[P_1^{-1}]/P)$ and $G_{Q}$ be the image of $\rho_{Q}\colon G_\Q\to\GL_2(\I[P_1^{-1}]/Q)$. 
By our previous remark $\rho_P$ coincides with the restriction $\rho_{Q}\vert_{H_0}$, so $G_P$ is a finite index subgroup of $G_{Q}$ for every $Q$.
In particular $G_P$ is small if and only if $G_{Q}$ is small for all prime factors $Q$ of $P\I[P_1^{-1}]$.
%This implies that the image G_P of \rho_P:H\to\GL_2(\I_0/P) is small if and only if the image G_{\I_0,Q} of \rho_{\I_0,Q}:G_\Q\to\GL_2(\I/Q) is small for every Q.

Now if $Q$ is a CM point the representation $\rho_{Q}$ is induced by a character of $\Gal(F/\Q)$ for an imaginary quadratic field $F$.
Hence $G_{Q}$ admits an abelian subgroup of index $2$ and $G_P$ is also small.

Conversely, if $G_P$ is small, $G_{Q^\prime}$ is small for every prime $Q^\prime$ above $P$.
Choose any such prime $Q^\prime$; by the argument in \cite[Prop. 4.4]{ribet2} $G_{Q^\prime}$ has an abelian subgroup of index $2$.
It follows that $\rho_{Q^\prime}$ is induced by a character of $\Gal(\overline{F}_{Q^\prime}/F_{Q^\prime})$ for a quadratic field $F_{Q^\prime}$.
If $F_{Q^\prime}$ is imaginary then ${Q^\prime}$ is a CM point.
In particular, if we suppose that the residual representation $\bar{\rho}\colon G_\Q\to\GL_2(\F)$ is not induced by a character of $\Gal(\overline{F}/F)$ for a real quadratic field $F/\Q$, then $F_{Q^\prime}$ is imaginary and ${Q^\prime}$ is CM.
The above argument proves that $G_P$ is small if and only if all points ${Q^\prime}\subset\I[P_1^{-1}]$ above $P$ are CM.

\begin{proof}
%We also have a representation \rho_{\I_0,P}:H\to\GL_2(\I_0/P) obtained by reducing \rho\vert_H:H\to\GL_2(\I_0) modulo P.
%Denote by \GL_2(G_{\I_0,Q}) the image of \rho_{\I,Q}\vert_H, seen as a subgroup of \I/Q, and by G_{\I_0,P} the image of \rho_{\I_0,P}, seen as a subgroup of \GL_2(\I/P\I).
%both are subgroups of \GL_2(\I_0/P).
%By the Chinese remainder theorem we have an isomorphism \prod_Q\I/Q\equiv \I/P\I, where the product is taken over the primary components Q of P\I.
%This induces an isomorphism \prod_Q\GL_2(\I/Q)\equiv\GL_2(\I/P\I) and
%\begin{equation}\label{primaryprod}
%G_{\I_0,P}\equiv\prod_Q G_{\I,Q}
%\end{equation}
%
%We already defined a Lie algebra \fH_P\subset M_2(\I_0/P) given by the image of G_{\I_0,P} under the \Log map; we see it now as a subalgebra of M_2(\I/P).
%We also define for any Q a Lie algebra \fH_Q\subset M_2(\I/Q) as \fH_Q=\Log\,G_{\I,Q}.
%Then Eq. (\ref{primaryprod}) gives an isomorphism \I/P\fH_P=\bigoplus_Q\fH_Q of Lie subalgebras of M_2(\I/P).
We prove first that $V(\fc_1)\subset V(\fl)$. Fix a radius $r<r_h$.
By contradiction, suppose that a prime $P$ of $\I_0[P_1^{-1}]$ contains $\fc_0$ but $P$ does not contain $\fl$.
Then there exists a prime factor $Q$ of $P\I[P_1^{-1}]$ such that $\fc\subset Q$.
By definition of $\fc$ we have that $Q$ is a CM point in the sense of Section \ref{congrid}, hence the representation $\rho_{\I[P_1^{-1}],Q}$ has small image in $\GL_2(\I[P_1^{-1}]/Q)$. % contained in the normalizer of a torus.
Then its restriction $\rho_{\I[P_1^{-1}],Q}\vert_{H_0}=\rho_P$ also has small image in $\GL_2(\I_0[P_1^{-1}]/P)$. 
%In particular the representation $\rho_{P^\prime}:G_\Q\to\GL_2(\I/P^\prime\I)$ has image contained in the normalizer of a torus in $\GL_2(\I/P^\prime\I)$. The same must be true for the representation \rho_{P^\prime}\vert_H:H\to\GL_2(\I_0/P\I_0).
We deduce that there is no nonzero ideal $\frakI_P$ of $\I_0[P_1^{-1}]/P$ such that the Lie algebra $\fH_{r,P}$ contains $\frakI_P\cdot\fsl_2(\I_0[P_1^{-1}]/P)$.

%By contradiction, suppose that $\fl_C\not\subset P^\prime\I_C$.
Now by definition of $\fl$ we have $\fl\cdot\fsl_2(\B_r)\subset\fH_r$.
Since reduction modulo $P$ gives a surjection $\fH_r\to\fH_{r,P}$, by looking at the previous inclusion modulo $P$ we find $\fl\cdot \fsl_2(\I_{0,r}[P_1^{-1}]/P\I_{0,r}[P_1^{-1}])\subset\fH_{r,P}$.
%We can write \fH_{P,C}=\bigoplus_Q\fH_{Q,C}, so we 
If $\fl\not\subset P$ we have $\fl/P\neq 0$, which contradicts our earlier statement.
We deduce that $\fl\subset P$.
%%Since $\fl_C$ is an ideal of $\I_{0,C}$ we have $\fl_C\subset P\I_C\cap\I_{0,C}=(P\cap\I_0)\I_{0,C}=P\I_{0,C}$.

We prove now that $V(\fl)\subset V(\fc_1)$.
Let $P\subset\I_0[P_1^{-1}]$ be a prime containing $\fl$. Recall that $\I_0[P_1^{-1}]$ has Krull dimension one, so $\kappa_{P}=\I_0[P_1^{-1}]/P$ is a field.
Let $Q$ be a prime of $\I[P_1^{-1}]$ above $P$. As before $\rho$ reduces to representations $\rho_Q\colon G_\Q\to\GL_2(\I[P_1^{-1}]/Q)$ and $\rho_P\colon H_0\to\GL_2(\I_0[P_1^{-1}]/P)$.
Let $\fP\subset\I_0[P_1^{-1}]$ be the $P$-primary component of $\fl$ and let $\fA$ be an ideal of $\I_0[P_1^{-1}]$ containing $\fP$ such that
the localization at $P$ of $\fA/\fP$ is one-dimensional over $\kappa_P$. 
%Let $\tilde{\fA}$ be the image of $\fA$ under reduction modulo $\fP$.
Choose any $r<r_h$.
Let $\fs=\fA/\fP\cdot\fsl_2(\I_{0,r}[P_1^{-1}]/\fP)\cap\fH_{r,\fP}\subset
\fA/\fP\cdot\fsl_2(\I_{0,r}[P_1^{-1}]/\fP)$.

%We show that $\fA\fsl_2(\I_{0,r})\cap\fH_r$ is stable under the adjoint action $\Ad(\rho)$ of $G_\Q$, hence $\fs$ is stable under \Ad(\rho_Q).
%Let $\fa$ be any primary ideal of $\I_r$.
We show that $\fs$ is stable under the adjoint action $\Ad(\rho_Q)$ of $G_\Q$. Let $\fQ$ be the $Q$-primary component of $\fl\cdot\I[P_1^{-1}]$.
Recall that $\fH_{r,\fP}$ is the Lie algebra associated with the pro-$p$ group $\im\,\rho_{r,\fQ}\vert_{H_0}\cap\Gamma_{\GL_2(\I_{0,r}[P_1^{-1}]/\fP)}(p)\subset\GL_2(\I_{0,r}[P_1^{-1}]/\fP)$. Since this group is open in $\im\,\rho_{r,\fQ}\subset\GL_2(\I_r[P_1^{-1}]/\fQ)$, the Lie algebra associated with $\im\,\rho_{r,\fQ}$ is again $\fH_{r,\fP}$.
%Since $H$ is an open subgroup of $G_\Q$, the Lie algebra $\fH_{r,\fP}$ of $\im\,\rho_{r,\fP}\vert_H\subset\GL_2(\I_{0,r}[P_1^{-1}]/\fP)$ is the same as the Lie algebra of $\im\,\rho_{r,\fP}\subset\GL_2(\I_{0,r}[P_1^{-1}]/\fP)$. 
In particular $\fH_{r,\fP}$ is stable under $\Ad(\rho_Q)$. %???
%, so the same is true for $\fH_r=\varprojlim_{\fA}\fH_{\fA,r}$.
Since $\fH_{r,\fP}\subset\fsl_2(\I_{0,r}[P_1^{-1}]/\fP)$ we have $\fA/\fP\cdot\fsl_2(\I_{0,r}[P_1^{-1}]/\fP)\cap\fH_{r,\fP}=\fA/\fP\cdot\fsl_2(\I_r[P_1^{-1}]/\fQ)\cap\fH_{r,\fP}$. Now $\fA/\fP\cdot\fsl_2(\I_r[P_1^{-1}]/\fQ)$ is clearly stable under $\Ad(\rho_Q)$, so the same is true for $\fA/\fP\cdot\fsl_2(\I_r[P_1^{-1}]/\fQ)\cap\fH_{r,\fP}$, as desired. %Check!!!

%Note that $\fs$ is stable under the adjoint action 
%$\Ad(\rho)$ of $H$, so we consider it as a Galois representation via $\Ad(\rho)$.
We consider from now on $\fs$ as a Galois representation via $\Ad(\rho_Q)$. 
By the proof of Theorem \ref{betalevel} we can assume, possibly considering a sub-Galois 
representation, that $\fH_r$ is a $\B_r$-submodule of $\fsl_2(\B_r)$ 
containing $\fl\cdot\fsl_2(\B_r)$ but not $\fa\cdot\fsl_2(\B_r)$ for any $\fa$ strictly bigger than $\fl$. 
This allows us to speak of the localization $\fs_P$ of $\fs$ at $P$.
Note that, since $\fP$ is the $P$-primary component of $\fl$ and $\fA_P/\fP_P\cong\kappa_P$, when $P$-localizing we find $\fH_{r,P}\supset\fP_{P}\cdot\fsl_2(\B_{r,P})$ and $\fH_{r,P}\not\supset\fA_P\cdot\fsl_2(\B_{r,P})$.

The localization at $P$ of $\fa/\fP\cdot\fsl_2(\I_{0,r}[P_1^{-1}]/\fP)$ is $\fsl_2(\kappa_P)$, so $\fs_P$ is contained in $\fsl_2(\kappa_P)$. It is a $\kappa_P$-representation $G_\Q$ (via $\Ad(\rho_Q)$) of dimension at most $3$. We distinguish various cases following its dimension.

%, where $\rho_P:H\to\GL_2(\I_0/P)$ denotes the reduction of $\rho\vert_H$ modulo $P$.
%
%We denote by $\fH_{r,P}$ the localization of $\fH_{r}$ at $P$.
%%We decompose $\kappa_{P,\C_p}$ as a product of copies of $\C_p$ indexed by embeddings $\sigma\colon \kappa_P\hookrightarrow\C_p$.
%As such its dimension over $\kappa_P$ is at most $3$ since it is contained in $\fsl_2(\kappa_P)$.
%In order to get a contradiction, we study the $\kappa_P$-vector space $\fs_P$.
%In other words, we consider the primary ideals $\fP_{\sigma}$, respectively $\fA_{P,\sigma}$ of $\I_{0,\C_p}$ above 
%$\fP\subset\I_0$, respectively $\fA\subset\I_0$.
%
%We study the $\kappa_P$-vector space $\fs_P$.
We cannot have $\fs_P=0$. By exchanging the quotient with the localization we would obtain $(\fA_{P}\cdot\fsl_2(\B_{r,P})\cap\fH_{r,P})/\fP_{P}=0$. By Nakayama's lemma $\fA_{P}\cdot\fsl_2(\B_{r,P})\cap\fH_{r,P}=0$, which is absurd since $\fA_{P}\cdot\fsl_2(\B_{r,P})\cap\fH_{r,P}\supset\fP_{P}\cdot\fsl_2(\B_{r,P})\neq 0$.

We also exclude the $3$-dimensional case. If $\fs_{P}=\fsl_2(\kappa_{P})$, by exchanging the quotient with the localization we obtain  $(\fA_{P}\cdot\fsl_2(\B_{r,P})\cap\fH_{r,P})/\fP_{P}=(\fA_P\cdot\fsl_2(\I_{0,r,P}[P_1^{-1}]))/\fP_{P}\I_{0,r,P}[P_1^{-1}]$, because
$\fA_{P}\I_{0,r,P}[P_1^{-1}]/\fP_P\I_{0,r,P}[P_1^{-1}]=\left(\I_{0,r,P}[P_1^{-1}]/\fP_{P}\I_{0,r,P}[P_1^{-1}]\right)$ and this is isomorphic to $\kappa_{P}$.
By Nakayama's lemma we would conclude that $\fH_{r,P}\supset\fA\cdot\fsl_2(\B_{r,P})$, which is absurd.
%and $\fA\subset\fc$, which is absurd since the $\fP$-primary component $\fP$ of $\fc$ is strictly contained in $\fA$.

We are left with the one and two-dimensional cases. If $\fs_P$ is two-dimensional we can always replace it by its orthogonal 
in $\fsl_2(\kappa_P)$ which is one-dimensional; indeed the action of $G_\Q$ via $\Ad(\rho_Q)$ is isometric with respect to the scalar product $\Tr(XY)$ on $\fsl_2(\kappa_P)$.

Suppose that $\fsl_2(\kappa_P)$ contains a one-dimensional stable subspace.
Let $\phi$ be a generator of this subspace over $\kappa_P$.
Let $\chi\colon G_\Q\to\kappa_P$ denote the character satisfying $\rho_Q(g)\phi\rho_Q(g)^{-1}=\chi(g)\phi$ for all $g\in G_\Q$.
Now $\phi$ induces a nontrivial morphism of representations $\rho_Q\to\rho_Q\otimes\chi$. Since $\rho_P$ and $\rho_Q\otimes\chi$ are irreducible, by Schur's lemma $\phi$ must be invertible. Hence we obtain an isomorphism $\rho_Q\cong\rho_Q\otimes\chi$.
%By taking determinants in the previous equivalence we see that $\chi$ is quadratic.
By taking determinants we see that $\chi$ must be quadratic. If $F_0/\Q$ is the quadratic extension fixed by $\ker\chi$, then $\rho_Q$ is induced by a character $\psi$ of $\Gal(\overline{F_0}/F_0)$. 
%Let $F_0$ be the finite extension of $\Q$ fixed by $\chi$ and $G_{F_0}$ be its absolute Galois group.
%For $g\in G_{F_0}$ we have $\rho_P(g)\phi\rho_P(g)^{-1}=\phi$, so by Schur's lemma we deduce that $\rho_P\vert_{G_{F_0}}$ acts by scalars, so $\rho_P$ is induced by a character $G_{F_0}\to (\I_0/P\I_0)^\times$.
%%%In particular the image of $\rho_P$ is small. By the remarks preceding this proof, if $Q\subset\I$ is any primary component of $P\I$ then $\rho_{\I,Q}$ is induced by the character of a quadratic field $F_1/\Q$ (which will be a subfield of $F_0$).
%If $P^\prime\subset\I$ is a prime above $P$ the representation $\rho_{P^\prime}:G_\Q\to\GL_2/\I/P^\prime\I$ must also be induced by $\psi$.
By assumption the residual representation $\rho_{\fm_\I}\colon G_\Q\to\GL_2(\F)$ is not of the form $\Ind_F^\Q\psi$ for a real quadratic field $F$ and a character $\Gal(\overline{F}/F)\to\F^\times$. We deduce that $F_0$ must be imaginary, so $Q$ is a CM point by Remark \ref{cmlocus}(1). By construction of the congruence ideal $\fc\subset Q$ and $\fc_0\subset Q\cap\I_0[P_1^{-1}]=P$.
%If $\psi:G_{F_1}\to(\I/Q)^\times$ is the character inducing $\rho_Q$ then $Q$ is a CM point defined by the Gr\"ossencharacter of $F_1$ induced by $\psi$.
\end{proof}

We prove a corollary.

\begin{corollary}
If the residual representation $\overline{\rho}\colon G_\Q\to\GL_2(\F)$ is not dihedral then $\fl=1$.
\end{corollary}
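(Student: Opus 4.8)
The plan is to derive the corollary from Theorem \ref{comparison}: the point is that when $\overline\rho$ is not dihedral the family $\theta$ cannot have any CM point, and then its fortuitous congruence ideal is all of $\I$. I keep the standing hypotheses in force (in particular $\rho$ is $(H_0,\Z_p)$-regular). Since $\overline\rho$ is not dihedral it is, a fortiori, not of the form $\Ind_F^\Q\psi$ for a real quadratic field $F$, so hypothesis (2) of Theorem \ref{comparison} holds and we obtain $V(\fl)=V(\fc_1)$. If I can show that $\Spec\,\I$ contains no CM point, then the intersection defining $\fc$ is taken over the empty family, so $\fc=\I$, hence $\fc_0=\fc\cap\I_0=\I_0$ and $\fc_1=\I_0[P_1^{-1}]$; thus $V(\fc_1)=\emptyset$ and therefore $V(\fl)=\emptyset$. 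A proper ideal of $\I_0[P_1^{-1}]$ is contained in some maximal ideal, so $V(\fl)=\emptyset$ forces $\fl=\I_0[P_1^{-1}]$, i.e.\ $\fl=1$. Hence everything reduces to ruling out CM points.

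So suppose $\fQ\subset\I$ is a primary ideal defining a CM point, with associated imaginary quadratic field $F$; write $\chi_F=\left(\frac{F/\Q}{\bullet}\right)$ for the corresponding quadratic Dirichlet character, which is unramified outside $N$ since $D\mid N$. Conditions (3) and (4) in the definition of a CM point say precisely that $a_{\ell,\fQ}=\chi_F(\ell)\,a_{\ell,\fQ}$ for every prime $\ell\nmid Np$. Now $\fQ\cap\I^\circ$ is a proper ideal of the local ring $\I^\circ$, hence is contained in $\fm_{\I^\circ}$; composing $\I^\circ\to\I^\circ/(\fQ\cap\I^\circ)\to\F=\I^\circ/\fm_{\I^\circ}$ I would obtain, in $\F$, that $\Tr\overline\rho(\Frob_\ell)=\overline{\chi_F}(\ell)\,\Tr\overline\rho(\Frob_\ell)$ for all $\ell\nmid Np$, where $\overline{\chi_F}$ is the reduction of $\chi_F$. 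Since $F\neq\Q$ the character $\chi_F$ has exact order $2$, and as $p>2$ the reduction map is injective on $\mu_2$, so $\overline{\chi_F}$ is a nontrivial quadratic character of $G_{\Q,Np}$. By Chebotarev density the pseudo-characters $\Tr\overline\rho$ and $\Tr(\overline\rho\otimes\overline{\chi_F})$ coincide, and since $\overline\rho$ is absolutely irreducible, Brauer--Nesbitt yields $\overline\rho\cong\overline\rho\otimes\overline{\chi_F}$.

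Finally I would invoke the standard fact that a $2$-dimensional absolutely irreducible $\overline\rho$ with $\overline\rho\cong\overline\rho\otimes\eta$ for a nontrivial quadratic character $\eta$ is dihedral: choosing $A$ with $A\overline\rho(g)A^{-1}=\eta(g)\overline\rho(g)$ for all $g$, the matrix $A$ centralizes $\overline\rho(\ker\eta)$, so $\overline\rho\vert_{\ker\eta}$ is not absolutely irreducible (otherwise $A$ would be scalar and $\eta$ trivial); a line of $\overline\rho$ stable under the index-two subgroup $\ker\eta$ cannot be stable under all of $G_\Q$, by irreducibility, so $\overline\rho$ is induced from a character of $\ker\eta$. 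Applying this with $\eta=\overline{\chi_F}$ shows $\overline\rho$ is dihedral, contradicting the hypothesis; hence no CM point exists, and by the reduction of the first paragraph $\fl=1$. I do not expect any genuine obstacle here: the proof is essentially bookkeeping, and the only points that require care are which hypotheses are in force --- chiefly the $(H_0,\Z_p)$-regularity inherited from Theorem \ref{comparison} --- and the fact that CM points are primary ideals of $\I$ rather than of $\I_0$.
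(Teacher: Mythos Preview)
Your proof is correct and follows essentially the same route as the paper: both deduce from Theorem \ref{comparison} that $V(\fl)=V(\fc_1)$ and then argue that a non-dihedral residual representation forces the family to have no CM points, whence $\fc_1$ is the unit ideal and $\fl=1$. The paper's proof is two lines and leaves the implication ``no CM points if $\overline\rho$ is not dihedral'' to the reader (it is implicit in Remark \ref{cmlocus}(1) reduced modulo the maximal ideal), whereas you have carefully spelled out that step via the trace identity $a_{\ell,\fQ}=\chi_F(\ell)a_{\ell,\fQ}$, reduction to $\F$, Chebotarev, and the standard argument that a quadratic self-twist forces induction.
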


\begin{proof}
Since $\overline{\rho}$ is not dihedral there cannot be any CM point on the family $\theta\colon\T_h\to\I^\circ$.
By Theorem \ref{comparison} we deduce that $\fl$ has no nontrivial prime factor, hence it is trivial.
% (recall that we are not considering the prime $P_1$ since $\fl\subset\I_0[P_1^{-1}]$).
\end{proof}

\begin{remark}
Theorem \ref{comparison} gives another proof of Proposition \ref{finiteCM}. Indeed the CM points of a family $\theta\colon\T_h\to\I^\circ$ correspond to the prime factors of its Galois level, which are finite in number.
\end{remark}

%\begin{corollary}
%Any family $\theta:\T_h\to\I^\circ$ contains at most a finite number of CM points.
%\end{corollary}
%
%\begin{proof}
%If $P\subset\I^\circ$ is a CM point of $\theta$ with $P\neq P_1$ then by Theorem \ref{comparison} $P$ is a divisor of $\fl$.
%Since $\fl$ has a finite number of prime factors we deduce that $\theta$ admits a finite number of CM points.
%\end{proof}

We also give a partial result about the comparison of the exponents of each prime factor in $\fc_1$ and $\fl$.
This is an analogous of what is proved in \cite[Th. 8.6]{hida} for the ordinary case; our proof also relies on the strategy there.
%%Set $\fc_{1,C}=\fc_1\I_{0,C}[P_1^{-1}]$ and $\fl_{1}=\fl_1\I_{0}[P_1^{-1}]$.
%Denote by $P$ any prime of $\I_{0}$ with $P\nmid P_1$.
For every prime $P$ of $\I_0[P_1^{-1}]$ we denote by $\fc_1^P$ and $\fl^P$ the $P$-primary components respectively of $\fc_1$ and $\fl$.

\begin{theorem}\label{exponents}
Suppose that $\overline{\rho}$ is not induced by a character of $G_F$ for a real quadratic field $F/\Q$.
We have $(\fc_1^P)^2\subset\fl^P\subset\fc_1^P$.
\end{theorem}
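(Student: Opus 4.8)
The plan is to follow the strategy of \cite[Th. 8.6]{hida}, comparing the two ideals one prime at a time. Fix a prime $P$ of $\I_0[P_1^{-1}]$. If $P\not\supseteq\fl$ then $P\not\supseteq\fc_1$ by Theorem \ref{comparison}, so $\fc_1^P$ and $\fl^P$ are both the unit ideal and there is nothing to prove; so assume $P\supseteq\fl$. By the proof of Theorem \ref{comparison}, together with the assumption that $\overline{\rho}$ is not induced from a real quadratic field, every prime $Q$ of $\I[P_1^{-1}]$ above $P$ is a CM point: $\rho_Q\cong\Ind_{G_F}^{G_\Q}\lambda_Q$ for a fixed imaginary quadratic field $F$, with quadratic character $\chi$, and some character $\lambda_Q$. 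Write $\fc_1^P=P^n$ in the localization of $\I_0[P_1^{-1}]$ at $P$; by Remark \ref{cmlocus}(1) and the $\Gamma$-equivariance of $\fc$, the integer $n$ is the largest $m$ such that $\rho_{r,P^m}\vert_{H_0}$ is $F$-induced (inverting the weight-one prime $P_1$ is what guarantees the relevant primes of $\I$ over $P$ are well-behaved, so that the order of the CM congruence along $Q$ matches $n$).

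First I would reduce the statement to a computation with the unipotent parts of $\fH_r$. Decomposing $\fH_r=\fu^+\oplus\fu^0\oplus\fu^-$ under the torus action of the diagonal $\Z_p$-regular element (equivalently of $C_T$, as in Section \ref{sentheory}), the proof of Theorem \ref{betalevel} shows that each $\fu^\pm$, viewed inside $\fU^\pm(\B_r)\cong\B_r$, equals $\fa^\pm\cdot\fU^\pm(\B_r)$ for an ideal $\fa^\pm\subseteq\I_0[P_1^{-1}]$, and that $[\fu^+,\fu^-]$ spans $\fa^+\fa^-\cdot\ft(\B_r)$. Hence $\fH_r\supseteq\fa^+\fU^+\oplus\fa^+\fa^-\ft\oplus\fa^-\fU^-\supseteq\fa^+\fa^-\cdot\fsl_2(\B_r)$, so $\fa^+\fa^-\subseteq\fl$; on the other hand $\fl\cdot\fsl_2(\B_r)\subseteq\fH_r$ forces $\fl\subseteq\fa^\pm$. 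Passing to $P$-adic orders, $v_P(\fa^+)+v_P(\fa^-)\ge v_P(\fl)\ge\max\bigl(v_P(\fa^+),v_P(\fa^-)\bigr)$, so it suffices to prove $v_P(\fa^+)=v_P(\fa^-)=n$.

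The inequality $v_P(\fa^\pm)\ge n$ is the easy half and already gives $\fl^P\subseteq\fc_1^P$: reducing modulo $P^n$, the representation $\rho_{r,P^n}\vert_{H_0}$ is, up to finite index, $F$-induced, so its image lies in the normalizer of a torus and $\fH_{r,P^n}$ is contained in $\ft$; thus $\fu^\pm$ vanish modulo $P^n$, i.e. $\fa^\pm\subseteq P^n$ locally at $P$, whence $v_P(\fl)\ge n$. For the reverse inequality $v_P(\fa^\pm)\le n$ --- which yields $(\fc_1^P)^2=P^{2n}\subseteq P^{v_P(\fl)}=\fl^P$ --- suppose $\fu^+$ vanishes modulo $P^{n+1}$. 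Then $\fH_{r,P^{n+1}}$ lies in a Borel subalgebra; since $\fH_r$ is stable under $\Ad\bigl(\rho(w_0)\bigr)$ for a lift $w_0\in G_\Q$ of the nontrivial element of $\Gal(F/\Q)$, and $\rho(w_0)$ is anti-diagonal modulo $\fm_{\I_0}$, the algebra $\fH_{r,P^{n+1}}$ lies in the opposite Borel as well, hence in $\ft$. Exponentiating, $\rho_{r,P^{n+1}}\vert_{H_0}$ --- and therefore, up to finite index, $\rho_{Q^{n+1}}$ --- is induced from a quadratic field $F'$; reducing modulo $\fm$ shows $\overline{\rho}$ is induced from $F'$, and since $\overline{\rho}$ is also induced from $F$ but from no real quadratic field (a third quadratic subfield of $FF'$ would be real if $F\neq F'$), necessarily $F'=F$. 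Then $\rho_{Q^{n+1}}$ is $F$-induced, contradicting the maximality of $n$; by symmetry the same bound holds for $\fa^-$. Hence $v_P(\fa^\pm)=n$ and $(\fc_1^P)^2\subseteq\fl^P\subseteq\fc_1^P$.

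The main obstacle is to make rigorous the equivalence ``$\fH_{r,P^m}\subseteq\ft$ if and only if $\rho_{Q^m}$ is induced'': the Lie algebra $\fH_{r,P^m}$ is attached only to the pro-$p$ congruence subgroup $G_{r,P^m}'$ of the image, so one must pass carefully between $G_\Q$, $H_0$, its pro-$p$ part, $G_F$ and various finite-index subgroups --- this is exactly where \cite{hida} and \cite{lang} appeal to Pink's Lie-algebra theory, and in our setting the substitute is Proposition \ref{approx} together with the isometry of $\Ad(\rho_Q)$ for the trace form on $\fsl_2$. A secondary technical point is the ramification bookkeeping identifying $v_P(\fc_1)$, a length over $\I_0[P_1^{-1}]$, with the order of the CM congruence measured over $\I[P_1^{-1}]$, which is the reason the weight-one prime $P_1$ is inverted throughout.
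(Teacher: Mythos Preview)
Your overall plan --- reduce to showing $v_P(\fa^+)=v_P(\fa^-)=n$ for the unipotent ideals and then sandwich $v_P(\fl)$ between $n$ and $2n$ --- is a genuine alternative to the paper's argument, which instead studies the graded piece
\[
\fs_{\fc_1^P}=\bigl(\fH_r\cap\fc_1^P\cdot\fsl_2(\I_{0,r})\bigr)\big/\bigl(\fH_r\cap\fc_1^PP\cdot\fsl_2(\I_{0,r})\bigr)\subset\fsl_2(\kappa_P)
\]
as a $G_\Q$-representation via $\Ad(\rho_Q)$ and exploits the decomposition $\Ad(\rho_Q)\cong\chi_{F/\Q}\oplus\Ind_F^\Q\psi^-$. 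In the key two-dimensional case the paper shows that $\fs_{\fc_1^P}$ meets both $\fU^+$ and $\fU^-$ nontrivially (because its diagonal part would have to sit in the $\chi_{F/\Q}$-summand, which is absent), and then one Lie bracket pushes down to level $(\fc_1^P)^2$. This representation-theoretic trichotomy replaces precisely the step where your argument is weakest.

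There are two concrete gaps. First, the assertion that $\fu^\pm=\fa^\pm\cdot\fU^\pm(\B_r)$ for ideals $\fa^\pm\subset\I_0[P_1^{-1}]$ overstates what the proof of Theorem~\ref{betalevel} establishes: the $B_{r}$-module structure on the nilpotent parts only appears after conjugation by $\beta_r$ over $\C_p$, and one needs the ``sub-Galois replacement'' used in the proof of Theorem~\ref{comparison} to justify treating $\fH_r$ (hence $\fu^\pm$) as a $\B_r$-module. Second, and more seriously, your proof of $v_P(\fa^+)\le n$ relies on $\Ad(\rho(w_0))$ swapping the two Borels over $\I_0/P^{n+1}$; but you only know $\rho(w_0)$ is anti-diagonal modulo $\fm_{\I_0}$, and over the Artinian ring $\I_0/P^{n+1}$ conjugation by an element that is anti-diagonal only modulo $\fm$ does \emph{not} send $\fb^-$ into $\fb^+$. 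So the conclusion $\fH_{r,P^{n+1}}\subset\ft$ does not follow, and you cannot invoke Ribet's dichotomy (which in any case is stated over a field, not an Artinian quotient). The paper circumvents this by arguing over the residue field $\kappa_P$: if $\fu^+$ vanished modulo $P^{n+1}$ then $\fs_{\fc_1^P}$ would be a $G_\Q$-stable subspace of $\fb^-(\kappa_P)$, forcing $\fs_{\fc_1^P}\subset\ft$ (since the $\Ind\psi^-$-piece is $\fU^+\oplus\fU^-$ and is irreducible), i.e.\ the one-dimensional case, whose Schur-type contradiction is cleanly set up over $\kappa_P$. You should either recast your $\le n$ step in these terms or supply an independent argument valid over the Artinian quotient.
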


\begin{proof}
The inclusion $\fl^P\subset\fc_1^P$ is proved in the same way as the first inclusion of Theorem \ref{comparison}.

We show that the inclusion $(\fc_1^P)^2\subset\fl^P$ holds.
If $\fc_1^P$ is trivial this reduces to Theorem \ref{comparison}, so we can suppose that $P$ is a factor of $\fc_1$. %, hence if $P=\fP\cap\I_0[P_1^{-1}]$ then $P$ is a factor of $\fc_1$.
Let $Q$ denote any prime of $\I[P_1^{-1}]$ above $P$. Let $\fc_1^Q$ be a $Q$-primary ideal of $\I[P_1^{-1}]$ satisfying $\fc_1^Q\cap\I_0[P_1^{-1}]=\fc_1^P$.
Since $P$ divides $\fc_1$, $Q$ is a CM point, so we have an isomorphism $\rho_P\cong\Ind_F^\Q\psi$ for an imaginary quadratic field $F/\Q$ and a character $\psi\colon G_F\to\C_p^\times$.
%Let \rho_Q and \rho_P be as in the proof of Theorem \ref{comparison}.
Choose any $r<r_h$.
Consider the $\kappa_P$-vector space $\fs_{\fc_1^P}=\fH_r\cap\fc_1^P\cdot\fsl_2(\I_{0,r})/\fH_r\cap\fc_1^PP\cdot\fsl_2(\I_{0,r})$.
We see it as a subspace of $\fsl_2(\fc_1^P/\fc_1^PP)\cong\fsl_2(\kappa_{P})$. By the same argument as in the proof of Theorem \ref{comparison}, $\fs_{\fc_1^P}$ is stable under the adjoint action $\Ad(\rho_{\fc_1^QQ})\colon G_\Q\to\Aut(\fsl_2(\kappa_P))$.
%As in the proof of Theorem \ref{comparison} we decompose $\fs_{\fc_1^P}$ into a sum of $\C_p$-vector spaces $\fs_{\fc_1^Q,\sigma}=\fs_P\otimes_{\kappa_P,\sigma}\C_p$ indexed by the embeddings $\sigma:\kappa_P\into\C_p$.

Let $\chi_{F/\Q}\colon G_\Q\to\C_p^\times$ be the quadratic character defined by the extension $F/\Q$.
Let $\varepsilon\in G_\Q$ be an element projecting to the generator of $\Gal(F/\Q)$. Let $\psi^\varepsilon\colon G_F\to\C_p^\times$ be given by $\psi^\varepsilon(\tau)=\psi(\varepsilon\tau\varepsilon^{-1})$. Set $\psi^-=\psi/\psi^\varepsilon$.
Since $\rho_Q\cong\Ind_F^\Q\psi$, we have a decomposition $\Ad(\rho_Q)\cong\chi_{F/\Q}\oplus\Ind_F^\Q\psi^-$, where the two factors are irreducible.
Now we have three possibilities for the Galois isomorphism class of $\fs_{\fc_1^P}$: it is either that of $\Ad(\rho_P)$ or that of one of the two irreducible factors.

If $\fs_{\fc_1^P}\cong\Ad(\rho_Q)$, then as $\kappa_P$-vector spaces $\fs_{\fc_1^P}=\fsl_2(\kappa_P)$. 
We proceed as in the proof of Theorem \ref{comparison} to obtain $\fs_{\fc_1^P}=\fsl_2(\kappa_P)$. 
By Nakayama's lemma $\fH_r\supset\fc_1^P\cdot\fsl_2(\B_{r})$. 
This implies $\fc_1^P\subset\fl^P$, hence $\fc_1^P=\fl^P$ in this case.

If $\fs_{\fc_1^P}$ is one-dimensional then we proceed as in the proof of Theorem \ref{comparison} to show that $\rho_{\fc_1^QQ}\colon G_\Q\to\GL_2(\I_r[P_1^{-1}]/\fc_1^QP\I_r[P_1^{-1}])$ is induced by a character $\psi_{\fc_1^QQ}\colon G_F\to\C_p^\times$. In particular the image of $\rho_{\fc_1^PP}\colon H\to\GL_2(\I_{0,r}[P_1^{-1}]/\fc_1^PP\I_{0,r})$ is small.
%dihedral. We deduce that there exists a character $\psi_{\fc_1^QP}:G_F\to\C_p^\times$ such that $\rho_{\fc_1^QP}\cong\Ind_F^\Q\psi_{\fc_1^QP}$.
This is a contradiction, since $\fc_1^P$ is the $P$-primary component of $\fc_1$, hence it is the smallest $P$-primary ideal $\fA$ of $\I_{0,r}[P_1^{-1}]$ such that the image of $\rho_\fA\colon G_\Q\to\GL_2(\I_r[P_1^{-1}]/\fA\I_r[P_1^{-1}])$ is small.

Finally, suppose then that $\fs_{\fc_1^P}\cong\Ind_F^\Q\psi^-$.
Let $d=\diag(d_1,d_2)\in\rho(G_\Q)$ be the image of a $\Z_p$-regular element.
Since $d_1$ and $d_2$ are nontrivial modulo the maximal ideal of $\I_0^\circ$, the image of $d$ modulo $\fc_1^QQ$ is a nontrivial diagonal element $d_{\fc_1^QQ}=\diag(d_{1,\fc_1^QQ},d_{2,\fc_1^QQ})\in\rho_{\fc_1^QQ}(G_\Q)$.
We decomposte $\fs_{\fc_1^P}$ in eigenspaces for the adjoint action of $d_{\fc_1^QQ}$: we write $\fs_{\fc_1^P}=\fs_{\fc_1^P}[a]\oplus\fs_{\fc_1^P}[1]\oplus\fs_{\fc_1^P}[a^{-1}]$, where $a=d_{1,\fc_1^QQ}/d_{2,\fc_1^QQ}$.
Now $\fs_{\fc_1^P}[1]$ is contained in the diagonal torus, on which the adjoint action of $G_\Q$ is given by the character $\chi_{F/\Q}$.
Since $\chi_{F/\Q}$ does not appear as a factor of $\fs_{\fc_1^P}$, we must have $\fs_{\fc_1^P}[1]=0$.
This implies that $\fs_{\fc_1^P}[a]\neq 0$ and $\fs_{\fc_1^P}[a^{-1}]\neq 0$.
Since $\fs_{\fc_1^P}[a]=\fs_{\fc_1^P}\cap\fu^+(\kappa_P)$ and $\fs_{\fc_1^P}[a^{-1}]=\fs_{\fc_1^P}\cap\fu^-(\kappa_P)$, we deduce that $\fs_{\fc_1^P}$ contains nontrivial upper and lower nilpotent elements $\overline{u^+}$ and $\overline{u^-}$.
Then $\overline{u^+}$ and $\overline{u^-}$ are the images of some elements $u^+$ and $u^-$ of $\fH_r\cap\fc_1^P\cdot\fsl_2(\I_{0,r}[P_1^{-1}])$ nontrivial modulo $\fc_1^PP$.
%By letting $\sigma$ vary among all embeddings $\kappa_P\into\C_p$ we can define two unipotent elements $u^+=(u_\sigma^+)_\sigma$ and $u^-=(u_\sigma^-)_\sigma$ in $\fsl_2(\kappa_P)$.
The Lie bracket $t=[u^+,u^-]$ is an element of $\fH_r\cap\ft(\I_{0,r}[P_1^{-1}])$ (where $\ft$ denotes the diagonal torus) and it is nontrivial modulo $(\fc_1^P)^2P$.
Hence the $\kappa_P$-vector space $\fs_{(\fc_1^P)^2}=\fH_r\cap(\fc_1^P)^2\cdot\fsl_2(\I_{0,r,\C_p}[P_1^{-1}])/\fH_r\cap(\fc_1^P)^2P\cdot\fsl_2(\I_{0,r,\C_p}[P_1^{-1}])$ contains nontrivial diagonal, upper nilpotent and lower nilpotent elements, so it is three-dimensional.
By Nakayama's lemma we conclude that $\fH_r\supset(\fc_1^P)^2\cdot\fsl_2(\I_{0,r}[P_1^{-1}])$, so $(\fc_1^P)^2\subset\fl^P$.
%We can choose a complex conjugation c\in G_\Q such that \rho_P(c)d_P\rho_P(c)=\diag(d_{2,P},d_{1,P}).
%Conjugation by \rho_P(c) gives an isomorphism
\end{proof}

%Since $\fc_{1}$ and $\fl_{1}$ actually come from the ideals $\fc_1$ and $\fl_1$ of $\I_0[P_1^{-1}]$ we deduce a corollary stated in terms of prime ideals of $\I_0$.
%%For a prime $P$ of $\I_0[P_1^{-1}]$ we denote by $\fl^P$ and $\fc^P$ the $P$-primary components of $\fc_1$ and $\fl_1$.
%%
%%\begin{corollary}
%%Suppose that $\overline{\rho}$ is not induced by a character of $G_F$ for a real quadratic field $F/\Q$.
%%We have $(\fc^P)^2\subset\fl^P\subset\fc^P$.
%%\end{corollary}

\bigskip

\bigskip
\bigskip

\begin{small}
\textsc{Universit\'e Paris 13,
Sorbonne Paris Cit\'e, LAGA, CNRS (UMR 7539),
99, avenue J.-B. Cl\'ement,
F-93430, Villetaneuse, France.}

\textit{E-mail addresses}: \url{conti@math.univ-paris13.fr}, \url{tilouine@math.univ-paris13.fr}

\bigskip

\textsc{Department of Mathematics and Statistics, Concordia University, Montreal, Canada and
Dipartimento di Matematica, Universita degli Studi di Padova, Padova, Italy.}

\textit{E-mail address}: \url{adrian.iovita@concordia.ca}
\end{small}

\end{document}